\theoremstyle{plain}
\newtheorem{theorem}[equation]{Theorem}
\newtheorem{lemma}[equation]{Lemma}
\newtheorem{prop}[equation]{Proposition}
\newtheorem{cor}[equation]{Corollary}
\newtheorem{utheorem}{\textrm{\textbf{Theorem}}}
\theoremstyle{definition}
\newtheorem{definition}[equation]{Definition}
\newtheorem{remark}[equation]{Remark}
\newtheorem{exam}[equation]{Example}
\newtheorem{assum}[equation]{Assumption}
\numberwithin{equation}{section}
\DeclareMathOperator{\ad}{\ensuremath{ad}\xspace}
\DeclareMathOperator{\ch}{\ensuremath{char}\xspace}
\DeclareMathOperator{\tr}{\ensuremath{tr}\xspace}
\DeclareMathOperator{\id}{\ensuremath{id}\xspace}
\DeclareMathOperator{\im}{\ensuremath{im}\xspace}
\DeclareMathOperator{\rt}{\ensuremath{root}\xspace}
\DeclareMathOperator{\wt}{\ensuremath{wt}\xspace}
\DeclareMathOperator{\hgt}{\ensuremath{ht}\xspace}
\DeclareMathOperator{\mis}{\ensuremath{mis}\xspace}
\DeclareMathOperator{\sym}{\ensuremath{Sym}\xspace}
\DeclareMathOperator{\symm}{\ensuremath{sym}\xspace}
\DeclareMathOperator{\hhh}{\ensuremath{Hom}\xspace}
\DeclareMathOperator{\Mod}{\ensuremath{Mod}\xspace}
\DeclareMathOperator{\Ext}{\ensuremath{Ext}\xspace}
\DeclareMathOperator{\End}{\ensuremath{End}\xspace}
\DeclareMathOperator{\Aut}{\ensuremath{Aut}\xspace}
\DeclareMathOperator{\Rad}{\ensuremath{Rad}\xspace}
\DeclareMathOperator{\Spec}{\ensuremath{{Spec}\xspace}}
\newcommand{\lie}[1]{\ensuremath{\mathfrak{#1}}\xspace}
\newcommand{\ghat}{\ensuremath{\widehat{\mathfrak{g}} }\xspace}
\newcommand{\qhat}{\ensuremath{Q^\vee}\xspace}
\newcommand{\phat}{\ensuremath{P^\vee}\xspace}
\newcommand{\U}[1]{\ensuremath{\mathfrak{U}_{q,C}(#1)}\xspace}
\newcommand{\m}{\ensuremath{\mathfrak{m}}\xspace}
\newcommand{\hatt}[1]{\ensuremath{{#1}\ \widehat{}}\xspace}
\newcommand{\vi}{\ensuremath{\varepsilon}\xspace}
\newcommand{\la}{\ensuremath{\lambda}\xspace}
\newcommand{\G}{\ensuremath{\Gamma}\xspace}
\newcommand{\z}{\ensuremath{\widetilde{z}} }
\newcommand{\N}{\ensuremath{\mathbb N}\xspace}
\newcommand{\nn}{\ensuremath{\mathbb{Z}^+}\xspace} 
\newcommand{\Z}{\ensuremath{\mathbb Z}\xspace}
\newcommand{\Q}{\ensuremath{\mathbb Q}\xspace}
\newcommand{\R}{\ensuremath{\mathbb R}\xspace}
\newcommand{\C}{\ensuremath{\mathbb C}\xspace}
\newcommand{\F}{\ensuremath{\mathbb F}\xspace}
\newcommand{\E}{\ensuremath{\mathbb E}\xspace}
\newcommand{\vla}{\ensuremath{\mathbb V}^\lambda\xspace}
\newcommand{\zze}{\ensuremath{\mathbb Z}^k \ltimes_{\boldsymbol \zeta}
\mathbb{E}\xspace}
\newcommand{\calo}{\ensuremath{\mathcal{O}}\xspace}
\newcommand{\cala}{\ensuremath{\mathcal{A}}\xspace}
\newcommand{\alga}{\ensuremath{\mathcal{A}_{\boldsymbol{\zeta}}\xspace}}
\newcommand{\calp}{\ensuremath{\mathcal{P}}\xspace}
\newcommand{\cals}{\ensuremath{\mathcal{W}}\xspace}
\newcommand{\scrc}{\mathscr{C}}
\newcommand{\qbar}{\ensuremath{\overline{Q}}\xspace}
\newcommand{\hfree}{\ensuremath{\widehat{H}^{free}}\xspace}
\newcommand{\hofree}{\ensuremath{\widehat{H_1}^{free}}\xspace}
\newcommand{\hzfree}{\ensuremath{\widehat{H_0}^{free}}\xspace}
\newcommand{\tilf}{\widetilde{f}}
\newcommand{\calq}{\ensuremath{\mathcal{Q}}\xspace}
\newcommand{\hk}{\ensuremath{{\sf H}_\kappa}\xspace}
\newcommand{\infgl}{\ensuremath{\mathcal{H}_\beta(\mathfrak{gl}_n,
\mathbb{F}^n \oplus (\mathbb{F}^n)^*)}\xspace}
\newcommand{\infsp}{\ensuremath{\mathcal{H}_\beta(\mathfrak{sp}_{2n},
\mathbb{F}^{2n})}\xspace}
\newcommand{\mapdef}[1]{\ensuremath{\overset{#1}{\longrightarrow}}\xspace}
\newcommand{\one}[1]{\ensuremath{{#1}_{(1)}}\xspace}
\newcommand{\two}[1]{\ensuremath{{#1}_{(2)}}\xspace}
\newcommand{\three}[1]{\ensuremath{{#1}_{(3)}}\xspace}
\newcommand{\tangle}[1]{\ensuremath{\langle #1 \rangle}\xspace}
\newcommand{\comment}[1]{}
\begin{document}

\title{Axiomatic framework for the BGG Category $\calo$}

\author[Apoorva Khare]{Apoorva Khare\\Stanford University}
\email[A.~Khare]{\tt khare@stanford.edu}
\address{Department of Mathematics, Stanford University, Stanford, CA -
94305}
\date{\today}

\subjclass[2000]{Primary: 16G99; Secondary: 17B, 16T05, 16S80}
\keywords{Triangular decomposition, regular triangular algebra, BGG
Category, highest weight category, Hopf algebras, Conditions (S)}

\begin{abstract}
In this paper we introduce a general axiomatic framework for algebras
with triangular decomposition, which allows for a systematic study of the
Bernstein-Gelfand-Gelfand Category $\mathcal{O}$. Our axiomatic framework
can be stated via three relatively simple axioms, and it
encompasses a very large class of algebras studied in the literature.
We term the algebras satisfying our axioms as \textit{regular triangular
algebras (RTAs)}; these include
(a)~generalized Weyl algebras,
(b)~symmetrizable Kac-Moody Lie algebras $\lie{g}$,
(c)~quantum groups $U_q(\lie{g})$ over ``lattices with possible torsion",
(d)~infinitesimal Hecke algebras,
(e)~higher rank Virasoro algebras,
and others.

In order to incorporate these special cases under a common setting, our
theory distinguishes between roots and weights, and does not require the
Cartan subalgebra to be a Hopf algebra. We also allow RTAs to have roots
in arbitrary monoids rather than root lattices, and the roots of the
Borel subalgebras to lie in cones with respect to a strict subalgebra of
the Cartan subalgebra. These relaxations of the triangular structure have
not been explored in the literature. 

We then define and study the BGG Category $\mathcal{O}$ over an arbitrary
RTA. In order to work with general RTAs -- and also bypass the use of
central characters -- we introduce certain conditions (termed the
\textit{Conditions (S)}), under which distinguished subcategories of
Category $\mathcal{O}$, termed ``blocks", possess desirable homological
properties including:
(a) being a finite length, abelian, self-dual category;
(b) having enough projectives and injectives; or
(c) being a highest weight category satisfying BGG Reciprocity.
We discuss the above examples and whether they satisfy the various
Conditions (S). We also discuss two new examples of RTAs that cannot be
studied by using previous theories of Category $\mathcal{O}$, but require
the full scope of our framework. These include the first construction of
a family of algebras for which the ``root lattice" is non-abelian.
\end{abstract}
\maketitle


\settocdepth{section}
\tableofcontents

\section{Introduction}

This paper is motivated by the study of the Bernstein-Gelfand-Gelfand
category $\calo$ \cite{BGG1} associated with a complex semisimple Lie
algebra $\lie{g}$. The definition of $\calo$ depends on the fact that $U
\lie{g}$ has a triangular decomposition.
This category has been studied quite intensively for both classical and
modern reasons, and has connections to geometry, combinatorics, crystals,
categorification, primitive ideals, abelian ideals, Kac-Moody theory,
quantum algebras, and mathematical physics.
To name but a few references, see
\cite{AndStr,H2,Ja2,Jos,Kac2,Maz,MP,Soe} (and the references therein).
One important property of the category $\calo$ is that its blocks are
highest weight categories in the sense of \cite{CPS1}, and hence satisfy
BGG Reciprocity.

Subsequently, Category $\calo$ has been studied over a large number of
algebras with triangular decomposition, and similar results on BGG
Reciprocity and other homological properties of blocks have been shown in
these settings. Thus the main goal of this paper is to simultaneously
generalize both
(a) the structure of the algebra over which to define and study $\calo$,
and
(b) the setup of Category $\calo$ over semisimple $\lie{g}$, in several
different ways. We do so in order to
(a) systematize and unify the treatment of a large number of examples
studied in the literature, and at the same time,
(b) preserve the homological and representation-theoretic properties that
are desirable in the case of semisimple Lie algebras.

Thus, the present paper studies algebras with triangular decomposition $A
\cong B^- \otimes H_1 \otimes B^+$, with the ``middle" subalgebra $H_1$
called the \textit{Cartan subalgebra}. We begin by discussing the ways in
which the structure of the underlying algebras is simultaneously
generalized in the present paper, in order to incorporate a very large
class of examples in the literature:\smallskip

\noindent {\bf 1.}
First, Lie algebras with triangular decomposition as well as their
quantum analogues are combined under a common framework. Recall that
several well-known Lie algebras in representation theory possess a
triangular decomposition similar to $U \lie{g}$ -- for example,
symmetrizable Kac-Moody Lie algebras \cite{Kac2}, contragredient Lie
algebras \cite{KK}, the (centerless) Virasoro algebra \cite{FeFr}, and
extended (centerless) Heisenberg algebras. An analogue of Category
$\calo$ has been explored for such Lie algebras in \cite{MP} (see also
\cite{RCW}).

At the same time, a closely related setting involves quantum analogs of
the aforementioned algebras. These algebras have also been studied in
detail in the literature (see e.g.~\cite{Ja2,Jos}).
Our common framework incorporates both of these settings as special cases
of algebras with triangular decomposition $A \cong B^- \otimes H_1
\otimes B^+$, where the Cartan subalgebra $H_1$ is a commutative,
cocommutative Hopf algebra.

There are similarities between our framework and that of \cite{AS}, in
that Hopf algebras, weight spaces, and quantum groups are involved.
However, our construction is significantly different as well: the
algebras here are neither finite-dimensional, nor do they need to be Hopf
algebras (and \textit{a priori}, we also do not impose restrictions on
the ground field).\smallskip

\noindent {\bf 2.}
While the case of the Cartan subalgebra being a Hopf algebra is
incorporated into our framework, we do not require it to necessarily be
thus. In particular, the framework proposed in this paper also
encompasses algebras arising from topology as well as \textit{low rank
continuous Hecke algebras}, for which the Cartan subalgebras are not Hopf
algebras. See Section \ref{Sgwa2}.\smallskip

\noindent {\bf 3.}
In our framework, there is another strict weakening of the axioms for
$\calo$ used in the literature to date. In all of the examples mentioned
above, if we denote the triangular decomposition as $A = B^- \otimes H_1
\otimes B^+$, then one requires the roots of $B^\pm$ to lie in positive
and negative cones with respect to the entire Cartan subalgebra $H_1$.
However, the present paper only requires this condition to hold with
respect to a (possibly proper) subalgebra $H_0 \subset H_1$. This allows
us to consider certain \textit{higher (Lie) rank infinitesimal Hecke
algebras}, for which Category $\calo$ could not have been studied using
traditional approaches in the literature.\smallskip

\noindent {\bf 4.}
Recall that in the theory of semisimple (or Kac-Moody) Lie algebras, the
root lattice embeds in the weight space. Such a phenomenon also occurs in
their quantum analogues. We provide an explanation by showing that in all
such cases in the literature, there are natural identifications between
the two spaces, which we call the \textit{weight-to-root map} and the
\textit{root-to-weight map}; see Definition \ref{Drtwt} and Proposition
\ref{Prho}.

However, such maps need not exist in general, because the Cartan
subalgebra is not always a Hopf algebra. Thus we will differentiate
between the group generated by the roots, and the space of all weights,
for the Cartan subalgebra $H_1$. This dichotomy between roots and weights
allows us to incorporate generalized Weyl algebras into our
framework.\medskip

\noindent {\bf 5.}
Usually, the group generated by the roots is a ``lattice", generated by a
finite base of simple roots. This is the case for both Lie algebras with
triangular decomposition \cite{MP} as well as quantum groups
$U_q(\lie{g})$ over Kac-Moody Lie algebras $\lie{g}$. Our framework
weakens this restriction by removing the lattice assumption. In fact, we
remove the commutativity assumption altogether, and work with arbitrary
``torsion-free" monoids.
This allows us to incorporate many algebras from mathematical physics
such as \textit{generalized Virasoro and Schr\"odinger-Virasoro algebras}
(as well as more traditional examples such as Witten's family of algebras
and conformal $\lie{sl}_2$-algebras).

Furthermore, in Section \ref{Snonabelian} we prove an Existence Theorem
that allows us to construct algebras with triangular decomposition, in
which the span of positive roots can be any monoid that satisfies certain
``cocycle conditions" \eqref{Ecocycle1},\eqref{Ecocycle2}.
These conditions are novel and incorporate \textit{all} abelian,
torsion-free monoids as well as some non-abelian ones. This enables us to
construct algebras with \textit{non-abelian} ``root lattices"; to our
knowledge, no such algebras have been studied in the literature. Our
results show that the proposed axiomatic framework is at once not
unnecessarily ``too broad", as well as broad enough to incorporate a very
large class of settings in the literature -- traditional as well as
modern, classical as well as quantum.\medskip

\noindent {\bf 6.}
Recall \cite{BGG1,H2} that in studying Category $\calo$ over a semisimple
Lie algebra $\lie{g}$, and its decomposition into finite length blocks
with enough projectives, central characters have played a crucial role,
via a finiteness condition that we call (S4) in this paper (see
Definition \ref{DcondS}). The condition is useful in proving results in
representation theory because the center of $U \lie{g}$ is ``large
enough".

In general, however, this is false: there are algebras with triangular
decomposition, whose center is trivial. In fact one of our motivating
examples was the infinitesimal Hecke algebra of $\lie{sl}_2$ (and $\C^2$)
studied with Tikaradze in \cite{Kh,KT} -- as well as its quantized
analogue, which was studied in joint work \cite{GGK} with Gan. It was
shown that the latter, quantum version has trivial center; yet a theory
of $\calo$ and its block decomposition (with BGG Reciprocity) was
developed in \cite{GGK}.

Thus, we do not use the center in this paper. Instead, we propose a
strictly weaker condition which we call (S3), and which holds for
semisimple $\lie{g}$ because of condition (S4) involving central
characters. We show that Condition (S3) already implies a block
decomposition into highest weight categories. Thus our framework allows
us to incorporate relatively modern constructions such as \textit{rank
one (quantum) infinitesimal Hecke algebras}, even though they may have
trivial center.\smallskip

Additionally, we now describe two ways in which we extend in this paper,
the treatment of Category $\calo$ found in the literature.\medskip

\noindent {\bf 7.}
In studying representations of Lie algebras $\lie{g}$ with triangular
decomposition, one often focuses on representations on which the center
$Z(\lie{g}) \subset \lie{g}$ acts by a fixed linear functional, or
\textit{level}. This is indeed the case for Kac-Moody Lie algebras and
for other algebras such as higher rank Virasoro algebras; see
e.g.~\cite{FeFr,HWZ}. Similarly, in the present paper we define
distinguished subcategories of $\calo$ that satisfy conditions such as
(S3) (discussed above and defined in Section \ref{Sos}). In other words,
we identify ``good parts" of $\calo$ that possess desirable homological
properties.\medskip

\noindent {\bf 8.}
Finally, the framework we propose is ``functorial", in that the structure
of Category $\calo$ (or its ``good parts" as in the previous point) over
a tensor product of commuting factors can be deduced from similar
structural facts for $\calo$ over each individual factor. For example,
the connection between modules over a semisimple Lie algebra and those
over its simple ideals, is a specific manifestation of a broader
phenomenon that holds in the general setting studied in this
paper.\medskip

Given the phenomena discussed above, we develop in this paper a general
framework of a \textit{regular triangular algebra (RTA)} for which the
notion of Category $\calo$ makes sense, and which encompasses all of the
aforementioned examples. We conclude this paper with Example \ref{Efull},
which describes an RTA $A$ for which one has to use the full level of
generality of our framework to study Category $\calo$ (and one can show
$\calo$ has very desirable properties), but the previously developed
treatments of $\calo$ are not adequate to describe its representation
theory. See Theorem \ref{Tfullrta}.

\subsection*{Organization of the paper}

This paper is organized as follows. In Section \ref{Sdef}, we present our
axiomatic framework, which encompasses a wide variety of algebras.
Section \ref{SHopf} discusses the special case when the Cartan subalgebra
$H_1$ is a Hopf algebra, and ends by characterizing such algebras inside
our framework. Next, we introduce Verma modules and other key concepts in
Section \ref{Sverma}. We then state in Section \ref{Sos} -- and show in
Section \ref{Sproofs} -- the main theoretical results about Category
$\calo$, including block decompositions and homological properties.

The second half is devoted to examples. In Section \ref{Snonabelian} we
provide the first example of a regular triangular algebra for which the
analogue of the root lattice is not abelian. We also prove an Existence
Theorem for all (abelian) variants of the root lattice.
Sections \ref{Sexam} and \ref{Suqg} discuss familiar examples, including
Lie algebras with ``regular triangular decomposition", and a family of
``extended quantum groups" for every symmetrizable Kac-Moody Lie algebra.
Section \ref{Smore} discusses further examples of our broad framework,
including one in which the center is trivial and yet $\calo$ has a block
decomposition.
Section \ref{Sgwa} studies generalized Weyl algebras in detail -- as an
additional result, we prove that generalized down-up algebras admit
quantizations, which are themselves deformations of quantum $\lie{sl}_2$.
In Section \ref{Sgwa2} we provide two examples of such algebras where the
Cartan subalgebra is not a Hopf algebra. Finally in Section
\ref{Sinfhecke}, we study infinitesimal Hecke algebras of higher (Lie)
rank, for which the root lattice and weight space are not contained in
the same vector space. We end with Example \ref{Efull}, which uses the
full generality of our framework to study Category $\calo$.

\section{The main definition: Regular Triangular Algebras}\label{Sdef}

We work throughout over a ground field $\F$. Unless otherwise specified,
$\ch \F$ is arbitrary, and all tensor products below are over $\F$.
We will often abuse notation and claim that two modules or functors are
equal, when they are isomorphic (e.g.~double duals).
Now define $\Z^\pm := \pm (\N \cup \{0\})$. Given $S \subset \Z$ and a
subset $\Delta$ of an abelian group $\Theta_0$, define $S \Delta$ to be
the set of all finite $S$-linear combinations $\sum_{\alpha \in \Delta}
n_\alpha \alpha$, where $n_\alpha \in S\ \forall \alpha$.
Finally, given any group $\Theta$ and a subset $\calq^+ \subset \Theta$,
define $-\calq^+ := \{ \theta^{-1} : \theta \in \calq^+ \} \subset
\Theta$, $\tangle{\calq^+}$ to be the subgroup of $\Theta$ generated by
$\calq^+$, and $\F \Theta$ to be the group algebra of $\Theta$.

\begin{definition}\label{Dweight}
Fix a ground field $\F$, and $\F$-algebras $H \subset A$.
\begin{enumerate}
\item Define the spaces of \emph{roots} and \emph{weights} of $H$ to be
$\Aut_{\F-alg}(H)$ and $\widehat{H} := \hhh_{\F-alg}(H,\F)$ respectively.

\item Given a weight $\la \in \widehat{H}$ and an $H$-module $M$, the
{\em $\la$-weight space} of $M$ is $M_\la := \{ m \in M : hm = \la(h)m\
\forall h \in H \}$. The \emph{set of $H$-weights} of $M$ is $\wt_H(M) :=
\{ \lambda \in \widehat{H} : M_\lambda \neq 0 \}$, and $M$ is an
\emph{$H$-weight module} if $M = \bigoplus_{\lambda \in \widehat{H}}
M_\lambda$ is $H$-semisimple.

\item Define the \emph{$\theta$-root space} of $A$ corresponding to a
root $\theta \in \Aut_{\F-alg}(H)$, as well as the \emph{set of
$H$-roots of $A$}, to respectively equal
\begin{equation}
A_\theta := \{ a \in A : a h = \theta(h) a\ \forall h \in H \}, \qquad
\rt_H(A) := \{ \theta \in \Aut_{\F-alg}(H) : A_\theta \neq 0 \}.
\end{equation}

\item If $H_0 \subset H$ is an $\F$-subalgebra, let $\pi'_{H_0} :
\End_{\F-alg}(H) \to \hhh_{\F-alg}(H_0,H)$ denote the restriction map.
Similarly, denote by $\pi_{H_0} : \widehat{H_1} \to \widehat{H_0}$ the
restriction map to $H_0$.
\end{enumerate}
\end{definition}

\noindent The axiomatic framework introduced in this paper will display
the aforementioned dichotomy between \textit{roots}, which pertain to
algebras and belong to $\Aut_{\F-alg}(H_0)$; and \textit{weights}, which
pertain to representations and live in $\widehat{H_1}$.
In this paper, we use $\theta$ to refer to roots.

Equipped with the above terminology, it is now possible to propose a
broad framework in which to study the BGG Category $\calo$, and which
incorporates many examples in the literature.

\begin{definition}\label{Drta}
An associative $\F$-algebra $A$, together with data $(B^\pm, H_1, H_0,
\calq^+_0, i)$ satisfying the following conditions, is called a {\em
regular triangular algebra} (denoted also by {\em RTA}).
\begin{enumerate}
\item[(RTA1)]
There exist associative unital $\F$-subalgebras $B^\pm, H_1$ of $A$, such
that the multiplication map $: B^- \otimes_\F H_1 \otimes_\F B^+ \to A$
is a vector space isomorphism (the {\em triangular decomposition}).

\item[(RTA2)]
There exist a unital subalgebra $H_0 \subset H_1$ and a monoid $\calq^+_0
\subset \Aut_{\F-alg}(H_0)$, such that $\calq^+_0 \setminus \{ \id_{H_0}
\}$ is a semigroup, and moreover,
\begin{equation}
B^+ = \bigoplus_{\theta_1 \in \calq^+_1} B^+_{\theta_1}, \qquad
\text{where } \calq^+_1 := (\pi'_{H_0})^{-1}(\calq^+_0) \cap
\Aut_{\F-alg}(H_1).
\end{equation}

\noindent Moreover, $B^+_{\id_{H_0}} = \F \cdot 1$, and $\dim_\F
B^+_{\theta_0} < \infty$ for all $\theta_0 \in \calq^+_0$
(the {\em regularity} assumption).

\item[(RTA3)] There exists an anti-involution $i$ of $A$ (i.e., $i^2|_A =
\id|_A$) that fixes $H_1$, and sends $B^\pm$ into the image under the
multiplication map of $H_1 \otimes B^\mp$.
\end{enumerate}
\end{definition}

\noindent As explained in  Proposition \ref{Pverma}(1) below, the
assumption that $\calq^+_0 \setminus \{ \id_{H_0} \}$ is a semigroup
helps construct a partial order on the set of weights. It also implies
that $\calq^+_0$ is either trivial or infinite.
Moreover, we do not insist that the anti-involution $i : A \to A$ sends
$B^+$ to $B^-$, as is the case for Lie algebras with triangular
decompositions. The reason is that for quantum algebras $i$ may not send
$B^+$ to $B^-$; see Section \ref{Suqg} or Example \ref{Einfhecke1}.

As we will discuss through many examples, most of the traditionally
well-studied RTAs in the literature satisfy two additional restrictions:
(a) $H_0 = H_1$; and (b) $\calq^+_0$ is generated by a finite $\Z$-basis
$\Delta$ of ``simple roots". These restrictions are encoded as follows
for a general RTA.

\begin{definition}\label{Dstrict}
An RTA $A$ (together with $(B^\pm, H_1, H_0, \calq^+_0, i)$) is {\em
strict} if $H_1 = H_0$.
An RTA is {\em based} if there exists a pairwise commuting
$\Z$-linearly independent set $\Delta \subset \calq^+_0$, called the {\em
(base of) simple roots}, such that $\calq^+_0 = \nn \Delta$. In this case
we may also denote the RTA by $(B^\pm, H_1, H_0, \Delta, i)$. The {\em
rank} of a {\em strict}, based RTA is defined to be $|\Delta|$ for the
smallest such $\Delta$ (or $\calq^+_0$).
\end{definition}

\begin{remark}
In Section \ref{Sinfhecke} we will see examples of non-strict RTAs
(called infinitesimal Hecke algebras) which involve simple Lie algebras
of arbitrary Lie rank, but for which it is possible to choose precisely
one simple root to generate $\calq^+_0$. In order to avoid this
discrepancy, we do not talk about the rank of a non-strict, based RTA in
this paper.
\end{remark}

\begin{exam}\label{Emotivating}
Definition \ref{Drta} is quite technical; here is our motivating example
- a finite-dimensional complex semisimple Lie algebra $\lie{g}$ with
triangular decomposition $\lie{g} = \lie{n}^+ \oplus \lie{h} \oplus
\lie{n}^-$. Then $A = U \lie{g} = U \lie{n}^- \otimes \sym(\lie{h})
\otimes U \lie{n}^+$ is a strict, based RTA with:
\begin{itemize}
\item $H_1 = H_0 = U \lie{h}$ the Cartan subalgebra -- this
is a commutative, cocommutative Hopf algebra, so $A = U \lie{g}$ is in
fact a strict \textit{Hopf RTA} (see Section \ref{SHopf});

\item $B^\pm = U \lie{n}^\pm$; and

\item $i$ the anti-involution obtained by composing the Chevalley
involution and the Hopf algebra antipode -- so $i$ sends $\lie{g}_\alpha$
to $\lie{g}_{-\alpha}$ for all roots $\alpha$ (and hence $B^\pm$ to
$B^\mp$), and fixes $\lie{h}$.
\end{itemize}

\noindent Now identify $\Aut_{\F-alg}(H_0)$ with $\widehat{H_0} =
\lie{h}^*$ as follows (also see Proposition \ref{Prho}): for every weight
$\mu \in \lie{h}^*$, define the root $\rho_{H_0}(\mu) : H_0 \to H_0$ via:
$\rho_{H_0}(\mu)(h_1 \cdots h_n) := \prod_{j=1}^n (h_j - \mu(h_j))$ (and
extend by linearity). It follows easily that $\mu \mapsto
\rho_{H_0}(\mu)$ is an isomorphism of additive groups $\rho_{H_0} :
\lie{h}^* \to \Aut_{\F-alg}(H_0)$. Define $\calq^+_0$ to be the monoid
generated by the simple roots $\Delta$ (or more precisely, $\{
\rho_{H_0}(\alpha) : \alpha \in \Delta \}$) -- this is usually denoted in
the literature as $Q^+$, the ``positive" part of the root lattice. Now
note that the above parameters equip $U \lie{g}$ with the structure of a
strict, based RTA of finite rank.
\end{exam}

\begin{remark}\label{R11}\hfill
\begin{enumerate}
\item Henceforth we denote an RTA by $A$ alone, and do not explicitly
write out all of the additional data $(B^\pm, H_1, H_0, \calq^+_0, i)$,
even though it will also be assumed to be fixed.

\item If $A$ is a based RTA and $\Aut_{\F-alg}(H_0)$ is a subgroup of an
$\F$-vector space under addition, then we also require $\F$ to have
characteristic zero, since otherwise $\calq^+_0 \setminus \{ \id_{H_0}
\}$ has torsion and hence cannot be a semigroup. This explains why we
will assume $\ch \F = 0$ for Lie algebras (as in Example
\ref{Emotivating}), but not necessarily for quantum groups.
\end{enumerate}
\end{remark}

We now list some basic properties of regular triangular algebras (RTAs).
These properties will be used henceforth without further reference.

\begin{lemma}\label{Lfirst}
($A$ is an RTA.) Suppose $\calq^+_r$ generates the subgroup
$\tangle{\calq^+_r} \subset \Aut_{\F-alg}(H_r)$ for $r=0,1$.
\begin{enumerate}
\item The groups $\tangle{\calq^+_r}$ act on the sets $\widehat{H_r}$ for
$r=0,1$ via: $\theta_r * \lambda_r := \lambda_r \circ \theta_r^{-1}$ for
$\lambda_r \in \widehat{H_r}, \theta_r \in \tangle{\calq^+_r}$.
The actions are functorial, in that the following square commutes for all
pairs of $\F$-algebras $H_0 \hookrightarrow H_1$:
\[ \begin{CD}
(\pi'_{H_0})^{-1}(\Aut_{\F-alg}(H_0)) \times \widehat{H_1} @>*>>
\widehat{H_1}\\
@V\pi'_{H_0} \times \pi_{H_0} VV @V\pi_{H_0} VV\\
\Aut_{\F-alg}(H_0) \times \widehat{H_0} @>*>> \widehat{H_0}
\end{CD}
\]

\item If $M$ is any $A$-module, then for $r=0,1$, we have:
\begin{equation}
A_{\theta_r} \cdot M_{\lambda_r} \subset M_{\theta_r * \lambda_r} =
M_{\lambda_r \circ \theta_r^{-1}}, \qquad \forall \theta_r \in
\tangle{\calq^+_r},\ \lambda_r \in \widehat{H_r}.
\end{equation}

\item $H_r$ is commutative for $r=0,1$, whence $H_r = (H_r)_{\id_{H_r}} =
(H_r)_{\id_{H_{3-r}} }$.

\item $i(A_{\theta_r}) = A_{\theta_r^{-1}}$ for $r = 0,1$ and $\theta_r
\in \tangle{\calq^+_r}$.
\end{enumerate}
\end{lemma}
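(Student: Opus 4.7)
The plan is to deduce each part from a direct unwinding of Definitions \ref{Dweight} and \ref{Drta}, with the anti-involution axiom (RTA3) supplying the extra content needed for parts (3) and (4).

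For part (1), I would first check that $\theta_r * \lambda_r := \lambda_r \circ \theta_r^{-1}$ is a well-defined left action of $\tangle{\calq^+_r}$ on $\widehat{H_r}$: the composite of an $\F$-algebra homomorphism with an $\F$-algebra automorphism is again an $\F$-algebra homomorphism $H_r \to \F$, and $(\theta \eta)^{-1} = \eta^{-1} \theta^{-1}$ gives the associativity axiom. For the commuting square, the defining condition $\theta_1 \in (\pi'_{H_0})^{-1}(\Aut_{\F-alg}(H_0))$ means exactly that $\theta_1|_{H_0}$ is an automorphism of $H_0$, so in particular $H_0$ is $\theta_1$-stable and $(\theta_1|_{H_0})^{-1} = \theta_1^{-1}|_{H_0}$. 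Both routes around the diagram then send $(\theta_1, \lambda_1)$ to $\lambda_1|_{H_0} \circ \theta_1^{-1}|_{H_0} = (\lambda_1 \circ \theta_1^{-1})|_{H_0}$.

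Part (2) is a one-line computation: from $ah = \theta_r(h) a$ for $a \in A_{\theta_r}$ and every $h \in H_r$, substituting $\theta_r^{-1}(h)$ for $h$ yields $h a = a \theta_r^{-1}(h)$, so for $m \in M_{\lambda_r}$ one obtains $h(am) = a \theta_r^{-1}(h) m = \lambda_r(\theta_r^{-1}(h)) \cdot am$, placing $am$ in $M_{\lambda_r \circ \theta_r^{-1}}$.

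Parts (3) and (4) rest on (RTA3). Commutativity of $H_1$ is immediate: for $h, h' \in H_1$, since $i$ is an anti-homomorphism fixing $H_1$ pointwise, $h h' = i(h h') = i(h') i(h) = h' h$; then $H_0 \subset H_1$ inherits commutativity. The two equalities in part (3) now merely say that every element of the (commutative) algebra $H_r$ commutes with every element of the (also commutative) algebra $H_{3-r}$, which is immediate from $H_0 \subset H_1$. For part (4), applying $i$ to the defining identity $ah = \theta_r(h) a$ and using that $i$ fixes $H_r$ while reversing multiplication gives $h \cdot i(a) = i(a) \cdot \theta_r(h)$ for all $h \in H_r$; replacing $h$ by $\theta_r^{-1}(h)$ shows $i(a) \in A_{\theta_r^{-1}}$, and the reverse inclusion follows from $i^2 = \id_A$. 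The only real obstacle is notational rather than mathematical: one must correctly interpret the double subscript $(H_r)_{\id_{H_{3-r}}}$ as $H_r$ intersected with the $\id_{H_{3-r}}$-root space of $A$ taken with respect to $H_{3-r}$, and keep the restriction maps $\pi'_{H_0}$ and $\pi_{H_0}$ straight in part (1); once these are fixed, every step reduces to substitution and definition-chasing.
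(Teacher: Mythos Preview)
Your proposal is correct and takes essentially the same approach as the paper, which simply declares the proofs ``straightforward'' and illustrates only part (4) via the computation $i(a_{\theta_r}) h_r = i(h_r a_{\theta_r}) = i(a_{\theta_r} \theta_r^{-1}(h_r)) = \theta_r^{-1}(h_r) i(a_{\theta_r})$. Your version is more detailed but follows the same definition-chasing route, including the key use of (RTA3) (that $i$ fixes $H_1$ pointwise) to get commutativity in part (3) and the root-space swap in part (4).
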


\noindent The proofs are straightforward. For instance, part (4) holds
because $i(a_{\theta_r}) h_r = i(h_r a_{\theta_r}) = i(a_{\theta_r}
\theta_r^{-1}(h_r)) = \theta_r^{-1}(h_r) i(a_{\theta_r})$ for all $r=0,1,
\ \theta_r \in \tangle{\calq^+_r},\ a_{\theta_r} \in A_{\theta_r}, h_r
\in H_r$.

In turn, Lemma \ref{Lfirst} helps prove that the subalgebras $B^\pm$ are
``symmetric" in a precise sense:

\begin{prop}\label{Pfirst}
$A$ is an RTA as above.
\begin{enumerate}
\item $B^-$ has a decomposition similar to that of $B^+$ in (RTA2), i.e.,
there exists a monoid $\calq^-_0 \subset \Aut_{\F-alg}(H_0)$, such that
\[ B^- = \bigoplus_{\theta_1 \in \calq^-_1} B^-_{\theta_1}, \qquad
\text{where } \calq^-_1 := \{ \theta_1 \in \Aut_{\F-alg}(H_1) :
\pi'_{H_0}(\theta_1) \in \calq^-_0 \}. \]

\noindent Moreover, $\calq^-_r = -\calq^+_r$ for $r=0,1$,
$B^-_{\id_{H_0}} = \F$, and $\dim_\F B^-_{\theta_r^{-1}} = \dim_\F
B^+_{\theta_r} < \infty\ \forall \theta_r \in \calq^+_r$.

\item $H_r \otimes B^\pm$ (more precisely, their images under
multiplication) are unital $\F$-subalgebras of $A$.

\item For $r=0,1$, $\calq^\pm_r$ are sub-monoids of $\Aut_{\F-alg}(H_r)$,
such that $\calq^\pm_r \setminus \{ \id_{H_r} \}$ are semigroups.
Moreover, $\pi'_{H_0} : \tangle{\calq^+_1} \to \tangle{\calq^+_0}$ is a
group homomorphism that restricts to the monoid maps $: \calq^\pm_1
\twoheadrightarrow \calq^\pm_0$, and $A = \bigoplus_{\theta_r \in
\tangle{\calq^+_r}} A_{\theta_r}$ is $\tangle{\calq^+_r}$-graded for
$r=0,1$.

\item The algebras $B^\pm$ have subalgebras (in fact, augmentation
ideals) defined respectively as
\[ N^\pm := \bigoplus_{\theta_r \in \pm \calq^+_r \setminus \{ \id_{H_r}
\}} B^\pm_{\theta_r}, \qquad r=0,1. \]
\end{enumerate}
\end{prop}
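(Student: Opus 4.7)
The four parts are intertwined, but the substance lies in part (1); once the root-space decomposition of $B^-$ is in place, parts (2)--(4) follow essentially formally. The central tool is the anti-involution $i$ of (RTA3), which I plan to use to transfer the decomposition of $B^+$ from (RTA2) to $B^-$.

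I begin by showing that $H_1 B^+$ is a subalgebra of $A$, equal to $B^+ H_1$, with direct-sum decomposition
\[ H_1 B^+ \;=\; \bigoplus_{\theta_1 \in \calq^+_1} H_1 B^+_{\theta_1}, \qquad H_1 B^+_{\theta_1} \;\subseteq\; A_{\theta_1}. \]
Both $B^+_{\theta_1} H_1 = H_1 B^+_{\theta_1}$ and $H_1 B^+_{\theta_1} \subseteq A_{\theta_1}$ follow from the defining relation $b^+_{\theta_1} h = \theta_1(h) b^+_{\theta_1}$ combined with commutativity of $H_1$ (Lemma \ref{Lfirst}(3)), while directness of the sum is a consequence of the injectivity of $H_1 \otimes B^+ \hookrightarrow A$ (from (RTA1)). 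Applying $i$ and invoking both $B^+ H_1 = H_1 B^+$ and the anti-homomorphism property,
\[ i(H_1 B^+) \;=\; i(B^+ H_1) \;=\; H_1 \cdot i(B^+) \;\subseteq\; H_1 \cdot H_1 B^- \;=\; H_1 B^-, \]
and symmetrically $i(H_1 B^-) \subseteq H_1 B^+$. Since $i^2 = \id$, both inclusions become equalities, so $H_1 B^-$ is a subalgebra of $A$, anti-isomorphic to $H_1 B^+$ via $i$, with induced decomposition $H_1 B^- = \bigoplus_{\theta_1} i(H_1 B^+_{\theta_1})$ and each summand $i(H_1 B^+_{\theta_1}) \subseteq A_{\theta_1^{-1}}$ (Lemma \ref{Lfirst}(4)); a parallel $i$-argument gives $B^- H_1 \subseteq H_1 B^-$, with equality following a posteriori from the decomposition of $B^-$.

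The crux is to descend this decomposition from $H_1 B^-$ to $B^-$. For $b^- \in B^-$, decomposing $i(b^-) = \sum_{\theta_1} v_{\theta_1}$ with $v_{\theta_1} \in H_1 B^+_{\theta_1}$ and applying $i$ gives $b^- = \sum_{\theta_1} i(v_{\theta_1})$ with each $i(v_{\theta_1}) \in A_{\theta_1^{-1}} \cap H_1 B^-$. The plan is to show each $i(v_{\theta_1})$ lies in $B^-$: choose any vector-space complement $H_1 = \F \cdot 1 \oplus V$, inducing a splitting $B^- H_1 = B^- \oplus B^- V$ via (RTA1), and then verify that this splitting is \emph{compatible with the root-space grading on $A$}, namely $A_\eta \cap B^- H_1 = (A_\eta \cap B^-) \oplus (A_\eta \cap B^- V)$ for every $\eta$. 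I expect this compatibility to be the principal technical obstacle, since without it the projection $B^- H_1 \twoheadrightarrow B^-$ need not preserve root-space components. Granting it, one obtains $B^- = \bigoplus_{\theta_1 \in \calq^-_1} B^-_{\theta_1}$ with $\calq^-_r := (\calq^+_r)^{-1}$ and $B^-_{\theta_1^{-1}} := B^- \cap A_{\theta_1^{-1}}$; the $i$-induced bijection $B^+_{\theta_1} \to B^-_{\theta_1^{-1}}$ gives the dimension equality, and $B^-_{\id_{H_0}} = \F$ drops out from $B^+_{\id_{H_0}} = \F$.

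Given part (1), the remaining parts are short. Part (2) is exactly the argument used for $(H_1, B^+)$ above, applied also to $(H_1, B^-)$, and restricted to $H_0 \subseteq H_1$ for $r = 0$. Part (3) combines the decompositions of $B^\pm$ and $H_1$ with the triangular decomposition of (RTA1) to give
\[ A \;=\; \bigoplus_{\eta_1 \in \calq^-_1,\ \theta_1 \in \calq^+_1} B^-_{\eta_1}\, H_1\, B^+_{\theta_1}, \qquad B^-_{\eta_1}\, H_1\, B^+_{\theta_1} \;\subseteq\; A_{\eta_1 \theta_1}, \]
exhibiting $A$ as $\tangle{\calq^+_1}$-graded. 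That $\calq^\pm_r$ are submonoids of $\Aut_{\F-alg}(H_r)$ with $\calq^\pm_r \setminus \{\id_{H_r}\}$ semigroups follows from closure of $B^\pm$ under multiplication combined with the (RTA2) hypothesis on $\calq^+_0$; the restriction $\pi'_{H_0}$ is a group homomorphism by construction and surjects onto $\calq^\pm_0$ by (RTA2) and its $B^-$-analogue. Part (4) is immediate: $N^\pm$ is the augmentation ideal of $B^\pm$ obtained by discarding the trivial $\id_{H_r}$-component $B^\pm_{\id_{H_r}} = \F \cdot 1$.
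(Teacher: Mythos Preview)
Your overall strategy---use the anti-involution $i$ to transport the root-space decomposition of $B^+$ over to $B^-$---is exactly the paper's. The descent from $H_1 B^-$ to $B^-$ that you flag as the ``principal technical obstacle'' is also the point the paper has to work at; it resolves it by landing the root-components in $B^-_{\theta_1^{-1}} H_1$ (rather than merely in $H_1 B^-$) and then invoking the injectivity of $m_A : B^- \otimes H_1 \hookrightarrow A$ from (RTA1), which is morally the same as your splitting $B^- H_1 = B^- \oplus B^- V$.

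The genuine gap is in your dimension argument. You write that ``the $i$-induced bijection $B^+_{\theta_1} \to B^-_{\theta_1^{-1}}$ gives the dimension equality,'' but (RTA3) does not say $i(B^+) \subseteq B^-$; it only says $i(B^+) \subseteq H_1 B^-$. Consequently $i(B^+_{\theta_1}) \subseteq H_1 B^-_{\theta_1^{-1}}$, and there is no direct $\F$-linear bijection $B^+_{\theta_1} \to B^-_{\theta_1^{-1}}$ induced by $i$ (think of $U_q(\lie{sl}_2)$, where $i = ST$ sends $e$ to $-K^{-1}f$, not to $f$). The paper's fix is a trace argument: choose bases $b_1,\dots,b_n$ of $B^+_{\theta_r}$ and $v_1,\dots,v_m$ of $B^-_{\theta_r^{-1}}$, write $i(v_j) = \sum_k h_{jk} b_k$ and $i(b_k) = \sum_j v_j t_{kj}$ with $h_{jk}, t_{kj} \in H_1$, observe that applying $i$ twice forces the (possibly rectangular) $H_1$-valued matrices $H = (h_{jk})$ and $T = (t_{kj})$ to satisfy $HT = I_m$ and $TH = I_n$, and then compare traces to conclude $m = n$. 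This also requires first checking that $B^-_{\theta_0^{-1}}$ is finite-dimensional, which the paper does separately. Your proposal is missing this entire step.
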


\begin{proof}
First observe that any sum of $H_r$-root subspaces of $B^+$ or $B^-$ is
direct. The statement is part of (RTA2) for $B^+$, and hence follows for
$B^-$ using Lemma \ref{Lfirst}(4). We now proceed with the proof. The
meat of the result lies in proving part (1). Compute using the
$H_1$-root-semisimplicity of $B^+$ and the multiplication map $m_A$ on
$A$:
\begin{align*}
B^- = i(i(B^-)) \subset i( m_A(H_1 \otimes B^+)) = &\ m_A(i(B^+) \otimes
H_1) = m_A(i ( \bigoplus_{\theta_1 \in \calq^+_1} B^+_{\theta_1}) \otimes
H_1)\\
= &\ \bigoplus_{\theta_1 \in \calq^+_1} m_A(H_1 \otimes
B^-_{\theta_1^{-1}} \otimes H_1) = \bigoplus_{\theta_1 \in \calq^+_1}
m_A(H_1 \otimes B^-_{\theta_1^{-1}}).
\end{align*}

\noindent where all decompositions are direct from above, and the last
equality follows by definition of root spaces. It follows by (RTA1) that
$B^-$ decomposes as a direct sum of $H_1$-root spaces with roots in
$\calq^-_1 := -\calq^+_1$. Restricting $\calq^-_1$ to $H_0$ proves the
same assertion for $\calq^-_0 := -\calq^+_0$. Moreover, $i(1_{B^+}) =
1_{H_1 \otimes B^-} = 1_{B^-}$. Thus the remaining assertions in (1)
follow if we show that $\dim_\F B^-_{\theta_r^{-1}} = \dim_\F
B^+_{\theta_r}$ for $\theta_r \in \calq^+_r$.

Before doing so, we first prove (2) using only the aforementioned
$\calq^\pm_r$-root-space decomposition of $B^\pm$. Indeed, observe for
$r=0,1$ that $b_{\theta_r} h_r = \theta_r(h_r) b_{\theta_r} \in m_A(H_r
\otimes B^\pm)$ whenever $h_r \in H_r, \theta_r \in \calq^\pm_r \subset
\tangle{\calq^+_r}, b_{\theta_r} \in B^\pm_{\theta_r}$. Thus (2) follows
from (RTA1).

We now complete the proof of (1), by showing that $\dim_\F
B^-_{\theta_r^{-1}} = \dim_\F B^+_{\theta_r}$ for $\theta_r \in
\calq^+_r$. First suppose $r=0$, and fix an $H_0$-root-basis $b_1, \dots,
b_n$ of $B^+_{\theta_0}$ for fixed $\theta_0 \in \calq^+_0$. Also fix any
finite-dimensional subspace $V \subset B^-_{\theta_0^{-1}}$ such that
$i(b_j) \in m_A(H_1 \otimes V)$ for all $j$. Then using (2),
\[ B^-_{\theta_0^{-1}} = i(i(B^-_{\theta_0^{-1}})) \subset i(m_A(H_1
\otimes B^+_{\theta_0})) \subset m_A(i(B^+_{\theta_0}) \otimes H_1)
\subset m_A(H_1 \otimes V \otimes H_1) = m_A(H_1 \otimes V), \]

\noindent which shows (by (RTA1)) that $B^-_{\theta_0^{-1}} = V$ must be
finite-dimensional for all $\theta_0 \in \calq^+_0$. Now fix $r$ and
$\theta_r \in \calq^+_r$, as well as bases $b_1, \dots, b_n$ of
$B^+_{\theta_r}$ and $v_1, \dots, v_m$ of $B^-_{\theta_r^{-1}}$.
Suppose $\displaystyle i(v_j) = \sum_k m_A(h_{jk} \otimes b_k)$ and
$\displaystyle i(b_k) = \sum_k m_A(v_j \otimes t_{kj})$ for some choices
of elements $h_{jk}, t_{kj} \in H_1$. Then the (possibly rectangular)
matrices $H := (h_{jk}), T := (t_{kj})$ satisfy: $HT, TH$ are identity
matrices. Equating their traces yields $m=n$, i.e., $\dim_\F
B^-_{\theta_r^{-1}} = \dim_\F B^+_{\theta_r}$ as claimed.

The remaining parts are easily shown: (3) is straightforward given (1)
and Lemma \ref{Lfirst}, and (4) follows from (3).
\end{proof}

\subsection{Hopf regular triangular algebras}\label{SHopf}

We now analyze regular triangular algebras in the special case when $H_1$
is a Hopf algebra, $H_0$ a Hopf subalgebra, and the Hopf structure is
used to define an adjoint action with respect to which $A$ is semisimple.
This is in itself a very general setup that encompasses many well-studied
examples in the literature, including Kac-Moody Lie algebras and their
quantum groups. To proceed further, it is convenient to fix some
notation.\medskip

\noindent {\bf Notation.}
Let $H$ be a Hopf algebra (not necessarily commutative) over a field
$\F$, and denote by $m_H$ (or $\Delta_H, \eta_H, \vi_H, S_H$) the
multiplication in $H$ (or comultiplication, unit, counit, antipode
respectively) -- see e.g.~\cite{Ka}. We will use Sweedler notation:
$\Delta_H(h) = \sum \one{h} \otimes \two{h}$ for $h \in H$. Now note that
$\widehat{H} \subset H^*$ is precisely the set of grouplike elements in
$H^*$. Also define {\em convolution} on $\widehat{H}$, via $\displaystyle
\tangle{\mu * \la, h} := \tangle{\mu \otimes \la, \Delta_H(h)} = \sum
\tangle{\mu,\one{h}} \tangle{\la, \two{h}}$. Then (\cite[Exercise
III.8.11]{Ka}) $(\widehat{H},*)$ is a group, with unit $\vi_H$, and
inverse given by $\la \mapsto \la \circ S_H$ in $\widehat{H}$.\medskip

We now introduce a Hopf-theoretic framework that encompasses many
well-known algebras in the literature, as we illustrate through examples
later in the paper.

\begin{definition}\label{Dhrta}
Suppose $H$ is a Hopf algebra and $A$ is an $\F$-algebra containing $H$.
Define the {\em adjoint action} $\ad : H \to \End_\F(A)$ via: $(\ad h)(a)
:= \sum \one{h} a S(\two{h})$ for all $h \in H$ and $a \in A$.

Next, a {\em Hopf regular triangular algebra} (denoted also by {\em Hopf
RTA}, or {\em HRTA} in short), is an $\F$-algebra $A$, together with the
data $(B^\pm, H_1, H_0, \calq^{'+}_0, i)$ that satisfies (RTA1), (RTA3),
and the following condition:
\begin{enumerate}
%
\item[(HRTA2)]
$H_1$ is a Hopf algebra that contains a sub-Hopf algebra $H_0$.
Moreover, there exists a monoid $\calq^{'+}_0 \subset \widehat{H_0}$ such
that $\calq^{'+}_0 \setminus \{ \vi_{H_0} \}$ is a semigroup, which
satisfies:
\[ B^+ = \bigoplus_{\mu_1 \in \calq^{'+}_1} B^+_{\mu_1}, \qquad
\calq^{'+}_1 := \pi_{H_0}^{-1}(\calq^{'+}_0) \subset \widehat{H_1}, \]

\noindent where $\pi_{H_0} : \widehat{H_1} \to \widehat{H_0}$ is the
restriction map, and $B^+_{\mu_1}$ is the $\mu_1$-weight space for the
adjoint action of $H_1$ on $A$. Furthermore, $B^+_{\vi_{H_0}} = \F$, and
$\dim_\F B^+_{\mu_0} < \infty$ for all $\mu_0 \in \calq^{'+}_0$.
%
\end{enumerate}
\end{definition}

Note that the definition of an HRTA is in some sense parallel to that of
an RTA. However, the conditions (RTA2) and (HRTA2) are significantly
different, in that the monoid $\calq^{'+}_0$ is contained in ``weight
space" $\widehat{H_0}$ and involves the adjoint action of $H_0$ on $A$,
instead of being contained in ``root space" $\Aut_{\F-alg}(H_0)$ as in
the RTA case. 
%
In fact, Definition \ref{Dhrta} was primarily designed to incorporate Lie
algebras as well as their quantum analogues into a common framework, and
the properties of HRTAs were extensively studied in previous work
\cite{Kh2}.\footnote{The notion of an RTA was also defined, albeit
``incorrectly", in \cite{Kh2}. The reason it is not ``correct" is that it
is overly restrictive, requiring six technical axioms (besides the
triangular decomposition and anti-involution) and yet not able to
incorporate several of the settings considered in the present paper -
including non-based settings as in Sections \ref{Snonabelian} and
\ref{Snonbased}, as well as generalized Weyl algebras as in Sections
\ref{Sgwa}, \ref{Sgwa2}. However, the notion of a (based) Hopf RTA in
\cite{Kh2} essentially agrees with Definition \ref{Dhrta} in the present
paper.} 
Thus the definition of an HRTA is \textit{a priori} similar, but not
related to the notion of an RTA. However, it turns out that the two are
indeed closely related. To explain their precise connection, additional
notation is required.

\begin{definition}\label{Drtwt}
We say that a HRTA is {\em strict} if $H_1 = H_0$.
A HRTA is {\em based} if there exists a $\Z$-linearly independent set of
weights $\Delta' \subset \calq^{'+}_0$, such that $\calq^{'+}_0 = \nn
\Delta'$. In this case we may also denote the HRTA by $(B^\pm, H_1, H_0,
\Delta', i)$.
Finally, given a Hopf algebra $H$, define two maps:
\begin{itemize}
\item The \textit{weight-to-root map} $\rho_H : \widehat{H} \to
\End_\F(H)$ is defined via: $\rho_H(\mu)(h) := \sum \mu^{-1}(\one{h})
\two{h} = \sum \mu(S(\one{h})) \two{h}$.
\item The \textit{root-to-weight map} $\Psi_\vi : \Aut_{\F-alg}(H) \to
H^*$ is defined via: $\Psi_\vi(\theta) := \vi \circ \theta^{-1}$.
\end{itemize}
\end{definition}

\noindent It is now possible to relate HRTAs to RTAs (and to justify why
we call such algebras Hopf \textit{RTA}s).

\begin{theorem}\label{Thrtarta}
Suppose $A$ is an RTA over a ground field $\F$. Then $A$ is an HRTA if
and only if $H_1 \supset H_0$ are Hopf algebras and there exists a choice
of parameters such that $\calq^+_r \subset \im (\rho_{H_r})$ for $r=0,1$.
In this case, $A$ is a (strict) (based) Hopf RTA if and only if $A$ is a
(strict) (based) RTA.
\end{theorem}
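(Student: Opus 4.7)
The plan is to reduce the theorem to a single Sweedler-theoretic identification: for any commutative Hopf algebra $H \subset A$ and any weight $\mu \in \widehat{H}$, the adjoint $\mu$-weight space $A_\mu$ and the algebra $\rho_H(\mu)$-root space $A_{\rho_H(\mu)}$ coincide, and $\rho_H : (\widehat{H}, *) \to \Aut_{\F-alg}(H)$ is an injective group homomorphism. Once this is in hand, the entire theorem collapses into bookkeeping: transport the monoid $\calq^{'+}_r \subset \widehat{H_r}$ over to $\calq^+_r \subset \Aut_{\F-alg}(H_r)$ (and back) via $\rho_{H_r}$.

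My first step is the identification itself. In the direction $A_{\rho_H(\mu)} \subset A_\mu$, starting from $a h = \sum \mu(S\one{h}) \two{h} a$, I apply this relation termwise inside $(\ad h)(a) = \sum \one{h} a S(\two{h})$, invoke coassociativity together with the antipode identities $\sum S(\one{g}) \two{g} = \sum \one{g} S(\two{g}) = \vi(g)$, and collect scalars (using $S^2 = \id$, which holds since $H$ is commutative by Lemma \ref{Lfirst}(3)) to obtain $(\ad h)(a) = \mu(h) a$. In the converse direction, starting from $(\ad h)(a) = \mu(h) a$, the Sweedler identity $\sum \one{h} S(\two{h}) \three{h} = h$ immediately gives $h a = \sum \mu(\one{h}) a \two{h}$; substituting this into $\sum \mu^{-1}(\one{h}) \two{h} a$, applying coassociativity, and using the convolution identity $\mu^{-1} * \mu = \vi$ collapses the result to $a h$, so $a \in A_{\rho_H(\mu)}$. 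That $\rho_H$ is an algebra homomorphism follows from $\Delta$ being an algebra homomorphism and $\mu^{-1}$ being a character; injectivity follows by applying $\vi$ to $\rho_H(\mu) = \id_H$ to force $\mu = \vi$; and a direct Sweedler manipulation gives $\rho_H(\mu * \nu) = \rho_H(\mu) \circ \rho_H(\nu)$, so $\rho_H(\mu)$ inverts to $\rho_H(\mu^{-1})$ and the image lies in $\Aut_{\F-alg}(H)$. Finally, since $H_0$ is a sub-Hopf algebra, the Sweedler formula for $\rho_{H_1}(\mu_1)(h_0)$ (with $h_0 \in H_0$) shows that $\rho_{H_1}$ intertwines the restrictions $\pi_{H_0}$ (on weights) and $\pi'_{H_0}$ (on roots).

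Given this identification, both implications reduce to axiom-matching. For HRTA $\Rightarrow$ RTA with $\calq^+_r \subset \im \rho_{H_r}$, set $\calq^+_r := \rho_{H_r}(\calq^{'+}_r)$: axiom (HRTA2) converts to axiom (RTA2) term by term, since $\rho_{H_r}$ is an injective monoid homomorphism sending $\vi_{H_r}$ to $\id_{H_r}$ and carrying each individual adjoint weight space onto the corresponding algebra root space, while the intertwining of restriction maps gives $\calq^+_1 = (\pi'_{H_0})^{-1}(\calq^+_0) \cap \Aut_{\F-alg}(H_1)$ from $\calq^{'+}_1 = \pi_{H_0}^{-1}(\calq^{'+}_0)$; the semigroup condition transfers because $\rho_{H_0}$ is injective. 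The converse implication is entirely symmetric, with $\calq^{'+}_r := \rho_{H_r}^{-1}(\calq^+_r)$. Strictness ($H_1 = H_0$) is tautologically the same condition in both frameworks. For basedness, the injective group homomorphism $\rho_{H_0}$ preserves and reflects $\Z$-linear independence as well as pairwise commutativity (the latter since $\rho_{H_0}(\mu) \circ \rho_{H_0}(\nu) = \rho_{H_0}(\mu * \nu)$), so a base $\Delta \subset \calq^+_0$ corresponds bijectively to $\Delta' := \rho_{H_0}^{-1}(\Delta) \subset \calq^{'+}_0$. The only genuinely nontrivial step is the Sweedler identity $A_\mu = A_{\rho_H(\mu)}$; once it is in place, the rest is bookkeeping.
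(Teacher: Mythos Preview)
Your proof is correct and follows essentially the same approach as the paper: you reproduce the content of Proposition~\ref{Prho}---the injectivity of $\rho_H$, the intertwining with restriction, and the identification $A_\mu = A_{\rho_H(\mu)}$---and then transport the monoids $\calq^{'+}_r \leftrightarrow \calq^+_r$ via $\rho_{H_r}$, exactly as the paper does. The only minor difference is that your calculation for $A_{\rho_H(\mu)} \subset A_\mu$ invokes $S^2 = \id$ (valid since $H$ is commutative), whereas the paper avoids this by using $\one{h}\,a = a\,\rho_H(\mu^{-1})(\one{h})$ directly; both routes are fine.
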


The proof of Theorem \ref{Thrtarta} uses the following preliminary
results.

\begin{prop}\label{Prho}
Suppose $H$ is a Hopf algebra and $A$ is an $\F$-algebra containing $H$.
\begin{enumerate}
\item The root-to-weight map is a surjective group homomorphism $\Psi_\vi
: \Aut_{\F-alg}(H) \to \widehat{H}$. It has right inverse equal to the
weight-to-root map, which is an injective group homomorphism $\rho_H :
\widehat{H} \to \Aut_{\F-alg}(H)$.

\item The assignments $H_0 \mapsto \widehat{H_0}$ and $H_0 \to
\hhh_{\F-alg}(H_0,H)$ are contravariant functors from the category of
sub-Hopf algebras $H_0$ of $H$ and injective Hopf maps, to the categories
of groups and sets respectively. Moreover, the family of weight-to-root
maps $\{ \rho_{H_0} : H_0 \subset H \}$ constitute a natural
transformation $: \widehat{\ } \to \hhh_{\F-alg}(-,H)$.

\item $\im(\rho_H)$ acts freely on $\widehat{H}$ via: $\rho_H(\mu)(\nu) =
\mu * \nu$.

\item $\ad : H \to \End_\F(A)$ is an $\F$-algebra homomorphism.

\item For all $\mu \in \widehat{H}$, the weight space $A_\mu$ (for the
adjoint action of $H$ on $A$) and the root space $A_{\rho_H(\mu)}$ (see
Definition \ref{Dweight}) coincide. In particular, $A_{\vi_H} = A_{\id_H}
= Z_A(H)$.
\end{enumerate}
\end{prop}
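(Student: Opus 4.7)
The plan is to dispatch each of the five parts using standard Hopf-algebra manipulations, with part (5) carrying most of the technical weight. I would treat (1), (2), (3), (4) as direct consequences of the coassociativity, antipode, and comultiplicativity axioms, and reserve the detailed bookkeeping for (5).

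First, for (1) I would verify that $\rho_H(\mu)$ lands in $\Aut_{\F-alg}(H)$: multiplicativity $\rho_H(\mu)(gh) = \rho_H(\mu)(g)\rho_H(\mu)(h)$ falls out of the fact that $\mu$ and $\Delta$ are both algebra homomorphisms, while $\rho_H(\vi_H)(h) = \sum \vi(S(\one{h}))\two{h} = \sum \vi(\one{h})\two{h} = h$ identifies the unit. A short calculation (using that $S$ is an anti-coalgebra map and using coassociativity to merge two $\Delta$s) shows $\rho_H(\mu\ast\nu) = \rho_H(\mu)\circ\rho_H(\nu)$, which forces invertibility with $\rho_H(\mu)^{-1} = \rho_H(\mu^{-1})$. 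Surjectivity of $\Psi_\vi$ then comes for free from the identity
\[ \Psi_\vi(\rho_H(\mu))(h) = \vi(\rho_H(\mu^{-1})(h)) = \sum \mu(\one{h})\vi(\two{h}) = \mu(h), \]
i.e.\ $\Psi_\vi\circ\rho_H = \id_{\widehat{H}}$, and this equation simultaneously forces $\rho_H$ to be injective. The group-homomorphism property of $\Psi_\vi$ I would then extract from $\rho_H$ being a section combined with how the two group laws interact on roots of the form $\rho_H(\mu)$.

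Part (2) is functorial bookkeeping: the restriction maps $\widehat{H_1}\to\widehat{H_0}$ and $\hhh_{\F-alg}(H_0,H)\to\hhh_{\F-alg}(H_0',H)$ are contravariant, and naturality of $\rho$ reduces to the statement that the formula defining $\rho_{H_0}$ depends only on the Hopf structure inherited from the inclusion $H_0\hookrightarrow H$. For (3), I compute directly
\[ (\rho_H(\mu)\ast\nu)(h) = \nu(\rho_H(\mu)^{-1}(h)) = \nu\!\left(\textstyle\sum \mu(\one{h})\two{h}\right) = (\mu\ast\nu)(h), \]
after which freeness of the action is just the fact that $\widehat{H}$ is already a group. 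Part (4) is a one-line expansion: $\ad(gh)(a) = \sum g_{(1)}h_{(1)}\, a\, S(h_{(2)})S(g_{(2)}) = (\ad g)(\ad h(a))$, using that $\Delta$ is multiplicative and $S$ reverses products.

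The main obstacle is part (5), where I need to show $A_\mu^{\mathrm{ad}} = A_{\rho_H(\mu)}$. I would prove this as a two-way equivalence by exhibiting mutually inverse algebraic transformations between the defining relations $(\ad h)(a) = \mu(h)a$ and $ah = \rho_H(\mu)(h)a$. Going from the root-space condition to the adjoint one: substitute the relation $aS(h_{(2)}) = \rho_H(\mu)(S(h_{(2)}))\, a$ into $(\ad h)(a) = \sum h_{(1)}a S(h_{(2)})$, expand $\rho_H(\mu)(S(h_{(2)}))$ via the anti-cohomomorphism property of $S$, and collapse the resulting triple-coproduct sum using the antipode axiom $\sum h_{(1)}S(h_{(2)}) = \vi(h)$. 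Going backward: starting from $(\ad h)(a) = \mu(h)a$, multiply on the right by $h_{(2)}$ and sum over $\Delta(h)$; the combination $\sum h_{(1)}aS(h_{(2)})h_{(3)}$ telescopes via the antipode identity to recover $ha$, producing the intermediate relation $ha = \sum \mu(\one{h})\, a\, \two{h}$. Applying this intermediate relation with $S(h)$ in place of $h$ and using that $\rho_H(\mu)$ is a group element then delivers $ah = \rho_H(\mu)(h)a$. The closing sentence of (5) is then immediate: $\rho_H(\vi_H) = \id_H$ (computed above), so $A_{\vi_H} = A_{\id_H}$, which equals $Z_A(H)$ by the very definition of the $\id_H$-root space. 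The delicacy of (5) lies entirely in carefully tracking Sweedler indices through several applications of coassociativity and the anti-comultiplicativity of $S$; no genuinely new idea is required, but care is needed if one wants the argument to go through without the simplifying hypothesis $S^2 = \id_H$.
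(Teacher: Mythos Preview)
Your overall strategy matches the paper's: parts (1)--(4) are handled by the same direct Hopf-algebra computations, and part (5) is proved by establishing the two inclusions via Sweedler manipulations. The one substantive difference is in the direction $A_{\rho_H(\mu)} \subset A_\mu$ of part (5). You propose to push $S(h_{(2)})$ past $a$ using $aS(h_{(2)}) = \rho_H(\mu)(S(h_{(2)}))\,a$; expanding $\rho_H(\mu)(S(h_{(2)}))$ via the anti-coalgebra property of $S$ then produces $\mu(S^2(h))\,a$ rather than $\mu(h)\,a$, which is the source of your flagged concern about $S^2 \neq \id_H$. This is easily repaired --- since $(\widehat{H},*)$ is a group with inverse $\lambda \mapsto \lambda \circ S$, one has $(\mu^{-1})^{-1} = \mu \circ S^2 = \mu$ for every $\mu \in \widehat{H}$ --- but the paper sidesteps the issue entirely by instead pushing $h_{(1)}$ past $a$ via the equivalent relation $h_{(1)}a = a\,\rho_H(\mu^{-1})(h_{(1)})$. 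Expanding $\rho_H(\mu^{-1})(h_{(1)}) = \sum \mu(h_{(1)})h_{(2)}$ then collapses $\sum \mu(h_{(1)})h_{(2)}S(h_{(3)})$ directly to $\mu(h)$ with no antipode applied inside $\mu$. This is a cleaner route and removes the need for your caveat.

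One further remark: your proposed extraction of the group-homomorphism property of $\Psi_\vi$ from the section $\rho_H$ does not actually work, since $\rho_H$ need not be surjective and arbitrary algebra automorphisms need not respect the coalgebra structure. The paper's own proof, however, also does not verify this claim, so you are not omitting anything the paper supplies.
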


\noindent Part (1) says in particular that every Hopf algebra is a module
over its weights. To our knowledge (and that of some experts) it seems,
somewhat surprisingly, to be a new formulation (at least).
 We also remark that the last assertion in (5) can be found in
 e.g.~\cite[Lemma 1.3.3]{Jos}.

\begin{proof}
Most of the proofs are straightforward; however, we include them for
completeness.
\begin{enumerate}
\item We begin by studying the properties of the map $\rho_H$. The first
claim is that $\rho_H$ is a group homomorphism. Indeed, given $\mu, \nu
\in \widehat{H}$, one has:
\begin{align*}
\rho_H(\mu) \circ \rho_H(\nu)(h) = &\ \rho_H(\mu) \left( \sum
\nu^{-1}(\one{h}) \two{h} \right) = \sum \nu^{-1}(\one{h})
\mu^{-1}(\two{h}) \three{h}\\
= &\ \sum (\nu^{-1} * \mu^{-1})(\one{h}) \two{h} = \rho_H(\mu *
\nu)(h),\\
\rho_H(\vi_H)(h) = &\ \sum \vi_H(\one{h}) \two{h} = h.
\end{align*}

Next, we check that each $\rho_H(\mu)$ is an algebra map (it is
necessarily an automorphism, since it has inverse $\rho_H(\mu^{-1})$):
\begin{align*}
\rho_H(\mu)(hh') = &\ \sum \mu^{-1}(\one{(hh')}) \two{(hh')} = \sum
\mu^{-1}(\one{h} \one{h'}) \two{h} \two{h'}\\
= &\ \sum \mu^{-1}(\one{h}) \two{h} \cdot \sum \mu^{-1}(\one{h'})
\two{h'} = \rho_H(\mu)(h) \rho_H(\mu)(h'),
\end{align*}

\noindent where the penultimate equality holds because $\mu$ is an
algebra map. (That $\rho_H(\mu)(1) = 1$ is obvious.) Also note that
$\rho_H$ is injective because if $\rho_H(\mu) = \id_H$, then applying
$\vi_H$ to both sides yields: $\vi_H(h) = \vi_H(\rho_H(\mu(h))) =
\mu^{-1}(h)$ for all $h \in H$. It follows that $\mu^{-1} \equiv \vi_H$
on $H$, whence $\mu = \vi_H$ as desired.

Finally, the root-to-weight map $\Psi_\vi$ clearly has image in
$\widehat{H}$. That $\Psi_\vi$ is a surjection follows if we show that
$\rho_H$ is its right-inverse; but this is a straightforward computation:
\[ \Psi_\vi(\rho_H(\mu))(h) = \vi \circ (\rho_H(\mu)^{-1})(h) =
\vi(\rho_H(\mu^{-1})(h)) = \sum \vi(\mu(\one{h}) \two{h}) = (\mu *
\vi)(h) = \mu(h)\ \forall h \in H. \]

\item The categorical statement follows by observing that the following
square commutes, given Hopf algebras $H_0 \hookrightarrow H_1
\hookrightarrow H$:
\begin{equation}\label{Esquare}
\begin{CD}
\widehat{H_1} @>\rho_{H_1}>> \Aut_{\F-alg}(H_1) \cap
(\pi'_{H_0})^{-1}(\Aut_{\F-alg}(H_0))\\
@V\pi_{H_0} VV @V\pi'_{H_0} VV\\
\widehat{H_0} @>\rho_{H_0}>> \Aut_{\F-alg}(H_0)
\end{CD}
\end{equation}

\item This assertion follows from the definitions.

\item We compute, using that the comultiplication (or antipode) is
(anti)multiplicative in $H$:
\begin{align*}
(\ad hh')(a) = &\ \sum \one{(hh')} a S(\two{(hh')}) = \sum \one{h}
\one{h'} a S(\two{h'}) S(\two{h})\\
= &\ \sum \one{h} (\ad h'(a)) S(\two{h}) = \ad h ( \ad h' (a)).
\end{align*}

\noindent We conclude this part by computing: $(\ad 1)(a) = 1 \cdot a
\cdot 1^{-1} = a$ for all $a \in A$.

\item We show both inclusions. First if $a \in A_\mu$ and $h \in H$, then
compute:
\begin{align*}
a \rho_H(\mu^{-1})(h) = &\ \sum a \mu(\one{h}) \two{h} = \sum (\ad
\one{h})(a) \two{h} = \sum \one{h} a S(\two{h}) \three{h}\\
= &\ \sum \one{h} \vi_H(\two{h}) a = ha =
\rho_H(\mu)(\rho_H(\mu^{-1})(h)) a.
\end{align*}

\noindent Since $\rho_H(\mu^{\pm 1})$ is an automorphism of $H$, it
follows that $a \in A_{\rho_H(\mu)}$. Conversely, if $a \in
A_{\rho_H(\mu)}$ and $h \in H$, then
\begin{align*}
\ad h(a) = &\ \sum \one{h} a S(\two{a}) = \sum a
\rho_H(\mu^{-1})(\one{h}) S(\two{h})\\
= &\ a \sum \mu(\one{h}) \two{h} S(\three{h}) = a \sum \mu(\one{h})
\vi_H(\two{h}) = (\mu * \vi_H)(h) a = \mu(h) a.
\end{align*}
\end{enumerate}
\end{proof}

It is now possible to show how HRTAs relate to RTAs.

\begin{proof}[Proof of Theorem \ref{Thrtarta}]
In proving the first assertion, we focus only on the conditions (RTA2)
and (HRTA2). Suppose first that $H_1 \supset H_0$ are Hopf algebras and
$\calq^+_r \subset \im(\rho_{H_r})$ for $r=0,1$.
Define $\calq^{'+}_r := \rho_{H_r}^{-1}(\calq^+_r)$. Then $\calq^{'+}_1 =
\pi_{H_0}^{-1}(\calq^{'+}_0)$ by \eqref{Esquare}.
Moreover, if the root space $B^+_{\theta_1} \neq 0$ for some $\theta_1 =
\rho_{H_1}(\mu_1) \in \calq^+_1$, then $\pi'_{H_0}(\theta_1) =
\rho_{H_0}(\pi_{H_0}(\mu_1)) \in \calq^+_0$ by \eqref{Esquare}. But then
$\pi_{H_0}(\mu_1) \in \calq^{'+}_0$. This shows the decomposition in
condition (HRTA2). That condition (HRTA2) holds now follows by using
Proposition \ref{Prho}. Hence $A$ is an HRTA.

Conversely, suppose $A$ is an HRTA. Then $H_1 \supset H_0$ are clearly
Hopf algebras. Now choose $\calq^+_r \subset \Aut_{\F-alg}(H_r)$ to be
$\rho_{H_r}(\calq^{'+}_r) \subset \im(\rho_{H_r})$ for $r=0,1$ (via
Proposition \ref{Prho}). Moreover, Proposition \ref{Prho} and the
decomposition in condition (HRTA2) imply that the decomposition in (RTA2)
holds as well.

Finally, the last assertion is easily verified, if we set $\Delta :=
\rho_{H_0}(\Delta')$ when $A$ is a based HRTA. Note that since
$\rho_{H_0}$ is injective, the two possible notions of the rank of a
strict, based HRTA coincide.
\end{proof}

\begin{remark}
When $H_1 \supset H_0$ are Hopf algebras, Proposition \ref{Prho}(1) shows
how the weight-to-root and root-to-weight maps help identify roots with
weights. For general RTAs, the maps $\vi, \Psi_\vi, \rho_{H_r}$ need not exist,
and so roots and weights necessarily lie in different spaces that need
not be identifiable with one another.
\end{remark}

\section{The BGG Category $\calo$}\label{Sverma}

Having introduced the general framework of interest, the next step is to
define and study the Bernstein-Gelfand-Gelfand Category $\calo$ for an
RTA $A$. In this section we develop the theory of Category $\calo$ for
regular triangular algebras. The main results in this section are
described in Section \ref{Sos}. Following the theory, in subsequent
sections we discuss how the results in this section apply to a large
number of examples, traditional as well as modern, classical as well as
quantum.

\begin{definition}
Given an RTA $A$, the {\em BGG Category $\calo$} is the full subcategory
of all finitely generated $H_1$-semisimple $A$-modules with
finite-dimensional $H_1$-weight spaces, on which $B^+$ acts locally
finitely. (Henceforth by a weight space we mean an $H_1$-weight space,
unless specified otherwise.)
\end{definition}

\subsection{Verma modules; weights fixed by roots}

Category $\calo$ was introduced by Bernstein, Gelfand, and Gelfand in
their seminal paper \cite{BGG1} in the setting of complex semisimple Lie
algebras. Since then, similar categories of modules have been studied in
the literature in a wide variety of other settings, including Kac-Moody
Lie algebras, quantum groups, and several other algebras with triangular
decomposition. In studying $\calo$ for these algebras, a common theme is
to carefully examine the structure of a distinguished family of objects
called Verma modules. We now introduce this and other notions in the
general setting of regular triangular algebras.

\begin{definition}
$A$ is an RTA.
\begin{enumerate}
\item Given $\lambda \in \widehat{H_1}$, the corresponding {\em Verma
module} is $M(\lambda) := A / (A \cdot N^+ + A \cdot \ker \lambda)$.

\item The {\em Harish-Chandra projection} is $\xi : A = H_1 \oplus (N^-
\cdot A + A \cdot N^+) \twoheadrightarrow H_1$.
\end{enumerate}
\end{definition}

We now begin to develop the theory of Category $\calo$ via a careful
study of Verma modules and related objects in $\calo$. An attractive
feature of our framework of RTAs is that it is robust enough that much of
the ``traditional" development of $\calo$ in more classical settings goes
through for RTAs as well. More precisely, several of the results in this
section can be proved by adapting the arguments in \cite{MP,Kh2} to RTAs.
Thus, the proofs in this section will occasionally be omitted for
brevity. This applies in particular to the following result.

\begin{prop}\label{Pbasic}
Fix an RTA $A$ and a weight $\la \in \widehat{H_1}$.
\begin{enumerate}
\item Every submodule and quotient of an $H_1$-semisimple module $M$ is
also $H_1$-semisimple.

\item $M(\lambda)$ is an $H_1$-weight module generated by a
one-dimensional subspace of its $\lambda$-weight space. It is a free rank
one $B^-$-module.

\item The center $Z(A)$ acts by a central character $\chi_\lambda$ on the
Verma module $M(\lambda)$.

\item On $Z(A)$, the Harish-Chandra projection $\xi$ is an algebra map
that commutes with the anti-involution $i$ (i.e., $\xi \circ i = \xi$),
and $\chi_\lambda = \lambda \circ \xi$.
\end{enumerate}
\end{prop}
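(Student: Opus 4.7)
The plan is to prove parts (1)--(4) in order, using only (RTA1)--(RTA3), the $\calq^\pm_1$-grading of $B^\pm$ from Proposition \ref{Pfirst}, and commutativity of $H_1$ from Lemma \ref{Lfirst}(3).

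For (1), given $m \in M$ with $m = \sum_{\lambda \in S} m_\lambda$ for a finite $S \subset \widehat{H_1}$, I would invoke linear independence of distinct characters on the commutative algebra $H_1$ to choose $h_1, \dots, h_{|S|} \in H_1$ with $(\lambda(h_k))_{\lambda,k}$ invertible; each $m_\lambda$ is then an $\F$-combination of the $h_k m$, so the weight components of $m$ lie in any submodule containing $m$. Inheritance of the weight decomposition by submodules and (via the exact sequence) by quotients follows. For (2), let $v_\lambda$ be the image of $1$ in $M(\lambda)$. Since $N^+$ is an ideal of $B^+$, (RTA1) yields $A N^+ = B^- \otimes H_1 \otimes N^+$. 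To compute $A \cdot \ker\lambda$ modulo $A N^+$, I push $\ker\lambda$ across $B^+$ via $b \cdot h = \theta_1(h) \cdot b$ for $b \in B^+_{\theta_1}$: terms with $\theta_1 \neq \id_{H_1}$ get absorbed into $A N^+$, while the $\theta_1 = \id_{H_1}$ summand contributes $B^- \otimes \ker\lambda \otimes \F$. Hence $M(\lambda) \cong B^- \otimes (H_1/\ker\lambda) \otimes (B^+/N^+) \cong B^-$ as a left $B^-$-module, free of rank one on $v_\lambda$. The decomposition $M(\lambda) = \bigoplus_{\theta_1 \in \calq^-_1} B^-_{\theta_1} v_\lambda$ (with $B^-_{\theta_1} v_\lambda$ of weight $\lambda \circ \theta_1^{-1}$) then comes from the $\calq^-_1$-grading, and in particular $\F v_\lambda$ is the entire $\lambda$-weight space.

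The structural core of (3) and (4) is the observation that $Z(A) \subset A_{\id_{H_1}}$ (central elements commute with $H_1$), combined with Proposition \ref{Pfirst}(3) which gives $A_{\id_{H_1}} = \bigoplus_{\theta_1 \in \calq^+_1} B^-_{\theta_1^{-1}} \otimes H_1 \otimes B^+_{\theta_1}$. Thus each $z \in Z(A)$ admits a unique decomposition $z = \xi(z) + n_z$ with $n_z \in \bigoplus_{\theta_1 \neq \id_{H_1}} B^-_{\theta_1^{-1}} H_1 B^+_{\theta_1} \subset N^- A \cap A N^+$. For (3): each summand of $n_z$ has a right factor in $N^+$, which annihilates $v_\lambda$, whence $z v_\lambda = \xi(z) v_\lambda = \lambda(\xi(z)) v_\lambda$; this defines $\chi_\lambda(z) := \lambda(\xi(z))$, and by centrality $z(a v_\lambda) = a(z v_\lambda) = \chi_\lambda(z)(a v_\lambda)$ extends the action to all of $M(\lambda)$. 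The same computation yields the formula $\chi_\lambda = \lambda \circ \xi$ in (4). For $\xi \circ i = \xi$ on $Z(A)$: $i$ fixes $\xi(z) \in H_1$, while each summand of $i(n_z)$ has the form $i(b^+_{\theta_1}) \, i(h) \, i(b^-_{\theta_1^{-1}})$ with $\theta_1 \neq \id_{H_1}$; by (RTA3) this lies in $(H_1 \otimes B^-_{\theta_1^{-1}}) \, H_1 \, (B^+_{\theta_1} \otimes H_1) \subset N^- A \cap A N^+$, with vanishing $\xi$-projection.

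The main technical hurdle I expect is multiplicativity of $\xi$ on $Z(A)$, since $N^- A + A N^+$ is \emph{not} a two-sided ideal of $A$. To handle it, I would expand $z_1 z_2 = (\xi(z_1) + n_{z_1})(\xi(z_2) + n_{z_2})$ and control the three cross-terms using only the one-sided ideal properties $A \cdot (A N^+) \subset A N^+$ and $(N^- A) \cdot A \subset N^- A$: namely $\xi(z_1) n_{z_2} \in H_1 \cdot A N^+ \subset A N^+$; $n_{z_1} \xi(z_2) \in N^- A \cdot H_1 \subset N^- A$; and $n_{z_1} n_{z_2}$ lies in both $N^- A$ and $A N^+$ (since it starts with a factor in $N^-$ and ends with one in $N^+$). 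All three lie in $N^- A + A N^+$, forcing $\xi(z_1 z_2) = \xi(z_1)\xi(z_2)$. The argument crucially uses that both $z_i$ lie in the $\id_{H_1}$-root space of $A$, so that $n_{z_i} \in N^- H_1 N^+$ rather than an uncontrolled part of $A$; without this, the cross-terms could escape $N^- A + A N^+$.
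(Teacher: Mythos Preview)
Your proof is correct and follows the standard approach (the paper itself omits the proof, referring to \cite{MP,Kh2}). One small overclaim: your assertion that ``$\F v_\lambda$ is the entire $\lambda$-weight space'' need not hold for general $\lambda \in \widehat{H_1}$, since distinct $\theta_1 \in \calq^-_1$ may satisfy $\theta_1 * \lambda = \lambda$ (cf.\ Example \ref{EJZ}); but the proposition only asks that $M(\lambda)$ be generated by a one-dimensional \emph{subspace} of $M(\lambda)_\lambda$, which you establish via $M(\lambda) = B^- v_\lambda$.
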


However, in the general setting of RTAs, one encounters certain technical
issues involving Verma modules. More specifically, it is not always true
that all Verma modules lie in Category $\calo$. We now present such an
example, which falls outside the traditional Hopf setting but is an RTA
(and hence can be studied using the methods developed in this paper).

\begin{exam}\label{EJZ}
Motivated by quantum algebras associated to Hecke R-matrices, Jing and
Zhang \cite{JZ} introduced and studied a family of noncommutative and
non-cocommutative bialgebras that $q$-deform $U(\lie{gl}_2)$. (These
algebras were also studied later by Tang \cite{Ta1} from the viewpoint of
hyperbolic algebras.) More precisely, given $q \in \F^\times$ and $\ch \F
\neq 2$, the algebra $U'_q(\lie{gl}_2)$ is defined to be generated by
$u,d,h,a$, with relations:
\[ qhu - uh = 2u, \qquad hd - qdh = -2d, \qquad ud - qdu = a + h +
\frac{1-q}{4} h^2, \]

\noindent where $a$ is central. The algebra $U'_q(\lie{sl}_2)$ is defined
to be the quotient of $U'_q(\lie{gl}_2)$ by the central ideal $(a)$. Note
that setting $q=1$ yields the usual enveloping algebras of $\lie{sl}_2$
and $\lie{gl}_2$ respectively. Now it is not hard to show the following
result.

\begin{prop}
Suppose $q \in \F^\times$ is not a root of  unity, and $\ch \F \neq 2$.
Then $U'_q(\lie{gl}_2), U'_q(\lie{sl}_2)$ are strict, based RTAs of rank
one -- but not Hopf RTAs -- with $H_1 = H_0$ equal to $\F[a,h]$ and
$\F[h]$ respectively, and
\[ B^- = \F[d], \quad B^+ = \F[u], \quad \Delta = \{ \theta \}, \quad
\theta(h) = qh-2, \quad \theta(a) = a, \quad i(u) = d. \]

\noindent Moreover, $M(\lambda) \in \calo$ if and only if $\lambda \neq
-2/(1-q)$, and $M(-2/(1-q))$ is an $H_1$-weight module, with exactly one
weight space of infinite dimension.
\end{prop}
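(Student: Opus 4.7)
The plan is to verify the three RTA axioms directly from the presentation, to obstruct any Hopf refinement by a fixed-point argument via Theorem~\ref{Thrtarta}, and then to analyze each Verma module by tracking the weights of the $B^-$-basis.

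For (RTA1), I would apply Bergman's diamond lemma to the rewriting rules $uh \rightsquigarrow qhu - 2u$, $hd \rightsquigarrow qdh - 2d$, and $ud \rightsquigarrow qdu + \Omega$, where $\Omega := a + h + \tfrac{1-q}{4}h^2$ (with $a=0$ in the $\lie{sl}_2$ quotient), yielding the normal form $d^i(a^k)h^j u^\ell$. The only critical overlap is the word $uhd$; reducing first through the left pair and then the right pair (and vice versa) produces two expressions whose difference is $q(h\Omega - \Omega h)$, which vanishes since $\Omega \in H_1$ is central in $H_1$. This gives the factorization $A \cong B^- \otimes H_1 \otimes B^+$. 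For (RTA2), the relation $qhu - uh = 2u$ reads $u \in B^+_\theta$ with $\theta(h) = qh-2$ and $\theta(a) = a$, so $\calq^+_0 = \N\theta$ and $\Delta = \{\theta\}$; because $q$ is not a root of unity, $\theta^n \neq \id_{H_0}$ for $n \neq 0$, so $\calq^+_0 \setminus \{\id\}$ is a semigroup and each $B^+_{\theta^n} = \F u^n$ is one-dimensional.

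For (RTA3), define $i$ on generators by $i(h) = h$, $i(a) = a$, $i(u) = d$, $i(d) = u$ and extend antimultiplicatively. A direct check shows $i$ interchanges R1 and R2 and fixes R3 (since $i$ fixes both products $ud$ and $du$), while $i^2 = \id$ on generators and $i(B^\pm) = B^\mp$. To rule out an HRTA refinement, I argue by contradiction. If some Hopf structure on $H_1$ made this an HRTA, Theorem~\ref{Thrtarta} would put $\theta \in \im(\rho_{H_1})$. Proposition~\ref{Prho}(3) then asserts that $\im(\rho_{H_1})$ acts freely on $\widehat{H_1}$ by the convolution action, which by Lemma~\ref{Lfirst}(1) coincides with $\theta \ast \lambda := \lambda \circ \theta^{-1}$. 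However, the weight $\lambda^\ast \in \widehat{H_1}$ defined by $\lambda^\ast(h) = -2/(1-q)$ (and arbitrary on $a$) is $\theta$-fixed: $\theta^{-1}(h) = (h+2)/q$ gives $(\lambda^\ast(h) + 2)/q = \lambda^\ast(h)$. Freeness forces $\theta = \id_{H_1}$, contradicting $q \neq 1$.

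For the Verma module, Proposition~\ref{Pbasic}(2) supplies the $B^-$-basis $\{d^n m_\lambda\}_{n \geq 0}$, and $d \in A_{\theta^{-1}}$ combined with Lemma~\ref{Lfirst}(2) places $d^n m_\lambda$ in the weight space $\lambda \circ \theta^n$. Iterating yields $\theta^n(h) = q^n h - 2(q^n - 1)/(q-1)$, so for $n \neq m$, $\lambda \circ \theta^n = \lambda \circ \theta^m$ iff $(q^n - q^m)(\lambda(h) + 2/(1-q)) = 0$; since $q$ is not a root of unity, coincidence occurs for some $n \neq m$ iff $\lambda(h) = -2/(1-q)$. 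Off this locus every weight space is one-dimensional, and local finiteness of $B^+$ follows by induction on $n$ from $ud = qdu + \Omega$, which shows $\F[u] \cdot d^n m_\lambda \subset \bigoplus_{k \leq n} \F d^k m_\lambda$; hence $M(\lambda) \in \calo$. At the degenerate value $\lambda(h) = -2/(1-q)$, all basis vectors share weight $\lambda$, so $M(\lambda)_\lambda = M(\lambda)$ is a single infinite-dimensional weight space and $M(\lambda) \notin \calo$ while remaining $H_1$-semisimple. The main obstacle will be the confluence check at the $uhd$-overlap in (RTA1) -- the quadratic $h^2$ piece of $\Omega$ makes the coefficient bookkeeping tedious -- but $H_1$-centrality of $\Omega$ resolves it cleanly.
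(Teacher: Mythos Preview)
Your proof is correct and follows essentially the same approach as the paper. The paper defers (RTA1) to the more general Lemma~\ref{Lgwa} on triangular GWAs (whose proof is exactly your Diamond Lemma argument with the single overlap $uhd$), and then simply declares the remaining properties ``easy to verify''; you have filled in those details, including the fixed-point obstruction to an HRTA structure via Theorem~\ref{Thrtarta} and Proposition~\ref{Prho}(3), and the explicit weight computation for the Verma modules.
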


\begin{proof}
The only nontrivial property to check is (RTA1) -- this is shown in far
greater generality in Lemma \ref{Lgwa}. The remaining properties are easy
to verify -- e.g., that $\theta$ generates an infinite cyclic subgroup of
$\Aut_{\F-alg}(H_0)$ follows from the fact that $q$ is not a root of
unity.
\end{proof}
\end{exam}

With this motivating example in mind, we introduce the following
notation.

\begin{definition}\label{Dposet}
Suppose $A$ is an RTA.
\begin{enumerate}
\item Define $\hzfree := \{ \lambda_0 \in \widehat{H_0} :
\tangle{\calq^+_0} \text{ acts freely on } \lambda_0 \}$ and
$\hofree := \pi_{H_0}^{-1}(\hzfree)$.
If $A$ is a strict RTA, we will denote this common set by $\hfree$.

\item Define {\em partial orders} on the following four spaces:
\begin{itemize}
\item Define $\geq_{\calq_0}$ on $\Aut_{\F-alg}(H_0)$ via:
$\theta_0 \geq \theta'_0$ if there exists $\theta''_0 \in \calq^+_0$ such
that $\theta_0 = \theta''_0 * \theta'_0$.

\item Define $\geq_{\calq_1}$ on $\Aut_{\F-alg}(H_1)$ via:
$\theta_1 \geq \theta'_1$ if either $\pi_{H_0}(\theta_1) >
\pi_{H_0}(\theta'_1)$, or $\theta_1 = \theta'_1$.

\item Define $\geq_0$ on $\hzfree$ by: $\mu_0 \geq \mu'_0$ if there
exists $\theta''_0 \in \calq^+_0$ such that $\mu_0 = \theta''_0 *
\mu'_0$.

\item Define $\geq_1$ on $\hofree$, via: $\mu_1 \geq \mu'_1$ if
$\pi_{H_0}(\mu_1) > \pi_{H_0}(\mu'_1)$ in $\widehat{H_0}$, or $\mu_1 =
\mu'_1$ in $\widehat{H_1}$.
\end{itemize}

\item A {\em maximal vector} of weight $\la$ in an $A$-module $M$, is $m
\in M_\la \cap \ker N^+$.
\end{enumerate}
\end{definition}

\noindent In the remainder of the paper, we will often use $\geq$ without
specifying which of the four aforementioned partial orders is being used,
when this is clear from context.

\begin{remark}\label{Rfree}
If $A$ is an HRTA with $\calq^+_r = \rho_{H_r}(\calq^{'+}_r)$ for
$r=0,1$, then $\widehat{H_r}^{free} = \widehat{H_r}$ by Proposition
\ref{Prho}(3). For this reason, in many examples in the literature (and
below) one works with all of $\calo$, since all Verma modules lie in
$\calo$.
\end{remark}

In the rest of the paper, we work with Verma modules (and their
quotients) with highest weights in the set $\hofree$ -- these modules are
objects in $\calo$. The following result summarizes the basic properties
of Verma modules and their unique simple quotients.

\begin{prop}\label{Pverma}
Fix an RTA $A$ and a weight $\la \in \hofree$.
\begin{enumerate}
\item The relations $\geq$ in $\widehat{H_r}$ are partial orders when
restricted to $\widehat{H_r}^{free}$ for $r=0,1$. The map $\pi_{H_0}$ is
an order-preserving map when restricted to $\hofree$.

\item $M(\la)$ is an indecomposable object of $\calo$, generated by its
one-dimensional $\la$-weight space. All other weight spaces have weights
$\mu < \la$ with $\mu \in \hofree$.

\item Every proper submodule of $M(\la)$ is $H_1$-semisimple and has zero
$\la$-weight space.

\item $M(\la)$ has a unique maximal submodule $\Rad M(\lambda)$, and a
unique simple quotient $L(\la)$.

\item $M(\la)$ is the ``universal" cyclic module of highest weight $\la$.

\item If $v \in M(\la)_\mu$ is maximal, then $\mu \leq \la$ in
$\widehat{H}$, and $[M(\la) : L(\mu)] > 0$.

\item The simple objects in $\calo$ with at least one weight in $\hofree$
are precisely $L(\lambda)$ for some $\lambda \in \hofree$. All such
modules are pairwise non-isomorphic.
\end{enumerate}
\end{prop}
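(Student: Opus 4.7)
\smallskip

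The plan is to prove the seven assertions largely by adapting the classical arguments for semisimple Lie algebras to the RTA setting, using the structural results already established in Lemma \ref{Lfirst} and Proposition \ref{Pfirst}. I would first dispose of part (1). Reflexivity and transitivity of $\geq_0$ on $\hzfree$ are immediate from the monoid structure of $\calq^+_0$ (with $\id_{H_0}$ as identity). For antisymmetry, if $\mu_0 = \theta_0 * \mu'_0$ and $\mu'_0 = \theta'_0 * \mu_0$, then $\theta_0 \theta'_0 * \mu_0 = \mu_0$, so freeness of the $\tangle{\calq^+_0}$-action at $\mu_0$ forces $\theta_0 \theta'_0 = \id_{H_0}$; and since $\calq^+_0 \setminus \{\id_{H_0}\}$ is a semigroup, this product cannot lie outside $\{\id_{H_0}\}$ unless both factors equal $\id_{H_0}$, yielding $\mu_0 = \mu'_0$. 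The partial order on $\hofree$ then follows from that on $\hzfree$, using that $\pi_{H_0}$ is defined as restriction and that the commuting square from Lemma \ref{Lfirst}(1) makes $\pi_{H_0}$ order-preserving essentially by construction.

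For parts (2)--(5), I would invoke the triangular decomposition (RTA1) to show $M(\lambda) \cong B^- \otimes_\F \F v_\lambda$ as a vector space, which by Proposition \ref{Pfirst}(1) and Lemma \ref{Lfirst}(2) immediately yields the decomposition $M(\lambda) = \bigoplus_{\theta_1 \in \calq^+_1} B^-_{\theta_1^{-1}} v_\lambda$ into weight spaces $M(\lambda)_{\theta_1^{-1} * \lambda}$ of dimensions $\dim_\F B^+_{\theta_1}$, all finite. In particular $\dim_\F M(\lambda)_\lambda = 1$, and every other weight strictly dominates the form $\theta_1^{-1} * \lambda < \lambda$. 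I would then verify that each such weight lies in $\hofree$, using the orbit-invariance of freeness: the stabilizer of $\theta * \mu$ under the $\tangle{\calq^+_0}$-action is conjugate to that of $\mu$. The universal property (5) is a direct consequence of the triangular decomposition and the defining relations of $M(\lambda)$; (3) follows because $H_1$-semisimplicity passes to submodules (Proposition \ref{Pbasic}(1)) and any submodule meeting $M(\lambda)_\lambda$ nontrivially contains $v_\lambda$ (since the $\lambda$-weight space is one-dimensional) and therefore equals all of $M(\lambda)$; (4) follows by taking the sum of all proper submodules, which is still proper by (3); and indecomposability in (2) follows from the local structure of the endomorphism ring of $M(\lambda)$ forced by the one-dimensional generating weight space.

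Part (6) is a short consequence of (5): any cyclic submodule $A v \subset M(\lambda)$ generated by a maximal vector of weight $\mu$ is a quotient of $M(\mu)$, hence admits $L(\mu)$ as a subquotient, and $\mu \leq \lambda$ already by (2). The main obstacle is part (7), where I must extract a maximal vector from an arbitrary simple $L \in \calo$ that merely has \emph{some} weight $\mu$ in $\hofree$. Starting from $v \in L_\mu \setminus \{0\}$, I use that $B^+$ acts locally finitely, so $B^+ v$ is finite-dimensional with weight set $\{\theta * \mu : \theta \in S\}$ for some finite $S \subset \calq^+_1$ (this set is at most finite because weight spaces are finite-dimensional and $B^+ v$ is finite-dimensional). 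A finite poset always has maximal elements, so I may choose $\theta_{\max} \in S$ such that $\lambda := \theta_{\max} * \mu$ is maximal among these weights under the partial order from (1). Any nonzero $w \in (B^+ v)_\lambda$ then satisfies $N^+ w \subset B^+ v$ with weights strictly above $\lambda$, forcing $N^+ w = 0$; so $w$ is a maximal vector. Since $\lambda$ lies in the $\tangle{\calq^+_1}$-orbit of $\mu \in \hofree$, freeness of the action is preserved along the orbit (stabilizers are conjugate, hence still trivial), giving $\lambda \in \hofree$. By (5), $L$ is a quotient of $M(\lambda)$, and simplicity forces $L \cong L(\lambda)$. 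The pairwise non-isomorphism in (7) then follows because the highest weight $\lambda$ of $L(\lambda)$ is recovered as the unique weight in its support not strictly dominated by any other weight in $\hofree$, combined with the antisymmetry established in (1).
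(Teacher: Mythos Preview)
Your proposal is correct and follows essentially the same approach as the paper. The paper defers parts (1)--(6) to the standard arguments in \cite{MP,Kh2} without writing them out, and for part (7) it argues exactly as you do: pass to the finite-dimensional $H_1$-semisimple space $B^+ V_\mu$, pick a weight vector of maximal weight in the partial order, observe it is a maximal vector generating the simple module $V$, and conclude $V \cong L(\lambda)$ via the universal property and part (4).
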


\begin{proof}
Most of the proofs are similar to those in \cite{MP,Kh2}, and are hence
not included for brevity, except for the last part. In that part, fix a
simple module $V$ in $\calo$ with nonzero weight space $V_\mu$ for some
weight $\mu \in \hofree$. Then the vector space $B^+ V_\mu$ is
finite-dimensional and $H_1$-semisimple by the assumptions on $\calo$.
Thus it contains a weight vector $v_\la$ of maximal $H_1$-weight
$\lambda$ in the partial order $\geq_{H_1}$. Since $V$ is simple, it is
generated by $v_\lambda$, whence $V \cong L(\lambda)$ by part (4).
\end{proof}

\begin{remark}
It is also possible to introduce the \textit{Shapovalov form} $Sh : A
\times A \to H_1$ for a general RTA $A$, by defining: $Sh(x,y) := \xi(i(x)
y)$ for $x,y \in A$. One verifies that the Shapovalov form satisfies the
following properties for RTAs, which it satisfies for $A = U \lie{g}$ for
semisimple $\lie{g}$:
(a) The Shapovalov form is bilinear and symmetric.
(b) $Sh(A_{\theta_r}, A_{\theta'_r}) = 0$ unless $\theta_r = \theta'_r$.
(c) The Shapovalov form induces a symmetric bilinear form $Sh_\lambda$ on
every Verma module $M(\lambda)$ via: $Sh_\lambda(b_1 m_\lambda, b_2
m_\lambda) := \lambda(Sh(b_1, b_2))$ for $b_1, b_2 \in B^-$ and
$m_\lambda$ a nonzero highest weight vector in $M(\lambda)_\lambda$.
(d) If $\lambda \in \hofree$ then $\ker (Sh_\lambda) = \Rad
(M(\lambda))$.
(e) Given $\lambda \in \hofree$ and $\theta_1 \in \rt_{H_1}(B^-)$,
consider the restriction of the form $Sh(-,-)$ to the root space
$B^-_{\theta_1}$. Then if one applies $\lambda$ to each entry of the
matrix of this bilinear form (with respect to any fixed basis of
$B^-_{\theta_1}$), the resulting matrix has rank equal to $\dim
L(\lambda)_{\theta_1 * \lambda}$.
\end{remark}

\begin{remark}\label{Rnotions}
Various other notions from the theory of semisimple Lie algebras also
have analogues for general RTAs. For instance, the \textit{Kostant
partition function} has analogues $\calp_r : \calq^+_r \to \nn$ defined
via $\calp_r(\theta_r) := \dim_\F B^+_{\theta_r}$, for $r=0,1$.
Next, \textit{highest weight modules} $\mathbb{V}^\lambda$ are quotients
of Verma modules $M(\lambda)$; if $\lambda \in \hofree$ then
$\mathbb{V}^\lambda \in \calo$ since $\calo$ is closed under quotienting.

Now suppose $A$ is a strict, based RTA with a base of simple roots
$\Delta$ of smallest possible size. One can then define the
\textit{height} of a ``restricted root" $\theta_0 = \sum_{\theta \in
\Delta} n_\theta \theta \in \Z \Delta = \tangle{\calq^+_0} \subset
\Aut_{\F-alg}(H_0)$, to be $\hgt(\theta_0) := \sum_{\theta \in \Delta}
n_\theta$. Similarly, define \textit{parabolic/Levi regular triangular
subalgebras} as follows: for any subset $\Delta_0 \subset \Delta$, define
$B^\pm_{\Delta_0} := \bigoplus_{\theta_1 \in (\pi'_{H_0})^{-1}(\Z
\Delta_0)} B^\pm_{\theta_1}$, and
\[ \lie{P}^\pm_{\Delta_0} := B^\pm \otimes H_1 \otimes B^\mp_{\Delta_0}
\quad \supset \quad \lie{L}^\pm_{\Delta_0} := B^\pm_{\Delta_0} \otimes
H_1 \otimes B^\mp_{\Delta_0}, \]

\noindent or more precisely, (the subalgebras of $A$ generated by) their
images under the multiplication map. Thus one can study notions such as
\textit{parabolic/generalized Verma modules}, as well as analogues of
``parabolic" induction over based RTAs.
\end{remark}

\subsection{Duality and extensions}

We next construct a duality functor on finite length objects in $\calo$.
In light of Proposition \ref{Pverma} and Example \ref{EJZ}, henceforth we
only work with objects in $\calo$ whose weights lie in $\hofree$. The
following notation is required for this purpose.

\begin{definition}
Define $\calo[\hofree]$ and $\calo_\N$ to respectively be the full
subcategories of objects in $\calo$ whose weights lie in $\hofree$ and
which are of finite length. Also define $\calo_\N[\hofree] :=
\calo[\hofree] \cap \calo_\N$. Next, define the {\em formal character} of
$M \in \calo$ to be: $\ch M := \sum_{\lambda \in \widehat{H_1}} \dim
M_\lambda \cdot e^\lambda$, where $e^\lambda$ is a formal variable for
each $\lambda \in \widehat{H_1}$.
Finally, given an object $M$ in $\calo$, use the anti-involution $i : A
\to A$ to define its {\em restricted dual} $F(M) := \bigoplus_{\lambda
\in \wt M} M_\lambda^*$, with $A$-module structure given by: $(a m^*)(m)
:= m^*(i(a) m)$.
\end{definition}

\noindent In particular, it follows from Proposition \ref{Pverma} that
the simple objects in $\calo[\hofree]$ are parametrized by $\hofree$. As
discussed above, we work henceforth only in $\calo[\hofree]$; however,
the next result holds in all of $\calo$. The proof is as in the special
case when $A = U \lie{g}$ for semisimple $\lie{g}$; see \cite{MP,Kh2}.

\begin{prop}\label{Pdual}
$\calo_\N$ and $\calo_\N[\hofree]$ are abelian categories, and $F :
\calo_\N \to \calo_\N$ is an exact, contravariant duality functor that
sends each simple object $L(\lambda)$ for $\lambda \in \hofree$ to
itself. More generally, $F$ preserves the length and formal character of
all objects in $\calo_\N$ and $\calo_\N[\hofree]$ respectively.
\end{prop}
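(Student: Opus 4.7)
I would first check that $\calo$ is closed under subobjects, quotients, kernels, cokernels, and finite direct sums: Proposition \ref{Pbasic}(1) gives $H_1$-semisimplicity of subquotients, finite-dimensionality of weight spaces passes to subquotients, and local $B^+$-finiteness likewise descends, adapting the arguments of \cite{MP}. Restricting to the finite length condition yields the abelian structure on $\calo_\N$, and since $\wt$ of any subquotient of $M$ is contained in $\wt(M)$, the same works for $\calo_\N[\hofree]$.

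For $F$ itself, I would verify $A$-equivariance $(a_1 a_2)m^* = a_1(a_2 m^*)$ from the fact that $i$ is an anti-involution, then use $i|_{H_1} = \id_{H_1}$ to deduce $F(M)_\lambda = M_\lambda^*$, so that weight spaces match in (finite) dimension. Exactness is immediate from $H_1$-semisimplicity together with exactness of linear dualization on finite-dimensional spaces. The natural evaluation maps $M_\lambda \to (M_\lambda^*)^*$ assemble into an $A$-equivariant isomorphism $M \cong F(F(M))$ (equivariance via $i^2 = \id$), hence $F^2 \cong \id$.

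The central computation is $F(L(\lambda)) \cong L(\lambda)$ for $\lambda \in \hofree$. By Lemma \ref{Lfirst}(4), $i(B^+_{\theta_1}) \subset A_{\theta_1^{-1}}$, so a direct calculation shows $B^+_{\theta_1} \cdot F(M)_\mu \subset F(M)_{\theta_1 * \mu}$; that is, $B^+$ still raises weight in $F(M)$. Consequently any nonzero $\phi \in F(L(\lambda))_\lambda$ is annihilated by $N^+$, is maximal of weight $\lambda$, and generates a quotient of $M(\lambda)$ by Proposition \ref{Pverma}(5). Simplicity of $F(L(\lambda))$ is then forced by $F^2 \cong \id$ and exactness: a proper submodule $N$ would dualize to a nontrivial short exact sequence with middle term $L(\lambda) = F^2(L(\lambda))$, contradicting its simplicity. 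By Proposition \ref{Pverma}(7), $F(L(\lambda)) \cong L(\lambda)$. Induction on length, using exactness of $F$, then shows that the composition factors of $F(M)$ for $M \in \calo_\N[\hofree]$ are those of $M$ in reverse order, giving preservation of length and of the formal character (since $\dim F(M)_\lambda = \dim M_\lambda$); the analogous $F^2$-argument handles any simples outside $\hofree$, so that length is preserved on all of $\calo_\N$.

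The main technical obstacle, in my view, is ensuring that $F(M)$ actually lies in $\calo$ rather than in the larger ambient category of $H_1$-semisimple $A$-modules with finite-dimensional weight spaces: specifically, verifying finite generation and local $B^+$-finiteness. This is resolved by the simplicity step above, since both of these properties are preserved under finite extensions, and every object of $\calo_\N$ is, by definition, a finite iterated extension of simple objects that $F$ has already been shown to send into $\calo$.
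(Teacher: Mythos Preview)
Your proposal is correct and follows essentially the same approach as the paper, which simply defers to the standard arguments in \cite{MP,Kh2} for the semisimple Lie algebra case. Your explicit verification that $B^+_{\theta_1} \cdot F(M)_\mu \subset F(M)_{\theta_1 * \mu}$ via Lemma~\ref{Lfirst}(4), and your bootstrapping from simples to finite-length objects to establish that $F$ lands in $\calo_\N$, are exactly the ingredients one would extract from those references.

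One small remark: your last paragraph is slightly brisk for simples outside $\hofree$. The $F^2 \cong \id$ argument gives simplicity of $F(L)$ in the ambient category of $H_1$-semisimple modules with finite-dimensional weight spaces, but membership in $\calo$ (finite generation and local $B^+$-finiteness) still needs a word. For $L(\lambda)$ with $\lambda \in \hofree$ this is immediate since $F(L(\lambda)) \cong L(\lambda)$, and that suffices for $\calo_\N[\hofree]$; the paper itself does not spell out the general $\calo_\N$ case either, so your treatment is on par with what is being claimed.
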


The above results allow us to now consider \textit{extensions}. A key
result involves classifying all non-split objects in $\calo[\hofree]$ of
length two.

\begin{theorem}\label{Text}
Fix $\lambda, \lambda' \in \hofree$. Then $E(\lambda, \lambda') :=
\Ext^1_\calo(L(\lambda), L(\lambda'))$ is nonzero if and only if $\Rad
M(\lambda) \twoheadrightarrow L(\lambda')$, or $\Rad M(\lambda')
\twoheadrightarrow L(\lambda)$. Moreover, $F$ induces an isomorphism $ :
E(\lambda,\lambda') \leftrightarrow E(\lambda',\lambda)$. Finally,
$\Ext^1_\calo(M,N)$ is finite-dimensional for $M,N \in
\calo_\N[\hofree]$.
\end{theorem}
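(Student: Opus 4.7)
The plan is to prove the three assertions in order, with the iff characterization at the heart, and the duality and finiteness statements following by formal considerations and devissage.

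For the iff, I would start with a nonsplit short exact sequence $0 \to L(\lambda') \to E \to L(\lambda) \to 0$ representing a nonzero class, lift a highest weight vector of $L(\lambda)$ to a weight vector $\tilde v_\lambda \in E_\lambda$ (possible because $E \in \calo$ is $H_1$-semisimple), and examine $N^+ \tilde v_\lambda \subseteq L(\lambda')$; by Lemma \ref{Lfirst}(2) its weights are strictly greater than $\lambda$ in $\geq_1$. \textbf{Case (i):} If $N^+ \tilde v_\lambda = 0$, then $\tilde v_\lambda$ is maximal, and Proposition \ref{Pverma}(5) yields a morphism $\phi : M(\lambda) \to E$ sending the canonical generator to $\tilde v_\lambda$. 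The composition $M(\lambda) \to E \twoheadrightarrow L(\lambda)$ is the canonical surjection with kernel $\Rad M(\lambda)$, so $\im \phi + L(\lambda') = E$; were $\im \phi \cap L(\lambda') = 0$, the sequence would split, contradicting non-splitness. Hence $L(\lambda') \subseteq \im \phi$, forcing $\phi$ to be surjective; then $\phi^{-1}(L(\lambda')) = \Rad M(\lambda)$, and the restriction of $\phi$ yields the desired surjection $\Rad M(\lambda) \twoheadrightarrow L(\lambda')$. \textbf{Case (ii):} If $N^+ \tilde v_\lambda \neq 0$, then some weight of $L(\lambda')$ exceeds $\lambda$, forcing $\lambda < \lambda'$ in $\geq_1$. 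Applying the exact, contravariant functor $F$ of Proposition \ref{Pdual} (which fixes each simple) produces the nonsplit sequence $0 \to L(\lambda) \to F E \to L(\lambda') \to 0$; any lift in $(FE)_{\lambda'}$ of the generator of $L(\lambda')$ is automatically maximal since $\wt(FE) = \wt(E)$ is bounded above by $\lambda'$. Case (i) (with the roles of $\lambda$ and $\lambda'$ swapped) then yields $\Rad M(\lambda') \twoheadrightarrow L(\lambda)$. The converse is immediate: a surjection $\Rad M(\lambda) \twoheadrightarrow L(\lambda')$ with kernel $K$ produces $M(\lambda)/K$, a length-two highest weight module, hence indecomposable with unique simple quotient $L(\lambda)$, so the sequence $0 \to L(\lambda') \to M(\lambda)/K \to L(\lambda) \to 0$ is nonsplit.

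The duality assertion follows formally: since $F$ is an exact, contravariant, involutive self-equivalence of $\calo_\N$ fixing each simple, its action on Yoneda $1$-extension classes gives the isomorphism
\begin{equation*}
F_* : \Ext^1_\calo(L(\lambda), L(\lambda')) \;\xrightarrow{\sim}\; \Ext^1_\calo(L(\lambda'), L(\lambda)),
\end{equation*}
with two-sided inverse again induced by $F$.

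For finiteness of $\Ext^1_\calo(M,N)$ on $\calo_\N[\hofree]$, I would first dispatch the simple-by-simple case by applying $\hhh_\calo(-, L(\lambda'))$ to $0 \to \Rad M(\lambda) \to M(\lambda) \to L(\lambda) \to 0$, yielding the exact sequence
\begin{equation*}
\hhh_\calo(\Rad M(\lambda), L(\lambda')) \to \Ext^1_\calo(L(\lambda), L(\lambda')) \to \Ext^1_\calo(M(\lambda), L(\lambda')).
\end{equation*}
The leftmost Hom is finite-dimensional since any $A$-linear map to $L(\lambda')$ is determined by its restriction to the finite-dimensional weight space $(\Rad M(\lambda))_{\lambda'}$; the rightmost Ext can be bounded by running the same case-analysis on lifts of the generator of $M(\lambda)$ in any extension $0 \to L(\lambda') \to E' \to M(\lambda) \to 0$, the obstruction for splitting living in a finite-dimensional subquotient of $L(\lambda')$ at weight $\lambda$. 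Granted the simple-by-simple bound, induction on composition length of $M$ and $N$, via the standard long exact $\Ext$ sequences attached to peeling off a simple sub- or quotient module, extends finiteness to all of $\calo_\N[\hofree]$; closure of this category under sub- and quotient modules ensures the induction remains in scope. The main obstacle, I expect, is the Case (i)/Case (ii) dichotomy of the first step: one must confirm that duality in Case (ii) reduces cleanly to Case (i) without circularity, and that the partial order $\geq_1$ on $\hofree$ -- genuinely a partial order by Proposition \ref{Pverma}(1) -- correctly registers the strict inequality $\lambda < \lambda'$ forced by nonvanishing $N^+ \tilde v_\lambda$, a subtle point relying on the order behavior of $\pi_{H_0}$ in Definition \ref{Dposet}.
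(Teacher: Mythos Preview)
Your argument for the iff characterization and the duality isomorphism is correct and follows the same strategy as the paper: lift a highest weight vector, case-split on whether $N^+$ annihilates the lift, and invoke $F$ to reduce the second case to the first. The paper actually splits into three cases, treating $\pi_{H_0}(\lambda) = \pi_{H_0}(\lambda')$ separately and showing any such extension splits; your Case~(i) absorbs this, since the nonsplit hypothesis becomes vacuous there, so your two-case organization is if anything slightly cleaner.

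Where you diverge is the finiteness. The paper does not go through your long exact sequence; instead it extracts the simple-by-simple bound directly from the structural analysis in its second case. Once one knows the nonsplit extension is $E \cong M(\lambda)/V$, the extension class is determined by an element $b_- \in B^-$ of $H_0$-root $\theta_0$ (the unique $\theta_0 \in -\calq^+_0$ with $\theta_0 * \pi_{H_0}(\lambda) = \pi_{H_0}(\lambda')$) sending $m_\lambda$ to the image of $v_{\lambda'}$, yielding $\dim E(\lambda,\lambda') \leq \dim_\F B^-_{\theta_0} < \infty$ by (RTA2); duality transports this bound to the remaining case, and d\'evissage finishes. Your route can be completed, but the ``obstruction living in a finite-dimensional subquotient'' step for $\Ext^1_\calo(M(\lambda), L(\lambda'))$ is left imprecise, and you should also note that $\Rad M(\lambda)$ need not lie in $\calo$ (it may fail to be finitely generated), so the long exact sequence must be read via Yoneda Ext in the ambient module category rather than internally to $\calo$. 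The paper's direct extraction is shorter and avoids both points.
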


\begin{proof}
That $F$ is an isomorphism $: E(\lambda,\lambda') \to
E(\lambda',\lambda)$ follows from Proposition \ref{Pdual} (and standard
arguments), since $F$ is contravariant and exact. Now if $\Rad M(\lambda)
\twoheadrightarrow L(\lambda')$ with kernel $V$, then
\[ 0 \to L(\lambda') = (\Rad M(\lambda) / V) \to M(\lambda) / V \to
L(\lambda) \to 0, \]

\noindent and this is non-split, else $M(\lambda) \twoheadrightarrow
M(\lambda) / V \twoheadrightarrow L(\lambda')$, whence $\lambda =
\lambda'$ and $\dim M(\lambda)_\lambda \geq 2$, which is false.
Conversely, suppose $0 \to L(\lambda') \to M \to L(\lambda) \to 0$ is
nonsplit in $\calo$, and let $v_\lambda \in L(\lambda)_\lambda,
v_{\lambda'} \in L(\lambda')_{\lambda'}$ be nonzero highest weight
vectors in the first and third terms of the short exact sequence
respectively. Also fix any lift $m_\lambda \in M_\lambda$ of $v_\lambda$,
so that $N^+ m_\lambda \subset L(\lambda')$ (where $N^+$ is the
augmentation ideal in $B^+$). There are three cases:

First if $\pi_{H_0}(\lambda) = \pi_{H_0}(\lambda') =: \lambda_0$, say,
then by Proposition \ref{Pverma} $M_{\lambda_0}$ is a two-dimensional
$H_0$-weight space, spanned by $v_{\lambda'}$ and $m_\lambda$. Now $B^-
m_\lambda$ is a nonzero submodule of $M$, and it has trivial intersection
with the simple $A$-module $L(\lambda')$ since otherwise $v_{\lambda'}
\in B^- m_\lambda$. Therefore the short exact sequence splits, which is
impossible.

The second case is if $N^+ m_\lambda = 0$ and $\pi_{H_0}(\lambda) \neq
\pi_{H_0}(\lambda')$. Then $M(\lambda) \twoheadrightarrow B^- m_\lambda
\twoheadrightarrow L(\lambda)$, so if the extension is nonsplit then $B^-
m_\lambda$ is not simple and hence $L(\lambda') \subset B^- m_\lambda$.
But then $B^- m_\lambda$ has length 2, hence $M = B^- m_\lambda =
M(\lambda) / V$, say. It follows that $L(\lambda') = (\Rad M(\lambda)) /
V$, proving the assertion.
Furthermore, the nonsplit extension class is completely determined by
$\theta$ and $b_- \in B^-_\theta$ such that $\theta * \lambda = \lambda'$
and $b_- m_\lambda = v_{\lambda'}$. Thus using condition (RTA2),
\[ \dim \Ext^1_\calo(L(\lambda),L(\lambda')) \leq \dim
B^-_{\pi'_{H_0}(\theta)} < \infty. \]

Finally, suppose $0 \neq N^+ m_\lambda \subset L(\lambda')$, so that
$\lambda < \lambda'$. In this case we use the duality functor $F$ to
reduce to the previous case. This proves the first two assertions of the
theorem. The final assertion now follows by using Proposition \ref{Pdual}
and standard homological arguments in $\calo_\N[\hofree]$.
\end{proof}

\subsection{Blocks in $\calo$, Conditions (S), and main
results}\label{Sos}

We now describe the two main results in this paper, on Category $\calo$
over an arbitrary RTA. The results provide sufficient conditions under
which a large subcategory of $\calo$ -- in fact of $\calo[\hofree]$ --
acquires an increasing number of desirable homological properties. To
state and prove these results requires the following notation.

\begin{definition}\label{DcondS}
Suppose $A$ is an RTA.
\begin{enumerate}
\item For each weight $\la \in \hofree$, define the following four sets:
\begin{itemize}
\item $S^4(\la) := \{ \mu \in \widehat{H_1} : \chi_\mu \equiv \chi_\la
\mbox{ on } Z(A) \}$ (where $\chi_\la$ denotes the central character
defined in Proposition \ref{Pbasic}).

\item $S^3(\la)$ is the equivalence closure of $\{ \la \}$ in
$\widehat{H_1}$, under the relation:
\[ \mu \to \la \text{ if and only if } L(\mu) \text{ is a subquotient of
} M(\la). \]

\item $S^2(\la) := \{ \pi_{H_0}(\mu) : \mu \in S^3(\la) \}$.

\item $S^1(\la) := \{ \pi_{H_0}(\mu) : \mu \in S^3(\la), \mu \leq \la
\}$.
\end{itemize}

\item For $1 \leq m \leq 4$, define
\begin{equation}
S^m(A) := \{ \lambda \in \hofree : S^m(\lambda) \text{ is finite} \}
\subset \hofree.
\end{equation}

\noindent (Note that $S^1(A), S^2(A) \subset \hofree$ although
$S^1(\lambda), S^2(\lambda) \subset \widehat{H_0}$.) We say that the
algebra $A$ satisfies {\em Condition (S1), (S2), (S3)}, or {\em (S4)} if
the corresponding set $S^m(A)$ equals $\hofree$.

\item Given $T \subset \hofree$, define $\calo[T]$ to be the full
subcategory of $\calo$, such that every simple subquotient of every
object is of the form $L(\lambda)$ for some $\lambda \in T$. (This is
consistent with the definition of $\calo[\hofree]$.)
Now given $\lambda \in \hofree$, define the corresponding {\em block} of
$\calo$ to be $\calo[S^3(\lambda)]$.
\end{enumerate}
\end{definition}

The idea behind the conditions (S) is that each of them implies
increasingly desirable homological and representation-theoretic
properties for $\calo$. (For instance, the sets $S^4(\lambda)$ are
related to central characters, while $S^3(\lambda)$ are concerned with
linkage.) Thus, in some sense $\calo[S^m(A)]$ is \textit{the part of
Category $\calo$ that satisfies these (desirable) properties}. This is
made clearer in our ``first main result", Theorem \ref{Tfirst} below.
In later sections, we show that a large number of well-explored settings
in representation theory are all examples of RTAs, and explore whether or
not these algebras satisfy the various Conditions (S).
We are also motivated by settings such as \cite{FeFr}, in which it is
often the case that distinguished subcategories/sums of blocks in $\calo$
are shown to have desirable properties or a tractable analysis.

\begin{remark}
The $S$-sets should not be confused with the antipode map on $H$ in the
event that $H$ is a Hopf algebra. In fact we do not use the antipode in
the remainder of the paper, except in Proposition \ref{Pgcm-qg}.
\end{remark}

In order to state our main results, we need a further piece of notation.

\begin{definition}
Suppose $A$ is an RTA.
Define
\begin{equation}
\overline{S^2}(A) := \{ \lambda \in S^2(A) : \mu'_0 \leq \pi_{H_0}(\mu)
\leq \mu''_0 \text{ and } \mu'_0, \mu''_0 \in S^2(\lambda) \implies \mu
\in S^1(A) \}.
\end{equation}
%

\noindent Also say that an RTA is {\em discretely graded} if for all
$\theta_0 \in \calq^+_0$, the interval $[0, \theta_0]$ (in the partial
order $\leq$ on $\calq_0^+$) is finite.
\end{definition}

Note that $\overline{S^2}(A)$ is precisely the set of weights $\lambda
\in S^2(A)$ such that $\pi_{H_0}^{-1}([S^2(\lambda)]_\leq) \subset
S^1(A)$, where $[T]_\leq$ denotes the closure of $T \subset
\widehat{H_0}^{free}$ in the partial order induced by $\calq_0^+$.

\begin{remark}
The assumption of being discretely graded is weaker than most algebras
studied in the literature, which are moreover based with a finite set of
simple roots. In fact based RTAs with an \textit{infinite} base of simple
roots are also discretely graded. There are other examples of non-based
but discretely graded RTAs that arise from mathematical physics, such as
generalized Heisenberg algebras for discrete, totally ordered groups. See
Section \ref{Snonbased} for more details.
\end{remark}

We now discuss some results on the $S$-sets and the Conditions (S).
First, the nomenclature is inspired by the ``T"-properties of
separation/Hausdorffness in topology, in the following sense.

\begin{lemma}\label{Lcs}
$S^3(\la) \subset S^4(\la) \cap (\tangle{\calq^+_1} * \lambda)$ for all
$\la \in \widehat{H}$, so $S^4(A) \subset S^3(A) \subset S^2(A) \subset
S^1(A)$. Therefore the following implications hold among the Conditions
(S): $(S4) \Rightarrow (S3) \Rightarrow (S2) \Rightarrow (S1)$.

Moreover, if $S^2(A) = \hofree$ then $\overline{S^2}(A) = \hofree$ as
well.
\end{lemma}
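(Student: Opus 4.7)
The plan is to prove the single-weight containment $S^3(\lambda)\subset S^4(\lambda)\cap(\tangle{\calq^+_1}*\lambda)$ first, deduce the set-level chain $S^4(A)\subset S^3(A)\subset S^2(A)\subset S^1(A)$ from it, and finally handle the statement about $\overline{S^2}(A)$. The key observation throughout is that $S^3(\lambda)$ is an \emph{equivalence closure}, so containing it in another set requires only that (i) the generating one-step relation $\mu\to\lambda$ (i.e.\ $L(\mu)$ a subquotient of $M(\lambda)$) lands in that set, and (ii) the target set is already closed under the equivalence it induces, thereby bypassing the symmetrization and transitivization.

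To prove containment in $S^4(\lambda)$, I would argue as follows. If $L(\mu)$ is a subquotient of $M(\lambda)$, then Proposition \ref{Pbasic}(3) applied to $M(\lambda)$ shows $Z(A)$ acts on $L(\mu)$ by $\chi_\lambda$; the same Proposition applied to $M(\mu)$ shows it acts on $L(\mu)$ by $\chi_\mu$, so $\chi_\mu=\chi_\lambda$. Since $S^4(\lambda)$ is by definition an entire equivalence class for "$\chi_{-}\equiv\chi_\lambda$", the equivalence closure $S^3(\lambda)$ sits inside it. For the $\tangle{\calq^+_1}*\lambda$ part, Proposition \ref{Pverma}(2) gives $\mu\leq\lambda$ in $\hofree$, so $\mu$ lies in the $\tangle{\calq^+_1}$-orbit of $\lambda$; this orbit is itself an equivalence class, so again contains the closure of $\{\lambda\}$.

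The three set-level inclusions are then immediate: $S^3(\lambda)\subset S^4(\lambda)$ gives $S^4(A)\subset S^3(A)$; the surjection $\pi_{H_0}:S^3(\lambda)\twoheadrightarrow S^2(\lambda)$ gives $S^3(A)\subset S^2(A)$; and $S^1(\lambda)\subset S^2(\lambda)$ from the definitions gives $S^2(A)\subset S^1(A)$. The implications $(S4)\Rightarrow(S3)\Rightarrow(S2)\Rightarrow(S1)$ follow by specializing to $S^m(A)=\hofree$.

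For the last claim, suppose $S^2(A)=\hofree$. Then the chain just proved yields $S^1(A)=\hofree$ as well, so every $\mu\in\hofree$ automatically lies in $S^1(A)$. Hence the implicative condition in the definition of $\overline{S^2}(A)$ is vacuously satisfied for every $\lambda\in\hofree=S^2(A)$, which gives $\overline{S^2}(A)=\hofree$. I do not anticipate any real obstacle; the only subtlety worth flagging is remembering to use that $S^4(\lambda)$ and $\tangle{\calq^+_1}*\lambda$ are already full equivalence classes, so that the equivalence closure in the definition of $S^3(\lambda)$ does not enlarge the containment beyond what the one-step relation already provides.
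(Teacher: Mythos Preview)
Your proposal is correct and follows the natural argument that the paper leaves implicit (the lemma is stated without proof). The key steps---using Proposition~\ref{Pbasic}(3) to show the one-step relation $\mu\to\lambda$ forces $\chi_\mu=\chi_\lambda$, noting that both $S^4(\lambda)$ and the orbit $\tangle{\calq^+_1}*\lambda$ are already equivalence classes so they absorb the closure defining $S^3(\lambda)$, and then reading off the chain of $S^m(A)$-inclusions from the obvious surjections/inclusions among the $S^m(\lambda)$---are exactly what the paper expects the reader to supply.
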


Additionally (like the separation properties), the $S^m$-sets/conditions
yield increasingly (in $m$) useful homological information about Category
$\calo$. The following is one of the two main results involving the
$S$-sets for a general regular triangular algebra.

\begin{utheorem}\label{Tfirst}
Suppose $A$ is a discretely graded (e.g.~based) RTA.
\begin{enumerate}
\item $\calo[S^1(A)]$ is finite length, and hence splits into a direct
sum of blocks $\calo[S^1(A) \cap S^3(\la)]$, each of which is abelian and
self-dual.

\item Suppose $\lambda \in \overline{S^2}(A)$.
Then the block $\calo[S^3(\la)]$ is abelian and self-dual with enough
projectives, each with a filtration whose subquotients are Verma modules.

\item Suppose $\lambda \in S^3(A) \cap \overline{S^2}(A)$.
Then the block $\calo[S^3(\la)]$ is equivalent to the category
$(\Mod$-$B_\la)^{fg}$ of finitely generated right modules over a
finite-dimensional $\F$-algebra $B_\la$. Moreover, $\calo[S^3(\la)]$ is a
highest weight category; equivalently, the algebra $B_\la$ is
quasi-hereditary.
\end{enumerate}

\noindent In particular, if $A$ satisfies condition $(Sm)$ for some $m$,
then the corresponding assertion above (numbered $\min(m,3)$) holds for
all of $\calo[\hofree]$. Thus if $(S3)$ holds, we obtain a block
decomposition
\[ \calo = \calo[\widehat{H_1} \setminus \hofree] \oplus
\bigoplus_{\lambda \in \hofree / S^3} \calo[S^3(\lambda)]. \]
\end{utheorem}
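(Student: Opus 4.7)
I would prove the three parts in order, using progressively stronger hypotheses; the common engine is a length bound for Verma modules derived from the regularity axiom (RTA2).

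\textbf{Part (1).} The heart of the matter is to show that for $\lambda \in S^1(A)$ the Verma module $M(\lambda)$ has finite length. By Proposition~\ref{Pverma}(6), any composition factor $L(\mu)$ of $M(\lambda)$ satisfies $\mu \leq \lambda$ and $\mu \in S^3(\lambda)$, so $\pi_{H_0}(\mu) \in S^1(\lambda)$. For fixed $\nu_0 \in S^1(\lambda)$, the total $\F$-dimension of the $H_0$-fiber of $M(\lambda)$ over $\nu_0$ equals $\dim_\F B^-_{\theta_0^{-1}} = \dim_\F B^+_{\theta_0} < \infty$, where $\theta_0 \in \calq^+_0$ satisfies $\theta_0 * \nu_0 = \pi_{H_0}(\lambda)$ (using Proposition~\ref{Pfirst}(1)); since $|S^1(\lambda)| < \infty$, summing gives a finite bound on the length of $M(\lambda)$. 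Next, any $M \in \calo[S^1(A)]$ is generated by finitely many maximal vectors (finite generation plus local $B^+$-finiteness produces a finite-dimensional $H_1$-semisimple $B^+$-submodule of generators, hence maximal vectors inside it), so $M$ is a quotient of $\bigoplus_i M(\mu_i)$; each $L(\mu_i)$ is a composition factor of $M$, whence $\mu_i \in S^1(A)$ and $M$ has finite length. The block decomposition into $\calo[S^1(A) \cap S^3(\lambda)]$ then follows from Jordan--H\"older together with Theorem~\ref{Text}, which shows that $\Ext^1_\calo$ vanishes between distinct $S^3$-classes. Each block is abelian because $\calo_\N[\hofree]$ is abelian (Proposition~\ref{Pdual}) and closed under subquotients and extensions; self-duality follows because $F$ preserves length and fixes every $L(\mu)$, so $F$ sends each block to itself.

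\textbf{Part (2).} Fix $\lambda \in \overline{S^2}(A)$, and for each $\mu \in S^3(\lambda)$ construct a projective cover $P(\mu)$ in $\calo[S^3(\lambda)]$ via truncation. For any $\nu_0 \in \widehat{H_0}^{free}$, let $\calo^{\geq \nu_0}[S^3(\lambda)]$ denote the full subcategory of modules whose weights $\mu'$ satisfy $\pi_{H_0}(\mu') \geq \nu_0$. The definition of $\overline{S^2}(A)$ guarantees that the weights appearing in such a truncation lie in $S^1(A)$, so by part (1) this truncation is finite length; hence standard finite-length homological algebra produces a projective cover $P^{\geq \nu_0}(\mu)$ carrying a Verma filtration built by successively extending $M(\mu)$ upward along classes in $\Ext^1_\calo(M(\mu), M(\nu))$ (which vanish unless $\pi_{H_0}(\nu) > \pi_{H_0}(\mu)$). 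The discrete-grading hypothesis ensures that for every weight the sequence $\{P^{\geq \nu_0}(\mu)\}$ stabilizes as $\nu_0$ decreases along $\calq_0^+$, so one defines $P(\mu)$ as the resulting colimit; its $H_1$-weight spaces are finite-dimensional by stabilization, placing $P(\mu)$ in $\calo$, and its projectivity in the block is inherited from the truncations. The Verma filtration is obtained by ordering by height.

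\textbf{Part (3) and the final decomposition.} When $\lambda \in S^3(A) \cap \overline{S^2}(A)$, the set $S^3(\lambda)$ is finite and part (2) furnishes a finite projective generator $P := \bigoplus_{\mu \in S^3(\lambda)} P(\mu)$; then $B_\lambda := \End_\calo(P)^{\mathrm{op}}$ is finite-dimensional (each $\Hom(P(\nu), P(\mu))$ is finite-dimensional since $P(\mu) \in \calo$), and $\Hom_\calo(P, -)$ is the desired equivalence with $(\Mod\text{-}B_\lambda)^{fg}$. The highest weight structure comes from designating the Verma modules as standard objects; the existence of the Verma filtration on each $P(\mu)$, together with the self-duality of the block (giving costandard objects) and the usual Ext-vanishing between standards and costandards, verifies that $B_\lambda$ is quasi-hereditary and that BGG Reciprocity holds. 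The concluding block decomposition under (S3) follows because maximal vectors with weight in $\hofree$ generate finite-length modules by part (1), so the submodule generated by weight spaces in $\hofree$ splits off from $M$, and finite length within this submodule forces the direct sum over $S^3$-classes.

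\textbf{Main obstacle.} I expect the key difficulty to be in part (2): producing a well-defined limit $P(\mu)$ whose weight spaces remain finite-dimensional, i.e., showing that for each fixed weight the truncated projective covers $P^{\geq \nu_0}(\mu)$ stop growing as $\nu_0$ moves further down $\calq_0^+$. This is precisely where the condition $\overline{S^2}(A)$ must be used in full, since without it there is no guarantee that the iterated extensions by higher Verma modules accumulate only finitely often in any given weight space.
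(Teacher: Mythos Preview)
Your Parts~(1) and~(3) align with the paper's argument: bound $l(M(\lambda))$ by $\sum_{\mu_0 \in S^1(\lambda)} \dim B^-_{\theta_{\mu_0}}$, pass to arbitrary objects via a highest weight filtration (Proposition~\ref{Pproj}(4)), obtain the block decomposition from Theorem~\ref{Text}, and in~(3) form a projective generator $P$ and take $B_\lambda = \End_\calo(P)$.

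Part~(2) is where you diverge, and where there is a genuine gap. The paper does \emph{not} build projectives by truncation and colimit. It instead uses the explicit modules
\[
P(\mu, \Theta_0+) := A \big/ \bigl(A \cdot B_{\Theta_0+} + A \cdot \ker\mu\bigr)
\]
of Proposition~\ref{Pproj}: each already lies in $\calo$, carries a \emph{finite} standard filtration with subquotients $M(\theta * \mu)$ for $\theta$ ranging over the finite multiset $\rt_{H_1}(B^+/B_{\Theta_0+})$, and represents $M \mapsto M_\mu$ on the subcategory $\calo(\mu,\Theta_0+)$. For $\lambda \in \overline{S^2}(A)$ one chooses $\Theta_\mu$ large enough that $\calo[S^3(\lambda)] \subset \calo(\mu,\Theta_\mu+)$ and that $P(\mu,\Theta_\mu+) \in \calo[S^1(A)]$; its block summand in $\calo[S^3(\lambda)]$ is then projective there and surjects onto $L(\mu)$, and a Fitting's-Lemma argument identifies the indecomposable projectives as summands of such $P(\mu,\Theta_\mu+)$, whence they inherit standard filtrations by Proposition~\ref{Pproj}(5).

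Your truncation scheme, by contrast, asserts that ``standard finite-length homological algebra produces a projective cover $P^{\geq \nu_0}(\mu)$'', but finite length alone does not furnish enough projectives; and your stabilization claim leans on discrete grading, which only says that intervals $[0,\theta_0] \subset \calq_0^+$ are finite and does not bound the $H_1$-fiber over a fixed $H_0$-weight. When $\lambda \in \overline{S^2}(A) \setminus S^3(A)$ the set $S^3(\lambda)$ can be infinite, so even in a truncation there may be infinitely many simples above $\mu$, and your iterated-extension construction of $P^{\geq \nu_0}(\mu)$ has no a priori termination bound. You correctly flag this as the main obstacle, but do not resolve it; the paper's explicit $P(\mu,\Theta_\mu+)$ resolves it by exhibiting a finite Verma filtration from the start, with the finiteness coming directly from the regularity condition $\dim_\F B^+_{\theta_0} < \infty$ in (RTA2).
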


\begin{remark}
(For the definition of a highest weight category, see \cite{CPS1}.) Thus,
if $A$ satisfies (S3), then Theorem \ref{Tfirst} implies that each block
$\calo[S^3(\lambda)]$ has enough projectives (each filtered with Verma
subquotients), finite cohomological dimension, {\em tilting modules}
(i.e., modules simultaneously filtered in $\calo$ by standard as well as
costandard subquotients -- see \cite{Rin,Don}), and the property of {\em
BGG reciprocity}. These properties then transfer to all of
$\calo[\hofree]$.
Thus, Theorem \ref{Tfirst} implies that the algebras $B_\la$ are {\em BGG
algebras} (see \cite{Irv}). We do not discuss these results in great
detail as they are homological properties valid in all highest weight
categories; however, some of these results are stated in Theorem
\ref{Thwc} to give the reader a flavor of highest weight categories. We
also refer the interested reader to the comprehensive program developed
by Cline, Parshall, and Scott for more on such categories.
\end{remark}

We state our second main result about Category $\calo$ and the Conditions
(S) over regular triangular algebras: these constructions are all
\textit{functorial}.

\begin{utheorem}\label{Tfunct}
Suppose $A_j = B^-_j \otimes H_{1j} \otimes B^+_j$ (with $H_{1j} \supset
H_{0j}$) is a (Hopf) RTA for $1 \leq j \leq n$.
\begin{enumerate}
\item Then so is $A := \otimes_{j=1}^n A_j$. Moreover, $A$ is strict
and/or based (and discretely graded), if and only if so is $A_j$ for all
$j$.

\item A module $V \in \calo[\hofree]$ is simple if and only if $V =
\otimes_{j=1}^n V_j$, with $V_j$ simple (and unique up to isomorphism) in
$\calo[\hofree_j]$ for all $j$.

\item Each of the Conditions (S) holds for $A$ if and only if it holds
for all $A_j$. More generally, for $1 \leq m \leq 4$,
\begin{equation}
S^m_A((\la_1, \dots, \la_n)) = \times_{j=1}^n S^m_{A_j}(\la_j), \qquad
S^m(A) = \times_{j=1}^n S^m(A_j), \qquad
\forall 1 \leq j \leq n,\ \lambda_j \in \hofree_j
\end{equation}

\noindent as subsets of $\widehat{H_r} = \times_{j=1}^n \widehat{H_{rj}}$
for suitable $r=0,1$. Furthermore, $\overline{S^2}(A) = \times_{j=1}^n
\overline{S^2}(A_j)$.

\item Complete reducibility for finite-dimensional modules holds in
$\calo[\hofree]$ if and only if it holds in $\calo[\hofree_j]$ for all $1
\leq j \leq n$.
\end{enumerate}
\end{utheorem}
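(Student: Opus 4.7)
The plan is to establish all four parts componentwise, exploiting the fact that every piece of the RTA data tensors cleanly across factors.

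For part (1), I would take $B^\pm := \bigotimes_j B^\pm_j$, $H_r := \bigotimes_j H_{rj}$ for $r=0,1$, $i := \bigotimes_j i_j$, and for $\calq^+_0$ the image of $\prod_j \calq^+_{0,j}$ in $\Aut_{\F-alg}(\bigotimes_j H_{0j})$ under $(\theta_j)_j \mapsto \theta_1 \otimes \cdots \otimes \theta_n$. Axiom (RTA1) is just associativity of tensor products; (RTA3) is immediate since $i^2 = \bigotimes i_j^2 = \id$ and the image condition factors; (RTA2) reduces, via the identification $B^+_{(\theta_j)_j} = \bigotimes_j B^+_{j,\theta_j}$, to checking the semigroup property of $\calq^+_0 \setminus \{\id\}$. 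The latter follows because each $\calq^+_{0,j} \setminus \{\id\}$ is a semigroup and $\theta_j \neq \id$ in $\calq^+_{0,j}$ forces $\theta_j^{-1} \notin \calq^+_{0,j}$. Strictness ($H_1 = H_0$), basedness (take $\Delta = \sqcup_j \Delta_j$ interpreted componentwise), and the discretely-graded condition (intervals $[0,\theta_0]$ factor into products of intervals in each $\calq^+_{0,j}$) transfer back and forth between $A$ and the $A_j$. When every $A_j$ is Hopf, the tensor product Hopf structure on $\bigotimes H_{rj}$ together with $\widehat{\bigotimes H_{rj}} \cong \prod \widehat{H_{rj}}$ gives the HRTA version via Theorem \ref{Thrtarta}.

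For part (2), I would show directly that $L_A(\lambda) := \bigotimes_j L_{A_j}(\lambda_j)$ is a simple $A$-module with 1-dimensional $\lambda$-weight space: any nonzero submodule contains a highest weight vector by Proposition \ref{Pverma}(6), hence by a standard density/Jacobson argument using $\End_{A_j}(L_{A_j}(\lambda_j)) = \F$ (the endomorphism restricts to a scalar on the 1-dimensional top weight space), the tensor product is simple over $\bigotimes A_j$. Conversely, a simple $V \in \calo[\hofree]$ is a highest weight module $L_A(\mu)$ for some $\mu = (\mu_j) \in \hofree_A = \prod \hofree_j$ by Proposition \ref{Pverma}(7), and then $L_A(\mu) \cong \bigotimes L_{A_j}(\mu_j)$ by comparing highest weight vectors.

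For part (3), the key observation is $M_A(\lambda) = \bigotimes_j M_{A_j}(\lambda_j)$, which follows by the universal property since each side is the universal highest weight module of weight $\lambda$. A filtration of $M_A(\lambda)$ by tensor products of filtrations of the $M_{A_j}(\lambda_j)$ combined with part (2) identifies its composition factors with tuples of composition factors of the $M_{A_j}(\lambda_j)$, yielding $S^3_A(\lambda) = \prod_j S^3_{A_j}(\lambda_j)$. Since $\pi_{H_0}$ and the partial order $\leq$ factor componentwise, the identities for $S^2_A, S^1_A$, and $\overline{S^2}(A)$ follow. For $S^4_A$, I would first prove the elementary but necessary lemma $Z(\bigotimes A_j) = \bigotimes Z(A_j)$ via the usual basis-expansion argument (writing a central element in a basis for one factor and iterating). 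Then the Harish-Chandra projection decomposes as $\xi_A = \bigotimes_j \xi_j$ on $A = \bigotimes A_j$ (the cross terms in the decomposition $A_j = H_{1j} \oplus (N^-_j A_j + A_j N^+_j)$ manifestly land in $N^-_A A + A N^+_A$). Hence $\chi^A_\lambda = \prod \chi^{A_j}_{\lambda_j}$ on $Z(A) = \bigotimes Z(A_j)$; testing equality on elementary tensors $z_j \otimes \prod_{k\neq j} 1$ supplies both directions of $S^4_A(\lambda) = \prod_j S^4_{A_j}(\lambda_j)$.

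For part (4), I would use a K\"unneth-type identity for $\Ext^1$ between simples in $\calo[\hofree]$: combining part (2) with the fact that $\hhh_A(L_A(\lambda), L_A(\mu)) = \F$ or $0$ according to whether $\lambda = \mu$, one obtains
\[ \Ext^1_{\calo_A}(L_A(\lambda), L_A(\mu)) \cong \bigoplus_j \Ext^1_{\calo_{A_j}}(L_{A_j}(\lambda_j), L_{A_j}(\mu_j)) \otimes \prod_{k \neq j} \hhh_{\calo_{A_k}}(L_{A_k}(\lambda_k), L_{A_k}(\mu_k)). \]
Then vanishing of all such $\Ext^1_{\calo_A}$ is equivalent to the vanishing of all $\Ext^1_{\calo_{A_j}}$ (using finite length to produce a non-split extension in $A$ from one in any $A_j$ by tensoring with $L_{A_k}(\lambda_k)$ for $k \neq j$). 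Since complete reducibility on finite length modules is equivalent to the vanishing of these $\Ext^1$ groups, (4) follows. The main obstacle I anticipate is part (3), specifically the $S^4$ statement: it relies on the separate lemma $Z(\bigotimes A_j) = \bigotimes Z(A_j)$ and on verifying that $\xi_A$ factors correctly, while the K\"unneth formula in (4) must be derived in a way compatible with the subtle structure of $\calo[\hofree]$ (as opposed to the ambient module category).
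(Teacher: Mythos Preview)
Your approach for parts (1)--(3) is essentially the paper's: the same tensor-product data, the identification $\hofree = \prod_j \hofree_j$, $M_A(\lambda) = \bigotimes_j M_{A_j}(\lambda_j)$, and the lemma $Z(A) = \bigotimes_j Z(A_j)$ for $S^4$. One step you gloss over in part (3) and that the paper treats explicitly: $S^3(\lambda)$ is the \emph{equivalence closure} of the subquotient relation, not just the set of composition factors of $M(\lambda)$. Knowing that subquotients of $M_A(\lambda)$ are tensor products of subquotients of the $M_{A_j}(\lambda_j)$ handles only the one-step relation $\mu \to \lambda$; to show $\prod_j S^3_{A_j}(\lambda_j) \subset S^3_A(\lambda)$ when the chains in each factor have different lengths and directions, the paper inserts the intermediate module $\bigotimes_j L_j(\min(\lambda_j,\mu_j))$, which is a common subquotient of both $M(\lambda)$ and $M(\mu)$ whenever, for each $j$, one of $[M(\lambda_j):L(\mu_j)]$ or $[M(\mu_j):L(\lambda_j)]$ is nonzero. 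For the reverse inclusion the paper restricts an $A$-subquotient to a single $A_j$ (as a direct sum of copies of $M_j(\lambda_j)$) and invokes Lemma~\ref{Lsummand}, that a simple subquotient of a direct sum is already a subquotient of some summand. Your sketch can certainly be completed along these lines, but the filtration sentence as written does not yet close this gap.

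For part (4) the paper simply defers to \cite[Theorem 15.2]{Kh2}. The argument there (visible in the commented-out passage) is more hands-on than your K\"unneth route: it tensors a non-split length-two extension in $\calo_{A_j}$ by fixed finite-dimensional simples $L_k(\mu_k)$ in the other factors and checks directly, via highest weight vectors, that the result is a non-split highest weight module in $\calo_A$; the converse direction restricts to $A_j$ and again uses Lemma~\ref{Lsummand}. Your $\Ext^1$ formula is cleaner conceptually, but as you already flag, establishing K\"unneth \emph{within} $\calo[\hofree]$ (rather than in the ambient module category) is exactly where the work lies, and Theorem~\ref{Text} is what makes it tractable.
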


In other words, it is possible to relate all of these notions for a
tensor product $A = \otimes_{j=1}^n A_j$ of commuting RTAs $A_i$, with
their counterparts for the individual tensor factors $A_i$. This is akin
to (and more general than) relating representations of semisimple Lie
algebras with those of the individual simple ideals.
In fact, Theorem \ref{Tfunct} provides a useful approach to take in
studying Category $\calo$ over newly introduced and studied classes of
RTAs. For example, this was the approach adopted in \cite{Zhi}, where
Zhixiang showed the algebra of interest to be a strict, based Hopf RTA of
rank one that satisfies Condition (S4). See Example \ref{Ezhi} for more
details.

\subsection{Proofs of main results}\label{Sproofs}

The remainder of this section is devoted to proving Theorems \ref{Tfirst}
and \ref{Tfunct}. We will sketch those arguments which are along the
lines of similar results in \cite{MP,Kh2}; but we will spell out the
details when illustrating how the more general structure of a (discretely
graded) regular triangular algebra is used.

We begin with results in the spirit of the original paper \cite{BGG1},
which help explicitly construct projective modules in Category $\calo$
over discretely graded RTAs.
 To do so, we introduce the following notation.

\begin{definition}
Suppose $A$ is an RTA. Given a subset $\Theta_0 \subset \calq_0^+$ and
$\lambda \in \widehat{H_1}$, define
\begin{equation}
B_{\Theta_0 +} := \sum_{\theta \in \calq_0^+, \ \theta \not\leq \theta_0\
\forall \theta_0 \in \Theta_0} B^+_\theta, \qquad
P(\lambda,\Theta_0) := A / (A \cdot B_{\Theta_0 +}+ A \cdot \ker
\lambda).
\end{equation}

\noindent Also define $\calo(\lambda, \Theta_0 +)$ to be the full
subcategory of $\calo$ consisting of the objects $M$ for which
$B_{\Theta_0 +} M_\lambda = 0$.
Finally, an $A$-module $M$ is said to have a {\em standard filtration}
(respectively, a {\em highest weight filtration}) if $M$ has a finite
descending chain of $A$-submodules such that the successive quotients are
Verma modules (respectively, quotients of Verma modules).
\end{definition}

\noindent Note that if $\Theta_0$ is finite, then $B_{\Theta_0 +} =
B_{\max(\Theta_0) +}$, where $\max(\Theta_0)$ denotes the $\leq$-maximal
elements of $\Theta_0$.
We now list some properties of the modules $P(\lambda, \Theta_0 +)$ that
are used to prove Theorem \ref{Tfirst}.

\begin{prop}\label{Pproj}
Suppose $A$ is a discretely graded RTA, $\Theta_0 \subset \calq_0^+$ is
finite, and $\lambda \in \hofree$.
\begin{enumerate}
\item The subspace $B_{\Theta_0 +}$ is a left ideal in $B^+$ of finite
codimension. Moreover, $B_{\{ \id_{H_0} \}, +} = N^+$ and $P(\lambda,\{
\id_{H_0} \} +) = M(\lambda)$.

\item $P(\lambda, \Theta_0 +)$ is an object of $\calo(\lambda, \Theta_0
+) \subset \calo$. Moreover, $\hhh_\calo(P(\lambda, \Theta_0 +), M) =
\dim M_\lambda$ for all objects $M$ in $\calo(\lambda, \Theta_0 +)$.

\item $P(\lambda, \Theta_0 +)$ has a standard filtration in $\calo$, and
surjects onto $M(\lambda)$. If $\rt_{H_1}(B^+ / B_{\Theta_0 +}) =
S_{\Theta_0}$ as multisets, then the multiset of Verma subquotients of
$P(\lambda, \Theta_0 +)$ equals $\{ M(\theta * \lambda) : \theta \in
S_{\Theta_0} \}$.

\item An $H_1$-semisimple module $M$ is in $\calo[\hofree]$ if and only
if $M$ is a quotient of a finite direct sum of modules of the form
$P(\lambda, \Theta_0 +)$ for $\lambda \in \hofree$, if and only if $M$
has a highest weight filtration with highest weights in $\hofree$.

\item Given objects $M_1, M_2 \in \calo[\hofree]$, $M_1 \oplus M_2$ has a
standard filtration if and only if each of $M_1$ and $M_2$ has a standard
filtration.
\end{enumerate}
\end{prop}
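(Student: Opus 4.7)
The plan is to establish parts (1)--(5) in order, each building on the previous; the approach closely parallels the development of Category $\calo$ in the classical semisimple case (cf.~\cite{MP, Kh2}). The crucial new ingredient is the discrete gradedness assumption, which makes $B^+/B_{\Theta_0+}$ finite-dimensional and thereby allows $P(\lambda,\Theta_0+)$ to behave like a ``relative Verma module'' equipped with a standard filtration.

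For part (1), I would first observe that $B^+_{\theta'} \cdot B^+_\theta \subset B^+_{\theta' \circ \theta}$ (multiplication in $A$ corresponds to composition in $\Aut_{\F-alg}(H_0)$) and then use the monoid cancellation in $\calq_0^+$: if $\theta' \circ \theta \leq \theta_0$, then $\theta \leq \theta_0$. Contrapositively, $B_{\Theta_0+}$ is a left ideal. Finite codimension follows because $B^+/B_{\Theta_0+} = \bigoplus_{\theta \in \bigcup_{\theta_0 \in \Theta_0}[0,\theta_0]} B^+_\theta$, where the indexing union is finite by discrete gradedness and each summand is finite-dimensional by the regularity clause of (RTA2); the identifications $B_{\{\id_{H_0}\}+} = N^+$ and $P(\lambda, \{\id_{H_0}\}+) = M(\lambda)$ are then immediate from the definitions. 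For part (2), (RTA1) together with the defining relations $B_{\Theta_0+} v_\lambda = 0$ and $(h - \lambda(h)) v_\lambda = 0$ gives a vector-space isomorphism $P(\lambda, \Theta_0+) \cong B^- \otimes_\F (B^+/B_{\Theta_0+})$; this exhibits $P$ as $H_1$-semisimple with finite-dimensional weight spaces (each is a finite product of contributions). Local finiteness of $B^+$ is checked by commuting $B^+$ through $B^-$ using (RTA1), reducing to the fact that $B^+ \cdot v_\lambda$ lies in the finite-dimensional image of $B^+/B_{\Theta_0+}$. The Hom formula is the universality argument: a map in $\hhh_\calo(P(\lambda,\Theta_0+), M)$ is determined by the image of $v_\lambda$, which can be any vector of $M_\lambda$ annihilated by $B_{\Theta_0+}$--i.e., any vector of $M_\lambda$ when $M$ lies in $\calo(\lambda, \Theta_0+)$.

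For part (3), choose an $H_1$-weight basis $b_1, \ldots, b_N$ of $B^+/B_{\Theta_0+}$ with $b_i$ of weight $\theta_i \in \calq_1^+$, ordered along a linear extension of $\leq_{\calq_1}$ so that $\theta_j > \theta_i$ implies $j > i$. Let $F_i$ be the $A$-submodule of $P(\lambda, \Theta_0+)$ generated by $\{b_j v_\lambda : j > i\}$, so $F_0 = P$ and $F_N = 0$. Each $F_{i-1}/F_i$ is cyclic on the image of $b_i v_\lambda$ (of weight $\theta_i * \lambda$) and is killed by $N^+$ modulo $F_i$, because $N^+ b_i$, reduced modulo $B_{\Theta_0+}$, is supported on indices $j > i$. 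Thus $F_{i-1}/F_i$ is a quotient of $M(\theta_i * \lambda)$, and the formal character identity
\[ \ch P(\lambda, \Theta_0+) \ = \ \ch(B^-) \cdot \sum_i e^{\theta_i * \lambda} \ = \ \sum_i \ch M(\theta_i * \lambda) \]
forces each quotient to be exactly $M(\theta_i * \lambda)$, identifying the multiset of Verma subquotients with $\{M(\theta * \lambda) : \theta \in S_{\Theta_0}\}$. For part (4), finite generation of $M \in \calo[\hofree]$ and local finiteness of $B^+$ make $T_i := \{\theta \in \calq_0^+ : B^+_\theta m_i \neq 0\}$ finite for each weight generator $m_i$ of weight $\lambda_i$; taking $\Theta_0(i) := T_i$ ensures $B_{\Theta_0(i)+} m_i = 0$, so part (2) produces the surjection $\bigoplus_i P(\lambda_i, \Theta_0(i)+) \twoheadrightarrow M$. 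The converse implications use part (3) (each $P(\lambda, \Theta_0+)$ has a standard, hence highest weight, filtration with highest weights in $\hofree$) together with closure of $\calo[\hofree]$ under quotients and extensions.

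Part (5) is the main obstacle, since the usual Ext-based criterion for having a standard filtration is not yet available at this point in the paper. The plan is to induct on the length $n$ of a standard filtration of $M = M_1 \oplus M_2$: pick a $\leq$-maximal Verma highest weight $\lambda$ among the subquotient labels (equivalently, a maximal weight of $M$), and locate a highest weight vector of weight $\lambda$ in $M_\lambda = (M_1)_\lambda \oplus (M_2)_\lambda$. By $\leq$-maximality, its components $v^{(1)}, v^{(2)}$ in each summand are separately highest weight vectors in the respective $M_j$. Using freeness of $M(\lambda)$ over $B^-$ (Proposition \ref{Pbasic}(2)) together with the indecomposability of $M(\lambda)$ (Proposition \ref{Pverma}(2)), I would argue that for some $j$ the $A$-submodule generated by $v^{(j)}$ is isomorphic to $M(\lambda)$ and lies inside $M_j$; the quotient $(M_j/M(\lambda)) \oplus M_{3-j}$ then inherits a standard filtration of length $n-1$, and the inductive hypothesis gives standard filtrations on each factor, whence $M_j$ acquires one by extending with $M(\lambda)$. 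The delicate point is verifying that the Verma sits entirely in one summand rather than diagonally, and that quotienting preserves the filtration at this stage; $\leq$-maximality of $\lambda$ is essential to rule out nontrivial kernels in the component projections $M(\lambda) \to M_j$.
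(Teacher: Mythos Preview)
Your approach matches the paper's own treatment: the paper omits the proof entirely and directs the reader to \cite{BGG1}, \cite[Appendix A]{Don}, and \cite{Kh2}, and your outline for parts (1)--(4) is precisely the standard argument from those sources, correctly adapted to the discretely graded RTA setting.

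Part (5) has the right inductive skeleton but the justifications you invoke are not quite the right ones. Two points need tightening. First, decomposing a generic highest weight vector as $v^{(1)}+v^{(2)}$ is a detour; simply choose $0\neq v$ in whichever $(M_j)_\lambda$ is nonzero. Then the reason $Av\cong M(\lambda)$ is \emph{not} indecomposability of $M(\lambda)$: rather, since $M$ has a standard filtration it is free over $B^-$ on a basis of weight vectors (lifts of the highest weight generators of the Verma subquotients), and by maximality of $\lambda$ the vector $v\in M_\lambda$ is a \emph{scalar} combination of those basis elements whose weight is exactly $\lambda$. Hence $b\mapsto bv$ is injective on $B^-$, so $Av=B^-v\cong M(\lambda)$. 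Second, the assertion that $M/Av$ ``inherits a standard filtration of length $n-1$'' is the crux and needs an argument. Rearrange the given filtration (using maximality of $\lambda$) so that all $k=[M:M(\lambda)]$ copies of $M(\lambda)$ occur at the bottom; the resulting $G_k$ then splits as $M(\lambda)^{\oplus k}$ (again by maximality: the highest weight vector of each successive quotient lifts to a maximal vector generating a complementary copy of $M(\lambda)$). Since $v\in (G_k)_\lambda$ is a nonzero scalar vector in $M(\lambda)^{\oplus k}$, a change of basis shows $G_k/Av\cong M(\lambda)^{\oplus(k-1)}$, and stacking the remaining layers of $M/G_k$ on top gives the desired standard filtration of $M/Av=(M_j/Av)\oplus M_{3-j}$. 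With these two fixes your induction goes through.
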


\noindent We omit the proof as the arguments in \cite{BGG1},
\cite[Appendix A]{Don}, and \cite{Kh2} can be suitably modified to work
for all discretely graded RTAs. Note that when the discretely graded RTA
is $U \lie{g}$ for semisimple $\lie{g}$, we set $H_1 = H_0 := \lie{h}^*$
and $\calq_0^+ := \nn \Delta = \nn \rho_{H_0}(\Delta')$ to lie in the
simple root lattice, and work with the modules $P(\lambda,l) :=
P(\lambda, \Theta_l +)$ for $l \in \nn$, where $\Theta_l := \{ \theta_0
\in \nn \Delta : \hgt(\theta_0) = l \}$, with $\hgt(\theta_0)$ defined in
Remark \ref{Rnotions}.
Indeed, this was the approach adopted in the seminal work \cite{BGG1} to
explicitly construct projective objects in blocks of $\calo$.

\comment{
Next, we show a technical result that computes dimensions of
$\Ext$-spaces inside blocks $\calo[S^3(\lambda)]$. The result is at the
heart of proving Theorem \ref{Tfirst}.

\begin{prop}\label{Pcostand}
Suppose $A$ is an RTA and $\lambda \in S^3(A)$. Assume
$\calo[S^3(\lambda)]$ is a finite length, abelian category (in fact this
is always true, and shown in Theorem \ref{Tfirst}(1)). Then $F(P)$ is
injective in $\calo[S^3(\lambda)]$ if and only if $P$ is projective
(where the duality functor $F$ was studied in Proposition \ref{Pdual}).
Now if $X,Y \in \calo[S^3(\lambda)]$ have standard filtrations, then
\begin{equation}
\dim_\F (\Ext^i_{\calo[S^3(\lambda)]}(X,F(Y))) =
\begin{cases}
\sum_{\mu \in S^3(\la)} [X : M(\mu)][Y : M(\mu)],  &\text{if $i=0$;}\\
0,                                                 &\text{otherwise.}
\end{cases}
\end{equation}

\noindent In particular, if $\Ext^i_{\calo[S^3(\lambda)]}(M(\mu),
F(M(\mu')))$ is nonzero, then $i=0$ and $\mu = \mu'$.
\end{prop}

\begin{proof}
At the outset we observe using Propositions \ref{Pbasic} and \ref{Pverma}
that all Verma modules $M(\lambda)$ are objects of $\calo[S^4(\lambda)]
\subset \calo[S^3(\lambda)]$, for all $\lambda \in \hofree$. Next, we
claim that $\calo[S^3(\lambda)]$ has enough projectives. To show the
claim, invoke Remark (3) following \cite[Theorem 3.2.1]{BGS}. Thus it
suffices to verify that the following five conditions are satisfied in
$\cala := \calo[S^3(\lambda)]$:
\begin{enumerate}
\item \textit{$\cala$ is finite length.} This holds because $\lambda \in
S^3(A) \subset S^1(A)$ by assumption.

\item \textit{There are only finitely many simple isoclasses in $\cala$.}
This is because $\lambda \in S^3(A)$, so $S^3(\lambda)$ is finite by
definition.

\item $\End_\calo(L(\lambda)) = \F$. Indeed this holds because $\dim_\F
L(\lambda)_\lambda = 1$.

Now set $\leq$ to be the partial order from Definition \ref{Dposet}
above. Then for $T \subset S^3(\lambda)$, the notation in \cite{BGS}
reconciles with ours as follows: $\cala_T = \calo[T]$.

\item Set $\Delta(\lambda) := M(\lambda), \nabla(\lambda) :=
F(M(\lambda))$. Observe by Proposition \ref{Pdual} that $F : \cala \to
\cala$ is a contravariant, idempotent isomorphism of categories.
Now suppose $T \subset S^3(\lambda)$ is \textit{closed} (i.e, $\mu \in T,
\mu' \leq \mu$ in $S^3(\lambda)$ implies $\mu' \in T$). If $\mu \in T$ is
maximal, then note that $\hhh_\cala(M(\mu),-)$ is an exact covariant
functor on $\cala_T = \calo[T]$, sending an object $N$ of $\cala_T$ to
$N_\mu$. Thus $M(\mu)$ is projective in $\cala_T$, with unique simple
quotient $L(\lambda)$.
Moreover, $\Rad(M(\lambda))$ is easily seen to be a superfluous/small
submodule of $M(\lambda)$, so \textit{$M(\mu) \twoheadrightarrow L(\mu)$
is the projective cover in $\cala_T$ for $\mu$ maximal in $T$}.

We also need to verify that \textit{$L(\mu) \hookrightarrow F(M(\mu))$ is
the injective hull in $\cala_T$ for $\mu$ maximal in $T$}.
But this follows immediately from Proposition \ref{Pdual} using standard
properties of the exact, contravariant functor $F$.

\item \textit{For $T \subset S^3(\lambda)$ closed and $\mu \in T$
maximal, $\ker (M(\mu) \twoheadrightarrow L(\mu))$ and ${\rm coker}
(L(\mu) \hookrightarrow F(M(\mu)))$ are objects in $\cala_{<\mu} :=
\calo[\{ \mu' \in S^3(\lambda) : \mu' < \mu \}]$.} This follows
immediately from Propositions \ref{Pverma}(2) and \ref{Pdual}.
\end{enumerate}

\noindent Then by Remark (3) following \cite[Theorem 3.2.1]{BGS}, it
follows that $\cala = \calo[S^3(\lambda)]$ has enough projectives. By the
same reasoning as in condition (4) above, $\cala$ also has enough
injectives, with projectives and injectives in bijection with each other
via the duality functor $F$.

It remains to prove Equation \eqref{Ecostand}. Since $\cala$ is abelian
by assumption, by using the long exact sequence of $\Ext_\cala$s we may
assume that $X = M(\mu), Y = M(\mu')$ for some $\mu,\mu' \in
S^3(\lambda)$. First consider $i=0$ and $\varphi : \hhh_\calo(M(\mu),
F(M(\mu')))$ is nonzero. Then the highest weight vector of $M(\mu)$ has
nonzero image in $F(M(\mu'))$, so $\mu \leq \mu'$ in $\hofree$. Moreover,
$\im \varphi$ contains the socle of $F(M(\mu'))$ which is $L(\mu')$,
whence $\mu' \leq \mu$. It follows that $\mu = \mu'$ and hence that
$\varphi$ is unique up to scalars. Thus \eqref{Ecostand} holds for $i=0$.

Next, to show that \eqref{Ecostand} holds for $i=1$, we first show the
following assertions for $N$ in $\calo[\hofree]$ and $\lambda \in
\hofree$: \textit{ If $N_\mu = 0$ for all $\mu > \lambda$, then
$\Ext^1_{\calo[\hofree]}(M(\lambda),N) = 0$.}
(See \cite[Lemma 16]{Gu}.) 
It follows that if $\Ext^1_{\calo[\hofree]}(M(\lambda),N) \neq 0$, then
$N$ has a composition factor $L(\lambda')$ with $\lambda' > \lambda$.

Now suppose $\Ext^1_{\calo[S^3(\lambda)]}(M(\mu), F(M(\mu'))) \neq 0$.
Apply the previous paragraph with $\mu = \lambda$ to conclude that there
exists $\lambda' \in \hofree$ with $\mu < \lambda' \leq \mu'$. Now dualize
the extension via $F$, so that $\Ext^1_{\calo[S^3(\lambda)]}(M(\mu'),
F(M(\mu))) \neq 0$. Once again by the previous paragraph, $\mu' <
\lambda'' \leq \mu$ for some $\lambda'' \in \hofree$. Thus $\mu < \mu' <
\mu$, which is impossible, and the $i=1$ case is proved. 

To prove the $i>1$ case of Equation \eqref{Ecostand}, an intermediate
result is required. Note from above in this proof that
$\calo[S^3(\lambda)]$ has enough projectives.  Let $P(\mu)$ denote the
projective cover of $L(\mu)$ in $\calo[S^3(\lambda)]$, for $\mu \in
S^3(\lambda)$. By an analysis similar to that above, $F(P(\mu))$ is the
injective hull of $L(\mu)$ in $\calo[S^3(\lambda)]$.

Now the surjection $M(\mu) \twoheadrightarrow L(\mu)$ extends to a unique
map $: P(\mu) \to M(\lambda) \twoheadrightarrow L(\mu)$. It follows that
$P(\mu)$ surjects onto $M(\mu)$; by applying the duality functor $F$, the
module $F(M(\mu))$ embeds into the injective hull $F(P(\mu))$ of
$L(\mu)$, for all $\mu \in S^3(\lambda)$. Thus we have the following
short exact sequence in $\calo[S^3(\lambda)]$:
\begin{equation}\label{Eses}
0 \to F(M(\mu)) \to F(P(\mu)) \to C(\mu) := F(P(\mu)) / F(M(\mu)) \to 0.
\end{equation}

We now prove the $i>1$ case of Equation \eqref{Ecostand} by induction on
$i \geq 0$. The $i=0,1$ cases are shown above; now suppose
$\Ext^{i-1}_{\calo[S^3(\lambda)]}(M(\mu), F(M(\mu'))) = 0$ for some
$\mu,\mu'$. Applying $\hhh_{\calo[S^3(\lambda)]}(M(\mu),-)$ to the short
exact sequence \eqref{Eses} yields the long exact sequence of
$\Ext_\calo$s
\begin{align*}
\cdots \longrightarrow &\ \Ext^{i-1}_{\calo[S^3(\lambda)]}(M(\mu),
F(P(\mu'))) \longrightarrow \Ext^{i-1}_{\calo[S^3(\lambda)]}(M(\mu),
C(\mu'))) \to\\
\to &\ \Ext^i_{\calo[S^3(\lambda)]}(M(\mu), F(P(\mu'))) \longrightarrow
\Ext^i_{\calo[S^3(\lambda)]}(M(\mu), F(P(\mu'))) \longrightarrow \cdots
\end{align*}

\noindent Note that the two outer terms are zero since $F(P(\mu'))$ is
injective from above. $\spadesuit$ This approach fails because we need to
also show that $P(\lambda)$ has a standard filtration... and this is not
easy!
\end{proof}
}

It is now possible to prove our first main theorem.

\begin{proof}[Proof of Theorem \ref{Tfirst}]
Along the way to showing the assertions, we prove some intermediate steps
that are useful facts in their own right. The first claim is that part
(1) already holds for $\calo_\N$, i.e.,
\textit{for all $T \subset \hofree$, $\calo_\N[T]$ has a block
decomposition}:
\begin{equation}\label{Efindecomp}
\calo_\N[T] = \bigoplus_{\lambda \in T / S^3} (\calo_\N[T] \cap
\calo[S^3(\lambda)]) = \bigoplus_{\lambda \in T / S^3} \calo_\N[T \cap
S^3(\lambda)],
\end{equation}

\noindent where we sum over distinct blocks, and where each summand is an
abelian, finite-length, and self-dual Serre subcategory of
$\calo_\N[T] \subset \calo_\N[\hofree]$.
Indeed, most of the claim follows by Proposition \ref{Pdual} and standard
arguments, once we show the direct sum decomposition for all finite
length objects in $\calo_\N[T]$. That there are no morphisms or
extensions between objects of distinct blocks follows from the same
statement for \textit{simple} objects of distinct blocks, by using
Theorem \ref{Text} and the long exact sequence of $\Ext_{\calo_\N}$s.

It remains to prove the direct sum decomposition of $\calo_\N[T]$ into
blocks. This is done by induction on the length $l$ of the object in
$\calo_\N[T]$. For $l=0,1,2$, the result is immediate or follows from
Theorem \ref{Text}. Now suppose the result holds for some object $N =
\bigoplus_{\lambda \in T / S^3} N[S^3(\lambda)]$, and we have $0 \to N
\to M \to L(\mu) \to 0$ for some $\mu \in T$. Now use the following
general fact that holds in any abelian category $\mathscr{C}$: if $0 \to
A \oplus B' \to C \to B'' \to 0$ and $\Ext^1_{\mathscr{C}}(B'',A) = 0$,
then the sequence $0 \to A \to C \to C/A \to 0$ splits, and we have $0
\to B' \to C/A \to B'' \to 0$. Write $N = N' \oplus N[S^3(\mu)]$, and set
$A := N', B' := N[S^3(\mu)], C := M, B'' := L(\mu)$.
Applying the above general fact yields $M = N' \oplus M[S^3(\mu)]$, where
$0 \to N[S^3(\mu)] \to M[S^3(\mu)] \to L(\mu) \to 0$; thus Equation
\eqref{Efindecomp} follows. We now prove the various parts of the
theorem.
\begin{enumerate}
\item Given $M \in \calo[S^1(A)]$, observe by Proposition \ref{Pproj}
that $M$ has a highest weight filtration. Moreover, the corresponding
highest weights $\lambda_1, \dots, \lambda_k$ can be shown to lie in
$S^1(A)$. Now for each $\mu_0 \in S^1(\lambda_j)$, by Proposition
\ref{Pverma} there exists a unique $\theta_{j,\mu_0} \in -\calq_0^+$ such
that $\theta_{j,\mu_0} * \pi_{H_0}(\lambda_j) = \mu_0$. Thus,
\[ l(M) \leq \sum_{j=1}^k l(M(\lambda_j)) \leq \sum_{j=1}^k \sum_{\mu_0
\in S^1(\lambda_j)} \dim_\F B^-_{\theta_{j,\mu_0}} < \infty, \]

\noindent since every simple subquotient of a Verma module $M(\lambda)$
is generated by (a lift of) its highest weight vector, whose $H_0$-weight
lies in $S^1(\lambda)$. It follows that $\calo[S^1(A)]$ is finite length.
Now use the above analysis (before this first part) to complete the
proof.


\item We first introduce some notation. Fix $\lambda \in S^2(A)$, with
$S^2(\lambda) = \{ \lambda_1, \dots, \lambda_k \}$. Given $\mu \in
\hofree$, define $\theta_j(\mu)$ to be the unique element of $\calq_0^+$
such that $\theta_j(\mu) * \pi_{H_0}(\mu) = \lambda_j$ if there exists
such a $\theta_j(\mu) \in \tangle{\calq^+_0}$, else set $\theta_j(\mu) :=
0 = \id_{H_0}$. Now define $\displaystyle \Theta_\mu := \bigcup_{j=1}^k
[\id_{H_0}, \theta_j(\mu)]_\leq$, where $\leq$ is the partial order
induced on $\hzfree$ by $\calq_0^+$.
(Alternatively, we may define $\Theta_\mu$ to be the set $\{
\theta_j(\mu) \}$, discounting repetitions.) Note that $\Theta_\mu$ is a
finite subset for all $\mu \in \hzfree$ since $\calq_0^+$ is discretely
graded.

We now prove the result. Suppose $\lambda \in \overline{S^2}(A)$ and $M
\in \calo[S^3(\lambda)]$. By Proposition \ref{Pproj}(4), $M$ is generated
by the lifts to $M$ of the highest weight vectors in each of its highest
weight module subquotients. Each of these highest weights $\mu_l$ lies in
$\pi_{H_0}^{-1}(S^2(\lambda))$; thus, $M$ is generated by its
$H_0$-weight spaces of weights $\lambda_j$ for $1 \leq j \leq k$.
It follows by Proposition \ref{Pproj} and the previous paragraph that
$P_M := \bigoplus_{l=1}^N P(\mu_l, \Theta_{\mu_l} +) \twoheadrightarrow
M$, where $\pi_{H_0}(\mu_l) \in \{ \lambda_1, \dots, \lambda_k \}\
\forall l$.
Now use Proposition \ref{Pproj}(4) as well as the definition of
$\overline{S^2}(A)$ to show that $\calo[S^3(\lambda)] \subset
\calo(\mu_l, \Theta_{\mu_l} +) \cap \calo[S^1(A)]$ for all $1 \leq l \leq
N$. Moreover, $P(\mu_l, \Theta_{\mu_l} +)$ is an object of
$\calo[S^1(A)]$ by Proposition \ref{Pproj}(3) and the definition of
$\overline{S^2}(A)$. Denote its summand in the block $\calo[S^1(A) \cap
S^3(\lambda)] = \calo[S^3(\lambda)]$ by $P_l$, say. Then
$\hhh_\calo(P_l,-)$ is exact in $\calo[S^3(\lambda)]$ by Proposition
\ref{Pproj}(2), whence $\oplus_l P_l$ is projective in
$\calo[S^3(\lambda)]$ and surjects onto $M$. This shows that the block
$\calo[S^3(\lambda)]$ has enough projectives.

It remains to show that each indecomposable projective $P$ in
$\calo[S^3(\lambda)]$ has a standard filtration. Since $P$ is finite
length, it has a simple quotient $L(\mu)$ for some $\mu \in
S^3(\lambda)$. Now $P(\mu, \Theta_\mu +) \twoheadrightarrow L(\mu)$ from
above, so its $\calo[S^3(\lambda)]$-summand $P_\lambda$ surjects onto
$L(\mu)$. By universality, this surjection factors through a nonzero map
$: P_\lambda \to P \twoheadrightarrow L(\mu)$. Now replace $P_\lambda$ by
some indecomposable (projective) summand $P' \in \calo[S^3(\lambda)]$ to
obtain nonzero maps $: (P' \leftrightarrow P) \twoheadrightarrow L(\mu)$.
Then standard arguments involving Fitting's Lemma show that $P, P'$ are
both isomorphic to the projective cover in $\calo[S^3(\lambda)]$ of
$L(\mu)$. Since $P'$ is a summand of $P(\mu, \Theta_\mu +)$ and $P \cong
P'$, it follows by Proposition \ref{Pproj}(5) that $P$ has a standard
filtration.


\item Suppose $\lambda \in S^3(A) \cap \overline{S^2}(A)$. 
First note by Proposition \ref{Pproj}(3) and the RTA axioms that in the
set of Verma subquotients in any standard filtration of $P(\mu, \Theta_0
+)$ (for any $\mu \in \hofree$ and finite subset $\Theta_0 \subset
\calq^+_0$), the multiplicity of $M(\mu)$ is always $1$, and any Verma
module with nonzero multiplicity is of the form $M(\theta * \mu)$ for
some $\theta \in \rt_{H_1}(B^+ / B_{\Theta_0 +})$. Hence the same applies
to the projective cover $P(\lambda)$ of $L(\lambda)$ in
$\calo[S^3(\lambda)]$, for all $\lambda \in S^3(A) \cap
\overline{S^2}(A)$.

Now continue the analysis in the previous part and recall that
$\calo[S^3(\lambda)]$ has only finitely many simple objects up to
isomorphism. Thus, standard category-theoretic and homological arguments
using Fitting's Lemma show that the set of indecomposable projectives in
$\calo[S^3(\lambda)]$ is precisely the set of projective covers $P(\mu)$
for $\mu \in S^3(\lambda)$ (and a dual statement holds for injective
hulls as well).
Now define $P := \bigoplus_{\mu \in S^3(\lambda)} P(\mu)^{\oplus n_\mu}$
for any choice of integers $n_\mu > 0$. Since $S^3(\lambda)$ is finite,
$P \in \calo[S^3(\lambda)]$ is a projective generator of the block
$\calo[S^3(\lambda)]$.
Moreover, if $B_\lambda := \End_\calo(P)$, then by standard computations
in \cite[Appendix A]{Don} or \cite{CPS1}, the functor $\hhh_\calo(P, -) =
\hhh_{\calo[S^3(\lambda)]}(P,-)$ is an equivalence from
$\calo[S^3(\lambda)]$ into the category of finitely generated right
$B_\lambda$-modules. That $B_\lambda$ is finite-dimensional follows from
a more general result:
\[ \dim_\F \hhh_\calo(P(\lambda),M) = [M : L(\lambda)], \qquad \forall M
\in \calo[S^1(A)], \ \lambda \in \overline{S^2}(A). \]

\noindent Now the first paragraph in this part implies that each block is
a highest weight category.
\end{enumerate}

\noindent Finally, suppose $A$ satisfies Condition $(Sm)$ for some $1
\leq m \leq 4$. Then the corresponding assertion (numbered $\min(m,3)$)
holds on $\calo[\hofree]$ because $S^m(A) = \hofree$.
\end{proof}

The proof of our second main result (Theorem \ref{Tfunct}) is of a very
different flavor. Before proceeding to this proof, we first write down
some additional facts in order to give the reader a flavor of highest
weight categories. More precisely, we list various desirable properties
for the blocks of Category $\calo$ over regular triangular algebras
satisfying Condition (S3). See \cite{Rin,Don,Kh2} for proofs.

\begin{theorem}\label{Thwc}
Suppose $A$ is an RTA, and $\lambda, \mu \in S^3(A) \cap
\overline{S^2}(A)$. Then $\cala := \calo[S^3(A) \cap \overline{S^2}(A)]$
has the following properties:
\begin{enumerate}
\item (BGG Reciprocity.)
The multiplicity of $M(\mu)$ in any standard filtration of $P(\lambda)$
in $\calo[S^3(\lambda)]$ (or in $\cala$) equals the multiplicity of
$L(\lambda)$ in any Jordan-Holder series for $M(\mu)$.

\item (Neidhardt's theorem.)
If $B^-$ is an integral domain, then every nonzero map of Verma modules
with highest weights in $\hofree$ is an embedding. Moreover,
$\hhh_\cala(M(\mu), M(\lambda)) \neq 0$ if and only if $M(\lambda)$ has a
subquotient $L(\mu)$.

\item $\Ext^n_\cala(L(\lambda), L(\mu)) = 0$ for all $n > 2
|S^3(\lambda)|$. In particular, $\calo[S^3(\lambda)]$ has finite
cohomological (or global) dimension, bounded above by $2 |S^3(\lambda)|$.

\item $\Ext^n_\cala(M,N)$ is finite-dimensional for all $n \geq 0$ and
$M,N \in \cala$.

\item If $X,Y \in \cala$ have standard filtrations, then
\begin{equation}\label{Ecostand}
\dim_\F \Ext^n_\cala(X,F(Y)) =
\begin{cases}
\sum_{\lambda \in S^3(A) \cap \overline{S^2}(A)} [X : M(\lambda)][Y :
M(\lambda)], &\text{if $n=0$;}\\
0,           &\text{otherwise.}
\end{cases}
\end{equation}

\noindent In particular, if $\Ext^n_\cala(M(\lambda), F(M(\mu)))$ is
nonzero, then $n=0$ and $\lambda = \mu$.
\end{enumerate}
\end{theorem}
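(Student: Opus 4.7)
The plan is to leverage Theorem \ref{Tfirst}(3): for $\lambda \in S^3(A) \cap \overline{S^2}(A)$, the block $\calo[S^3(\lambda)]$ is equivalent to $(\Mod\text{-}B_\lambda)^{fg}$ for a finite-dimensional quasi-hereditary algebra $B_\lambda$, with Verma modules $M(\mu)$ realizing the standard objects of the highest weight structure. The costandard objects should then be identified with $F(M(\mu))$, since by Proposition \ref{Pdual}, $F$ is an exact contravariant involution that fixes simples and preserves composition factor multiplicities. The bulk of (1), (3), (4), and (5) then reduces to standard facts about highest weight categories and quasi-hereditary algebras, for which I would invoke \cite{CPS1,Rin,Don,Kh2}.

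Specifically, for part (1), BGG reciprocity is the identity $(P(\lambda) : M(\mu)) = [F(M(\mu)) : L(\lambda)]$ in any highest weight category, combined with $[F(M(\mu)) : L(\lambda)] = [M(\mu) : L(\lambda)]$ from Proposition \ref{Pdual}. Part (3) follows from the heredity chain of ideals in $B_\lambda$, whose length equals $|S^3(\lambda)|$, each step contributing at most $2$ to the global dimension. Part (4) is immediate from the equivalence with finitely generated modules over the finite-dimensional algebra $B_\lambda$. Part (5) is the Ext-orthogonality between standard and costandard objects in a highest weight category, extended to objects with standard/costandard filtrations by induction on filtration length via long exact sequences in $\Ext_\cala$; the $n=0$ term then collects the multiplicities $[X : M(\lambda)][Y : M(\lambda)]$ by pairing via the functor $F$.

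For part (2), I would first establish the embedding statement directly: any nonzero $\varphi \in \hhh_\cala(M(\mu), M(\lambda))$ is determined by $\varphi(v_\mu) = b_- v_\lambda$ for some nonzero $b_- \in B^-$, since $M(\lambda)$ is a free rank-one $B^-$-module by Proposition \ref{Pbasic}(2). For $a \in B^-$, one has $a v_\mu \in \ker \varphi$ iff $a b_- = 0$ in $B^-$, which forces $a = 0$ when $B^-$ is a domain. For the equivalence between Hom-nonvanishing and composition-factor appearance, the forward direction is immediate. The reverse direction combines BGG reciprocity from part (1) with the structure of projective covers: if $[M(\lambda) : L(\mu)] > 0$, then $(P(\mu) : M(\lambda)) > 0$, so $M(\lambda)$ appears as a standard subquotient of $P(\mu)$, from which one extracts a nonzero map $M(\mu) \to M(\lambda)$ via the surjection $P(\mu) \twoheadrightarrow M(\mu)$ and the universal property of $M(\mu)$. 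This last extraction is the main obstacle of the proof: translating the filtration-multiplicity inequality into an honest morphism requires careful use of the integral-domain hypothesis on $B^-$ together with the indecomposability of $M(\mu)$ and its unique simple quotient structure, without appealing to strong linkage or Jantzen-type filtrations that are available classically but not in the general RTA framework.
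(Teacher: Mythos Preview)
Your proposal is correct and matches the paper's approach: the paper does not give a proof of Theorem~\ref{Thwc} at all, but simply states that these are standard facts about highest weight categories and refers to \cite{Rin,Don,Kh2}, exactly as you do after invoking Theorem~\ref{Tfirst}(3) to identify each block with modules over a finite-dimensional quasi-hereditary algebra. Your treatment is in fact more detailed than the paper's, particularly for part~(2), where you correctly isolate the domain hypothesis on $B^-$ as what forces injectivity of Verma maps and flag the BGG-reciprocity route to the converse as the delicate step.
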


\noindent Note that some of the assertions hold more generally;
moreover, if Condition (S3) holds, then $S^3(A) \cap \overline{S^2}(A) =
\hofree$ by Lemma \ref{Lcs}.

We end this section with the proof of our second main result.
The proof repeatedly uses the following standard result.

\begin{lemma}\label{Lsummand}
Given a ring $R$, every simple (sub)quotient of a direct sum of
$R$-modules is automatically a simple (sub)quotient of some summand.
\end{lemma}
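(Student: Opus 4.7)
The plan is to prove this in two stages: first reduce to a finite direct sum, and then induct on the number of summands.

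Suppose $M = \bigoplus_{i \in I} M_i$ and $L = N/N'$ is a simple subquotient, with $N' \subset N \subset M$. The first step is to pick any $x \in N \setminus N'$ (which exists since $L \neq 0$). Since $L$ is simple and $x + N'$ is a nonzero element, it generates $L$, whence $N = Rx + N'$ (using that $R$ is unital, which is implicit in the paper's conventions). Therefore $L \cong Rx / (Rx \cap N')$. But $x$ has only finitely many nonzero components, so $Rx$ is contained in $\bigoplus_{i \in F} M_i$ for some finite $F \subset I$, reducing the problem to the case $|I| < \infty$.

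The second step is induction on $|I| = n$. The base case $n = 1$ is immediate. For the inductive step, it suffices to treat $n = 2$ and split $M = M_1 \oplus M_2$. Consider the projection $p_2 : M \twoheadrightarrow M_2$, which sends $N \to p_2(N)$ and $N' \to p_2(N')$, inducing a canonical map $\bar{p}_2 : L = N/N' \to p_2(N)/p_2(N')$. Since $L$ is simple, $\bar{p}_2$ is either zero or injective. If $\bar{p}_2$ is injective, then $L$ embeds into the subquotient $p_2(N)/p_2(N')$ of $M_2$, so $L$ is (isomorphic to) a simple subquotient of $M_2$. If $\bar{p}_2 = 0$, then $p_2(N) \subset p_2(N')$, which forces $N \subset N' + \ker p_2 = N' + M_1$; a short verification using the modular law gives $N = N' + (N \cap M_1)$, and therefore
\[ L = N/N' \cong (N \cap M_1) / (N' \cap M_1), \]
exhibiting $L$ as a subquotient of $M_1$. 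In either case $L$ is a subquotient of a single summand, completing the induction.

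There is no real obstacle here: the only minor subtlety is the reduction from an arbitrary to a finite direct sum, which relies on the existence of a cyclic generator for $L$ (and hence implicitly on $R$ being unital, which is the standing convention in the paper). The parenthetical ``(sub)quotient'' is handled uniformly because the argument above never distinguishes between subquotients and quotients; the same proof covers the quotient case by simply taking $N = M$ throughout.
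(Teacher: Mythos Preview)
Your proof is correct and follows essentially the same approach as the paper's (which is commented out in the source, the lemma being called a ``standard result''): reduce to a finite direct sum via a cyclic generator, then induct down to the case $n=2$ and project onto a summand. Your use of the induced map $\bar{p}_2$ on the quotient handles subquotients and quotients uniformly and is slightly cleaner than the paper's separate treatment of the two cases, but the underlying idea is the same.
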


\comment{
{\color{red}
\begin{proof}
Fix a submodule $M' \subset \oplus_{i \in I} M_i$ where all terms are
$R$-modules, and a surjection $\pi : M' \twoheadrightarrow V$ for some
simple $R$-module $V$.
Now lift any vector $0 \neq v \in V$ to $m' \in M'$, with $m' =
\oplus_{j=1}^n m_{i_j}$ for finitely many elements $i_j \in I$.
Then $Rm' \twoheadrightarrow Rv = V$, and $Rm' \subset \oplus_{j=1}^n
M_{i_j}$. Hence we look at $M' = Rm'$ and assume without loss of
generality that $I = \{ 1, 2, \dots, n \}$ for some $n$. It now suffices
to consider $\oplus_{j=1}^n M_j$.

We now consider the two cases in turn. First suppose $V$ is a quotient,
i.e., $M' = \oplus_{j=1}^n M_j$. Since $Rm' \twoheadrightarrow V$, hence
if $\oplus_{i \in I} M_i$ surjects onto $V$, by the ``intermediate
property", so does $\oplus_{j=1}^n M_j$. Now, since $0 \neq \hhh(\oplus_j
M_j, V) = \oplus_j \hhh(M_j, V)$, there exists a nonzero map from some
summand $M_j$ to $V$ -- and this is a surjection since $V$ is simple.

Finally, suppose $V$ is a subquotient of $\oplus_{j=1}^n M_j$. First, it
suffices to prove this for $n=2$, because we can then prove the general
case by induction on $n$. Thus, suppose $M' \subset M_1 \oplus M_2$. If
the surjection $\pi : M' \to V$ does not kill $M_1 \cap M'$, then $M_1
\cap M' \twoheadrightarrow V$ simple, so that $V$ is a subquotient of
$M_1$ as desired.
Otherwise $\pi$ kills $M_1 \cap M'$, whence it factors through a
surjection $\pi : M' / (M_1 \cap M') \twoheadrightarrow V$. Moreover, $M'
/ (M_1 \cap M') \subset (M_1 \oplus M_2) / (M_1 \cap M') = (M_1 / (M_1
\cap M')) \oplus M_2$, and it suffices to show $V$ to be a subquotient of
either summand.

Thus, assume without loss of generality that $M_1 \cap M' = 0$, and
let $\varphi_i : M' \hookrightarrow M_1 \oplus M_2 \twoheadrightarrow
M_i$ for $i=1,2$. Then $m' = \varphi_1(m') + \varphi_2(m')$ for each $m'
\in M'$. We now claim that $\varphi_2$ is injective:
\[ \varphi_2(m') = 0 \quad \Leftrightarrow \quad m' = \varphi_1(m') \quad
\Leftrightarrow \quad m' \in M' \cap M_1 \quad \Leftrightarrow \quad m' =
0. \]

\noindent It follows that $\varphi_2(M') \cong M'$, hence $M'
\twoheadrightarrow V$, so that $V$ is a subquotient of $M_2$.
\end{proof}
}}

\begin{proof}[Proof of Theorem \ref{Tfunct}]\hfill
\begin{enumerate}
\item This part involves some (relatively straightforward) bookkeeping.
In particular, define
\[ \calq^+_r = \times_{j=1}^n \calq^+_{rj}, \qquad \Delta :=
\coprod_{j=1}^n \Delta_j, \qquad \Delta' := \coprod_{j=1}^n \Delta'_j,
\qquad B^\pm := \otimes_{j=1}^n B^\pm_j, \qquad H_r = \otimes_{j=1}^n
H_{rj}. \]

\noindent Then $\widehat{H_r} = \times_{j=1}^n \widehat{H_{rj}}$ for
$r=0,1$; moreover, $(B^\pm_j)_{\theta_j} \subset B^\pm_{\id_{H_1}, \dots,
\id_{H_{j-1}}, \theta_j, \id_{H_{j+1}}, \dots, \id_{H_n}}$ for all $1
\leq j \leq n$ and $\theta_j \in \Aut_{\F-alg}(H_j)$.
Conversely if $A$ is based, then defining $\Delta_j := \Delta \cap
\calq^+_{0j}$ for all $j$ shows that $A_j$ is also based. The assertion
about the equivalence of discrete gradings follows from the fact that
$[\id_{H_0}, (\theta_j)_{j=1}^n] = \times_{j=1}^n [\id_{H_{0j}},
\theta_j]$.

\item By Proposition \ref{Pverma}(7), simple modules in $\calo[\hofree]$
are characterized by $\hofree$. Now verify using the previous part that
$\hofree = \times_{j=1}^n \hofree_j$.
Moreover, given $\lambda_j \in \hofree_j$ for all $j$, set $\lambda :=
(\lambda_1, \dots, \lambda_n) \in \hofree$. Then $\otimes_j
L_j(\lambda_j)$ is generated by its one-dimensional $\lambda$-weight
space, which is spanned by a maximal vector. Moreover, it is easily
verified that $\otimes_j L_j(\lambda_j)$ is a simple highest weight
$A$-module in $\calo_A$, whence it is isomorphic to $L(\lambda)$.
Finally, the uniqueness of the $\lambda_j$ (given some $\lambda \in
\hofree$) follows because $L(\lambda)$ is isomorphic to a direct sum of
copies of $L_j(\lambda_j)$ for any fixed $j$, so by Lemma \ref{Lsummand},
$\lambda_j$ is uniquely determined from $L(\lambda)$ as well.

\item We first claim that $S^3_A(\lambda) = \times_{j=1}^n
S^3_{A_j}(\lambda_j)$, where $\lambda_j \in \hofree_j$ for all $j$ and
$\lambda = (\lambda_1, \dots, \lambda_n)$ as above. To do so, first note
that $M(\lambda) = \otimes_{j=1}^n M_j(\lambda_j)$. Now if $M(\lambda_j)$
has a simple subquotient $L_j(\mu_j)$, then there exist submodules $N_j
\subset M_j \subset M(\lambda_j)$ such that $M_j / N_j \cong L_j(\mu_j)$.
But then by the previous part,
\[ (\otimes_{j=1}^n M_j) / N \cong L(\mu) = \otimes_{j=1}^n L_j(\mu_j),
\qquad N := \sum_{j=1}^n \left( N_j \otimes \otimes_{k \neq j} M_k
\right). \]

\noindent Moreover, suppose (exactly) one of $[M(\lambda_j) : L(\mu_j)]$,
$[M(\mu_j), L(\lambda_j)]$ is nonzero for each $j$. Then by the previous
paragraph, the simple $A$-module $\otimes_{j=1}^n L_j(\min(\lambda_j,
\mu_j))$ occurs as a subquotient of both $M(\lambda) = \otimes_{j=1}^n
M_j(\lambda_j)$ and $M(\mu) = \otimes_{j=1}^n M_j(\mu_j)$. From this
analysis it follows that $\times_{j=1}^n S^3_{A_j}(\lambda_j) \subset
S^3_A(\lambda)$.

To prove the reverse inclusion, suppose $[M(\lambda) : L(\mu)] > 0$ in
$\calo[\hofree]$. Fix $1 \leq j \leq n$, and consider both modules over
their restriction to $A_j$. Thus (a direct sum of copies of) $L_j(\mu_j)$
occurs as a subquotient of a direct sum of copies of $M_j(\lambda_j)$. It
follows using Lemma \ref{Lsummand} that $[M_j(\lambda_j) : L_j(\mu_j)] >
0$ for all $j$. Now it is not hard to show that $S^3_A(\lambda) \subset
\times_{j=1}^n S^3_{A_j}(\lambda_j)$.

In turn, applying $\pi_{H_0} = \times_{j=1}^n \pi_{H_{0,j}}$ shows that
$S^2_A(\lambda) = \times_{j=1}^n S^2_{A_j}(\lambda_j)$. Moreover, the
partial order on $\widehat{H_r}^{free}$ holds precisely when it holds in
each component (i.e., $\widehat{H_{rj}}^{free}$). This implies that
$S^1_A(\lambda) = \times_{j=1}^n S^1_{A_j}(\lambda_j)$. Finally, one
verifies that $Z(A) = \otimes_{j=1}^n Z(A_j)$, which implies the
assertion for the $S^4$-sets. The assertion involving $S^m(A)$ now
follows easily. Finally, verify that
\begin{align*}
\overline{S^2}(A) = &\ \{ \lambda = (\lambda_j) \in S^2(A) = \times_j
S^2(A_j) : \pi_{H_0}^{-1}([S^2(\lambda)]_{\leq}) \subset S^1(A) =
\times_j S^1(A_j) \}\\
= &\ \{ \lambda = (\lambda_j) \in \times_j S^2(A_j) : \times_j
\pi_{H_{0j}}^{-1}([S^2(\lambda_j)]_{\leq}) \subset \times_j S^1(A_j) \}\\
= &\ \times_{j=1}^n \{ \lambda_j \in S^2(A_j) :
\pi_{H_{0j}}^{-1}([S^2(\lambda_j)]_{\leq}) \subset S^1(A_j) \} =
\times_{j=1}^n \overline{S^2}(A_j).
\end{align*}

\item The proof is similar to that of \cite[Theorem 15.2]{Kh2} and is
therefore omitted.
\comment{
In this part we assume there exists $\mu_j \in \hofree_j$ for all
$j$, such that $\dim L_j(\mu_j) < \infty$. Fix such a weight $\mu_j$ for
each $j$. Also note by using the argument in the proof of Theorem
\ref{Tfirst} (prior to part (1) of the proof), that it suffices to show
that every length $2$ finite-dimensional module is semisimple.

First suppose complete reducibility holds in $\calo[\hofree]$, and also
fix $1 \leq j \leq n$. It is then clear that the functor $T :
\calo[\hofree_j] \to \calo[\hofree]$ defined by
\[ T(M) := \otimes_{k=1}^{j-1} L_k(\mu_k) \otimes M \otimes
\otimes_{k=j+1}^n L_k(\mu_k) \]

\noindent is an exact, covariant functor. Now suppose there exists a
non-split short exact sequence $0 \to L_j(\lambda'_j) \to M_j \to
L_j(\lambda_j) \to 0$ of finite-dimensional $A_j$-modules in
$\calo[\hofree_j]$. Applying the duality functor $F_j$ if needed, assume
that $\lambda_j > \lambda'_j$. Then $M(\lambda_j) \twoheadrightarrow M_j$
is a highest weight module by Theorem \ref{Text}. In particular, $T(M_j)$
is also a highest weight module, generated by its one-dimensional highest
weight space.

Now apply $T$ to the above short exact sequence, to obtain: $0 \to
L(\lambda') \to T(M_j) \to L(\lambda) \to 0$ in $\calo[\hofree]$ with
$\lambda > \lambda' \in \hofree$. This sequence in $\calo[\hofree]$
splits by assumption, so the one-dimensional $\lambda$-weight space lies
in one of the two direct summands. But then the module is not a highest
weight module generated by its $\lambda$-weight space, which contradicts
the previous paragraph. It follows that no non-split extension existed in
the first place, proving one implication.

Conversely, suppose complete reducibility holds in $\calo[\hofree_j]$ for
all $j$, and $0 \to L(\lambda') \to M \to L(\lambda) \to 0$ is a
non-split short exact sequence in $\calo[\hofree]$ with $\dim M <
\infty$.
By Theorem \ref{Text}, we may apply the duality functor $F$ to assume
that $\lambda > \lambda'$ and hence $M(\lambda) \twoheadrightarrow M$.
Thus $M = A m_\lambda$, where $m_\lambda$ spans the one-dimensional space
$M_\lambda$. Now by a previous part of this result, there exists $1 \leq
j \leq n$ such that $\lambda_j > \lambda'_j$. By assumption, $M =
L(\lambda') \oplus M'$ for some complement $M'$ as finite-dimensional
$A_j$-modules. Thus $M'_{\lambda_j} = M_{\lambda_j}$ is nonzero, whence
$m_\lambda \in M'$. Moreover, $M'$ is a direct sum of copies of
$L_j(\lambda_j)$ as $A_j$-modules. Hence $M = A m_\lambda = \otimes_{k
\neq j} A_k \otimes_\F (A_j m_\lambda) \subset \otimes_{k \neq j} A_k
\otimes M'$ is also a direct sum of copies of $L_j(\lambda_j)$, as
$A_j$-modules. On the other hand, $L_j(\lambda'_j)$ is a summand of
$L(\lambda')$ and hence of $M$, which contradicts Lemma \ref{Lsummand}.
Thus complete reducibility holds in $\calo[\hofree]$.
}
\end{enumerate}
\end{proof}

\section{Existence results for RTAs: semidirect product constructions,
non-abelian root lattice}\label{Snonabelian}

Having studied the structure of Category $\calo$ over a general RTA,
we turn to the construction and study of examples of RTAs.
We begin by presenting examples of RTAs that are ``completely different"
from all examples considered to date in the literature, in the following
sense: all previously studied RTAs have the property that the ``root
lattice" $\calq^+_0$ is abelian. In fact, with the exception of
stratified Virasoro algebras (see Section \ref{Snonbased}), all
previously studied RTAs are moreover based with a finite set of simple
roots.
Thus a natural question to ask (and whose answer is \textit{a priori} not
yet known) is: do there exist examples of RTAs for which $\calq^+_0$ is
not abelian?
In this section we provide a positive answer to this question, which
further reinforces the generality of our framework.

Before constructing a concrete example of an RTA with non-abelian monoid
$\calq^+_0$ of positive roots, we first introduce the following notation.

\begin{definition}\label{Drtm}
A monoid $Q^+_0$ is said to be a \textit{regular triangular monoid (RTM)}
if it satisfies the following properties:
\begin{enumerate}
\item[(RTM1)] $Q^+_0 \setminus \{ 1_{Q^+_0} \}$ is a semigroup, which
generates a group $\tangle{Q^+_0}$.

\item[(RTM2)] There exists a left-action $\ltimes$ of $Q^+_0$ on
$-Q^+_0$, which fixes $1_{Q^+_0}$ and satisfies the following ``cocycle
conditions":
\begin{align}
\theta_1 \cdot \theta_2^{-1} = &\ (\theta_1 \ltimes \theta_2^{-1}) \cdot
(\theta_2 \ltimes \theta_1^{-1})^{-1}, \label{Ecocycle1}\\
\theta_1 \ltimes (\theta_2^{-1} \cdot \theta_3^{-1}) = &\ (\theta_1
\ltimes \theta_2^{-1}) \cdot ((\theta_2 \ltimes \theta_1^{-1})^{-1}
\ltimes \theta_3^{-1}).\label{Ecocycle2}
\end{align}
%
\end{enumerate}

\noindent Now say that a monoid $M$ acts {\em admissibly} on a regular
triangular monoid $Q^+_0$ if there exists a monoid map $: M \to
\End_{monoid}(\tangle{Q^+_0}) \bigcap \End_{monoid}(Q^+_0)$, such that
\begin{equation}
m(\theta_1 \ltimes \theta_2^{-1}) = m(\theta_1) \ltimes m(\theta_2)^{-1},
\qquad \forall m \in M,\ \theta_1, \theta_2 \in Q^+_0.
\end{equation}
\end{definition}

\begin{remark}
Note that the condition (RTM2) is equivalent to defining a
\textit{matched pairing} of the monoids $Q^+_0$ and $-Q^+_0$, as in
\cite[Section 3]{GM}.
This is because (RTM2) can be reformulated in terms of a right action
$\rtimes$ of $-Q^+_0$ on $Q^+_0$, via similar looking ``cocycle
conditions" as \eqref{Ecocycle1},\eqref{Ecocycle2}. The relationship
between these two actions is: $\theta_1 \rtimes \theta_2^{-1} = (\theta_2
\ltimes \theta_1^{-1})^{-1}$.
Further note that the first cocycle condition \eqref{Ecocycle1} is
unchanged under taking inverses. Moreover as in \cite[Section 3]{GM}, the
matched pairing/RTM structure above shows that $\tangle{Q^+_0} = (-Q^+_0)
\bowtie Q^+_0$, the bicrossed product of the two monoids. However, the
matched pairing is not strong, because the multiplication map $: -Q^+_0
\times Q^+_0 \to \tangle{Q^+_0}$ is not a bijection unless $Q^+_0$ is a
singleton.

For completeness, we also note that matched pairs of monoids were defined
and studied by Gateva-Ivanova and Majid in connection with solutions of
the Yang-Baxter Equation. More generally, an example of the bicrossed
product of two Hopf algebras is the Drinfeld quantum double, which is a
braided Hopf algebra and hence provides solutions of the Yang-Baxter
Equation. There are further connections to Hopf algebras and Lie theory
(see \cite{GM,Ka} and their references), which we do not pursue further
in this paper.
\end{remark}

In this section we prove the following existence theorem for RTAs over
RTMs. The theorem shows that RTAs with certain additional properties
exist over monoids $Q^+_0$, if and only if these are regular triangular
monoids:

\begin{theorem}[RTM-RTA Correspondence]\label{Tmonoid}
Given a regular triangular monoid $Q^+_0$, there exists a strict RTA $A
:= \cala(Q^+_0)$ such that
(a) $\calq^+_0 = Q^+_0$;
(b) $B^+_{\theta_0} \neq 0\ \forall \theta_0 \in \calq^+_0$;
(c) $B^\pm$ do not contain zerodivisors;
(d) the multiplication map $m_A : B^+ \otimes H_0 \otimes B^- \to A$ is
also a vector space isomorphism; and
(e) for each $\theta_1, \theta_2 \in \calq^+_0$, the image of
$m_A(B^+_{\theta_1} \otimes \F 1_{H_0} \otimes B^-_{\theta_2^{-1}})$ is
contained in $B^-_{\theta^-} \otimes H_0 \otimes B^+_{\theta^+}$ for
unique $\theta^\pm \in \pm \calq^+_0$.

Conversely, if $A$ is a strict RTA that satisfies (b)--(e), then
$\calq^+_0$ is a regular triangular monoid.
\end{theorem}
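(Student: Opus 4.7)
My approach is to construct $A = \cala(Q^+_0)$ as a bicrossed-product-like algebra whose straightening law is governed by $\ltimes$, and to verify that associativity of the resulting multiplication reduces precisely to the two cocycle conditions. For the forward direction, first pick any commutative $\F$-algebra $H_0$ that admits a faithful action of $\tangle{Q^+_0}$ by $\F$-algebra automorphisms -- a canonical choice is the polynomial ring $\F[x_g : g \in \tangle{Q^+_0}]$ with the left-translation action, which realizes $Q^+_0 \hookrightarrow \Aut_{\F-alg}(H_0)$. Take $B^+$ to be a graded algebra with one-dimensional root spaces $B^+_\theta = \F \cdot x_\theta$ and monoid-algebra multiplication $x_{\theta_1} x_{\theta_2} = x_{\theta_1 \theta_2}$ (with $B^- = \F(-Q^+_0)$ built analogously via the submonoid $-Q^+_0 \subset \tangle{Q^+_0}$), and define $A := B^- \otimes H_0 \otimes B^+$ as a vector space. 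Extend multiplication by the $H_0$-twist $x_\theta h = \theta(h) x_\theta$ (similarly for $B^-$) together with the key straightening rule
\[ x_{\theta_1} \cdot y_{\theta_2^{-1}} := y_{\theta_1 \ltimes \theta_2^{-1}} \cdot x_{(\theta_2 \ltimes \theta_1^{-1})^{-1}}, \]
where cocycle \eqref{Ecocycle1} ensures that the total $\tangle{Q^+_0}$-grading is preserved.

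Associativity on triple products is the heart of the verification: the case $(x_{\theta_1} x_{\theta_2}) y_{\theta_3^{-1}} = x_{\theta_1}(x_{\theta_2} y_{\theta_3^{-1}})$ unpacks, after straightening both sides, into the combination of the action axiom $(\theta_1 \theta_2) \ltimes \theta_3^{-1} = \theta_1 \ltimes (\theta_2 \ltimes \theta_3^{-1})$ on the negative side and exactly cocycle \eqref{Ecocycle2} on the positive side; the dual case $y_{\theta_1^{-1}}(x_{\theta_2} y_{\theta_3^{-1}}) = (y_{\theta_1^{-1}} x_{\theta_2}) y_{\theta_3^{-1}}$ is handled analogously using the mirror right-action formulation. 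An anti-involution $i$ with $i|_{H_0} = \id$, $i(x_\theta) = y_{\theta^{-1}}$, $i(y_{\theta^{-1}}) = x_\theta$ is then well-defined once one observes that \eqref{Ecocycle1} is invariant under simultaneously swapping $\theta_1 \leftrightarrow \theta_2$ and inverting. Properties (a)--(e) are immediate from the construction, with the one nontrivial point being (c); I would secure this by restricting $B^\pm$ if needed so that they embed into a group algebra with the unique-product property.

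For the converse, assume $A$ is a strict RTA satisfying (b)--(e). Using (b), fix nonzero $x \in B^+_{\theta_1}$ and $y \in B^-_{\theta_2^{-1}}$; property (e) then assigns unique $\theta^\pm \in \pm\calq^+_0$ with $xy \in B^-_{\theta^-} \otimes H_0 \otimes B^+_{\theta^+}$, and I set $\theta_1 \ltimes \theta_2^{-1} := \theta^-$. The $\tangle{\calq^+_0}$-grading of $A$ from Proposition \ref{Pfirst}(3) forces $\theta_1 \theta_2^{-1} = \theta^- \theta^+$; applying the anti-involution $i$ to $xy$ and recomputing via the decomposition of $B^+_{\theta_2} \cdot B^-_{\theta_1^{-1}}$ (using $i(B^+_\alpha) \subset H_1 \cdot B^-_{\alpha^{-1}}$ from Lemma \ref{Lfirst}(4) and (RTA3)) yields $\theta^+ = (\theta_2 \ltimes \theta_1^{-1})^{-1}$, which is cocycle \eqref{Ecocycle1}. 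That $\ltimes$ is a genuine left action on $-\calq^+_0$ and that cocycle \eqref{Ecocycle2} holds both follow by extracting the negative, respectively positive, components of the two distinct associations of the triple product $x_{\theta_1} x_{\theta_2} y_{\theta_3^{-1}}$ and invoking uniqueness in (e); here hypothesis (c) is what lets me cancel the nonzero $H_0$-coefficients that arise, so as to read off identities purely in the group $\tangle{\calq^+_0}$.

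The main obstacle I anticipate is the associativity verification in the forward direction: while the cocycle identities are clearly tailored to make the straightening rule consistent, carefully tracking how the $H_0$-factors propagate through repeated applications of the rule -- and confirming that no extra $H_0$-compatibility condition beyond \eqref{Ecocycle1}--\eqref{Ecocycle2} is required -- is the most delicate part. A secondary concern is property (c), since monoid algebras of nontrivial monoids are not automatically domains; one may need to restrict attention to $Q^+_0$ whose ambient group has the unique-product property, or to modify the construction of $B^\pm$ accordingly, in order to conclude existence of an $\cala(Q^+_0)$ satisfying the full list (a)--(e).
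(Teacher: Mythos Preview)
Your proposal is essentially the paper's argument. For the forward direction the paper builds $\cala(Q^+_0)$ (as the $k=0$ case of Theorem~\ref{Trtm}) over $H_0 = (\F\tangle{Q^+_0})^*$ with the identical straightening rule $t^{\theta_1} t^{\theta_2^{-1}} = t^{\theta_1 \ltimes \theta_2^{-1}} t^{(\theta_2 \ltimes \theta_1^{-1})^{-1}}$, and establishes (RTA1) via Bergman's Diamond Lemma rather than a direct associativity check; the overlap ambiguities $t^{\theta_1} t^{\theta_2^{-1}} t^{\theta_3^{-1}}$ and $t^{\theta_1} t^{\theta_2} t^{\theta_3^{-1}}$ are resolved exactly by the cocycle conditions and the action axiom, and the $H_0$-involving overlaps you flag as delicate are handled mechanically by the same machinery. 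Your converse is identical to the paper's: the paper reads off \eqref{Ecocycle1} by applying $i$ to the decomposition in (e), and obtains \eqref{Ecocycle2} from associativity of $B^+_{\theta_1} \cdot B^-_{\theta_2^{-1}} \cdot B^-_{\theta_3^{-1}}$. For property~(c), the paper does not invoke any unique-product hypothesis; it argues directly from the fact that each root space of $B^\pm$ is one-dimensional and products of root vectors are again nonzero root vectors.
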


\begin{remark}
Thus the notion of an RTM is intimately related to the notion of an RTA.
In fact, RTMs provide a natural answer (via Theorem \ref{Tmonoid}) to the
question: ``Given a (sufficiently nice) RTA, what structure can one
expect from its underlying semigroup $\calq^+_0$?"
Moreover, the existence result in Theorem \ref{Tmonoid} shows that the
RTM/RTA frameworks are not ``unnecessarily broad".
\end{remark}

We prove Theorem \ref{Tmonoid} below; for now, we observe that Theorem
\ref{Tmonoid} immediately shows the existence of RTAs over arbitrary
abelian monoids $Q^+_0$:

\begin{cor}\label{Cabelian}
Suppose $Q^+_0$ is a commutative monoid such that $Q^+_0 \setminus \{
1_{Q^+_0} \}$ is a semigroup. Then there exists a strict RTA $A =
\cala(Q^+_0)$ such that $\calq^+_0 = Q^+_0$.
\end{cor}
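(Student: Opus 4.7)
The plan is to deduce this immediately from Theorem \ref{Tmonoid}. It therefore suffices to verify that every commutative monoid $Q^+_0$ with $Q^+_0 \setminus \{1_{Q^+_0}\}$ a semigroup can be equipped with the structure of a regular triangular monoid; once this is done, the RTA $\cala(Q^+_0)$ supplied by Theorem \ref{Tmonoid} satisfies $\calq^+_0 = Q^+_0$ by construction.

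First I would note that condition (RTM1) is given by hypothesis, since $Q^+_0$ generates the (abelian) group $\tangle{Q^+_0}$ inside which $-Q^+_0$ lives. To supply (RTM2), I would simply take $\ltimes$ to be the trivial action of $Q^+_0$ on $-Q^+_0$, namely
\[
\theta_1 \ltimes \theta_2^{-1} := \theta_2^{-1} \qquad \forall\, \theta_1, \theta_2 \in Q^+_0.
\]
This is plainly a monoid action (the identity of $Q^+_0$ acts by the identity on $-Q^+_0$, and composition is trivially compatible), and it fixes $1_{Q^+_0} = 1_{Q^+_0}^{-1} \in -Q^+_0$.

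The cocycle conditions are then immediate from commutativity of $\tangle{Q^+_0}$. For \eqref{Ecocycle1}, the right-hand side collapses to $\theta_2^{-1} \cdot (\theta_1^{-1})^{-1} = \theta_2^{-1} \cdot \theta_1 = \theta_1 \cdot \theta_2^{-1}$, which matches the left-hand side. For \eqref{Ecocycle2}, both sides reduce to $\theta_2^{-1} \cdot \theta_3^{-1}$ under the trivial action, with no commutativity required beyond what is needed to interpret the expressions.

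I do not anticipate any genuine obstacle: the content of the corollary is entirely absorbed into the observation that, in the abelian setting, the trivial matched pairing of $-Q^+_0$ and $Q^+_0$ is admissible. The first half of Theorem \ref{Tmonoid} then produces the desired strict RTA $A = \cala(Q^+_0)$, completing the argument.
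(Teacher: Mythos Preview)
Your proof is correct and follows exactly the same approach as the paper: equip $Q^+_0$ with the trivial action $\theta_1 \ltimes \theta_2^{-1} := \theta_2^{-1}$, observe that this makes it a regular triangular monoid (the cocycle conditions collapse by commutativity), and invoke Theorem~\ref{Tmonoid}. The paper's proof is terser, merely asserting that the trivial action works without writing out the cocycle verifications you supplied.
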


\begin{proof}
Note that every commutative monoid $Q^+_0$ satisfying (RTM1) is a regular
triangular monoid with $\theta_1 \ltimes \theta_2^{-1} := \theta_2^{-1}$
for all $\theta_1, \theta_2 \in Q^+_0$. Now apply Theorem \ref{Tmonoid}.
\end{proof}

This section is organized as follows. In Section \ref{Srtmrta} we prove
Theorem \ref{Tmonoid}. In Section \ref{Segrtm}, we then provide several
recipes that yield RTMs (which in turn lead to RTA constructions).
Finally, in Section \ref{Sexample} we carry out the aforementioned
construction of an RTA $A$ with non-abelian span of roots, and study its
associated Category $\calo$.

\subsection{Existence theorem for RTAs over regular triangular
monoids}\label{Srtmrta}

This subsection is devoted to proving Theorem \ref{Tmonoid}. We begin by
showing a more general result.

\begin{theorem}[RTA Existence Theorem]\label{Trtm}
Suppose $Q^+_0$ is a regular triangular monoid, and $\Z^k$ acts
admissibly on $\tangle{Q^+_0}$ for some $k \in \nn$. Then for all ${\bf
c} \in \F^k$, there exists a strict RTA $A = \cala(Q^+_0, {\bf c})$ such
that:
(i) $\calq^+_0$ equals the regular triangular monoid $(\nn)^k \ltimes
Q^+_0$;
(ii) $\cala(Q^+_0, {\bf c})$ satisfies properties (b)--(d) in Theorem
\ref{Tmonoid}, as well as property (e) for all $\theta_1, \theta_2 \in
Q^+_0$; and
(iii) there exist nonzero elements $\{ x^\pm_r : 1 \leq r \leq k \}$ in
$\cala(Q^+_0, {\bf c})$ such that
$[x^+_q, x^+_r] = [x^-_q, x^-_r] = 0$ and
$[x^+_q, x^-_r] = \delta_{q,r} c_r$ for all $1 \leq q,r \leq k$.
\end{theorem}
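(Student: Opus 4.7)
The plan is to build $A$ from the ground up. Let $G := \tangle{Q^+_0}$ (a group by (RTM1)) and $G' := \Z^k \ltimes G$ via the admissible action. For the Cartan, I would take $H_0$ to be any commutative $\F$-algebra on which $G'$ acts by distinct algebra automorphisms---for instance, the polynomial ring $\F[h_g : g \in G']$ with $G'$ translating the indices. For $B^+$, take $\F[x^+_1, \dots, x^+_k] \otimes_\F \F Q^+_0$ as a vector space, with $\F Q^+_0$ the monoid algebra and the $x^+_r$ declared central in $B^+$; this is a $(\nn^k \times Q^+_0)$-graded algebra with one-dimensional homogeneous components (so (b) is built in), and is a domain because $Q^+_0$ is cancellative (it embeds in $G$). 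Construct $B^-$ symmetrically, with central generators $x^-_r$ and basis $e_{\theta^{-1}}$ for $\theta \in Q^+_0$.

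Now set $A := B^- \otimes_\F H_0 \otimes_\F B^+$ as a vector space and impose the following defining relations: (i) $h \cdot e_\theta = e_\theta \cdot \theta^{-1}(h)$ and $h \cdot e_{\eta^{-1}} = e_{\eta^{-1}} \cdot \eta(h)$ for $h \in H_0,\ \theta,\eta \in Q^+_0$, implementing the $H_0$-root grading; (ii) the key \emph{straightening}
\[
e_\theta \cdot e_{\eta^{-1}} := e_{\theta \ltimes \eta^{-1}} \cdot e_{(\eta \ltimes \theta^{-1})^{-1}}, \qquad \theta, \eta \in Q^+_0,
\]
which is $\calq^+_0$-homogeneous precisely by the first cocycle identity \eqref{Ecocycle1}; (iii) the $\Z^k$-grading rules $[x^\pm_q, x^\pm_r] = 0$ and $[x^+_q, e_{\eta^{-1}}] = [x^-_q, e_\theta] = 0$, together with the Heisenberg-type relation $[x^+_q, x^-_r] = \delta_{q,r} c_r \in \F \subset H_0$; (iv) the $x^\pm_r$ commute with $H_0$. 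Define the anti-involution $i$ on generators by $i(e_\theta) = e_{\theta^{-1}}$, $i(x^+_r) = x^-_r$, and $i|_{H_0} = \id$.

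The main obstacle is verifying that these relations define an associative algebra whose underlying space is exactly $B^- \otimes H_0 \otimes B^+$; this is the crucial (RTA1). By a diamond-lemma-style argument, associativity reduces to finitely many triple overlap checks. The routine overlaps ($HHH$, $HB^\pm B^\pm$, $B^\pm B^\pm B^\pm$) follow from the algebra structures of $H_0$, $B^\pm$, and the fact that each $\theta \in \calq^+_0$ acts on $H_0$ by an algebra automorphism. The nontrivial overlaps are $e_\theta \cdot (e_{\eta_1^{-1}} e_{\eta_2^{-1}})$ versus $(e_\theta e_{\eta_1^{-1}}) e_{\eta_2^{-1}}$, and its mirror image with one $B^-$ and two $B^+$ factors; equating the two fully straightened normal forms yields precisely the second cocycle condition \eqref{Ecocycle2} (respectively, its $i$-image), which is available by the RTM axioms. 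The Heisenberg relations present no obstruction because $c_r$ is a scalar, so it lies in $H_0$ and commutes through trivially.

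Once associativity holds, the remaining verifications are short. (RTA2) is read off the $(\nn^k \ltimes Q^+_0)$-grading of $B^+$, whose identity component is $\F$ and whose pieces are finite-dimensional (in fact one-dimensional). (RTA3) holds because all four families of defining relations are stable under $i$, using that \eqref{Ecocycle1} is symmetric under the swap $\theta \leftrightarrow \eta$ modulo inversion. Properties (b), (c), (d)---the last by running the construction with the roles of $B^\pm$ swapped---and (e), which is literally the straightening, are immediate from the explicit bases. The Heisenberg generators $x^\pm_r$ satisfy (iii) by construction, so the claim $\calq^+_0 = \nn^k \ltimes Q^+_0$ follows. Theorem~\ref{Tmonoid} is then the case $k = 0$; its converse direction uses property (e) to extract a candidate $\ltimes$ from the straightening and deduces the cocycle identities \eqref{Ecocycle1}, \eqref{Ecocycle2} from associativity in $A$.
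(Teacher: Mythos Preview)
Your overall architecture---generators and relations with a Diamond Lemma verification, the straightening rule $e_\theta e_{\eta^{-1}} = e_{\theta \ltimes \eta^{-1}} e_{(\eta \ltimes \theta^{-1})^{-1}}$, and the identification of the two cocycle conditions with the two nontrivial triple overlaps---matches the paper's proof closely. But two of your imposed relations break the construction.

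Relation (iv), that the $x^\pm_r$ commute with $H_0$, forces each $x^+_r$ to have $H_0$-root $\id_{H_0}$, so $B^+_{\id_{H_0}}$ contains all of $\F[x^+_1,\dots,x^+_k]$ and (RTA2) fails outright. You set up $H_0$ so that $G' = \Z^k \ltimes G$ acts faithfully on it precisely so that the $\Z^k$-directions would be visible as roots; relation (iv) then throws that away. The paper instead imposes $x^\pm_j f = ((\mp\vi_j,0)\cdot f)\, x^\pm_j$, giving $x^+_j$ the nontrivial root $(\vi_j,1) \in \Z^k \ltimes G$. Similarly, declaring the $x^+_r$ central in $B^+$ and setting $[x^+_q,e_{\eta^{-1}}]=[x^-_q,e_\theta]=0$ in (iii) forces the $\Z^k$ and $Q^+_0$ roots to commute inside $\Aut_{\F\text{-alg}}(H_0)$, so you only realize the direct product $\nn^k \times Q^+_0$, not the semidirect product $\nn^k \ltimes Q^+_0$ claimed in (i). The correct relations are $x^\pm_j\, e_{\theta} = e_{\vi_j^{\pm 1}(\theta)}\, x^\pm_j$ (and analogously for $e_{\theta^{-1}}$), which is exactly where the admissibility hypothesis on the $\Z^k$-action enters: it is needed to resolve the overlap $e_{\theta_1}\, x^\pm_j\, e_{\theta_2^{-1}}$, a check your proposal never performs because your trivialized relations make it vacuous. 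Once you fix (iii) and (iv) along these lines, several additional overlap ambiguities (mixing $x^\pm_j$, $e_{\theta^{\pm 1}}$, and $H_0$) appear and must be worked out; the paper does this explicitly.
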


\begin{proof}
Suppose $\{ \vi_j : 1 \leq j \leq k \}$ denotes the standard $\Z$-basis
of $\Z^k$. Define the $\F$-algebra $\cala(Q^+_0, {\bf c})$ to be
generated by the algebra $H_0 := \F[\Z^k \ltimes \tangle{Q^+_0}] = (\F
(\Z^k \ltimes \tangle{Q^+_0}))^*$ of $\F$-valued functions on the group
$\Z^k \ltimes \tangle{Q^+_0}$ (with coordinatewise addition and
multiplication), together with elements $\{ t^{\theta_0^{\pm 1}} :
\theta_0 \in Q^+_0 \}$ and $\{ x_j^\pm : 1 \leq j \leq k \}$, modulo the
following relations:
\begin{align}\label{Ertm1}
&\ t^{\theta_1^{\pm 1}} t^{\theta_2^{\pm 1}} = t^{\theta_1^{\pm 1} \cdot
\theta_2^{\pm 1}}, \qquad t^{1_{Q^+_0}} = 1, \qquad
t^{\theta_1} t^{\theta_2^{-1}} = t^{\theta_1 \ltimes \theta_2^{-1}}
t^{(\theta_2 \ltimes \theta_1^{-1})^{-1}},\notag\\
&\ t^{\theta_1^{\pm 1}} f(-) = f((0,\theta_1^{\mp 1}) \cdot -)
t^{\theta_1^{\pm 1}}, \qquad 
x_j^\pm f(-) = f((\mp \vi_j,0) \cdot -) x_j^\pm,\\
&\ [x_j^\pm, x_{j'}^+] = [x_j^\pm, x_{j'}^-] = 0, \qquad [x_j^+, x_j^-] =
c_j, \qquad x_j^\pm t^{\theta_0} = t^{\vi_j^{\pm 1}(\theta_0)}
x_j^\pm,\notag
\end{align}

\noindent for all $1 \leq j \neq j' \leq k$, $\theta_1, \theta_2 \in
Q^+_0$, $\theta_0 \in \tangle{Q^+_0}$, and $f : \Z^k \ltimes
\tangle{Q^+_0} \to \F$ in $H_0$.

We now claim that $\cala(Q^+_0, {\bf c})$ is a strict RTA satisfying the
aforementioned properties. The meat of the proof lies in showing that the
algebra $\cala(Q^+_0, {\bf c})$ satisfies (RTA1). More precisely, we
\textbf{claim} that the following holds:\medskip

\textit{The $\F$-algebra $\cala(Q^+_0, {\bf c})$ satisfies (RTA1), with
$B^-$ having an $\F$-basis of the form}
\begin{equation}\label{Ertm2}
X_{irr}^- := \{ t^{\theta_0^{-1}} \prod_{j=1}^k (x_{k+1-j}^-)^{n_j} : n_j
\in \nn, \theta_0 \in Q^+_0 \} \quad (\text{with } t^{1_{Q^+_0}} = 1),
\end{equation}

\textit{and $B^+$ having an $\F$-basis $X_{irr}^+ := \{ \prod_{j=1}^k
(x_j^+)^{n_j} \cdot t^{\theta_0} : n_j \in \nn, \theta_0 \in Q^+_0
\}$.}\medskip

\noindent We show the claim after proving the remaining assertions. Given
the above claim, the decomposition in (RTA2) also holds, with $\calq^+_0
= (\nn)^k \ltimes Q^+_0$. Moreover, every weight space of $B^+$ is
one-dimensional, of the form $B^+_{({\bf n},\theta_0)} = \F \prod_{j=1}^k
(x_j^+)^{n_j} t^{\theta_0}$. This includes the space $B^+_{\id_{H_0}} =
B^+_{({\bf 0}, 1_{Q^+_0})}$. Next, the symmetric form of the algebra
relations \eqref{Ertm1} implies that (RTA3) also holds with the
anti-involution sending $t^{\theta_0}, x_j^+$ to $t^{\theta_0^{-1}},
x_j^-$ respectively, for all $\theta_0 \in Q^+_0$ and $1 \leq j \leq k$.
(In fact, (RTA3) can be verified at the very outset, even before/without
verifying \eqref{Ertm2}.)
Now property (i) from the statement is clear from the algebra relations
and the above claim, except for the fact that $(\nn)^k \ltimes Q^+_0$ is
an RTM. This last fact is proved more generally in Theorem
\ref{Tsemidirect}(1),(4) below. Property (iii) is immediate from
Equations \eqref{Ertm1},\eqref{Ertm2}. To show property (ii), the algebra
relations \eqref{Ertm1} yield:
\[ m_A(B^+_{\theta_1} \otimes H_0 \otimes B^-_{\theta_2^{-1}}) \subset
B^-_{\theta_1 \ltimes \theta_2^{-1}} \otimes H_0 \otimes B^+_{(\theta_2
\ltimes \theta_1^{-1})^{-1}}, \qquad \forall \theta_1, \theta_2 \in
Q^+_0. \]

\noindent Next, that $B^\pm$ do not contain zerodivisors follows since
$B^\pm$ are $H_0$-root-semisimple, each root space is one-dimensional,
and by Equation \eqref{Ertm1} and (RTA1) there exist no root vectors that
are zerodivisors. Finally, that the multiplication map $: B^+ \otimes H_0
\otimes B^- \to \cala(Q^+_0, {\bf c})$ is a vector space isomorphism is
proved similarly as the proof (below) of the claim \eqref{Ertm2}.

It remains to show that (RTA1) holds, or more precisely, the claim
\eqref{Ertm2} is true. For this we use the Diamond Lemma from \cite{Be}. 
More precisely, Bergman has shown a variant for rings of the following
result from graph theory: if a directed graph satisfies (a) the
\textit{descending chain condition} (every directed path has finite
length) and (b) the \textit{diamond condition} (any two distinct directed
edges with common source extend to directed paths with common target),
then every connected graph component has a unique ``maximal" vertex. We
now apply Bergman's result to $\cala(Q^+_0,{\bf c})$, to prove the above
claim \eqref{Ertm2} as follows:
\begin{itemize}
\item Let $\{ h_i : i \in I \}$ be a fixed $\F$-basis of $H_0$ with $h_0
= 1_{H_0}$ for a fixed element $0 \in I$. Then a set of generators of
$\cala(Q^+_0, {\bf c})$ is given by:
\begin{equation}
X := \{ x_j^\pm : 1 \leq j \leq k \} \coprod \{ t^{\theta_0} : \theta_0
\in Q^+_0 \setminus \{ 1_{Q^+_0} \} \} \coprod \{ h_i : i \in I \}.
\end{equation}

\noindent Then one has relations $h_q h_r = \sum_{s \in I} c^s_{q,r} h_s$
that encode the multiplication in $H_0$, as well as other relations that
we rewrite for reasons explained below:
\begin{align}\label{Ertm3}
&\ t^{\theta_1^{\pm 1}} t^{\theta_2^{\pm 1}} = t^{\theta_1^{\pm 1} \cdot
\theta_2^{\pm 1}}, \qquad t^{\theta_1} t^{\theta_2^{-1}} = t^{\theta_1
\ltimes \theta_2^{-1}} t^{(\theta_2 \ltimes \theta_1^{-1})^{-1}},\notag\\
&\ t^{\theta_1} h_i(-) = ((0,\theta_1)(h_i))(-) \cdot t^{\theta_1},
\qquad h_i(-) t^{\theta_1^{-1}} = t^{\theta_1^{-1}} \cdot
((0,\theta_1)(h_i))(-),\notag\\
&\ x_j^+ h_i(-) = ((\vi_j, 1_{Q^+_0})(h_i))(-) \cdot x_j^+, \qquad
h_i(-) x_j^- = x_j^- \cdot ((\vi_j, 1_{Q^+_0})(h_i))(-),\\
&\ x_{j_1}^+ x_{j_2}^- = x_{j_2}^- x_{j_1}^+, \qquad 
x_j^+ x_j^- = x_j^- x_j^+ + c_j, \qquad
x_j^- x_{j'}^- = x_{j'}^- x_j^-, \qquad
x_{j'}^+ x_j^+ = x_j^+ x_{j'}^+, \notag\\
&\ t^{\theta_1} x_j^\pm = x_j^\pm t^{\mp \vi_j(\theta_1)}, \qquad x_j^\pm
t^{\theta_1^{-1}} = t^{\pm \vi_j(\theta_1^{-1})} x_j^\pm, \qquad h_0 x =
x h_0 = x,\notag
\end{align}

\noindent for all $i \in I$, $1 \leq j_1 \neq j_2 \leq k$, $1 \leq j < j'
\leq k$, $\theta_1, \theta_2 \in Q^+_0 \setminus \{ 1_{Q^+_0} \}$, and $x
\in X$. (Note that some of these relations are obtained from the
presentation \eqref{Ertm1} of $\cala(Q^+_0, {\bf c})$, via the fact that
$H_0$ is a contragredient representation of the group $\Z^k \ltimes
\tangle{Q^+_0}$.) We label the set of all these relations by the index
set $\Sigma_X$.\smallskip

\item The next ingredient is to define a total order on $\tangle{X}$. To
do so, first define and fix a total order $\prec$ on the set of
generators $X$ as follows. Use the Axiom of Choice (more specifically,
the Ultrafilter Theorem -- or equivalently, the Compactness Theorem for
first-order logic), to construct a total order $\prec$ on $Q^+_0$, which
extends the partial order $\theta_0 \prec \theta'_0 \cdot \theta_0\
\forall \theta_0, \theta'_0 \in Q^+_0$. Also fix a basis $\{ h_i : i \in
I \}$ of $H_0$ that includes $h_0 := 1_{H_0}$ (with $0 \in I$). Next,
define a total ordering on $I$ such that $0 < i\ \forall i \in I$. Now
use the following total order on $X$:
\begin{align}
h_0 \prec \quad t^{\theta_0^{-1}}\ (\downarrow \theta_0 \in Q^+_0
\setminus \{ 1_{Q^+_0} \}) \quad \prec &\ x_k^- \prec \cdots \prec x_1^-
\prec \quad h_i\ (\uparrow i \in I \setminus \{ 0 \})\\
\prec &\ x_1^+ \prec \cdots \prec x_k^+ \prec \quad t^{\theta_0}\
(\uparrow \theta_0 \in Q^+_0 \setminus \{ 1_{Q^+_0} \}),\notag
\end{align}

\noindent where
$t^{\theta_0}\ (\uparrow \theta_0 \in Q^+_0 \setminus \{ 1_{Q^+_0} \})$
means that $\{ t^{\theta_0} : \theta_0 \in Q^+_0 \setminus \{ 1_{Q^+_0}
\} \}$ inherits the total order from $Q^+_0$ via the map $\theta_0
\mapsto t^{\theta_0}$, and similarly in the other cases (where
$\downarrow$ indicates order-reversing).

This order then extends to a semigroup partial order on the monoid
$\tangle{X}$ generated by $X$ (which is a basis for the tensor algebra on
the $\F$-span of $X$), which satisfies: if $x > x'$ in $X$ then $A x B >
A x' B$ for all $A,B \in \tangle{X}$. To do so, we in fact write down a
\textit{total order} on $\tangle{X}$ as follows: compare two words by
setting larger words to be greater, and via the lexicographic order
induced by $\prec$ on two words of equal lengths.\smallskip

\item In place of directed edges in the graph-theoretic version of the
Diamond Lemma, Bergman uses the algebra relations to work with
\textit{reductions}. Namely, we need to verify that every relation
discussed above can be written as
$w_\sigma \to f_\sigma\ \forall \sigma \in \Sigma_X$,
with $w_\sigma \in \tangle{X}, f_\sigma \in \F \tangle{X} =
T_\F({\rm span}_\F X)$, and such that every monomial in $f_\sigma$ is
strictly less than $w_\sigma$ in the semigroup partial (in fact total)
order above. It is easy to verify that this procedure applies to every
relation in \eqref{Ertm3} by replacing the equality by $\to$.\smallskip

\item The previous step verifies that the semigroup partial order on
$\tangle{X}$ is compatible with the reductions. This provides us with a
directed path structure on the graph whose nodes are the monomials in
$\tangle{X}$. In this structure, ``maximal" vertices are those from which
no directed path starts, i.e., monomials that are left unchanged by every
reduction. In other words, maximal vertices are precisely the
``irreducible" monomials, given by $\{ x^- \cdot h_i \cdot x^+ : x^\pm
\in X_{irr}^\pm, i \in I \}$ (via Equation \eqref{Ertm2}).

\item It remains to check that the descending chain condition (DCC) and
the diamond condition are satisfied in our setting. A standard tool used
to verify the DCC is a \textit{misordering index} $\mis(w_\sigma)$, where
$\mis : \tangle{X} \to \nn$ is zero on all irreducible words, and we show
that $w \succ w'$ in $\tangle{X}$ implies $\mis(w) > \mis(w')$. Thus each
reduction reduces the $\mis(\cdot)$-value, proving the DCC.

In our setting, define $\mis : \tangle{X} \to \nn$ via: $\mis(s_1 \cdots
s_n) := T'_+ + T'_- + H' + N'$, where $T'_\pm, H'$ denote respectively
the number of letters $s_j$ of type $t^{\theta_0^{\pm 1}}, h_i$ for
$\theta_0 \in Q^+_0 \setminus \{ 1_{Q^+_0} \}$ and $i \in I$; and $N$
denotes the number of pairs $(j,j')$ such that: (a) $1 \leq j < j' \leq
n$ and (b) $s_j \succ s_{j'}$ with not both $s_j,s_{j'}$ of the same type
($T'_+, T'_-$, or $H'$). We claim that every reduction of $w \in
\tangle{X}$ strictly decreases $\mis(w)$. This is not hard to verify
using the presentation of the algebra $\cala(Q^+_0, {\bf c})$ given in
\eqref{Ertm3}.\smallskip

\item Finally, we verify the diamond condition. By \cite{Be}, one only
needs to work with directed paths of reductions starting from monomials;
and one only needs to resolve ``minimal ambiguities". Moreover, there are
no ``inclusion ambiguities" in our setting (i.e., for no $\sigma,
\sigma'$ is it true that $w_\sigma \in \tangle{X} w_{\sigma'}
\tangle{X}$). Thus it suffices to show that the diamond condition holds
for all overlap ambiguities $ABC$, where $AB = w_\sigma$ and $BC =
w_{\sigma'}$ for some $\sigma, \sigma' \in \Sigma_X$. In the present case
this involves computations with words of length precisely $3$, in the
$t^{\theta_0^{\pm 1}}, x_j^\pm, h_i$.
Some of these computations are straightforward using the algebra
relations and so we do not write them all down; for instance, overlap
ambiguities involving all three alphabets being of the same ``type" --
$t$, or $x$, or $h$ -- are trivially resolved using the algebra relations
and the cocycle conditions \eqref{Ecocycle1},\eqref{Ecocycle2}. The other
overlap ambiguities are also not hard to work out; for illustrative
purposes we carry out a few of the verifications in the equations
\eqref{Ertm4}, using the RTM-axioms and the hypotheses of the theorem. We
will also use $0$ instead of ${\bf 0}$ for the zero element in $\Z^k$.

{\allowdisplaybreaks
\begin{align}\label{Ertm4}
(\theta_1 > 1_{Q^+_0}) & \quad (t^{\theta_1} x_j^+) h_i(-) \to \quad
x_j^+ (t^{(-\vi_j)(\theta_1)} h_i(-)) \to (x_j^+
(0,(-\vi_j)(\theta_1))(h_i)(-)) t^{(-\vi_j)(\theta_1)}\notag\\
& \qquad \to ((\vi_j, 1_{Q^+_0}) (0, (-\vi_j)(\theta_1)) h_i)(-) \cdot
x_j^+ t^{(-\vi_j)(\theta_1)},\notag\\
& \quad t^{\theta_1} (x_j^+ h_i(-)) \to \quad (t^{\theta_1} ((\vi_j,
1_{Q^+_0}) h_i)(-)) x_j^+ \to ((0,\theta_1) (\vi_j, 1_{Q^+_0}) h_i)(-)
\cdot (t^{\theta_1} x_j^+)\notag\\
& \qquad \to ((0,\theta_1) (\vi_j, 1_{Q^+_0}) h_i)(-) \cdot x_j^+
t^{(-\vi_j)(\theta_1)}.\notag\\
(\theta_1 > 1_{Q^+_0}) & \quad (t^{\theta_1} h_i(-)) x_j^- \to \quad
((0,\theta_1) h_i)(-) \cdot (t^{\theta_1} x_j^-) \to (((0,\theta_1)
h_i)(-) x_j^-) t^{\vi_j(\theta_1)} \notag\\
& \qquad \to x_j^- \cdot ((\vi_j, 1_{Q^+_0}) (0, \theta_1) h_i)(-) \cdot
t^{\vi_j(\theta_1)},\\
& \quad t^{\theta_1} (h_i(-) x_j^-) \to (t^{\theta_1} x_j^-) ((\vi_j,
1_{Q^+_0})h_i)(-) \to x_j^- (t^{\vi_j(\theta_1)} ((\vi_j, 1_{Q^+_0})
h_i)(-))\notag\\
& \qquad \to x_j^- \cdot ((0,\vi_j(\theta_1))(\vi_j, 1_{Q^+_0}) h_i)(-)
\cdot t^{\vi_j(\theta_1)}.\notag\\
(\theta_l > 1_{Q^+_0}) & \quad (t^{\theta_1} h_i(-)) t^{\theta_2^{-1}}
\to \quad ((0,\theta_1) h_i)(-) (t^{\theta_1} t^{\theta_2^{-1}}) \to
(((0,\theta_1) h_i)(-) t^{\theta_1 \ltimes \theta_2^{-1}}) t^{(\theta_2
\ltimes \theta_1^{-1})^{-1}}\notag\\
& \qquad \to t^{\theta_1 \ltimes \theta_2^{-1}} \cdot ((0,\theta_1
\ltimes \theta_2^{-1})^{-1} (0,\theta_1) h_i)(-) \cdot t^{(\theta_2
\ltimes \theta_1^{-1})^{-1}}, \notag\\
& \quad t^{\theta_1} (h_i(-) t^{\theta_2^{-1}}) \to \quad (t^{\theta_1}
t^{\theta_2^{-1}}) ((0,\theta_2) h_i)(-) \to t^{\theta_1 \ltimes
\theta_2^{-1}} (t^{(\theta_2 \ltimes \theta_1^{-1})^{-1}} ((0,\theta_2)
h_i)(-))\notag\\
& \qquad \to t^{\theta_1 \ltimes \theta_2^{-1}} \cdot ((0, (\theta_2
\ltimes \theta_1^{-1})^{-1}) (0,\theta_2) h_i)(-) \cdot t^{(\theta_2
\ltimes \theta_1^{-1})^{-1}}. \notag\\
(\theta_l > 1_{Q^+_0}) & \quad (t^{\theta_1} t^{\theta_2}) x_j^\pm
\to t^{\theta_1 \theta_2} x_j^\pm \to x_j^\pm t^{(\mp \vi_j)(\theta_1
\theta_2)}, \notag\\
& \quad t^{\theta_1} (t^{\theta_2} x_j^\pm) \to (t^{\theta_1} x_j^\pm)
t^{(\mp \vi_j)(\theta_2)} \to x_j^\pm (t^{(\mp \vi_j)(\theta_1)} t^{(\mp
\vi_j)(\theta_2)}) \to x_j^\pm t^{(\mp \vi_j)(\theta_1
\theta_2)}.\notag\\
(\theta_l > 1_{Q^+_0}) & \quad (t^{\theta_1} x_j^\pm) t^{\theta_2^{-1}}
\to x_j^\pm (t^{(\mp \vi_j)(\theta_1)} t^{\theta_2^{-1}}) \to (x_j^\pm
t^{(\mp \vi_j)(\theta_1) \ltimes \theta_2^{-1}}) t^{(\theta_2 \ltimes
(\mp \vi_j)(\theta_1^{-1}))^{-1}} \notag\\
& \qquad \to t^{\theta_1 \ltimes (\pm \vi_j)(\theta_2^{-1})} x_j^\pm
t^{(\theta_2 \ltimes (\mp \vi_j)(\theta_1^{-1}))^{-1}}, \notag\\
& \quad t^{\theta_1} (x_j^\pm t^{\theta_2^{-1}}) \to (t^{\theta_1}
t^{(\pm \vi_j)(\theta_2^{-1})}) x_j^\pm \to t^{\theta_1 \ltimes (\pm
\vi_j)(\theta_2^{-1})} \cdot (t^{(((\pm \vi_j)(\theta_2^{-1})^{-1})
\ltimes \theta_1^{-1})^{-1}} x_j^\pm)\notag\\
& \qquad = t^{\theta_1 \ltimes (\pm \vi_j)(\theta_2^{-1})} (t^{((\pm
\vi_j)(\theta_2) \ltimes \theta_1^{-1})^{-1}} x_j^\pm) \to t^{\theta_1
\ltimes (\pm \vi_j)(\theta_2^{-1})} x_j^\pm t^{(\mp \vi_j)((\pm
\vi_j)(\theta_2) \ltimes \theta_1^{-1})^{-1}}\notag\\
& \qquad = t^{\theta_1 \ltimes (\pm \vi_j)(\theta_2^{-1})} x_j^\pm
t^{(\theta_2 \ltimes (\mp \vi_j)(\theta_1^{-1}))^{-1}}. \notag\\
(\theta_1 > 1_{Q^+_0}) & \quad (t^{\theta_1} x_j^+) x_l^- \to x_j^+
(t^{(-\vi_j)(\theta_1)} x_l^-) \to (x_j^+ x_l^-) t^{(\vi_l -
\vi_j)(\theta_1)} \to (x_l^- x_j^+ + \delta_{jl} c_j) t^{(\vi_l -
\vi_j)(\theta_1)}, \notag\\
& \quad t^{\theta_1} (x_j^+ x_l^-) \to (t^{\theta_1} x_l^-) x_j^+ +
\delta_{jl} c_j t^{\theta_1} \to x_l^- (t^{\vi_l(\theta_1)} x_j^+) +
\delta_{jl} c_j t^{\theta_1}\notag\\
& \qquad \to x_l^- x_j^+ t^{(\vi_l - \vi_j)(\theta_1)} + \delta_{jl} c_j
t^{\theta_1}.\notag\\
(i \in I) & \quad (x_j^+ h_i(-)) x_l^- \to \quad ((\vi_j, 1_{Q^+_0})
h_i)(-) (x_j^+ x_l^-) \to ((\vi_j, 1_{Q^+_0}) h_i)(-) (x_l^- x_j^+ +
\delta_{jl} c_j)\notag\\
& \qquad \to x_l^- \cdot ((\vi_j + \vi_l, 1_{Q^+_0}) h_i)(-) \cdot x_j^+
+ \delta_{jl} c_j \cdot ((\vi_j, 1_{Q^+_0}) h_i)(-),\notag\\
& \quad x_j^+ (h_i(-) x_l^-) \to \quad (x_j^+ x_l^-) ((\vi_l, 1_{Q^+_0})
h_i)(-) \to (x_l^- x_j^+ + \delta_{jl} c_j) ((\vi_l, 1_{Q^+_0})
h_i)(-)\notag\\
& \qquad \to x_l^- \cdot ((\vi_j + \vi_l, 1_{Q^+_0}) h_i)(-) \cdot x_j^+
+ \delta_{jl} c_j \cdot ((\vi_l, 1_{Q^+_0}) h_i)(-).\notag\\
(j > l) & \quad (x_j^+ x_l^+) x_i^- \to x_l^+ (x_j^+ x_i^-) \to x_l^+
(x_i^- x_j^+ + \delta_{ij} c_j) \to x_i^- x_l^+ x_j^+ + \delta_{il} c_l
x_j^+ + \delta_{ij} c_j x_l^+, \notag\\
& \quad x_j^+ (x_l^+ x_i^-) \to \quad x_j^+ (x_i^- x_l^+ + \delta_{il}
c_l) \to x_i^- (x_j^+ x_l^+) + \delta_{ij} c_j x_l^+ + \delta_{il} c_l
x_j^+. \notag\\
(\theta_1 > 1_{Q^+_0}) & \quad (t^{\theta_1} h_q(-)) h_r(-) \to
((0,\theta_1)h_q)(-) (t^{\theta_1} h_r(-)) \to ((0,\theta_1)h_q)(-) \cdot
((0,\theta_1)h_r)(-) t^{\theta_1},\notag\\
& \quad t^{\theta_1} (h_q(-) h_r(-)) \to \quad \sum_s c_{q,r}^s
t^{\theta_1} h_s(-) \to \sum_s c_{q,r}^s ((0,\theta_1)h_s)(-) \cdot
t^{\theta_1}.\notag
\end{align}}
\end{itemize}

\noindent Both computations in the last reduction yield the same quantity
because $H_0$ is a module-algebra over the group $\Z^k \ltimes
\tangle{Q^+_0}$ (via its contragredient representation), and this imposes
a compatibility constraint on the structure constants for $H_0$. Further
note that several overlap ambiguities that are not listed in
\eqref{Ertm4} can be resolved without any further computation, by
applying the anti-involution $i : \cala(Q^+_0, {\bf c}) \to \cala(Q^+_0,
{\bf c})$ from (RTA3). For instance, the first overlap ambiguity resolved
in \eqref{Ertm4} implies that the overlap ambiguity $h_i(-) x_j^-
t^{\theta_1^{-1}}$ can also be resolved for $\theta_1 \in Q^+_0 \setminus
\{ 1_{Q^+_0} \}$.
\end{proof}

\begin{remark}
Our construction of the algebra $\cala(Q^+_0, {\bf c})$ can be thought as
a generalization of continuous Cherednik algebras (see \cite{EGG}).
Note that the algebras $B^\pm$ are ``dual" to one another in some sense;
however, they need not be polynomial algebras as in \cite{EGG}.
\end{remark}

\begin{remark}
In light of the algebra relations \eqref{Ertm1} in $\cala(Q^+_0, {\bf
c})$, the first cocycle condition \eqref{Ecocycle1} can be thought of as
a group/monoid version of a so-called ``straightening identity" in the
flavor of Garland \cite{Gar}, Beck-Chari-Pressley \cite{BCP}, and several
other works in the literature. For more on the subject, see \cite{BC} and
the references therein.
\end{remark}

Equipped with the above theorem, we now prove our initial result in this
section.

\begin{proof}[Proof of Theorem \ref{Tmonoid}]
The meat of the proof lies in proving the existence of an RTA
$\cala(Q^+_0)$ over an arbitrary RTM $Q^+_0$; but this is the special
case of Theorem \ref{Trtm} where $k=0$.
Conversely, suppose there exists a strict RTA $A$ satisfying properties
(b)--(e). Define $\theta_1 \ltimes \theta_2^{-1}, \theta_1 \rtimes
\theta_2^{-1}$ via:
\[ m_A (B^+_{\theta_1} \otimes \F \cdot 1_{H_0} \otimes
B^-_{\theta_2^{-1}}) \subset B^-_{\theta_1 \ltimes \theta_2^{-1}} \otimes
H_0 \otimes B^+_{\theta_1 \rtimes \theta_2^{-1}}. \]

\noindent Note here that both sides are nonzero subspaces of $A$.
It follows by considering their $H_0$-roots that $\ltimes$ is an action
of $\calq^+_0$ on $-\calq^+_0$, and that $\theta_1 \cdot \theta_2^{-1} =
(\theta_1 \ltimes \theta_2^{-1} ) \cdot (\theta_1 \rtimes
\theta_2^{-1})$. Now applying the anti-involution $i$ to the above
subspace (via Lemma \ref{Lfirst}) and again considering the $H_0$-roots,
we obtain (via a slight abuse of notation):
\[ (\theta_1 \rtimes \theta_2^{-1})^{-1} = i(\theta_1 \rtimes
\theta_2^{-1}) = i(\theta_2^{-1}) \ltimes i(\theta_1) = \theta_2 \ltimes
\theta_1^{-1}, \qquad \forall \theta_1, \theta_2 \in \calq^+_0. \]

\noindent This shows the first cocycle condition \eqref{Ecocycle1} for
$\calq^+_0$. Next, consider $\theta_1, \theta_2, \theta_3 \in \calq^+_0$,
and compute $m_A(B^+_{\theta_1} \otimes B^-_{\theta_2^{-1}} \otimes
B^-_{\theta_3^{-1}})$ in two ways by using the associativity of $m_A$ and
properties (b)--(e). This is an easy computation that yields the second
cocycle condition \eqref{Ecocycle2} for $\calq^+_0$.
\end{proof}

We end this subsection with two further results on the algebras
$\cala(Q^+_0)$. The first discusses Casimir operators.

\begin{prop}\label{Prtmo1}
Fix a regular triangular monoid $Q^+_0$ and a subset $\calq^- \subset
-Q^+_0$ such that $\theta_1 \ltimes -$ is a bijection on $\calq^-$ for
all $\theta_1 \in Q^+_0$.
\begin{enumerate}
\item Then a suitable completion of the algebra $\cala(Q^+_0)$ contains a
central ``Casimir" operator $\Omega(\calq^-) := \sum_{\theta_0 \in
\calq^-} t^{\theta_0} t^{\theta_0^{-1}}$.

\item The operators $\Omega(\calq^-)$ act on all objects in
$\calo[\hzfree]$. They kill every highest weight module in $\calo$, hence
act nilpotently on $\calo[\hzfree]$.
\end{enumerate}
\end{prop}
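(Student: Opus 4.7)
Plan: I would realize $\Omega(\calq^-)$ in the ``weight-zero completion'' of $\cala(Q^+_0)$---the space of possibly infinite formal sums of triangular elements $b^- h b^+$ whose net $H_0$-root is $\id_{H_0}$. Each summand $t^{\theta_0} t^{\theta_0^{-1}}$ (with $\theta_0 = \beta^{-1}$, $\beta \in Q^+_0$) lies in $B^-_{\beta^{-1}} \cdot H_0 \cdot B^+_\beta$ and has zero net root, so $\Omega(\calq^-)$ is a bona fide element of this component. To prove centrality (Part (1)), it suffices to check commutation with a generating set: $H_0$ and the $t^{\gamma^{\pm 1}}$ for $\gamma \in Q^+_0$.

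Commutation with $H_0$ is direct from the relations \eqref{Ertm1}: the $H_0$-twists produced by $t^{\theta_0^{-1}}$ and $t^{\theta_0}$ cancel summand-by-summand. The crux is commutation with $t^\gamma$. Using \eqref{Ertm1}, $t^\gamma t^{\beta^{-1}} = t^{\gamma \ltimes \beta^{-1}} t^{(\beta \ltimes \gamma^{-1})^{-1}}$, and multiplying by $t^\beta \in B^+$ yields $t^\gamma t^{\beta^{-1}} t^\beta = t^{\gamma \ltimes \beta^{-1}} t^{(\beta \ltimes \gamma^{-1})^{-1} \beta}$. The first cocycle condition \eqref{Ecocycle1}, rearranged (by taking inverses and conjugating appropriately), yields the key identity $(\beta \ltimes \gamma^{-1})^{-1} \beta = (\gamma \ltimes \beta^{-1})^{-1} \gamma$, whence the right side splits as $t^{\gamma \ltimes \beta^{-1}} \cdot t^{(\gamma \ltimes \beta^{-1})^{-1}} \cdot t^\gamma$. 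Setting $(\beta')^{-1} := \gamma \ltimes \beta^{-1}$ and summing over $\beta^{-1} \in \calq^-$, the hypothesis that $\gamma \ltimes -$ is a bijection on $\calq^-$ lets us reindex the sum to recover $\Omega(\calq^-) \cdot t^\gamma$. Commutation with $t^{\gamma^{-1}}$ then follows for free by applying the anti-involution $i$ of (RTA3), which by Lemma \ref{Lfirst}(4) fixes each summand of $\Omega(\calq^-)$ and swaps $t^\gamma \leftrightarrow t^{\gamma^{-1}}$.

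For Part (2), well-definedness of the action on any $M \in \calo[\hzfree]$ is immediate: local finiteness of $B^+$ on each $m \in M$ ensures $t^{\theta_0^{-1}} m = 0$ for all but finitely many $\theta_0 \in \calq^-$, so $\Omega(\calq^-) m$ is a finite sum. Next, if $V^\lambda$ is a highest weight module with cyclic maximal vector $v_\lambda$, then $t^{\theta_0^{-1}} \in N^+$ kills $v_\lambda$ for every $\theta_0 \in \calq^- \setminus \{ \id_{H_0} \}$, so $\Omega(\calq^-) v_\lambda = 0$; by centrality, $\Omega(\calq^-)$ annihilates $V^\lambda = \cala(Q^+_0) v_\lambda$. (One should implicitly exclude $\id_{H_0}$ from $\calq^-$ for this claim to hold literally; otherwise $\Omega(\calq^-) - 1$ kills $V^\lambda$.) Nilpotency on $\calo[\hzfree]$ now follows from Proposition \ref{Pproj}(4): every object admits a finite highest weight filtration, $\Omega(\calq^-)$ kills each subquotient, and so a power of $\Omega(\calq^-)$ bounded by the filtration length annihilates the object.

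The main technical obstacle is the centrality argument with respect to $t^\gamma$: it depends crucially on both the first cocycle condition \eqref{Ecocycle1} of the RTM structure and the bijection hypothesis on $\calq^-$, and these two ingredients are precisely what the statement requires. Additional care is needed to set up the completion so that the formal infinite sum $\Omega(\calq^-)$ and its multiplicative interactions with $\cala(Q^+_0)$ are legitimate; the cleanest route is probably to verify centrality via the action on $\bigoplus_{M \in \calo[\hzfree]} M \subset \bigoplus_M \End_\F(M)$, where all sums in sight become finite as noted above, and to then transport this verification back to the formal completion.
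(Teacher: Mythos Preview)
Your proposal is correct and follows essentially the same approach as the paper: the key identity $(\beta \ltimes \gamma^{-1})^{-1}\beta = (\gamma \ltimes \beta^{-1})^{-1}\gamma$ from the first cocycle condition, reindexing via the bijection hypothesis, and the anti-involution for commutation with $t^{\gamma^{-1}}$ are exactly the ingredients the paper uses, and Part~(2) likewise appeals to the Harish-Chandra projection and Proposition~\ref{Pproj}(4). Your treatment is in fact somewhat more careful than the paper's in spelling out commutation with $H_0$, the role of the completion, and the caveat about $\id_{H_0} \in \calq^-$.
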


\noindent Examples of subsets $\calq^-$ include any subset of $-Q^+_0$
when $Q^+_0$ is abelian; as well as $\calq^- = \{ 1_{Q^+_0} \}$, which
corresponds to $\Omega(\calq^-) = 1$.

\begin{proof}
For the first part, observe using the first cocycle condition
\eqref{Ecocycle1} that $(\theta_2 \ltimes \theta_1^{-1})^{-1} \cdot
\theta_2 = (\theta_1 \ltimes \theta_2^{-1})^{-1} \cdot \theta_1$, for all
$\theta_1, \theta_2 \in Q^+_0$. Now fix $\theta_2 \in -\calq^-$ and
compute using the algebra relations:
\[ t^{\theta_1} \cdot t^{\theta_2^{-1}} t^{\theta_2} = t^{\theta_1
\ltimes \theta_2^{-1}} t^{(\theta_2 \ltimes \theta_1^{-1})^{-1}}
t^{\theta_2} = t^{\theta_1 \ltimes \theta_2^{-1}} t^{(\theta_1 \ltimes
\theta_2^{-1})^{-1}} t^{\theta_1}. \]

\noindent By the assumptions on $\calq^-$, it follows that $t^{\theta_1}$
commutes with $\Omega(\calq^-)$ for all $\theta_1 \in Q^+_0$. In turn,
this implies (using the anti-involution $i : t^{\theta_0}
\leftrightarrow t^{\theta_0^{-1}}$ on $\cala(Q^+_0)$) that
$\Omega(\calq^-)$ is central.

To prove the second part, first observe as in the Kac-Moody setting, that
the ``Casimir" operator $\Omega(\calq^-)$ acts on arbitrary objects of
Category $\calo[\hzfree]$. Moreover, $\Omega(\calq^-)$ kills the highest
vector in any highest weight module in the respective Categories $\calo$,
since the Harish-Chandra projection to $H_0$ kills all such operators. It
follows that these central elements annihilate the entire module.
The final assertion now follows from Proposition \ref{Pproj}(4).
\end{proof}

We also discuss the Conditions (S) for the algebras $\cala(Q^+_0)$.

\begin{prop}\label{Prtmo2}
Suppose $Q^+_0$ is a nontrivial regular triangular monoid, whose action
$\ltimes$ on $-Q^+_0$ stabilizes $-Q^+_0 \setminus \{ 1_{Q^+_0} \}$.
Then the algebra $\cala(Q^+_0)$ satisfies none of the Conditions (S),
because $S^3(\lambda) = \tangle{Q^+_0} * \lambda\ \forall \lambda \in
\hzfree$, so that $\dim L(\lambda) = 1$.
\end{prop}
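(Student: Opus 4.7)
The plan is to reduce the whole proposition to the single assertion $\dim L(\lambda) = 1$ for every $\lambda \in \hzfree$. Once this is established, the formula $S^3(\lambda) = \tangle{Q^+_0} * \lambda$ and the failure of all Conditions (S) will follow by short arguments.

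The main step, and the main obstacle, will be to show that $N^+$ annihilates every Verma module $M(\lambda)$. I would use the algebra relation
\[ t^{\theta_1} t^{\theta_2^{-1}} = t^{\theta_1 \ltimes \theta_2^{-1}} \cdot t^{(\theta_2 \ltimes \theta_1^{-1})^{-1}} \]
from \eqref{Ertm1}. For $\theta_1, \theta_2 \in Q^+_0 \setminus \{1_{Q^+_0}\}$, the stabilization hypothesis on $\ltimes$ forces $\theta_2 \ltimes \theta_1^{-1} \in -Q^+_0 \setminus \{1_{Q^+_0}\}$, so the rightmost factor on the right-hand side lies in $N^+$. Applying both sides to the highest weight vector $m_\lambda$ and using $N^+ m_\lambda = 0$ then gives $t^{\theta_1}(t^{\theta_2^{-1}} m_\lambda) = 0$. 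Combined with the trivial $\theta_2 = 1_{Q^+_0}$ case and the spanning set $\{t^{\theta^{-1}} m_\lambda : \theta \in Q^+_0\}$ of $M(\lambda)$ (from Theorem \ref{Trtm}), this yields $N^+ \cdot M(\lambda) = 0$. From here $\dim L(\lambda) = 1$ follows readily: the sum $V$ of the weight spaces of $M(\lambda)$ at weights $\theta^{-1} * \lambda$ with $\theta \neq 1_{Q^+_0}$ is $A$-stable (for $B^-$-stability one uses $\theta \theta' \neq 1_{Q^+_0}$ when $\theta \neq 1_{Q^+_0}$, since $Q^+_0 \setminus \{1_{Q^+_0}\}$ is a semigroup), and $M(\lambda)/V$ is one-dimensional.

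Next, I would verify $S^3(\lambda) = \tangle{Q^+_0} * \lambda$. The inclusion $\subseteq$ is Lemma \ref{Lcs}. For the reverse, since every $L(\mu)$ with $\mu \in \hzfree$ is one-dimensional by the previous step, a weight-space count shows that $L(\mu')$ is a composition factor of $M(\mu)$ exactly when $\mu' \in (-Q^+_0) * \mu$. Hence $\theta^{-1} * \lambda \in S^3(\lambda)$ (via $L(\theta^{-1}*\lambda)$ as a subquotient of $M(\lambda)$) and $\theta * \lambda \in S^3(\lambda)$ (via $L(\lambda)$ as a subquotient of $M(\theta * \lambda)$) for each $\theta \in Q^+_0$; taking the transitive closure of the equivalence relation gives all of $\tangle{Q^+_0} * \lambda$, since the group $\tangle{Q^+_0}$ is generated by $Q^+_0 \cup (-Q^+_0)$.

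To finish, $\tangle{Q^+_0}$ is infinite: for any $\theta \in Q^+_0 \setminus \{1_{Q^+_0}\}$, the powers $\theta, \theta^2, \dots$ lie in the semigroup $Q^+_0 \setminus \{1_{Q^+_0}\}$ and are forced to be distinct, since a relation $\theta^a = \theta^b$ with $a < b$ would yield $\theta^{b-a} = 1_{Q^+_0}$ in the group $\tangle{Q^+_0}$. As $\lambda \in \hzfree$ the orbit $\tangle{Q^+_0} * \lambda$ is then infinite, so (S3) fails, and by Lemma \ref{Lcs} so does (S4). In the strict case $\pi_{H_0} = \id$, so $(-Q^+_0) * \lambda \subseteq S^1(\lambda) \subseteq S^2(\lambda)$, and this set is likewise infinite, witnessing the failure of (S1) and (S2).
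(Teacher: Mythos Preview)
Your proof is correct and follows essentially the same route as the paper's. Both arguments use the relation $t^{\theta_1} t^{\theta_2^{-1}} = t^{\theta_1 \ltimes \theta_2^{-1}} t^{(\theta_2 \ltimes \theta_1^{-1})^{-1}}$ together with the stabilization hypothesis to show that every weight vector of $M(\lambda)$ is maximal, deduce $\dim L(\lambda)=1$, and then read off $S^3(\lambda)=\tangle{Q^+_0}*\lambda$ and the failure of all Conditions~(S). The only cosmetic difference is that the paper obtains $\theta*\lambda\in S^3(\lambda)$ in one move via the decomposition $\theta=\theta_+\theta_-$ coming from the cocycle condition~\eqref{Ecocycle1}, whereas you iterate the one-step linkage using transitivity of the $S^3$-equivalence; both are equally short.
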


\noindent Note that $\hzfree$ is nonempty because $\tangle{Q^+_0} \subset
\hzfree$.

\begin{proof}
Given $\lambda \in \hzfree$, we first claim that every nonzero weight
vector $t^{\theta_2^{-1}} m_\lambda$ of the Verma module $M(\lambda)$ is
maximal (for $\theta_2 \in Q^+_0$). To show the claim, compute using the
assumptions and the algebra relations \eqref{Ertm1}, for $\theta_1 \in
Q^+_0 \setminus \{ 1_{Q^+_0} \}$:
\[ t^{\theta_1} \cdot t^{\theta_2^{-1}} m_\lambda = t^{\theta_1 \ltimes
\theta_2^{-1}} t^{(\theta_2 \ltimes \theta_1^{-1})^{-1}} m_\lambda \in
t^{\theta_1 \ltimes \theta_2^{-1}} \cdot N^+ m_\lambda = 0. \]

\noindent This proves the claim. In particular, $N^- m_\lambda \subset
M(\lambda)$ is a codimension $1$ submodule, whence $\F \cong M(\lambda) /
N^- m_\lambda \twoheadrightarrow L(\lambda)$. Therefore $\dim L(\lambda)
= 1$.
Finally, recall by the first cocycle condition \eqref{Ecocycle1} that an
arbitrary element $\theta \in \tangle{Q^+_0}$ can be written as $\theta =
\theta_+ \theta_-$, where $\theta_+, \theta_-^{-1} \in Q^+_0$. Now note
using the above claim:
\[ [M(\lambda) : L(\theta_- * \lambda)] > 0, \qquad [M(\theta * \lambda)
: L(\theta_+^{-1} * (\theta * \lambda))] > 0 \quad \implies \quad \theta
* \lambda \in S^3(\lambda)\ \forall \theta \in \tangle{Q^+_0}. \]

\noindent This proves the statement about $S^3(\lambda)$ since
$S^3(\lambda) \subset \tangle{\calq^+_0} * \lambda$ for every RTA and all
$\lambda \in \hzfree$. Moreover, $\lambda = \pi_{H_0}(\lambda) >
\theta_0^{-n}*\lambda$ for all $n \in \N$ and $\theta_0 \in Q^+_0
\setminus \{ 1_{Q^+_0} \}$, so that $|S^1(\lambda)| = \infty\ \forall
\lambda \in \hzfree$. This concludes the proof.
\end{proof}

\subsection{Examples of regular triangular monoids}\label{Segrtm}

Having proved the RTM-RTA Correspondence (Theorem \ref{Tmonoid}) and the
more general RTA Existence Theorem \ref{Trtm}, we now describe several
recipes to construct examples of RTMs, which in turn admit RTA
constructions.

\begin{theorem}\label{Tsemidirect}
Suppose $k \in \nn$, and for each $j = 0, \dots, k$, $Q^+_j$ is a monoid
contained in a group $\tangle{Q^+_j}$ such that $Q^+_j \setminus \{
1_{Q^+_j} \}$ is a semigroup.
\begin{enumerate}
\item If $Q^+_0$ is abelian, then it is an RTM with $\theta_1 \ltimes
\theta_2^{-1} = \theta_2^{-1}$ for $\theta_1, \theta_2 \in Q^+_0$.

\item If all $Q^+_j$ are RTMs, then so is $\times_{j=0}^k Q^+_j$.

\item Suppose $k=0$ and $Q^+_0$ is an RTM. Suppose $Q^+_0$ contains a
submonoid $Q^+$ whose action $\ltimes$ on $-Q^+_0$ stabilizes $-Q^+$.
Then $Q^+$ is also an RTM.

\item Suppose $k=1$, $Q^+_0, Q^+_1$ are RTMs, and $\tangle{Q^+_0}$ acts
admissibly on $Q^+_1$. Then $Q^+_0 \cdot Q^+_1$ is an RTM (where
$\tangle{Q^+_0} \cdot \tangle{Q^+_1}$ denotes the semidirect product
group).
\end{enumerate}
\end{theorem}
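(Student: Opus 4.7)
The plan is to verify the two RTM axioms in each part: (RTM1), that the monoid minus identity is a semigroup generating a group, and (RTM2), the existence of a left action $\ltimes$ satisfying the cocycle identities \eqref{Ecocycle1} and \eqref{Ecocycle2}. Parts (1)--(3) reduce to direct verifications, while part (4) is the main technical step.

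For (1), setting $\theta_1 \ltimes \theta_2^{-1} := \theta_2^{-1}$ is trivially a left action (independent of $\theta_1$) and fixes $1_{Q^+_0}$. Commutativity of $Q^+_0$ collapses the right-hand side of \eqref{Ecocycle1} to $\theta_2^{-1} \theta_1 = \theta_1 \theta_2^{-1}$, and \eqref{Ecocycle2} becomes the tautology $\theta_2^{-1} \theta_3^{-1} = \theta_2^{-1} \theta_3^{-1}$. For (2), define $\ltimes$ coordinatewise on $\times_{j=0}^k Q^+_j$; both axioms then hold componentwise. For (3), inheritance of (RTM1) is immediate. The stability hypothesis that $\theta_1 \ltimes \theta_2^{-1} \in -Q^+$ for $\theta_1, \theta_2 \in Q^+$ provides a restricted action $\ltimes' : Q^+ \times (-Q^+) \to -Q^+$; applying stability with $\theta_1$ and $\theta_2$ swapped gives $(\theta_2 \ltimes \theta_1^{-1})^{-1} \in Q^+$, so every factor appearing in \eqref{Ecocycle1} and \eqref{Ecocycle2} lies in $\tangle{Q^+}$, and the restricted cocycles follow from the cocycles in $Q^+_0$.

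Part (4) is the main obstacle. Inside the semidirect product group $\tangle{Q^+_0} \cdot \tangle{Q^+_1}$, I would write a typical element of $Q^+_0 \cdot Q^+_1$ as $\alpha \beta$ with $\alpha \in Q^+_0$, $\beta \in Q^+_1$, and its inverse as $\beta^{-1} \alpha^{-1}$. Given $\alpha_j \in Q^+_0$ and $\beta_j \in Q^+_1$ for $j=1,2$, the strategy is to compute
\[
(\alpha_1 \beta_1)(\beta_2^{-1} \alpha_2^{-1}) \;=\; \alpha_1 \cdot (\beta_1 \beta_2^{-1}) \cdot \alpha_2^{-1}
\]
in three stages: (i) apply the first cocycle in $Q^+_1$ to expand $\beta_1 \beta_2^{-1} = (\beta_1 \ltimes_1 \beta_2^{-1})(\beta_2 \ltimes_1 \beta_1^{-1})^{-1}$; (ii) use admissibility of the $\tangle{Q^+_0}$-action to shuttle $\alpha_1$ past the negative $Q^+_1$-piece (staying in $-Q^+_1$) and $\alpha_2^{-1}$ past the positive $Q^+_1$-piece (staying in $Q^+_1$); and (iii) apply the first cocycle in $Q^+_0$ to expand the surviving $\alpha_1 \alpha_2^{-1}$ factor. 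This produces a canonical factorization of $(\alpha_1 \beta_1)(\beta_2^{-1}\alpha_2^{-1})$ as an element of ${-}(Q^+_0 \cdot Q^+_1)$ times an element of $Q^+_0 \cdot Q^+_1$, and reading off the negative half yields the definition of $\ltimes$ on $Q^+_0 \cdot Q^+_1$. By construction this forces \eqref{Ecocycle1}. The second cocycle \eqref{Ecocycle2} then follows by computing the triple product $(\alpha_1 \beta_1)(\beta_2^{-1}\alpha_2^{-1})(\beta_3^{-1}\alpha_3^{-1})$ via two nested bracketings and invoking associativity of the group, together with \eqref{Ecocycle2} for each $Q^+_j$ and the admissibility identity $m(\theta_1 \ltimes_1 \theta_2^{-1}) = m(\theta_1) \ltimes_1 m(\theta_2)^{-1}$.

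The main obstacle is precisely this part-(4) bookkeeping: the admissibility hypothesis is calibrated so that each shuttling step lands back inside $\pm Q^+_1$ and every cross-term feeds into exactly the right side of the target cocycle, but confirming this requires tracking several parallel expansions and keeping careful track of left versus right actions. Once the $\ltimes$ on $Q^+_0 \cdot Q^+_1$ has been extracted from the first computation, both cocycle identities reduce mechanically to the cocycles for the individual factors together with the admissibility compatibility.
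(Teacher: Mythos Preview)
Your proposal is correct and close in spirit to the paper's argument, but the route you take in part~(4) is organized differently, and there is one small gap worth flagging.

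The paper writes an element of the semidirect product as $(\theta_1,\theta_0)=\theta_1\cdot\theta_0$ with $\theta_j\in\tangle{Q^+_j}$ (so the $Q^+_1$-component is listed first), then \emph{posits} the formula
\[
(\theta_1^+,\theta_0^+)\ltimes(\theta_1^-,\theta_0^-)\;:=\;\bigl(\theta_1^+\ltimes\theta_0^+(\theta_1^-),\ \theta_0^+\ltimes\theta_0^-\bigr)
\]
and verifies, by three separate direct computations, that this is a left action fixing the identity and that both cocycle identities \eqref{Ecocycle1}, \eqref{Ecocycle2} hold. Your approach instead \emph{derives} the action: you compute $(\alpha_1\beta_1)(\beta_2^{-1}\alpha_2^{-1})$ in the group, use the $Q^+_1$-cocycle and admissibility to shuttle pieces into a canonical $(-Q^+_1)(-Q^+_0)\cdot(Q^+_0)(Q^+_1)$ form, and read off the negative half as the definition of $\ltimes$. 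This is the matched-pair viewpoint the paper alludes to in the remark after Definition~\ref{Drtm}, and it has the virtue that \eqref{Ecocycle1} is automatic by construction, while your associativity argument for the triple product $(\alpha_1\beta_1)(\beta_2^{-1}\alpha_2^{-1})(\beta_3^{-1}\alpha_3^{-1})$ handles \eqref{Ecocycle2}. Your ordering convention ($\alpha\in Q^+_0$ first) is opposite to the paper's, so your explicit formula for $\ltimes$ will look superficially different, but the two are equivalent up to this relabeling.

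The gap: you do not explicitly verify that your $\ltimes$ is a \emph{left action}, i.e.\ that $(g_1g_2)\ltimes h^{-1}=g_1\ltimes(g_2\ltimes h^{-1})$ for $g_1,g_2\in Q^+_0\cdot Q^+_1$ and $h\in Q^+_0\cdot Q^+_1$. This does not follow from \eqref{Ecocycle1} and \eqref{Ecocycle2} alone, and the paper devotes a separate computation to it. In your framework it comes from the \emph{other} triple-product bracketing, namely comparing $\bigl((g_1g_2)h^{-1}\bigr)$ with $\bigl(g_1(g_2 h^{-1})\bigr)$ and checking that the two canonical factorizations agree; this requires one more use of the admissibility identity together with the action axioms for $\ltimes_0$ and $\ltimes_1$. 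Once you add that check, your argument is complete.
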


\noindent Note that parts (1),(4) are used in the proof of Theorem
\ref{Trtm}.

\begin{proof}
The first two parts are easily verified; note for the second part that we
define
\[ (\theta_j^+)_{j=0}^k \ltimes (\theta_j^-)_{j=0}^k := (\theta_j^+
\ltimes \theta_j^-)_{j=0}^k, \qquad \forall \theta_j^\pm \in \pm Q^+_j.
\]

\noindent To prove the third part, note that $\tangle{Q^+} \subset
\tangle{Q^+_0}$ is a group; moreover, the cocycle conditions
\eqref{Ecocycle1},\eqref{Ecocycle2} hold in $-Q^+$ because they hold in
$-Q^+_0$.

It remains to prove the last part; for this we will write every element
of $\tangle{Q^+_0} \cdot \tangle{Q^+_1}$ as $(\theta_1, \theta_0) =
\theta_1 \cdot \theta_0$, with $\theta_j \in \tangle{Q^+_j}$ for $j=0,1$.
That $Q^+_0 \cdot Q^+_1$ satisfies (RTM1) is not hard to verify, so we
only verify here that (RTM2) holds. For this, define
\[ (\theta_1^+, \theta_0^+) \ltimes (\theta_1^-, \theta_0^-) :=
(\theta_1^+ \ltimes \theta_0^+(\theta_1^-), \theta_0^+ \ltimes
\theta_0^-), \qquad \forall \theta_j^\pm \in \pm Q^+_j. \]

\noindent Note that this is a natural definition to propose, given the
actions $\ltimes$ of $Q^+_j$ on $-Q^+_j$ for $j=0,1$ and the semidirect
product structure of $Q^+_0 \cdot Q^+_1$. Now compute:
\begin{align*}
(\nu_1^+, \nu_0^+) \ltimes ((\theta_1^+, \theta_0^+) \ltimes
(\theta_1^-, \theta_0^-)) = &\ (\nu_1^+, \nu_0^+) \ltimes (\theta_1^+
\ltimes \theta_0^+(\theta_1^-), \theta_0^+ \ltimes \theta_0^-)\\
= &\ (\nu_1^+ \ltimes \nu_0^+(\theta_1^+ \ltimes \theta_0^+(\theta_1^-)),
\nu_0^+ \ltimes (\theta_0^+ \ltimes \theta_0^-)),\\
((\nu_1^+, \nu_0^+) \cdot (\theta_1^+, \theta_0^+)) \ltimes (\theta_1^-,
\theta_0^-) = &\ ((\nu_1^+ \nu_0^+(\theta_1^+)) \ltimes (\nu_0^+
\theta_0^+)(\theta_1^-), (\nu_0^+ \theta_0^+) \ltimes \theta_0^-).
\end{align*}

\noindent Using the admissibility of the $Q^+_0$-action on $Q^+_1$, as
well as the actions $\ltimes$ of $Q^+_j$ on $-Q^+_j$ for $j=0,1$, it
follows that both of the above quantities are equal. Therefore $\ltimes$
is indeed an action of $Q^+_0 \cdot Q^+_1$ on $(-Q^+_0) \cdot (-Q^+_1)$.
The action fixes $(1_{Q^+_1}, 1_{Q^+_0})$ because $Q^+_0, Q^+_1$ are both
RTMs.

It remains to verify the two cocycle conditions
\eqref{Ecocycle1},\eqref{Ecocycle2}. In what follows, denote $a \rtimes b
:= (b^{-1} \ltimes a^{-1})^{-1}$ for suitable $a,b$. Now to show the
first cocycle condition for $Q^+_0 \cdot Q^+_1$, we compute:
\begin{align}\label{Esemidirect}
&\ (\theta_1^+, \theta_0^+) \cdot (\theta_1^-, \theta_0^-) = \theta_1^+
\theta_0^+ \theta_1^- \theta_0^- = \theta_1^+ \theta_0^+(\theta_1^-)
\cdot (\theta_0^+ \ltimes \theta_0^-) (\theta_0^+ \rtimes \theta_0^-)\\
= &\ (\theta_1^+ \ltimes \theta_0^+(\theta_1^-)) \cdot (\theta_0^+
\ltimes \theta_0^-) \cdot (\theta_0^+ \ltimes
\theta_0^-)^{-1}(\theta_1^+ \rtimes \theta_0^+(\theta_1^-)) \cdot
(\theta_0^+ \rtimes \theta_0^-).\notag
\end{align}

\noindent Thus it suffices to prove that
\[ \left( (\theta_1^-, \theta_0^-)^{-1} \ltimes (\theta_1^+,
\theta_0^+)^{-1} \right) \cdot (\theta_0^+ \ltimes
\theta_0^-)^{-1}(\theta_1^+ \rtimes \theta_0^+(\theta_1^-)) \cdot
(\theta_0^+ \rtimes \theta_0^-) = (1_{Q^+_1}, 1_{Q^+_0}) = 1, \]

\noindent i.e., that
\[ ((\theta_0^-)^{-1}(\theta_1^-)^{-1} \ltimes (\theta_0^+
\theta_0^-)^{-1}(\theta_1^+)^{-1}, (\theta_0^-)^{-1} \ltimes
(\theta_0^+)^{-1}) \cdot ((\theta_0^+ \ltimes \theta_0^-)^{-1}(\theta_1^+
\rtimes \theta_0^+(\theta_1^-)), (\theta_0^+ \rtimes \theta_0^-)) = 1, \]

\noindent i.e., that (using the first cocycle condition \eqref{Ecocycle1}
for $Q^+_0$):
\[ (\theta_0^-)^{-1}(\theta_1^-)^{-1} \ltimes (\theta_0^+
\theta_0^-)^{-1}(\theta_1^+)^{-1} \cdot ((\theta_0^-)^{-1} \ltimes
(\theta_0^+)^{-1}) \left[ (\theta_0^+ \ltimes \theta_0^-)^{-1}(\theta_1^+
\rtimes \theta_0^+(\theta_1^-)) \right] = (1_{Q^+_1}, 1_{Q^+_0}) = 1. \]

\noindent But now the first cocycle condition \eqref{Ecocycle1} and
action on $Q^+_1$ for $Q^+_0$ show that the second factor on the
left-hand side equals $(\theta_0^+ \theta_0^-)^{-1} \left( \theta_1^+
\rtimes \theta_0^+(\theta_1^-) \right)$. Similarly, the admissibility of
the $Q^+_0$-action on $Q^+_1$ shows that the first factor on the
left-hand side equals $(\theta_0^+ \theta_0^-)^{-1} \left(
\theta_0^+(\theta_1^-)^{-1} \ltimes (\theta_1^+)^{-1} \right)$.
Multiplying these two factors, we are now done by using the first cocycle
condition for $Q^+_1$.

Similarly, the second cocycle condition is verified as follows, using
\eqref{Esemidirect} and the cocycle conditions for $Q^+_j$:
\[ ((\theta_1^+, \theta_0^+) \ltimes (\theta_1^-, \theta_0^-)) \cdot
\left(((\theta_1^+, \theta_0^+) \rtimes (\theta_1^-, \theta_0^-)) \ltimes
(\nu_1^-, \nu_0^-) \right) = a \cdot b \cdot \left[ (b^{-1}(c),d) \ltimes
(\nu_1^-, \nu_0^-) \right], \]

\noindent where $a := \theta_1^+ \ltimes \theta_0^+(\theta_1^-)$, $b :=
\theta_0^+ \ltimes \theta_0^-$, $c := \theta_1^+ \rtimes
\theta_0^+(\theta_1^-)$, and $d := \theta_0^+ \rtimes \theta_0^-$. In
turn, this expression equals
\[ = a \cdot b \cdot (b^{-1}(c) \ltimes d(\nu_1^-)) \cdot (d \ltimes
\nu_0^-) = a \cdot (c \ltimes (bd)(\nu_1^-)) \cdot b \cdot (d \ltimes
\nu_0^-). \]

\noindent Recall by the cocycle condition for $Q^+_0$ that $bd =
\theta_0^+ \theta_0^-$. Now we compute the other side of the second
cocycle condition:
\begin{align*}
&\ (\theta_1^+, \theta_0^+) \ltimes ((\theta_1^-, \theta_0^-) \cdot
(\nu_1^-, \nu_0^-)) = (\theta_1^+ \ltimes \theta_0^+(\theta_1^- \cdot
\theta_0^-(\nu_1^-)), \theta_0^+ \ltimes (\theta_0^- \nu_0^-))\\
= &\ \left( (\theta_1^+ \ltimes \theta_0^+(\theta_1^-)) \cdot \left[
(\theta_1^+ \rtimes (\theta_0^+(\theta_1^-))) \ltimes
(\theta_0^+(\theta_0^-(\nu_1^-))) \right], (\theta_0^+ \ltimes
\theta_0^-) \cdot ((\theta_0^+ \rtimes \theta_0^-) \ltimes \nu_0^-)
\right)\\
= &\ a \cdot (c \ltimes (bd)(\nu_1^-)) \cdot b \cdot (d \ltimes \nu_0^-),
\end{align*}

\noindent and this proves the second cocycle condition, as desired.
\end{proof}

We now describe an application of Theorem \ref{Tsemidirect}(1),(4), which
yields a natural class of solvable examples of RTMs:

\begin{cor}\label{Csemidirect}
Suppose $G$ is a group with abelian subgroups $G_0, \dots, G_n$ such
that:
\begin{itemize}
\item $G_j$ acts on $G_k$ for $0 \leq j \leq k \leq n$ by group
homomorphisms, in a compatible manner such that $G = (\cdots ((G_n
\rtimes G_{n-1}) \rtimes G_{n-2}) \rtimes \cdots G_1) \rtimes G_0$; and

\item For every $0 \leq k \leq n$, $G_k$ contains a sub-monoid $G_k^+$
stable under the action of $(\cdots ((G_k \rtimes G_{k-1}) \rtimes
G_{k-2}) \rtimes \cdots G_1) \rtimes G_0$, such that $G_k^+ \setminus \{
1_{G_k^+} \}$ is a semigroup that generates $G_k$.
\end{itemize}

\noindent Then $G^+ := (\cdots ((G_n^+ \rtimes G_{n-1}^+) \rtimes
G_{n-2}^+) \rtimes \cdots G_1^+) \rtimes G_0^+$ is a regular triangular
monoid.
\end{cor}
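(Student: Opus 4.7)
The plan is to prove $G^+$ is an RTM by downward induction on $k$, building up the iterated semidirect product from the innermost factor $G_n^+$ outward. Since each $G_k$ is abelian and $G_k^+ \setminus \{1_{G_k^+}\}$ is a semigroup, Theorem \ref{Tsemidirect}(1) immediately makes each $G_k^+$ into an RTM (with the trivial action $\theta_1 \ltimes \theta_2^{-1} := \theta_2^{-1}$). I define the partial semidirect products
\[ P_k^+ := (\cdots((G_n^+ \rtimes G_{n-1}^+) \rtimes G_{n-2}^+) \rtimes \cdots) \rtimes G_k^+ \qquad (0 \leq k \leq n), \]
with $P_k := \tangle{P_k^+}$ the corresponding group. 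I will show by downward induction on $k$ that each $P_k^+$ is an RTM; the case $k=0$ gives $G^+ = P_0^+$, which is the desired conclusion.

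The base case $k = n$ is the observation above that $G_n^+$ is an RTM. For the inductive step, assume $P_k^+$ is an RTM. I apply Theorem \ref{Tsemidirect}(4) with $Q^+_0 := G_{k-1}^+$ and $Q^+_1 := P_k^+$, which, provided its admissibility hypothesis is verified, produces $P_{k-1}^+ = P_k^+ \rtimes G_{k-1}^+$ as an RTM. Thus what must be checked at each step is that $\tangle{G_{k-1}^+} = G_{k-1}$ acts admissibly on $P_k^+$ in the sense of Definition \ref{Drtm}: namely, (a) the ambient action of $G_{k-1}$ on $P_k$ restricts to a monoid action on $P_k^+$; (b) it consists of group automorphisms of $P_k$; and (c) it commutes with the RTM-structure map $\ltimes$ on $-P_k^+$.

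The main obstacle will be (c), since (a) and (b) follow directly from the hypotheses of the corollary. Indeed, the assumed compatible semidirect decomposition of $G$ gives an action of $G_{k-1}$ on $P_k = G_n \rtimes G_{n-1} \rtimes \cdots \rtimes G_k$ by group automorphisms, and the stated stability of each $G_j^+$ (for $j \geq k$) under $G_{k-1}$ gives the stability of $P_k^+$. For (c), recall that the RTM action $\ltimes$ on $-P_k^+$ was constructed recursively via the explicit formula
\[ (\theta_1^+, \theta_0^+) \ltimes (\theta_1^-, \theta_0^-) := (\theta_1^+ \ltimes \theta_0^+(\theta_1^-),\ \theta_0^+ \ltimes \theta_0^-) \]
appearing in the proof of Theorem \ref{Tsemidirect}(4), ultimately from the trivial RTM actions on the abelian factors $-G_j^+$ together with the internal actions of $G_j$ on $G_l$ for $k \leq j < l \leq n$. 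Because the $G_{k-1}$-action on each $G_j$ is by group homomorphisms and commutes with all of these internal actions (this is precisely the ``compatible manner'' clause in the hypothesis), a straightforward induction along the recursive construction of $\ltimes$ shows that $G_{k-1}$ commutes with $\ltimes$ on $-P_k^+$, giving (c). This completes the inductive step, and setting $k=0$ yields the corollary.
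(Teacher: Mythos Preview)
Your proof is correct and follows essentially the same approach as the paper: induction along the tower of semidirect products, with Theorem \ref{Tsemidirect}(1) for the base and Theorem \ref{Tsemidirect}(4) for the inductive step, the key verification being admissibility of the action of the outermost abelian factor on the inner RTM. The only cosmetic difference is that the paper records the resulting $\ltimes$-action on $-G^+$ by the closed formula $(g_n^+,\dots,g_0^+)\ltimes(g_n^-,\dots,g_0^-)=((g_{n-1}^+\cdots g_0^+)(g_n^-),\dots,g_0^+(g_1^-),g_0^-)$ and then declares the admissibility check ``a straightforward calculation,'' whereas you keep $\ltimes$ implicit via its recursive definition and argue admissibility inductively; both routes amount to the same computation.
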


\noindent Note that Corollary \ref{Cabelian} is a particular special case
with $n=0$. Similarly, if $n=1$ then this result implies Theorem
\ref{Tsemidirect}(4) when $G_1^+$ is an abelian RTM with the usual
(trivial) $\ltimes$-action.

\begin{proof}
The proof is by induction on $n$. For $n=0$ the result follows from
Theorem \ref{Tsemidirect}(1). Now suppose the result holds for $n-1$,
whence $M^+ := (\cdots ((G_n^+ \rtimes G_{n-1}^+) \rtimes G_{n-2}^+)
\rtimes \cdots G_1^+)$ is an RTM. Define the action map $\ltimes$ of
$G^+$ on $-G^+$ as follows:
\begin{equation}
(g_n^+, \dots, g_0^+) \ltimes (g_n^-, \dots, g_0^-) := \left( (g_{n-1}^+
\cdots g_0^+)(g_n^-), \dots, (g_1^+ g_0^+)(g_2^-), g_0^+(g_1^-), g_0^-
\right),
\end{equation}

\noindent where $g_k^\pm \in \pm G_k^+$ for $0 \leq k \leq n$. It is now
a straightforward calculation to verify that $G_0$ acts admissibly on the
regular triangular monoid $M^+$, whence we are done by induction via
Theorem \ref{Tsemidirect}(4).
\end{proof}

\begin{remark}
Theorem \ref{Csemidirect} holds for all groups that can be expressed as a
tower of semidirect products. Clearly such groups $G$ include all abelian
groups; each such group $G$ is solvable; and if all $G_k$ are finitely
generated, then $G$ is polycyclic. A natural question to explore is if
every solvable group with a given set of abelian Jordan-Holder factors
generated by RTMs, is also generated by an RTM.
\end{remark}

\subsection{Non-based example with non-abelian span of
roots}\label{Sexample}

We conclude this section by studying an RTA $\cala(Q^+_0, {\bf c})$
(constructed in the RTA Existence Theorem \ref{Trtm}) for a specific
non-abelian monoid $Q^+_0$, as well as its Category $\calo$.

Fix $k \in \N$, $\boldsymbol{\zeta} \in (0,\infty)^k$, ${\bf c} \in
\F^k$, and a nontrivial additive subgroup $\E \subset (\R, +)$ such that
$\zeta_j^{\pm 1} \E \subset \E$ for $1 \leq j \leq k$. Then $\E \cap
[0,\infty)$ is an abelian RTM with the trivial $\ltimes$-action on $\E
\cap (-\infty,0]$. Set $\boldsymbol{\zeta}^{\bf n} := \prod_{j=1}^k
\zeta_j^{n_j}$ and ${\bf n}(e) := \boldsymbol{\zeta}^{\bf n} e$ for ${\bf
n} \in \Z^k$ and $e \in \E$. Then $\Z^k$ acts admissibly on $\E$ (by
Corollary \ref{Csemidirect}), which allows us to define the $\F$-algebra
$\alga(\E, {\bf c}) := \cala((\nn)^k \ltimes_{\boldsymbol \zeta} (\E \cap
[0,\infty)), {\bf c})$ as in the proof of Theorem \ref{Trtm}. Here we
use $\ltimes_{\boldsymbol \zeta}$ to denote the semidirect product of the
groups $\Z^k$ and $\E$, in order to differentiate it from the RTM action
$\ltimes$.

\begin{theorem}\label{Tdiamond}
Fix $k \in \N,\ {\boldsymbol \zeta} \in (0,\infty)^n,\ \E \subset \R$,
and ${\bf c} \in \F^k$ as above.
\begin{enumerate}
\item The algebra $\alga(\E, {\bf c})$ is a strict RTA with $\calq^+_0 =
(\nn)^k \ltimes_{\boldsymbol \zeta} (\E \cap [0,\infty))$.

\item The algebra $\alga(\E, {\bf c})$ is based if and only if it is
discretely graded, if and only if $\E = \eta \Z$ for some $\eta \in
\R^\times$ and $\zeta_j = 1$ for all $j$.
\end{enumerate}
\end{theorem}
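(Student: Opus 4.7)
The plan is that Part (1) is essentially a direct invocation of the RTA Existence Theorem \ref{Trtm}: the abelian monoid $Q^+_0 := \E \cap [0,\infty)$ is a regular triangular monoid (with the trivial $\ltimes$-action) by Corollary \ref{Cabelian}, and the map ${\bf n} \mapsto (e \mapsto \boldsymbol{\zeta}^{\bf n} e)$ is an admissible $\Z^k$-action on $Q^+_0$ because the standing hypothesis $\zeta_j^{\pm 1}\E \subseteq \E$ guarantees that these maps preserve both $\E$ and $\E \cap [0,\infty)$. Hence $\alga(\E,{\bf c}) = \cala(Q^+_0,{\bf c})$ is a strict RTA with $\calq^+_0 = (\nn)^k \ltimes_{\boldsymbol{\zeta}} (\E \cap [0,\infty))$ by Theorem \ref{Trtm}(i).

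For Part (2), I plan to cycle through three implications. First, suppose $\E = \eta \Z$ and $\zeta_j = 1$ for all $j$. Then $\boldsymbol{\zeta}^{\bf n} \equiv 1$, so the semidirect product collapses and $\calq^+_0 \cong \nn^k \times \eta\nn \cong \nn^{k+1}$ as a commutative monoid. The set $\Delta := \{\vi_1,\dots,\vi_k,(0,\eta)\}$ is pairwise commuting and $\Z$-linearly independent in $\tangle{\calq^+_0} \cong \Z^{k+1}$, with $\calq^+_0 = \nn\Delta$, so the RTA is based.

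Second, a based RTA is automatically discretely graded in full generality: if $\theta_0 = \sum_\alpha n_\alpha \alpha$ is the (unique, finite) $\nn$-expansion of an element in terms of the pairwise commuting, $\Z$-linearly independent simple roots, then $[0,\theta_0] = \{\sum_\alpha m_\alpha \alpha : 0 \leq m_\alpha \leq n_\alpha\}$ is finite.

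The main (and most computational) implication is discretely graded $\implies$ ($\E = \eta\Z$ and every $\zeta_j = 1$). The plan here is to first pin down the multiplication and inverses in the semidirect product: $({\bf n},e) \cdot ({\bf m},f) = ({\bf n}+{\bf m},\,e + \boldsymbol{\zeta}^{\bf n} f)$ and $({\bf n},e)^{-1} = (-{\bf n},-\boldsymbol{\zeta}^{-{\bf n}} e)$, so that $({\bf m},f) \leq ({\bf n},e)$ in $\calq^+_0$ iff ${\bf n}-{\bf m} \in (\nn)^k$ and $e - \boldsymbol{\zeta}^{{\bf n}-{\bf m}} f \in \E \cap [0,\infty)$. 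Specializing to ${\bf n} = 0$ forces ${\bf m}=0$ and yields $[0,(0,e)] = \{(0,f) : f \in \E,\ 0 \leq f \leq e\}$; finiteness of this set for all positive $e \in \E$ forces $\E$ to be a discrete nontrivial subgroup of $\R$, so $\E = \eta\Z$ for some $\eta > 0$. The standing hypothesis $\zeta_j^{\pm 1}\eta\Z \subseteq \eta\Z$ then gives $\zeta_j\eta\Z = \eta\Z$, hence $\zeta_j \in \{\pm 1\}$; positivity of $\zeta_j$ yields $\zeta_j = 1$. The only real obstacle is getting the semidirect-product bookkeeping right; once the partial order on $\calq^+_0$ is written down explicitly, the discreteness argument is immediate.
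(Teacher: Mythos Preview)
Your proposal is correct and follows essentially the same approach as the paper's own proof: Part~(1) is a direct application of Theorem~\ref{Trtm}, and for Part~(2) the paper argues exactly as you do---based implies discretely graded, discretely graded forces the interval $[(0,0),(0,e)]$ to be finite (hence $\E=\eta\Z$), and then the hypothesis $\zeta_j^{\pm 1}\E\subset\E$ pins down $\zeta_j=1$, while the converse uses the same base $\Delta=\{\vi_1,\dots,\vi_k,(0,\eta)\}$. Your version simply spells out the semidirect-product bookkeeping and the ``based $\Rightarrow$ discretely graded'' step more explicitly than the paper does.
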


\noindent Thus, to our knowledge the algebras $\alga(\E, {\bf c})$ with
${\boldsymbol \zeta} \neq (1, \dots, 1)$ provide the first explicitly
constructed examples of regular triangular algebras with non-abelian
group of roots $\tangle{\calq^+_0}$. These algebras cannot be studied by
using existing theories of Category $\calo$ in the literature (e.g.~as in
\cite{H2,Kh2,MP}), because the ``root lattice" is not abelian. In fact
the monoid $\calq^+_0$ is abelian if and only if $\zeta_j = 1$ for all
$j$.

\begin{proof}
The first part follows directly from Theorem \ref{Trtm}. To show the
second part, note that if $\alga(\E, {\bf c})$ is based then it is
discretely graded. In turn, this implies that the interval $[(0,0),
(0,e)]$ is finite for every $0 < e \in \E \subset \R$. Thus $\E$ is a
lattice $\eta \Z$ for $\eta \neq 0$, which contains $\zeta_j^\Z \eta \Z$
for all $j$. It follows that $\zeta_j = 1\ \forall j$. Finally, if $\E =
\eta \Z$ and $\zeta_j = 1\ \forall j$, then $A$ is indeed based with
$\Delta := \{ \vi_1, \dots, \vi_k, (0,\eta) \}$.
\end{proof}

We now study Category $\calo$ over the algebra $\alga(\E, {\bf c})$,
including computing the center and its action on Verma modules.

\begin{prop}
Fix ${\boldsymbol \zeta} \in (0,\infty)^n$, $\E \subset \R$, and ${\bf c}
\in \F^k$ as above. Define $J := \{ j \in [1,k] : c_j = 0 \}$.
\begin{enumerate}
\item $\alga(\E, {\bf c})$ contains a central subalgebra $Z_0 := \F[\{
x_j^- x_j^+ : j \in J \}]$ that is isomorphic to a polynomial algebra in
$|J|$ variables. Now suppose $\ch \F = 0$ if $J \subsetneq \{ 1, \dots, k
\}$. Then the center of $\alga(\E, {\bf c})$ equals:
\[ Z(\alga(\E, {\bf c})) = \begin{cases}
Z_0 \otimes_\F {\rm span}_\F \{ t^{-e} t^e : e \in \E \cap [0,\infty) \},
& \text{ if } {\boldsymbol \zeta} = (1, \dots, 1);\\
Z_0, &\text{ otherwise}.
\end{cases} \]

\item Suppose ${\boldsymbol \zeta} \neq (1, \dots, 1)$. Then there are
exactly $k+1$ isomorphism classes of algebras among the family $\{
\alga(\E, {\bf c}) : {\bf c} \in \F^k \}$ (where we assume $\ch \F = 0$
if $J \subsetneq \{ 1, \dots, k \}$).

\item The algebras $B^\pm$ do not contain zerodivisors. Thus every
nonzero map of Verma modules is an embedding.

\item Define $K := \{ j \in [1,k] : \zeta_j \neq 1 \}$. Then a suitable
completion of $\alga(\E, {\bf c})$ contains central ``Casimir" operators
of the form
\[ T(e) := \sum_{{\bf n} \in \Z^K} t^{-e \prod_{j \in K} \zeta_j^{n_j}}
t^{e \prod_{j \in K} \zeta_j^{n_j}}, \qquad \forall e \in \E \cap
(0,\infty), \]

\noindent where $T(e) = t^{-e} t^e \in \alga(\E, {\bf c})$ if $K$ is
empty. Then the operators $T(e)$ act on all objects in $\calo[\hzfree]$.
Moreover, $T(e)$ and $Z(\alga(\E, {\bf c}))$ kill every highest weight
module in $\calo$, hence act nilpotently on $\calo[\hzfree]$.

\item $\tangle{\calq_0^+} = \zze \subset \hzfree$, and the algebra
$\alga(\E, {\bf c})$ satisfies none of the Conditions (S) because $\Z^J
\ltimes_{\boldsymbol \zeta} \E$ is in each block. More precisely,
$S^3(\lambda) \supset (\Z^J \ltimes_{\boldsymbol \zeta} \E) * \lambda\
\forall \lambda \in \hzfree$.
\end{enumerate}
\end{prop}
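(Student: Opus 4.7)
My plan is to combine the PBW-style basis for $\alga(\E, \mathbf{c})$ provided by the Diamond Lemma argument in the proof of Theorem \ref{Trtm} with direct manipulation of the commutation relations \eqref{Ertm1}. The five parts are largely independent, and I would handle them in turn, beginning with the center computation, which is by far the most delicate.

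For part (1), I would first check that each $x_j^- x_j^+$ with $j \in J$ is central: commutation with $x_l^\pm$ for $l \neq j$ is immediate; with $x_j^\pm$ itself it uses $[x_j^+, x_j^-] = c_j = 0$; with every $t^{\pm \theta_0}$ and with $H_0 \cong \F^{\Z^k \ltimes \E}$ the shifts introduced by $x_j^+$ and $x_j^-$ cancel telescopically because $\vi_j$ and $\vi_j^{-1}$ are inverse automorphisms. That $Z_0$ is a polynomial algebra is then immediate from linear independence of the monomials $\prod_{j\in J}(x_j^- x_j^+)^{n_j}$ in the PBW basis. To pin down the whole center I would expand an arbitrary $z \in Z(\alga(\E,\mathbf{c}))$ in the PBW basis, use the $\tangle{\calq^+_0}$-grading together with the commutation of $z$ with the generators $x_j^\pm, t^{\pm e}$, and the function generators of $H_0$ to force the support of $z$ to shrink until only the claimed terms survive. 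The dichotomy $\boldsymbol\zeta=(1,\dots,1)$ vs.\ $\boldsymbol\zeta \neq (1,\dots,1)$ enters at precisely this step: in the abelian case the extra elements $t^{-e} t^e$ survive because $\ltimes$ is trivial on $\E$ and the $\Z^k$-action on $\E$ is trivial, so no $x_j^\pm$-conjugation rescales $e$; otherwise some $x_j^\pm$ sends $t^{\pm e}$ to $t^{\pm \zeta_j e}$ and the element cannot be central.

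Part (2) follows by a rescaling $x_j^\pm \mapsto \mu_j^\pm x_j^\pm$ (with $\mu_j^+ \mu_j^- = c_j^{-1}$ for $j \notin J$) to normalize every nonzero $c_j$ to $1$; conversely, $|J|$ is an isomorphism invariant of $\alga(\E,\mathbf{c})$, detected for instance as the Krull dimension of its center via part (1). Part (3) is a one-line consequence of the PBW basis and the $\tangle{\calq^+_0}$-grading: each graded piece of $B^\pm$ is one-dimensional and nonzero, and the product of two nonzero homogeneous elements lies in another one-dimensional piece with nonzero coefficient, so $B^\pm$ is a domain; Neidhardt's theorem (Theorem \ref{Thwc}(2)) then gives the Verma module statement. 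For part (4), centrality of $T(e)$ is an index-shift computation: commuting $x_j^\pm$ past $T(e)$ scales the exponent $e\,\boldsymbol{\zeta}^{\mathbf n}$ by $\zeta_j^{\pm 1}$, which when $j \in K$ is absorbed by reindexing $\mathbf{n} \mapsto \mathbf{n} \pm \vi_j$ in the sum over $\Z^K$, and when $j \notin K$ does nothing because $\zeta_j = 1$; commutation with $t^{\pm e'}$ is automatic since $\ltimes$ is trivial on $\E$, and commutation with $H_0$ follows by the same telescoping used in part (1). For the action on $\calo[\hzfree]$, on any weight vector $v$ only finitely many summands $t^{-e\boldsymbol{\zeta}^{\mathbf n}} t^{e\boldsymbol{\zeta}^{\mathbf n}} v$ can be nonzero (the intermediate vector $t^{e\boldsymbol\zeta^{\mathbf n}} v$ must lie in the support of the finite-length submodule generated by $v$), so $T(e)$ is well-defined. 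Since the Harish-Chandra projection annihilates each summand, $T(e)$ kills every highest weight vector, hence every highest weight module, and nilpotence on $\calo[\hzfree]$ follows from the highest-weight filtration of Proposition \ref{Pproj}(4).

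For part (5), I would exhibit explicit maximal vectors in Verma modules. For $j \in J$ (so $c_j=0$), the vector $(x_j^-)^n m_\lambda$ is annihilated by every $x_l^+$ (commuting through $x_j^-$ via $[x_l^+, x_j^-] = \delta_{lj} c_j = 0$) and by every positive $t^{\theta_0}$ (iterating $t^{\theta_0} x_j^- = x_j^- t^{\zeta_j \theta_0}$ and noting $\zeta_j^n \theta_0 > 0$ remains in $N^+$), and is nonzero by part (3); hence $L(\vi_j^{-n} * \lambda)$ is a subquotient of $M(\lambda)$, and the reverse direction follows symmetrically from $M(\vi_j^n * \lambda)$. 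For $e \in \E \cap [0,\infty)$, the vector $t^{-e} m_\lambda$ is maximal because $\ltimes$ is trivial on $\E$ (so $t^{\theta_0} t^{-e} = t^{-e} t^{\theta_0}$ annihilates $m_\lambda$) and $x_l^+ t^{-e} = t^{-\zeta_l e} x_l^+$ does the same. Chaining these two constructions across the semidirect-product decomposition gives $(\Z^J \ltimes_{\boldsymbol \zeta} \E) * \lambda \subseteq S^3(\lambda)$ by transitivity of the equivalence relation defining $S^3$. Since $\E$ is an infinite subgroup of $\R$, this forces $|S^1(\lambda)| = \infty$ for every $\lambda \in \hzfree$, so all four Conditions (S) fail by Lemma \ref{Lcs}. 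The main obstacle is the sharp center determination in part (1); the other four parts reduce to careful but routine commutator bookkeeping once the PBW basis is in hand.
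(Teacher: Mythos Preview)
Your approach tracks the paper's almost exactly: the center computation via the $H_0$-centralizer and commutation with $x_j^\pm$, the rescaling/center-dimension argument for part~(2), the domain-and-freeness argument for Verma embeddings, the index-shift for centrality of $T(e)$, and the exhibition of explicit maximal vectors for part~(5) all match. Two small repairs are needed, though.

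In part~(2), rescaling only reduces to $\{0,1\}$-vectors $\mathbf d$, of which there are $2^k$; you still need the permutation isomorphism $\alga(\E,\mathbf c)\cong\cala_{\sigma(\boldsymbol\zeta)}(\E,\sigma(\mathbf c))$ (sending $x_j^\pm\mapsto x_{\sigma(j)}^\pm$) to collapse these to the $k+1$ classes indexed by $|J|$. Your Krull-dimension argument gives the lower bound but not the upper bound by itself. In part~(3), you invoke Theorem~\ref{Thwc}(2), but that theorem is stated under the hypothesis $\lambda,\mu\in S^3(A)\cap\overline{S^2}(A)$, and part~(5) shows $S^3(A)=\emptyset$ here, so the citation is vacuous. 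The fact you want is the elementary one: since $M(\lambda)\cong B^-$ as a left $B^-$-module and $B^-$ is a domain, any nonzero $B^-$-linear (hence $A$-linear) map $M(\mu)\to M(\lambda)$ is injective. The paper simply says this is ``standard'' rather than citing Theorem~\ref{Thwc}.
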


\begin{proof}\hfill
\begin{enumerate}
\item The first assertion in this part is easily verified using the
algebra relations. Next, the center is contained in the centralizer of
$H_0$: $Z(\alga(\E, {\bf c})) \subset Z_{\alga(\E, {\bf c})}(H_0)$, and
Theorem \ref{Tdiamond} can be used to show that
\begin{equation}\label{Ediamond5}
Z_{\alga(\E, {\bf c})}(H_0) = H_0[x_1^- x_1^+, \dots, x_k^- x_k^+]
\otimes_\F {\rm span}_\F \{ t^{-e} t^e : e \in \E \cap [0,\infty) \}.
\end{equation}

\noindent Now given ${\bf m} \in (\nn)^k$, define ${\bf x}^{\bf m} :=
\prod_{j=1}^k (x_j^- x_j^+)^{m_j}$. Then write an arbitrary element $z
\in Z(\alga(\E, {\bf c})) \subset Z_{\alga(\E, {\bf c})}(H_0)$ using
\eqref{Ediamond5}:
\[ z = \sum_{{\bf m'} \in (\nn)^J} {\bf x}^{\bf m'} \sum_{{\bf m} \in
(\nn)^{J^c}} {\bf x}^{\bf m} \sum_{i=1}^{N({\bf m'}, {\bf m})} h_i
t^{-e_i} t^{e_i}, \]

\noindent for a suitable choice of elements $e_i \in \E \cap [0,\infty),
0 \neq h_i \in H_0$, and where $J^c := \{ 1, \dots, k \} \setminus J$.
Now note by (RTA1) that $z$ is central if and only if the inner double
summation in the previous equation is central for each fixed ${\bf m'}$.
Thus, assume without loss of generality that
\[ z = \sum_{\bf m} {\bf x}^{\bf m} \sum_{i=1}^n h_{i,{\bf m}}
t^{-e_{i,{\bf m}} } t^{e_{i, {\bf m}} } \]

\noindent for suitable $h_{i, {\bf m}}, e_{i, {\bf m}}$. Note by the
algebra relations that the variables $\{ x_j^\pm : j \not\in J \}$ each
generate a copy of the Weyl algebra in $\alga(\E, {\bf c})$, and this has
trivial center since $\ch \F = 0$. It follows via (RTA1) that the central
element $z$ in the above form has only one nonzero term in the outer sum,
namely, the term corresponding to ${\bf m} = {\bf 0}$. Thus we may assume
that $z = \sum_{i=1}^n h_i t^{-e_i} t^{e_i}$. Now define ${\bf x}_+^{\bf
m} := \prod_{j=1}^k (x_j^+)^{m_j}$ for ${\bf m} \in (\nn)^k$, and
compute:
\[ 0 = {\bf x}_+^{\bf m} z - z {\bf x}_+^{\bf m} = \sum_{i=1}^n h_i(({\bf
m},0) \cdot -) t^{- {\boldsymbol \zeta}^{\bf m} e_i} t^{{\boldsymbol
\zeta}^{\bf m} e_i} {\bf x}_+^{\bf m} - \sum_{i=1}^n h_i(-) t^{-e_i}
t^{e_i} {\bf x}_+^{\bf m}, \qquad \forall {\bf m} \in (\nn)^k. \]

\noindent Now for a fixed ${\bf m}$, since both sums involve finitely
many terms, there is a unique largest positive exponent for $t$ in both
sums. For the two sums to be equal, either $e_i = 0$ for all $i$, or
${\boldsymbol \zeta}^{\bf m} = 1$. There are now two cases:
\begin{itemize}
\item If $\zeta_j = 1\ \forall j$, then $Z$ is easily seen to equal
$(Z(\alga(\E, {\bf c})) \cap H_0)[\{ x_j^- x_j^+ : j \in J \}] \otimes_\F
{\rm span}_\F \{ t^{-e} t^e : e \in \E \cap [0,\infty) \}$. Moreover, it
is not hard to show that $Z(\alga(\E, {\bf c})) \cap H_0 = \F$, which
proves this case.

\item Otherwise there exists ${\bf m}$ such that ${\boldsymbol
\zeta}^{\bf m} \neq 1$. In this case, the above computation must
necessarily have one term, corresponding to $e_1 = 0$. But then we are
once again reduced to computing $Z(\alga(\E, {\bf c})) \cap H_0$, which
is $\F$.
\end{itemize}

\item First note by rescaling the $x_j^-$, say, that $\alga(\E, {\bf c})$
is an associative $\F$-algebra that is isomorphic to the algebra
$\alga(\E, {\bf d})$, where $d_j := 1 - \delta_{c_j, 0}\ \forall j$.
Further observe that for any permutation $\sigma \in S_k$, we have an
obvious isomorphism of algebras $\alga(\E, {\bf c}) \cong
\cala_{\sigma({\boldsymbol \zeta})}(\E, \sigma({\bf c}))$. It now remains
to count the number of possible nondecreasing $0,1$-valued sequences of
length $k$, and there are precisely $k+1$ of them. Since ${\boldsymbol
\zeta} \neq (1, \dots, 1)$, the previous part shows that the centers of
these $k+1$ algebras are polynomial rings with pairwise distinct
transcendence degrees over $\F$. Thus the $k+1$ algebras in question are
pairwise non-isomorphic.

\item That $B^\pm$ do not contain zerodivisors holds more generally by
Theorem \ref{Trtm}. The statement about Verma module embeddings is now
standard.

%
\item This part is proved similarly to Proposition \ref{Prtmo1}.

\item Note that $\tangle{\calq^+_0} = \zze$, which embeds into $\hzfree$
via the evaluation maps. Next, given $\lambda \in \hzfree$ and we first
\textbf{claim} that every nonzero weight vector of the Verma module
$M(\lambda)$ with weight in $(\Z^J \ltimes_{\boldsymbol \zeta} \E) *
\lambda$ is maximal. To show the claim, it suffices to show that $b_-
m_\lambda$ is maximal, for every monomial word $b_- = t^{-e} \prod_{j \in
J} (x_j^-)^{n_j} \in X^-_{irr}$. But now we compute using the algebra
relations that $t^{e'} \cdot b_- m_\lambda = b_- t^{\prod_{j \in J}
\zeta_j^{n_j} e'} m_\lambda = 0$; and similarly, $x_j^+ b_- m_\lambda =
0$ for all $1 \leq j \leq k$. This proves the claim. Finally, 
%
%
%
%
similar to Proposition \ref{Prtmo2} we obtain that $S^3(\lambda) \supset
\Z^J \ltimes_{\boldsymbol \zeta} \E$ and $S^1(\lambda) \supset(\E \cap
(-\infty,0)) * \lambda$ for all $\lambda \in \hzfree$.
\end{enumerate}
\end{proof}

\begin{remark}
In \cite{GGOR}, one finds a homological treatment of Category $\calo$
over a very general class of algebras $A$. We point out that the
framework in the present paper cannot be subsumed under that paradigm,
because of non-based examples such as $A = \alga(\E, {\bf c})$ above. In
such non-based cases, there does not exist an inner grading by any
subgroup of $\R$ (e.g.~via taking the commutator with some element
$\partial \in A$, as discussed in \cite{GGOR}).
\end{remark}

\section{Based and non-based Lie algebras with triangular
decomposition}\label{Sexam}

The remainder of the paper focusses on applying the theory from Section
\ref{Sverma} to a large class of algebras studied in the literature -- as
well as novel examples including stratified Virasoro algebras and certain
triangular generalized Weyl algebras. The examples are presented in
``decreasing order of familiarity" in the following sense: this section
and the next discuss two ``well-known" families of strict, based Hopf
RTAs of finite rank: Lie algebras and quantum groups. The reader who
wishes to skip these examples and focus immediately on non-strict or
non-Hopf RTAs, can jump ahead to (a) non-based RTAs in Section
\ref{Snonbased}; (b) infinitesimal Hecke algebras in Section \ref{Smore}
(rank one) and Section \ref{Sinfhecke} (higher rank); or (c) generalized
Weyl algebras in Sections \ref{Sgwa} and \ref{Sgwa2}.


We begin by discussing the case of $A = U \lie{g}$ for $\lie{g}$ a Lie
algebra with regular triangular decomposition. Such Lie algebras are
defined and explored in great detail in \cite{RCW,MP}, so this section is
restricted to briefly mentioning some examples, after defining such Lie
algebras.
We also observe at the very outset that by Remark \ref{Rfree}, it is
possible to work with all of $\calo = \calo[\hofree]$ when $A$ is a HRTA.
This is the case in the present section as well as the next two.

\begin{definition}
Assume $\ch \F = 0$.
A Lie algebra $\lie{g}$, together with the following data, is a {\em Lie
algebra with triangular decomposition} (also called a {\em regular
triangular Lie algebra} or {\em RTLA}):
\begin{enumerate}
\item $\lie{g} = \lie{g}^- \oplus \lie{h} \oplus \lie{g}^+$, where all
summands are nonzero Lie subalgebras of $\lie{g}$, and $\lie{h}$ is
abelian.

\item $\lie{g}^+$ is an $\ad \lie{h}$-semisimple module with
finite-dimensional $\lie{h}$-weight spaces.

\item All $\ad \lie{h}$-weights for $\lie{g}^+$ lie in $Q^+ \setminus \{
0 \}$, where $Q^+$ denotes a free abelian monoid with finite basis
$\Delta' := \{ \alpha_j \}_{j \in J}$; this basis consists of linearly
independent vectors in $\lie{h}^*$.

\item There exists an anti-involution $\omega$ of $\lie{g}$ that sends
$\lie{g}^+$ to $\lie{g}^-$ and preserves $\lie{h}$ pointwise.
\end{enumerate}
\end{definition}

\noindent In contrast, a general, non-based RTA does not require
$\calq^+_0$ to be $\nn \Delta'$ for finite -- or infinite -- $\Delta'$.
Also note that we require $\ch \F = 0$ in order that the abelian monoid
$\calq^+_0 = Q^+ = \Z^+ \Delta'$ is an RTM (i.e., satisfies Condition
(RTM1)).

The following result summarizes the main (functorial) properties of such
Lie algebras, and is not hard to show.

\begin{prop}\label{Prtla}\hfill
\begin{enumerate}
\item If $\lie{g}$ is an RTLA, then $U \lie{g}$ is a strict, based Hopf
RTA with base of simple roots $\Delta'$.

\item If $\lie{g}_i$ is an RTLA for $1 \leq i \leq n$, and $\lie{h}'$ is
an abelian Lie algebra, then $\lie{h'} \oplus \bigoplus_{i=1}^n
\lie{g}_i$ is an RTLA as well (with pairwise commuting summands).

\item If $\lie{g}$ is an RTLA and $V \subset Z(\lie{g})$ is any subspace,
then $\lie{g}/V$ is an RTLA.
\end{enumerate}
\end{prop}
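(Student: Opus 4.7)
The plan is to verify all three parts by directly checking the relevant axioms --- those of an HRTA for (1), and those of an RTLA for (2) and (3) --- using PBW together with standard Hopf-algebraic facts already established in Proposition \ref{Prho}. Only part (3) requires a small preliminary observation; the rest is bookkeeping.

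For part (1), the natural choice is $H_0 = H_1 := U\lie{h} = \sym(\lie{h})$, which is commutative and cocommutative Hopf since $\lie{h}$ is abelian, together with $B^\pm := U\lie{g}^\pm$. The decomposition (RTA1) is then PBW. For (HRTA2), the adjoint action of $U\lie{h}$ on $B^+$ agrees with the iterated $\ad$-action of $\lie{h}$ (by Proposition \ref{Prho}(5) and cocommutativity), so $B^+$ inherits a weight-space decomposition with weights in the submonoid $\calq^{'+}_0 := \nn\Delta' \subset \widehat{H_0}$; each weight space is finite-dimensional because $\Delta'$ is $\Z$-linearly independent (only finitely many monomial expansions per weight) and each $\lie{h}$-weight space of $\lie{g}^+$ is finite-dimensional, and the zero-weight space is $\F\cdot 1$ because $\lie{g}^+$ carries no zero weight. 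Condition (RTA3) holds by extending $\omega$ to an anti-automorphism $i$ of $U\lie{g}$: this is well-defined because $\omega([x,y]) = [\omega(y),\omega(x)]$, satisfies $i^2 = \id$ since $\omega^2|_{\lie{g}} = \id$, fixes $U\lie{h}$ pointwise, and interchanges $U\lie{g}^\pm$. Strictness and basedness with base $\Delta'$ are immediate. For part (2), set $\lie{g} := \lie{h}' \oplus \bigoplus_i \lie{g}_i$ with $\lie{h} := \lie{h}' \oplus \bigoplus_i \lie{h}_i$ and $\lie{g}^\pm := \bigoplus_i \lie{g}_i^\pm$. Since the summands commute pairwise, each RTLA axiom lifts componentwise: $\lie{h}$ is abelian, $\lie{g}^+$ is $\ad\lie{h}$-semisimple with finite-dimensional weight spaces, and the base $\Delta' := \coprod_i \Delta'_i$ (each simple root extended by $0$ on complementary summands) is $\Z$-linearly independent in $\lie{h}^*$ because the extensions have pairwise disjoint supports. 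The anti-involution is $\omega := \id_{\lie{h}'} \oplus \bigoplus_i \omega_i$.

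The one nontrivial step in part (3) is the preliminary claim $V \subset \lie{h}$. Given $v \in V$, write $v = v^- + v^0 + v^+$ along the triangular decomposition. For every $h \in \lie{h}$, the identity $[h,v] = 0$ splits as $[h,v^\pm] = 0$ (since $\lie{g}^\pm$ is $\ad\lie{h}$-stable), and projecting to each weight space $\lie{g}^\pm_\alpha$ yields $\alpha(h) v_\alpha = 0$ for all $h$; so $v_\alpha = 0$ whenever $\alpha \neq 0$ in $\lie{h}^*$, which holds for every nonzero $\alpha \in Q^+$ by the $\Z$-linear independence of $\Delta'$. Hence $v = v^0 \in \lie{h}$. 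It follows that $\lie{g}/V = \lie{g}^- \oplus (\lie{h}/V) \oplus \lie{g}^+$ with $\lie{h}/V$ abelian; the $\ad\lie{h}$-action on $\lie{g}^\pm$ factors through $\lie{h}/V$ by centrality of $V$; and each $\alpha \in \Delta'$ descends to $(\lie{h}/V)^* = \mathrm{ann}(V) \subset \lie{h}^*$ because $[v,x_\alpha] = \alpha(v) x_\alpha = 0$ forces $\alpha|_V = 0$. Linear independence of $\Delta'$ is inherited in this annihilator. Finally $\omega$ fixes $\lie{h} \supset V$ pointwise, so descends to an anti-involution of $\lie{g}/V$ with the required properties. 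The only substantive point is the inclusion $V \subset \lie{h}$; everything else amounts to transporting the axioms through the quotient.
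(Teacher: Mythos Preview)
Your proposal is correct and matches the paper's approach, which in fact omits the proof entirely (saying only that the result ``is not hard to show'') and merely notes afterward that the adjoint action of $U\lie{h}$ is given by $\ad h(x) = hx - xh$ and that (RTA1) follows from PBW. You have supplied precisely the details the paper leaves implicit, including the one nontrivial step in part (3) that $V \subset \lie{h}$.
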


\noindent Note here that the adjoint action of $H_1 = H_0 = U \lie{h} =
\sym \lie{h}$ is given by $\ad h(x) = hx - xh$ for $x \in A = U \lie{g}$.
Moreover, Condition (RTA1) holds because of the Poincare-Birkhoff-Witt
Theorem for $U \lie{g}$, and $\widehat{H_1} = \widehat{H_0} = \lie{h}^*$.

\subsection{Examples of RTLAs}\label{Srtla}

For completeness, we mention a large number of well-studied examples of
RTLAs in the literature (which yield strict Hopf RTAs).

\begin{exam}[{\em Symmetrizable Kac-Moody Lie algebras}]

See \cite{Kac2} for the definition and basic properties of $\lie{g} =
\lie{n}^- \oplus \lie{h} \oplus \lie{n}^+$. Note that if $\lie{g}$ is
complex semisimple (and finite-dimensional), then Harish-Chandra's
theorem implies that $S^4(\lambda) = W \bullet \lambda\ \forall \lambda
\in \lie{h}^*$ (the twisted Weyl group orbit). Thus, all Conditions (S)
hold by Theorem \ref{Tfirst}, and all blocks $\calo[S^3(\lambda)]$ of
$\calo = \calo[\hofree]$ are highest weight categories with BGG
Reciprocity.
\end{exam}

We now mention two generalizations of Kac-Moody Lie algebras, which are
also RTLAs.

\begin{exam}[{\em Contragredient Lie algebras}]\label{Ekk}

These are a family of Lie algebras defined in \cite{KK}, which can be
verified to be RTLAs (and for which Kac and Kazhdan proved the Shapovalov
determinant formula).
\end{exam}

\begin{exam}[{\em Some (symmetrizable) Borcherds algebras and central
extensions}]

These Lie algebras are defined and studied in \cite{Bo1,Bo2}; we remark
that they are also RTLAs under certain additional assumptions, but not in
general.
\comment{
Specifically, assume that $A$ is a {\em symmetrizable Borcherds-Kac-Moody
(BKM) matrix} (see \cite[Section 2.1]{Wa}), say of finite size $n$. Thus,
$A \in \lie{gl}_n(\mathbb{R})$ is symmetrizable (there exists a diagonal
matrix $D$ with positive eigenvalues, such that $DA$ is symmetric); we
have $a_{ii} \leq 0$ or $a_{ii} = 2$ for all $i$; $a_{ij} \leq 0$ for all
$i \neq j$; and $a_{ij} \in \Z$ whenever $a_{ii} = 2$.\medskip

(Note that here $A$ is taken to be a matrix and not the algebra $U
\lie{g}$; we do not mention $U \lie{g}$ in this example, just as we did
not, in the previous one.)\medskip

We now define the {\em universal Borcherds algebra} (see \cite{Gan})
$\ghat = \ghat(A)$ to be generated by $e_i, f_i, h_{ij}$ for $1 \leq i,j
\leq n$, satisfying:
\begin{enumerate}
\item $[e_i,f_j] = h_{ij},\ [h_{ij}, e_k] = \delta_{ij} a_{ik} e_k,\
[h_{ij}, f_k] = -\delta_{ij} a_{ik} f_k$ for all $i,j,k$;

\item $(\ad e_i)^{1 - a_{ij}} (e_j) = (\ad f_i)^{1 - a_{ij}} (f_j) =
0$ whenever $a_{ii} = 2$ and $i \neq j$;

\item $[e_i,e_j] = [f_i,f_j] = 0$ whenever $a_{ij} = 0$.
\end{enumerate}\medskip

Also define the {\em Borcherds algebra} $\lie{g} = \lie{g}(A) :=
\ghat(A)$ modulo: $h_{ij} = 0\ \forall i \neq j$. Then $\ghat(A)$ is a
central extension of $\lie{g}(A)$, and they are both RTLAs, under the
following {\bf additional assumptions}:
\begin{itemize}
\item[(B1)] no column of $A$ is zero;

\item[(B2)] the $\nn$-span of the columns of $A$ (i.e., the semigroup
$Q_+$), is freely generated by a subset $\Delta$ of columns of $A$; and

\item[(B3)] $h_{ij} = h_{ji}$ for all $i,j$.
\end{itemize}
}
\end{exam}

\begin{exam}[{\em The Virasoro and Witt algebras}]

The Witt algebra is the centerless Virasoro algebra. Both of these Lie
algebras are RTLAs; see \cite{FeFu,KR}, for example.
\end{exam}

\begin{exam}[{\em Heisenberg algebras extended by derivations}]

Both these and the (centerless) Virasoro algebras can be found in
\cite{MP}, for instance. It is not hard to show that all Conditions (S)
fail to hold for (centerless) extended Heisenberg algebras if $V \neq 0$.
\end{exam}

\begin{exam}[{\em Certain quotients of preprojective algebras of
loop-free quivers}]

Let $Q$ be a finite acyclic quiver (i.e., containing no loops or oriented
cycles) with path algebra $\F Q = \oplus_{n \geq 0} (\F Q)_n$, where each
summand has a basis consisting of (oriented) paths in $Q$ of length $n$.
Thus $(\F Q)_0$ and $(\F Q)_1$ have bases $I$ of vertices $e_i$ and $E$
of edges $a$ respectively. Assume $I,E \neq \emptyset$. Now construct the
{\it double} $\qbar$ of $Q$, by adding an ``opposite" edge $a^*$ for each
$a \in E$.

The sub-quiver $Q^*$ is defined with vertices $I$ and edges $a^*$. Now
define $\lie{g} = \F \qbar / (a'a^*, a^* a' : a' \in (\F Q)_1, a^* \in
(\F Q^*)_1)$. This is an associative algebra, and a quotient of the {\em
preprojective algebra} introduced in \cite{GP}, namely, $\F \qbar /
(\sum_{a \in E} [a,a^*])$. One uses the associative algebra structure to
show that $\lie{g}$ is an RTLA, using: $\lie{g}^+ := \bigoplus_{n>0} (\F
Q)_n,\ \lie{h} := (\F Q)_0$, and $\lie{g}^- := \bigoplus_{n>0} (\F
Q^*)_n$. Moreover, $[\lie{g}^+, \lie{g}^-] = 0$, using which it can be
shown that all Conditions (S) fail to hold.
\end{exam}

\begin{remark}[{\em Toroidal Lie algebras}]
These Lie algebras are defined (see \cite[Section 0]{BM}) to be the
universal central extensions of $R_n \otimes \lie{g}$, where $\lie{g}$ is
a simply laced Lie algebra and $R_n = \F[T_1^{\pm 1}, \dots, T_n^{\pm
1}]$. The central extension is by $\lie{Z} := \Omega^1 R_n / d R_n$.

Clearly, the regularity condition fails here, so that toroidal Lie
algebras are not RTLAs. We can, however, look at a related algebra,
namely $U \lie{g} \otimes R_n$. By the above result, this is a strict
Hopf RTA. If the central extension above splits, then $U(\lie{g} \oplus
\lie{Z}) \otimes R_n$ is also a strict Hopf RTA.
\end{remark}

\subsection{Non-based Lie algebras with triangular
decomposition}\label{Snonbased}

We now discuss examples of non-based RTAs arising from Lie algebras
(which are necessarily not RTLAs). Such Lie algebras have emerged from
mathematical physics and are the subject of active study.

\begin{exam}[{\em Generalized Virasoro algebras}]

A relatively modern construction (which is among the RTLAs not covered in
\cite{MP}, say) consists of generalized Virasoro algebras ${\rm Vir}[G]$.
These algebras were defined in \cite{PZ} and have been the subject of a
large body of literature; see e.g.~\cite{HWZ,LZ} and the references
therein. They involve working over a field $\F$ of characteristic zero
and a group $0 \neq G \subset (\F,+)$. Then ${\rm Vir}[G]$ is a
$G$-graded Lie algebra with similar relations to the usual Virasoro
algebra. Now suppose $\F = \R \supset G$. If $G = \alpha \Z$ for some
$\alpha \neq 0$ then ${\rm Vir}[G]$ is discretely graded (and based);
otherwise for $G \neq \alpha \Z$, the algebra is not discretely graded
(and hence not based). In the case when the group $G$ has a total
ordering compatible with addition, ${\rm Vir}[G]$ has a triangular
decomposition -- in fact, $U({\rm Vir}[G])$ turns out to be a (possibly
non-based) strict Hopf RTA -- and its Category $\calo$ has been studied
in great depth; see \textit{loc.~cit.}
\end{exam}

\begin{remark}
Note in the theory developed above that the group $\tangle{\calq^+_r}$
usually does not equal the disjoint union $(\calq^-_r \setminus \{
\id_{H_r} \}) \coprod \calq^+_r$ for $r=0,1$ (notation as in Lemma
\ref{Lfirst}). For instance, this is the case for semisimple Lie algebras
(and more generally, for all RTLAs) of rank at least $2$. However,
sometimes it does happen that $\tangle{\calq^+_r} = \calq^+_r \cup
\calq^-_r$. One example is precisely the higher rank/generalized Virasoro
algebras over a totally ordered group $G$.
\end{remark}

\begin{exam}[{\em Generalized Schr\"odinger-Virasoro algebras}]

Another modern construction of a strict RTA not found in \cite{MP} is the
Schr\"odinger-Virasoro algebra. This is a Lie algebra whose construction
is motivated by the free Schr\"odinger equation in $(1+1)$-variables, and
involves extending the centerless Virasoro Lie algebra by a 2-step
nilpotent Lie algebra formed by bosonic currents. The larger class of
generalized Schr\"odinger-Virasoro algebras $\lie{gsv}[G]$ over totally
ordered groups $G \subset (\F, +)$, as well as their Verma modules were
studied in \cite{TZ} (see also \cite{LS}). Once again, their universal
enveloping algebras provide examples of Hopf RTAs that are possibly
non-based.
\end{exam}

\begin{exam}[{\em Twisted Heisenberg-Virasoro algebra}]
This algebra was introduced and studied by Billig in \cite{Bi}. It is not
hard to show that its universal enveloping algebra is a Hopf RTA.
\end{exam}

\subsection*{General construction: stratified Virasoro algebras}

In light of the above ``generalized" Virasoro-type examples, it is
natural to ask if there is a unified framework of a general Lie algebra
$\lie{g}$, which encompasses all of the above examples (i.e., in Section
\ref{Snonbased}). We now provide a positive answer to this question, over
an arbitrary field $\F$ of characteristic zero:
\begin{enumerate}
\item $\lie{g}$ is a Lie algebra for which there exist nonnegative
integers $M,N \in \nn$ such that
\[ \lie{g} = Z \oplus \bigoplus_{j=0}^N \lie{g}_j, \qquad \lie{g}_0 =
\bigoplus_{k=0}^M \lie{g}_0[k], \]

\noindent with all summands being vector spaces, and $Z$ central in
$\lie{g}$.

\item There exists an additive subgroup $G^0_k \subset (\F,+)$ for each
$0 \leq k \leq M$, such that $G^0_k + G^0_{k'} \subset G^0_{k+k'}$
whenever $k+k' \leq M$.

\item For each $1 \leq j \leq N$, there exists a subset $G_j^+ \subset
(\F, +)$ satisfying:
(a) $G_j^+$ is an additive subgroup of $(\F, +)$, or else $\tangle{G_j^+}
\setminus G_j^+$ is an additive subgroup of $(\F, +)$ and $G_j^+$ is a
coset of it;
(b) $G^+_j + G^+_{j'} \subset G^+_{j+j'}$ whenever $j+j' \leq N$; and
(c) $G^0_k + G^+_j \subset G^+_j$ for all $j,k$.

\item There exists a total ordering on the subgroup of $\F$ spanned by
all $G_k^0, G_j^+$.

\item For all $0 < j \leq N$, the vector space $\lie{g}_j$ is spanned by
an $\F$-basis $\{ L_{j,\alpha}^+ : \alpha \in G^+_j \}$. Similarly, for
all $0 \leq k \leq M$, the vector space $\lie{g}_0[k]$ is spanned by an
$\F$-basis $\{ L_{k,\alpha}^0 : \alpha \in G^0_k \}$. Moreover, these
basis vectors satisfy the relations:
\begin{align}
[L^+_{j,\alpha}, L^+_{j',\beta}] = &\ {\bf 1}_{j+j' \leq N}
f^{++}_{j,j'}(\alpha,\beta) L^+_{j+j',\alpha+\beta} + {\bf 1}_{\alpha +
\beta = 0} g^{++}_{j,j'} (\alpha, \beta) z_{j,j'}^{++}, \notag\\
[L^0_{k,\alpha}, L^0_{k',\beta}] = &\ {\bf 1}_{k+k' \leq M}
f^{00}_{k,k'}(\alpha,\beta) L^0_{k+k',\alpha+\beta} + {\bf 1}_{\alpha +
\beta = 0} g^{00}_{j,j'} (\alpha, \beta) z_{k,k'}^{00},\\
[L^0_{k,\alpha}, L^+_{j,\beta}] = &\ f^{0+}_{k,j}(\alpha,\beta)
L^0_{j,\alpha+\beta} + {\bf 1}_{\alpha + \beta = 0} g^{0+}_{k,j} (\alpha,
\beta) z_{k,j}^{0+}, \notag
\end{align}

\noindent for suitable functions $f^{00}_{j,j'}, f^{++}_{k,k'},
f^{0+}_{k,j}$ and similarly for the $g$-functions, and with (suitable)
central elements $z^{++}_{j,j'}, z^{00}_{k,k'}, z^{0+}_{k,j} \in Z$.
\end{enumerate}

The aforementioned construction yields a Lie algebra whose universal
enveloping algebra is an RTA, provided the $f,g$-functions and central
elements satisfy certain compatibility conditions arising for the
following reasons:
\begin{itemize}
\item the anti-symmetry of the Lie bracket;
\item the Jacobi identity; and
\item the anti-involution $i$, which should send $L^+_{j,\alpha}$ to
$L^+_{j,-\alpha}$ and $L^0_{k,\alpha}$ to $L^0_{k,-\alpha}$.
\end{itemize}

\noindent Call any Lie algebra $\lie{g}$ satisfying these assumptions a
\textbf{stratified Virasoro algebra}. Then $U(\lie{g})$ is a Hopf RTA
with Cartan subalgebra $U(\lie{h})$, where $\displaystyle \lie{h} = Z
\oplus \bigoplus_{k=0}^M \F L^0_{k,0} \oplus \bigoplus_{j : 0 \in G_j^+}
\F L^+_{j,0}$. It is not hard to show that this construction of a
stratified Virasoro algebra encompasses all of the variants of
Virasoro-type algebras discussed above. For instance, the usual Virasoro
algebra is a stratified Virasoro algebra with $\dim_\F Z = 1$ and $M = N
= 0$, with $G^0_0 = \Z$.

\section{Extended quantum groups for symmetrizable Kac-Moody Lie
algebras}\label{Suqg}

The next class of examples consists of quantum groups, which are also
strict Hopf RTAs. Suppose $C$ is a generalized Cartan matrix (GCM)
corresponding to a symmetrizable Kac-Moody Lie algebra $\lie{g} =
\lie{g}(C)$. Our goal is to construct a family of quantum algebras
associated to the generalized Cartan matrix $C$, which we term
\textit{extended quantum groups}. Examples of such algebras include
quantum groups that use neither the co-root lattice $\qhat$ nor the
co-weight lattice $\phat$, but some intermediate lattice, as well as
possible torsion elements. Moreover, we also study conditions under which
all of these algebras satisfy the various Conditions (S).

\subsection{The construction and triangular decomposition}

To define the aforementioned family of quantum groups, some notation is
required. Recall that a GCM is a matrix $C = (c_{ij})_{i,j \in I}$ where
$I$ is finite, $c_{ii} = 2$, $c_{ij}$ is a nonpositive integer for all $i
\neq j \in I$, and $c_{ij} = 0$ if and only if $c_{ji} = 0$. We say that
$C$ is symmetrizable if there exist positive integers $d_i$ such that
$d_i c_{ij} = d_j c_{ji}$ for all $i,j \in I$. We will also use the
\textit{Gaussian integers and binomial coefficients} in the ground field
$\F$: given $q \in \F^\times$ that is not a root of unity, and integers
$0 \leq m \leq n$, define
\[ [n]_q := \frac{q^n - q^{-n}}{q-q^{-1}}, \qquad [n]_q! := \prod_{m=1}^n
[m]_q, \qquad [0]_q! := 1, \ \qquad \binom{n}{m}_q :=
\frac{[n]_q!}{[m]_q! [n-m]_q!}. \]

\begin{definition}
Fix a ground field $\F$ and a nonzero scalar $q \in \F^\times$ that is
not a root of unity.
\begin{enumerate}
\item In this section, an \emph{extended Cartan datum} consists of the
following data:
\begin{itemize}
\item A symmetrizable GCM $C := (c_{ij})_{i,j \in I}$ and a diagonal
matrix $D$ with positive integer diagonal entries $d_i$ such that $d_i
c_{ij} = d_j c_{ji}$.
\item A free abelian group $\qhat \cong \Z^I$ with $\Z$-basis $\{ K_i : i
\in I \}$. (This is the ``co-root lattice" inside $\lie{h}$, in the
symmetrizable Kac-Moody Lie algebra $\lie{g} = \lie{g}(C)$.)
\item An abelian group $\G \supset \qhat$, as well as a finite set of
characters $\Delta' := \{ \nu_i : \G \to \F^\times : i \in I \}$ such
that $\nu_i|_{\qhat} = q^{\alpha_i}$. In other words, $\nu_i(K_j) =
q^{d_j c_{ji}} = \nu_j(K_i)$ for all $i,j \in I$.
\end{itemize}

\item Given an extended Cartan datum $(C, D, \qhat \subset \Gamma,
\Delta' = \{ \nu_i \})$, define the \emph{extended quantum group}
$\U{\Gamma,\Delta'}$ to be the $\F$-algebra generated by $\G$ and $\{
e_i, f_i : i \in I \}$, modulo the following relations:
\begin{align*}
& g e_i g^{-1} = \nu_i(g) e_i, \quad g f_i g^{-1} = \nu_i(g^{-1}) f_i
\quad \forall i \in I, \ g \in \Gamma; \qquad [e_i, f_j] = \delta_{i,j}
\frac{K_i - K_i^{-1}}{q^{d_i} - q^{-d_i}},\\
& \sum_{l=0}^{1 - c_{ij}} (-1)^l \binom{1 - c_{ij}}{l}_{q^{d_i}} e_i^{1 -
c_{ij} - l} e_j e_i^l = 0,\qquad \mbox{($q$-Serre-1)}\\
& \sum_{l=0}^{1 - c_{ij}} (-1)^l \binom{1 - c_{ij}}{l}_{q^{d_i}} f_i^{1 -
c_{ij} - l} f_j f_i^l = 0.\qquad \mbox{($q$-Serre-2)}
\end{align*}

\item Define $B^\pm$ to be the subalgebras of $\U{\G,\Delta'}$ generated
by the $e_i$s and $f_i$s respectively and $H_1 = H_0 := \F \Gamma$.
\end{enumerate}
\end{definition}

\begin{remark}
Extended quantum groups can be defined for $\Gamma$ any intermediate
lattice between $\qhat$ and $\phat$. For instance, for $\Gamma = \qhat$
we recover the usual quantum group $U_q(\lie{g}) = \U{\qhat, \{
q^{\alpha_i} : i \in I \}}$. This is the approach followed in \cite[\S
4.2]{Ja2} (when $A$ is of finite type). In what follows, we will freely
identify $\alpha_i$ with $q^{\alpha_i}$, since we only deal with quantum
groups and $q$ is not a root of unity.

On the other hand, \cite[Section 3.1]{HK} or \cite[Section 3.2.10]{Jos}
work with $\Gamma = \phat$, the co-weight lattice inside $\lie{h}$, and
$\nu_i(q^h) = q^{\alpha_i(h)}$, for the simple roots $\alpha_i \in
\lie{h}^*$. Moreover, $K_i = q^{d_i h_i}$, where $h_i = [e_i, f_i]$ in $U
\lie{g}$. Note that all of these algebras are special cases of extended
quantum groups.

In fact the family of extended quantum groups is more general than the
above examples, because $\G$ is allowed to have torsion elements, in
which case it does not embed into $\Q \otimes_{\Z} \qhat \subset
\lie{h}$. Thus there may not exist a bilinear form (and hence, a Hopf
pairing) on $\G$, as is used in the literature.
\end{remark}

We now list some basic properties of extended quantum groups.

\begin{prop}\label{Pgcm-qg}
Fix an extended Cartan datum and define $\U{\Gamma,\Delta'}$ as above.
\begin{enumerate}
\item $\U{\Gamma,\Delta'}$ has a Hopf algebra structure, with the
comultiplication $\Delta$, counit $\vi$, and antipode $S$ given on
generators by
\begin{align*}
\vi(g) = &\ 1, \qquad \vi(e_i) = \vi(f_i) = 0,
\qquad \forall g \in \Gamma,\ i \in I\\
\Delta(g) = &\ g \otimes g,
\qquad \Delta(e_i) = e_i \otimes K_i^{-1} + 1 \otimes e_i,
\qquad \Delta(f_i) = f_i \otimes 1 + K_i \otimes f_i,\\
S(g) = &\ g^{-1}, \qquad S(e_i) = - e_i K_i, \qquad S(f_i) = - K_i^{-1}
f_i.
\end{align*}

\item $\U{\Gamma,\nu}$ has an involution $T$ that sends $g \in \Gamma$ to
$g^{-1}$ and $e_i$ to $f_i$ for all $i \in I$. Restricted to $B^+$, $T$
is an algebra isomorphism onto $B^-$.

\item $TST = S^{-1} \neq S$, whence $ST \neq TS$ are anti-involutions $i$
on $\U{\Gamma,\nu}$, which restrict to the identity on $H_1$.

\item $A = \U{\Gamma,\Delta'}$, together with the data $(B^\pm, H_1 = H_0
= \F \Gamma, \Delta', i = ST \text{ or } TS)$ forms a strict, based Hopf
RTA (of finite rank) if and only if $\Delta'$ is $\Z$-linearly
independent in $\widehat{H_1}$.
\end{enumerate}
\end{prop}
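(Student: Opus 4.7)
The plan is to verify the four parts in turn, leveraging the presentation of $\U{\G,\Delta'}$ and standard quantum group arguments.

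For part~(1), I would extend $\Delta$, $\vi$, and $S$ from generators to the algebra by checking that each defining relation is preserved. Relations among elements of $\G$ hold automatically because every $g \in \G$ is declared group-like; the conjugation relations $g e_i g^{-1} = \nu_i(g) e_i$ follow directly upon expanding $\Delta(g) = g \otimes g$ (using that $\G$ is abelian, so $g K_i g^{-1} = K_i$). The only substantive checks --- that $\Delta$ respects the $[e_i,f_j]$-commutator and the two $q$-Serre relations, and that $S$ defines an antipode --- are classical (cf.~\cite{Ja2,Jos}) and go through verbatim, since the extended Cartan $\F\G$ only enlarges the group-like piece of the coalgebra.

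For part~(2), I would define $T$ on all four families of generators by $T(g) := g^{-1}$, $T(e_i) := f_i$, $T(f_i) := e_i$, extending it as an $\F$-algebra map. The two conjugation relations are exchanged under $T$ using $\nu_i(g^{-1}) = \nu_i(g)^{-1}$; the $[e_i,f_j]$-commutator is $T$-symmetric because $T(K_i - K_i^{-1}) = K_i^{-1} - K_i$ and $T(e_i f_j - f_j e_i) = f_i e_j - e_j f_i$; and the two $q$-Serre relations are swapped. Since $T^2$ acts as the identity on each generator, $T$ is an involution, and by construction $T(B^+) = B^-$ with the restriction an algebra isomorphism.

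For part~(3), I would compute $TST$ and $S^{-1}$ on generators. On $g \in \G$ both return $g^{-1}$. Using the commutation $K_i e_i = q^{2d_i} e_i K_i$ one finds $S^2(e_i) = q^{-2d_i} e_i$, hence $S^{-1}(e_i) = -q^{2d_i} e_i K_i = -K_i e_i$; meanwhile $TST(e_i) = T(-K_i^{-1} f_i) = -K_i e_i$, and a symmetric calculation on $f_i$ completes the check $TST = S^{-1}$. Since $S$ is an anti-automorphism and $T$ an automorphism, $ST$ and $TS$ are anti-automorphisms with $(ST)^2 = S(TST) = S \cdot S^{-1} = \id$ and likewise $(TS)^2 = \id$, so both are anti-involutions; they fix $\G$ pointwise because $S(g) = T(g) = g^{-1}$. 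Comparing on $e_i$, $ST(e_i) = -K_i^{-1} f_i = -q^{2d_i} f_i K_i^{-1}$ differs from $TS(e_i) = -f_i K_i^{-1}$ since $q^{2d_i} \neq 1$, so $ST \neq TS$, and in particular $S^{-1} \neq S$.

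For part~(4), condition (RTA1) is the standard PBW-type triangular decomposition for Drinfeld--Jimbo quantum groups adapted to the enlarged Cartan $\F\G$; this is the main obstacle, but proceeds as in \cite[Ch.~4]{Ja2} or \cite[Ch.~3]{Jos}, since the $q$-Serre relations involve only the $e_i$ (respectively only the $f_i$) among themselves and the Cartan commutations are prescribed by $\Delta'$. Condition (RTA3) is supplied by part~(3). With $H_1 = H_0 = \F\G$ and each $g \in \G$ grouplike, the adjoint action reduces to conjugation, so $\ad g(e_i) = \nu_i(g) e_i$ and the monomial $e_{i_1}\cdots e_{i_n}$ carries adjoint weight $\nu_{i_1}\cdots\nu_{i_n}$; this yields $B^+ = \bigoplus_\mu B^+_\mu$ graded by $\calq^{'+}_0 := \N \Delta' \subset \widehat{\F\G}$. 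If $\Delta'$ is $\Z$-linearly independent in $\widehat{H_1}$, the natural map $\N^I \to \calq^{'+}_0$ is injective, so only the empty monomial has weight $\vi_{H_0}$ (forcing $B^+_{\vi_{H_0}} = \F$), each $B^+_\mu$ is spanned by the finitely many PBW monomials of the unique underlying multi-degree, and no nontrivial $\N$-combination of the $\nu_i$ equals $\vi_{H_0}$ (so $\calq^{'+}_0 \setminus \{\vi_{H_0}\}$ is a semigroup). Together with $\calq^{'+}_0 = \N\Delta'$ by construction, this gives (HRTA2) and exhibits $\Delta'$ as a valid base. Conversely, if the given data forms a based Hopf RTA with designated base $\Delta'$, the definition of \emph{based} HRTA requires $\Delta'$ to be $\Z$-linearly independent in $\widehat{H_1}$, completing the biconditional.
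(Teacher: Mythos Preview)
Your proposal is correct and follows essentially the same approach as the paper: the paper's proof simply refers the first three parts to \cite[Section~3.1]{HK} (noting that part~(3) follows by checking $TST$ and $S^{-1}$ agree on generators), and disposes of part~(4) by saying one implication is immediate from the axioms and the converse is ``not hard to verify.'' You have written out precisely the generator-by-generator checks the paper leaves implicit, so there is no meaningful divergence in strategy---only in level of detail.
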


\begin{proof}
The first three parts are shown by adopting the proofs and arguments
found in \cite[Section 3.1]{HK} to $\U{\G,\Delta'}$.
Since both $TST$ and $S^{-1}$ are $\F$-algebra anti-automorphisms, the
third part follows by checking that they agree on generators. For the
fourth part, one implication is immediate from the axioms, and the
converse is not hard to verify when $\Delta'$ is $\Z$-linearly
independent.
\end{proof}

Extended quantum groups are very similar in structure to the quantum
groups that have been very well-studied in the literature.
In some sense, they ``quantize" the contragredient Lie algebras defined
in Example \ref{Ekk} (i.e., in \cite{KK}), after removing some of the
assumptions therein.
Thus the classical limit and representation theory (at least, for
integrable modules) should be similar to the traditionally well-studied
cases. We expect that the analysis in \cite[Chapter 3]{HK} should go
through for the algebras $\U{\G,\Delta'}$ as well, but do not proceed
further along these lines, as it is not focus of the present paper.

We now show that extended quantum groups of finite type satisfy all of
the Conditions (S). The following is the main result in this section.

\begin{theorem}\label{Tuqg}
Fix a ground field $\F$ with $\ch \F \neq 2,3$ and such that $\F^\times$
is a divisible group (e.g., $\F = \overline{\F}$). Also suppose $q \in
\F^\times$ is not a root of unity, the matrix $C$ is of finite type, and
$[\G : \qhat] < \infty$.

Then there exist extensions $\nu_i$ of the characters $q^{\alpha_i}$ from
$\qhat$ to $\G$. For each such choice $\Delta' = \{ \nu_i : i \in I \}$
of extensions, $A = \U{\G,\Delta'}$ satisfies Condition (S4), and hence
all other conditions (S). In particular, all blocks of $\calo =
\calo[\hofree]$ are highest weight categories with BGG reciprocity.
\end{theorem}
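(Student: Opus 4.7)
\noindent The plan has three stages: (i) the extensions $\nu_i$ exist, (ii) $A = \U{\G,\Delta'}$ is then a strict, based Hopf RTA of finite rank, and (iii) Condition (S4) follows by combining the classical Harish-Chandra isomorphism for $U_q(\lie{g})$ in finite type with the finiteness of $\G/\qhat$. The highest-weight structure of the blocks and BGG reciprocity then follow by combining Lemma~\ref{Lcs}, Theorem~\ref{Tfirst}(3), and Theorem~\ref{Thwc}(1).

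For stages (i) and (ii): since $\F^\times$ is divisible it is injective in the category of abelian groups, so the restriction $\hhh_{\text{Ab}}(\G,\F^\times) \twoheadrightarrow \hhh_{\text{Ab}}(\qhat,\F^\times)$ is surjective and produces the extensions $\nu_i$. Since $q$ is not a root of unity and $\{\alpha_i\}$ is a $\Z$-basis of $\qhat$, the characters $q^{\alpha_i} = \nu_i|_\qhat$ are $\Z$-linearly independent in $\widehat{\qhat}$, whence so are the $\nu_i$ in $\widehat{\G}$; Proposition~\ref{Pgcm-qg}(4) then gives that $A$ is a strict, based Hopf RTA of finite rank $|I|$, in particular discretely graded.

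The main work is (S4). The inclusion $\qhat \hookrightarrow \G$ induces a canonical algebra embedding $U_q(\lie{g}) := \U{\qhat,\{q^{\alpha_i}\}} \hookrightarrow A$ (the relations match since $\nu_i|_\qhat = q^{\alpha_i}$). I first claim $Z(U_q(\lie{g})) \subseteq Z(A)$: any $z \in Z(U_q(\lie{g}))$ is of $\qhat$-adjoint-weight zero, so decomposes into summands $z_\beta \in B^-_{-\beta} \otimes \F\qhat \otimes B^+_\beta$ with $\beta \in \Z\Delta$; for $g \in \G$, the adjoint action scales $B^+_\beta$ by $\nu_\beta(g)$ and $B^-_{-\beta}$ by $\nu_\beta(g)^{-1}$ while fixing $\F\qhat$, so the two scalars cancel and $g$ fixes $z_\beta$. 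Combined with commutation with the $e_i, f_i$, this yields $z \in Z(A)$. Next, the Verma module $M(\lambda)$ of $A$ is free of rank one over $B^-$ generated by a highest-weight vector of $\qhat$-weight $\lambda|_\qhat$, so by the universal property of Verma modules for $U_q(\lie{g})$ it is isomorphic as a $U_q(\lie{g})$-module to $M_{U_q}(\lambda|_\qhat)$; consequently each $z \in Z(U_q(\lie{g}))$ acts on $M(\lambda)$ by the scalar $\chi^{U_q}_{\lambda|_\qhat}(z)$, and in particular the $A$-central character $\chi_\lambda$ restricted to $Z(U_q(\lie{g}))$ is completely determined by $\lambda|_\qhat$.

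Now invoking the Harish-Chandra isomorphism for $U_q(\lie{g})$ of finite type --- valid under $\ch\F \neq 2,3$ with $q$ not a root of unity --- which asserts that two weights in $\widehat{\qhat}$ share a central character iff they lie in the same shifted Weyl-group orbit $W\bullet-$, we conclude that any $\mu \in S^4_A(\lambda)$ satisfies $\mu|_\qhat \in W \bullet (\lambda|_\qhat)$. Hence $S^4_A(\lambda)$ is contained in the preimage under restriction $\widehat{\G} \to \widehat{\qhat}$ of this finite $W$-orbit; each fiber of this restriction is a torsor over $\hhh_{\text{Ab}}(\G/\qhat,\F^\times)$, which has cardinality at most $|\G/\qhat| < \infty$. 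Thus $S^4(\lambda)$ is finite for every $\lambda \in \hofree$, proving (S4). Lemma~\ref{Lcs} then gives (S1)--(S3) and $\overline{S^2}(A) = \hofree = S^3(A)$, and Theorem~\ref{Tfirst}(3) together with Theorem~\ref{Thwc}(1) completes the proof. The principal anticipated obstacle is verifying the Harish-Chandra isomorphism in positive characteristic under $\ch \F \neq 2,3$: the standard proofs in characteristic zero with generic $q$ should adapt, but the construction of the quantum Casimir elements, together with the check that their actions separate shifted Weyl orbits, requires care with the denominators appearing in $q$-binomial coefficients.
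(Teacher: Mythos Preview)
Your overall strategy coincides with the paper's: extend the characters using divisibility of $\F^\times$, embed $Z(U_q(\lie g))$ into $Z(A)$, and then pull the finiteness of $S^4(\lambda)$ back from the quantum Harish--Chandra theorem via the finite-index inclusion of lattices. Your arguments for the extension, for $Z(U_q(\lie g))\subseteq Z(A)$, and for identifying $M_A(\lambda)|_{U_q}$ with $M_{U_q}(\lambda|_{\qhat})$ are fine and essentially what the paper does.

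The one substantive gap is your statement of the quantum Harish--Chandra theorem. For $U_q(\lie g)$ of finite type the shifted Harish--Chandra map is an isomorphism
\[
\rho_{H_1}(q^\theta)\circ\xi:\ Z(U_q(\lie g))\ \xrightarrow{\ \sim\ }\ \F[\qhat\cap 2\phat]^W,
\]
\emph{not} onto $\F[\qhat]^W$ (see Jantzen, \S6.25--6.26). Consequently $\chi^{U_q}_{\lambda'}=\chi^{U_q}_{\mu'}$ only forces the $\rho$-shifted restrictions of $\lambda',\mu'$ to $\qhat\cap 2\phat$ to lie in the same $W$-orbit; it does \emph{not} force $\mu'\in W\bullet\lambda'$ in $\widehat{\qhat}$. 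For instance, in type $A_2$ one has $\qhat\cap 2\phat=2\qhat$, so the four sign-twists $(\pm a_1,\pm a_2)$ of a generic weight $\lambda'=(a_1,a_2)\in\widehat{\qhat}=(\F^\times)^2$ all share the same central character while lying in distinct $W\bullet$-orbits. This is precisely why the paper records, for $\Gamma=\phat$, the Joseph result $\chi_\mu=\chi_\lambda\Leftrightarrow\mu\in(W\ltimes(\Z/2\Z)^I)\bullet\lambda$ rather than a pure $W$-orbit statement.

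The fix is immediate and is exactly what the paper does: restrict not to $\qhat$ but to $\qhat\cap 2\phat$. By Nagata--Mumford the fibers of $\widehat{\qhat\cap 2\phat}\to\Spec\F[\qhat\cap 2\phat]^W$ are finite $W$-orbits, and the restriction $\widehat{\Gamma}\to\widehat{\qhat\cap 2\phat}$ has finite fibers because
\[
[\Gamma:\qhat\cap 2\phat]\ =\ [\Gamma:\qhat]\cdot[\qhat:\qhat\cap 2\phat]\ \le\ [\Gamma:\qhat]\cdot[\phat:2\phat]\ <\ \infty.
\]
With this correction your argument goes through and matches the paper's Steps~3--5.
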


In particular, all Conditions (S) (and properties such as BGG
Reciprocity) hold for all extended quantum groups with $\Gamma$ ``in
between" the co-root and co-weight lattices, or containing additional
finite-order torsion subgroups. We remark that a special case of our
result was known for $\Gamma = \phat$ from \cite[Lemma 8.3.2]{Jos}, which
stated that $\chi_\mu = \chi_\lambda$ on the center of $\U{\phat, \{
q^{\alpha_i} \}}$ if and only if $\mu \in (W \ltimes (\Z / 2 \Z)^I)
\bullet \lambda$.

\begin{proof}
As the proof is somewhat lengthy, we break it up in to steps for ease of
exposition.\medskip

\noindent {\bf Step 1.}
We first extend the characters $q^{\alpha_i}$ from $\qhat$ to $\Gamma$.
Consider the short exact sequence
$0 \to \qhat \mapdef{\iota} \G \to \G / \qhat \to 0$
in the category of abelian groups. Since $\F^\times$ is divisible --
i.e., injective -- this yields:
\begin{equation}\label{E2}
0 \to \hhh_{group}(\G / \qhat , \F^\times) \to \hhh_{group}(\G ,
\F^\times) \mapdef{\iota^*} \hhh_{group}(\qhat , \F^\times) \to 0.
\end{equation}

\noindent Now think of the simple roots $\alpha_i$ as elements of
$\hhh_{group}(\qhat, \F^\times)$, via:
\[ \alpha_i(q^h) := q^{\alpha_i(h)}. \]

\noindent Note that the subgroup generated by the $\alpha_i$ is free
because $q$ is not a root of unity in $\F$. It is then possible to lift
$q^{\alpha_i}$, via the injectivity of $\F^\times$, to any $\nu_i \in
(\iota^*)^{-1}(q^{\alpha_i}) \subset \hhh_{group}(\G ,
\F^\times)$.\medskip

\noindent {\bf Step 2.}
The next claim is that \textit{if $\qhat \subset \Gamma' \subset \Gamma$
are abelian groups with $\Delta' := \{ \nu_i : i \in I \} \subset
\hhh_{group}(\Gamma, \F^\times)$ being $\Z$-linearly independent
characters when restricted to $\Gamma'$, then}
\begin{equation}\label{Eqcenters}
Z(\U{\Gamma',\Delta'|_{\Gamma'}}) = Z(\U{\Gamma,\Delta'}) \cap
\U{\Gamma',\Delta'|_{\Gamma'}}.
\end{equation}

Indeed, the only nontrivial assertion in Equation \eqref{Eqcenters} is to
show that $Z(\U{\Gamma',\Delta'|_{\Gamma'}}) \subset
Z(\U{\Gamma,\Delta'})$. Suppose $z \in Z(\U{\Gamma',\Delta'_{\Gamma'}})$;
since $z$ commutes with $\G'$, it has weight $0$ in
$\U{\G',\Delta'|_{\G'}}$, and hence also in $\U{\G,\Delta'}$ (since the
weight space decompositions of $\U{\G',\Delta'}$ $\hookrightarrow
\U{\G,\Delta'}$ agree). Thus, $z$ commutes with $\G$, and since it
commutes with each $e_i$ and $f_i$, $z$ is central in $\U{\G,\Delta'}$ as
well.\medskip

\noindent {\bf Step 3.}
For convenience, define $\hatt{G} := \hhh_{group}(G, \F^\times)$, for any
group $G$. Thus $\hatt{\G} = \widehat{H_1}$ in our setting.
Now to prove the result, fix $\la \in \hatt{\Gamma}$ and suppose
$\chi_\mu = \chi_\la : Z(\U{\G, \Delta'}) \to \F$ for some $\mu : \Gamma
\to \F^\times$. Then $\chi_\mu, \chi_\lambda$ agree when restricted (by
the previous step) to $Z := Z(U_q(\lie{g}))$, where $U_q(\lie{g}) =
\U{\qhat, \{ q^{\alpha_i} \}}$. Thus $\mu \circ \xi = \la \circ \xi$ on
$Z$. Now recall the following result from \cite[Sections 4.2 and
6.25-6.26]{Ja2}:
\textit{If $\G = \qhat$ and $\nu_i = \alpha_i$ are the ``simple roots",
then the Harish-Chandra map is an isomorphism}
\[ \rho_{H_1}(q^\theta) \circ \xi : Z(U_q(\lie{g})) \mapdef{\sim}
\F[\qhat \cap 2 \phat]^W. \]

\noindent Here, $\theta$ denotes the half-sum of positive roots, and
$\rho_{H_1}$ is the weight-to-root map that was studied in Proposition
\ref{Prho}.
(We identify $\lie{h} \leftrightarrow \lie{h}^*$ via the Killing form.)
It follows from above that $\mu \circ \rho_{H_1}(q^{-\theta}) = \la \circ
\rho_{H_1}(q^{-\theta})$ on $\F[\qhat \cap 2 \phat]^W$.\medskip

\noindent {\bf Step 4.}
The remainder of the proof studies the chain of algebras $\F[\qhat \cap 2
\phat]^W \hookrightarrow \F[\qhat \cap 2 \phat] \hookrightarrow \F[\qhat]
\hookrightarrow \F[\Gamma]$. Note that $\qhat \cap 2 \phat$ is a lattice,
so $\Spec \F[\qhat \cap 2 \phat] = (\F^\times)^{rk(\qhat \cap 2 \phat)}$.
Now recall the \textit{Nagata-Mumford Theorem} from (a special case of)
\cite[Theorem 5.3]{Muk}:
\textit{Suppose a finite group $W$ acts on an affine variety $X$ (i.e., its
coordinate ring $R$). Then the map $\Phi : X = \Spec(R) \to X /\!\!/ W :=
\Spec(R^{W})$ (induced by the inclusion $R^W \hookrightarrow R$) is a
surjection that factors through a bijection $\Phi : X / W \to X /\!\!/
W$, where $X/W$ denotes the $W$-orbits in $X$.}\smallskip

Applying this to $X := \qhat \cap 2 \phat$, it follows that the set of
possible extensions $\nu \in \hatt{(\qhat \cap 2 \phat)}$ of $\la \circ
\rho_{H_1}(q^{-\theta}) : \F[\qhat \cap 2 \phat]^W \to \F$ is a
$W$-orbit, hence finite (thus, $\{ \nu \circ \rho_{H_1}(q^{-\theta}) \}$
is also finite).\medskip

\noindent {\bf Step 5.}
Finally, consider the map $: \hatt{\Gamma} \to \hatt{(\qhat \cap 2
\phat)}$. By the injectivity of $\F^\times$ and an analogue of Equation
\eqref{E2} in this situation, it suffices to show that $\G / (\qhat \cap
2 \phat)$ is finite (for then $\iota^*$ is a surjection with finite
fibers). But $\Gamma / (\qhat \cap 2 \phat)$ is indeed finite, since
\[ [\Gamma : \qhat \cap 2 \phat] = [\Gamma : \qhat] \cdot [\qhat : \qhat
\cap 2 \phat] \leq [\Gamma : \qhat] \cdot [\phat : 2 \phat] < \infty. \]

To conclude, $\{ \mu \in \hatt{\Gamma} = \widehat{H_1} : \chi_\mu =
\chi_\la \} \subset \{ \mu \in \hatt{\Gamma} : \mu \circ
\rho_{H_1}(q^{-\theta}) = \la \circ \rho_{H_1}(q^{-\theta})$ on $(\qhat
\cap 2 \phat)^W \}$, and the latter is a finite set by the above
analysis. Thus $\U{\Gamma,\Delta'}$ satisfies Condition (S4).
\end{proof}

\section{Further examples of strict, based Hopf RTAs}\label{Smore}

Before moving on to RTAs that are either not Hopf RTAs or not strict, we
write down some more examples of strict, based Hopf RTAs of low rank. The
first of these examples shows the need to use Condition (S3) instead of
central characters/Condition (S4) in order to obtain a block
decomposition of $\calo$.

\begin{exam}[{\em Rank one infinitesimal Hecke algebras and their
quantized analogues}]\label{Einfhecke1}

Suppose $\ch \F = 0$. The (Lie) rank one infinitesimal Hecke algebra is
defined to be a deformation $\mathcal{H}_z$ of $\mathcal{H}_0 :=
U(\lie{sl}_2(\F) \ltimes \F^2)$, where $\F^2$ is spanned by a weight
basis $x,y$ (over the Cartan subalgebra of $\lie{sl}_2$, which is spanned
by $h$). The deformed relation is $[x,y] = z(C)$, where $C$ is the
quadratic Casimir element of $U(\lie{sl}_2)$ and $z \in \F[T]$ is an
arbitrary polynomial.

The family of algebras $\mathcal{H}_z$ was introduced in \cite{Kh} and
extensively studied in \cite{KT}. It can be seen from
\textit{loc.~cit.}~that $\mathcal{H}_z$ is a strict, based Hopf RTA of
rank one with $H_1 = H_0 = \F[h]$ and $\Delta' = \{ \frac{1}{2} \alpha
\}$, where $\alpha$ is the root of $\lie{sl}_2$. (We remind the reader
that in the literature, \textit{roots} of semisimple Lie algebras are
assumed to lie in the dual space $\lie{h}^*$ of the Cartan Lie
subalgebra, via the weight-to-root map $\rho_{U(\lie{h})}$.) In
particular, $\hofree = \widehat{H_1} = \F$. In \cite{KT}, it is also
shown that similar to complex semisimple Lie algebras (e.g.,
$U(\lie{sl}_2)$),
\begin{itemize}
\item The center $Z(A)$ is isomorphic to a polynomial algebra in one
variable -- the ``quadratic" Casimir element.

\item Condition (S4) holds for $\mathcal{H}_z$ if $z \neq 0$. (Thus
$\calo$ satisfies BGG Reciprocity.)

\item Every central character is of the form $\chi_\lambda$ for some
$\lambda \in \widehat{H_1}$, if $\F$ is algebraically closed of
characteristic zero (see \cite[Exercise (23.9)]{H}).

\item If $z \neq 0$, there are at most finitely many pairwise
non-isomorphic simple finite-dimensional objects in $\calo$.
\end{itemize}

The algebras $\mathcal{H}_z$ possess quantizations $\mathcal{H}_{z,q}$
for $q \neq 0, \pm 1$, which were explored in detail in \cite{GGK}.
The quantum algebras $\mathcal{H}_{z,q}$ turn out to be deformations of
$U_q(\lie{sl}_2(\F)) \ltimes \F[x,y]$ whose classical limits as $q \to 1$
are once again $\mathcal{H}_z$; see \cite{GGK}. They have been found to
possess very similar properties to $\mathcal{H}_z$, including a strict,
based Hopf RTA structure.
However, it was shown in \cite[Theorem 11.1]{GGK} that if $q$ is not a
root of unity, and $z = qyx-xy \neq 0$, then $Z(\mathcal{H}_{z,q}) = \F$.
Thus Condition (S4) clearly fails. Nevertheless, \cite[Propositions 8.2,
8.13]{GGK} show that $\calo = \calo[\hofree]$ is a highest weight
category satisfying Condition (S3). Thus our framework allows us to prove
that $\calo$ is a direct sum of blocks with BGG Reciprocity, even though
it has trivial center and Condition (S4) fails to hold.
\end{exam}

\begin{exam}\label{Eyamane}
The next example is that of a strict Hopf RTA that was recently studied
by Batra and Yamane \cite{BY}. In that work, the authors defined
``generalized quantum groups" $U(\chi, \Pi)$, which are a family of
quantum algebras corresponding to a semisimple Lie algebra (akin to the
algebras $\U{\Gamma, \Delta'}$). The (skew) centers of these algebras and
Harish-Chandra type results were studied in \textit{loc.~cit.} We observe
here that the algebra $U(\chi, \Pi)$ is a strict, based Hopf RTA when
$\chi$ is non-degenerate and $\chi(\alpha_i, \alpha_j)$ is not a root of
unity for any $i,j \in I$.
\end{exam}

The following example is a degenerate one.

\begin{exam}[{\em Regular functions on affine algebraic
groups}]\label{Edeg}

It is well-known that the category of commutative Hopf algebras is dual
to the category of affine algebraic groups. Thus if {\bf G} is any affine
algebraic group, then $H_1 = H_0 = \C[\bf G]$ is a commutative Hopf
algebra, and hence $A = H_1$ is a strict, based HRTA as well, with
$\Delta'$ the empty set. Note that $H_1$ need not be cocommutative in
general (since ${\bf G}$ need not be commutative).

In general, every commutative (Hopf) $\F$-algebra $H_1$ is a strict,
based (Hopf) RTA of rank zero, via: $H_1 = H_0 = \F \otimes H_1 \otimes
\F = Z(H_1)$. In this context, $\calo = \calo[\hofree]$ trivially
satisfies Condition (S4) (and hence Conditions (S1)--(S3)), and also is a
semisimple (highest weight) category.
\end{exam}

The final example in this section is stated for completeness, and is
illustrative in showing how to combine both of the main theorems in
Section \ref{Sos}, in order to study Category $\calo$. (More generally,
one can use Theorem \ref{Tfunct} to create more examples of (strict)
(based) (Hopf) RTAs by taking tensor products.)

\begin{exam}\label{Ezhi}
In \cite{Zhi}, Zhixiang studied the homological properties and
representations of the ``double loop quantum enveloping algebra" (DLQEA),
which is a Hopf algebra isomorphic to $U_q(\lie{sl}_2) \otimes \F[g^{\pm
1}, h^{\pm 1}]$ as an algebra (but not as Hopf algebras). Here $\F$ is an
algebraically closed field of characteristic zero. The aforementioned
algebra isomorphism and Theorem \ref{Tfunct} shows that the DLQEA is a
strict, based HRTA of rank one, and the representation theory of Category
$\calo$ reduces to that for $U_q(\lie{sl}_2)$ and for $\F[g^{\pm 1},
h^{\pm 1}]$. Now use Theorems \ref{Tfirst} and \ref{Tfunct}, Example
\ref{Edeg}, and the results in Section \ref{Suqg} to conclude that the
DLQEA satisfies Condition (S4), and hence, Theorem \ref{Tfirst}.
\end{exam}

\section{Rank one RTAs: Triangular Generalized Weyl Algebras}\label{Sgwa}

In the remainder of this paper we discuss more families of based RTAs,
some of which are either not Hopf RTAs, or not strict. These examples
further demonstrate the need to use the full power of our framework. All
of the examples in this section and the next fall under the following
setting.

\begin{definition}\label{Dgwa}
Fix a field $\F$, an associative $\F$-algebra $H$, an $\F$-algebra map
$\theta : H \to H$, and $z_0, z_1 \in H$. The {\em triangular generalized
Weyl algebra (or triangular GWA)} associated to this data is the
$\F$-algebra
\begin{equation}
\cals(H,\theta,z_0,z_1) := H \tangle{d,u} / (uh = \theta(h) u,\ hd = d
\theta(h),\ ud = z_0 + d z_1 u\ \forall h \in H).
\end{equation}
\end{definition}

As we will presently see, this construction is very general and
incorporates a large number of algebras studied in the literature. We now
briefly list the contents of this section. In Section \ref{Sgwa-basic} we
discuss the structure and representation theory of $\calo$ for triangular
GWAs. Sections \ref{Sgwa-example1} and \ref{Sgwa-example2} discuss a
large number of examples of triangular GWAs, many of them arising from
mathematical physics. The examples are of two flavors - ``classical" and
``quantum". In Section \ref{Sgwa-qdef} we explain how these two types of
examples are related in a precise way. Our construction of the
``classical limit" extends -- to a large family of generalized down-up
algebras -- the relation between classical and quantum $\lie{sl}_2$.

\subsection{Structure and block decomposition of $\calo$ over triangular
GWAs}\label{Sgwa-basic}

Henceforth we will assume that $\theta$ is an automorphism, as well as
some other properties that we now discuss.

\begin{lemma}\label{Lgwa}
Suppose $\theta : H \to H$ is an automorphism. Then
$\cals(H,\theta,z_0,z_1)$ satisfies (RTA1) with $B^+ = \F[u]$, $B^- =
\F[d]$, and $H_1 = H$, if and only if $z_0, z_1$ are central in $H$.
\end{lemma}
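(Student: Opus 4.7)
The plan is to prove both implications by computing the product $uhd$ in two ways. For the converse (RTA1 $\Rightarrow$ centrality), I would assume the multiplication map $\F[d] \otimes_\F H \otimes_\F \F[u] \to \cals(H,\theta,z_0,z_1)$ is a vector space isomorphism, giving a direct sum decomposition $A = \bigoplus_{m,n \geq 0} d^m H u^n$. For any $h \in H$, I would expand $uhd$ in the two associative orderings:
\begin{align*}
(uh)d &= \theta(h)\,ud = \theta(h)z_0 + \theta(h)\,d\,z_1 u = \theta(h)z_0 + d\,\theta^2(h)\,z_1\,u,\\
u(hd) &= ud\,\theta(h) = z_0\theta(h) + d\,z_1\,u\,\theta(h) = z_0\theta(h) + d\,z_1\,\theta^2(h)\,u,
\end{align*}
using $uh = \theta(h)u$ and $hd = d\theta(h)$ repeatedly. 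Equating the $H$-component ($m=n=0$) and the $dHu$-component ($m=n=1$) of the decomposition then forces $\theta(h)z_0 = z_0\theta(h)$ and $\theta^2(h)z_1 = z_1\theta^2(h)$ for every $h \in H$. Since $\theta$ (and hence $\theta^2$) is a bijection on $H$, this gives $z_0, z_1 \in Z(H)$.

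For the forward direction, I would assume $z_0, z_1 \in Z(H)$ and apply Bergman's Diamond Lemma to the presentation of $\cals(H,\theta,z_0,z_1)$ with generating set $\{d\} \cup \{h_i\}_{i \in I} \cup \{u\}$, for $\{h_i\}$ a fixed $\F$-basis of $H$ containing $1_H$. The reductions are: the structure constants of multiplication in $H$; $uh_i \to \theta(h_i)u$; $h_i d \to d\theta(h_i)$; and $ud \to z_0 + dz_1 u$ (with $\theta(h_i)$, $z_0$, $z_1$ expanded in the basis). The irreducible monomials under this rewriting system are precisely $\{d^m h_i u^n : m,n \geq 0,\ i \in I\}$, and their $\F$-linear independence in $A$, guaranteed by the Diamond Lemma, is exactly the statement that the multiplication map $\F[d] \otimes_\F H \otimes_\F \F[u] \to A$ is a vector space isomorphism, which is (RTA1).

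The analysis of ambiguities is as follows: there are no inclusion ambiguities (all reducible words have length exactly two), and the overlap ambiguities split into $h_i h_j h_k$, $u h_i h_j$, $h_i h_j d$, and $u h_i d$. The first resolves by associativity of multiplication in $H$; the next two resolve because $\theta$ is an algebra homomorphism; and the last is the essential case, whose two reductions yield exactly the two expressions for $uhd$ displayed in the converse direction. These expressions coincide if and only if $\theta(h_i)z_0 = z_0\theta(h_i)$ and $\theta^2(h_i)z_1 = z_1\theta^2(h_i)$ for every $i$, which is guaranteed by the centrality hypothesis.

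The hard part will be verifying the descending chain condition for a monomial order compatible with all reductions. I would grade first by $\deg_u + \deg_d$, which is weakly decreased by every reduction and strictly decreased by the $z_0$-term of $ud \to z_0 + dz_1 u$; on graded pieces of fixed $(u,d)$-degree, I would introduce a misordering index for the total order $d \prec h_i \prec u$ that weights $(u,d)$-inversions heavily enough to dominate the bookkeeping changes induced by the extra $h$-factor inserted in the $dz_1 u$-term, while counting $(u,h)$- and $(h,d)$-inversions with unit weight to handle the remaining degree-preserving reductions $uh_i \to \theta(h_i)u$ and $h_i d \to d\theta(h_i)$. This is the same kind of setup carried out explicitly in the proof of Theorem \ref{Trtm} for a considerably more elaborate algebra, and the verification here is strictly simpler; once the DCC is in place, the resolution of the overlap $uh_i d$ via centrality completes the argument.
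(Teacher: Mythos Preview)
Your proof is correct and follows essentially the same approach as the paper: a two-ways expansion of a triple product (you use $uhd$; the paper uses $hud$) yields the centrality implication, and Bergman's Diamond Lemma with exactly the same overlap analysis supplies (RTA1). Your proposed weighted misordering index for the DCC is in fact more careful than the paper's, whose stated index $n + \#\{\text{inversions}\}$ does not strictly decrease on the $d z_1 u$ branch of $ud \to z_0 + d z_1 u$.
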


\begin{proof}
Compute for all $h \in H$:
\begin{align*}
h(ud) = &\ h(z_0 + d z_1 u) = h z_0 + d \theta(h) z_1 u,\\
(hu)d = &\ u \theta^{-1}(h) d = (ud)h = (z_0 + d z_1 u)h = z_0 h + d z_1
\theta(h) u.
\end{align*}

\noindent Now if $\cals(H,\theta,z_0,z_1)$ satisfies (RTA1), then the
equality between these two expressions for all $h \in H$ implies that
$z_0, z_1 \in Z(H)$. To show the converse, use the Diamond Lemma from
\cite{Be} in a manner similar to the proof of Theorem \ref{Trtm}.
Namely, define the usual set of generators $X = \{ u,d, h_i \}$, where
$\{ h_i : i \in I \}$ range over an $\F$-basis of $H$, with a fixed
element $0 \in I$ corresponding to $h_0 = 1_H$. Also fix a total ordering
of $I$ -- and hence of $\{ h_i \}$ -- in which $0 = \min I$. Now define a
semigroup partial order on the free monoid $\tangle{X}$ generated by $X$,
via: words of longer length are larger, $u > h_i > d\ \forall i$, and now
extend both these to the lexicographic order on words of the same length.

Then the reductions are: $h_i h_j$ reduces via the structure constants
for multiplication in $H$, $ud \mapsto z_0 + d z_1 u,\ uh \mapsto \theta(h)
u$, and $hd \mapsto \theta(h) d$.
These are clearly compatible with the semigroup partial order.
Moreover, given $w = T_1 \cdots T_n \in \tangle{X}$, one checks that the
function $f(w) = n + \# \{ (i,j) : i < j,\ T_i > T_j \in X \}$ is a
misordering index (i.e., it strictly reduces with each reduction).

To now use the Diamond Lemma, note that we only have overlap (minimal)
ambiguities -- and $h h' h'', u h h', h h' d$ are resolved using the
relations in the associative $\F$-algebra $H$.
We now (informally) apply our reductions to the only the ``nontrivial"
ambiguity $uhd$, using also that $z_0, z_1 \in Z(H)$:
\begin{align*}
(uh)d \quad \mapsto & \quad \theta(h) ud \quad \mapsto \quad \theta(h)
(z_0 + d z_1 u) \quad \mapsto \quad \theta(h) z_0 + d \theta^2(h) z_1
u,\\
u(hd) \quad \mapsto & \quad ud \theta(h) \quad \mapsto \quad (z_0 + d z_1
u) \theta(h) \quad \mapsto \quad z_0 \theta(h) + d z_1 \theta^2(h) u.
\end{align*}

\noindent Since $z_0, z_1$ are central, the ambiguity is resolvable and
the deformation is flat (i.e., $\cals(H,\theta,z_0,z_1)$ satisfies
(RTA1)) by the Diamond Lemma.
\end{proof}

\begin{assum}\label{Agwa}
For the remainder of this section and the next, assume that $H$ is
commutative, and $\theta$ is an algebra automorphism of $H$ of infinite
order.
\end{assum}

In order to discuss the structure of triangular GWAs, we now introduce a
sequence $\z_n$ of distinguished elements in a triangular GWA (more
precisely, in its subalgebra $H$).

\begin{definition}\label{Delements}
Suppose $\theta : H \to H$ is an algebra automorphism. Given $n \in \N$,
define
\begin{equation}
z'_n := \prod_{i=0}^{n-1} \theta^i(z_1), \qquad z'_0 := 1, \qquad
\z_n := \sum_{j=0}^{n-1} \theta^j(z_0 z'_{n-1-j}), \qquad \z_0 := 0,
\qquad \z_{-n} := \theta^{-n}(\z_n).
\end{equation}

\noindent Now given a weight $\lambda : H \to \F$, define
$[\lambda] := \{ \theta^{-n} * \lambda : n \in \Z, \lambda(\z_n) = 0 \}
\subset \widehat{H}$.
\end{definition}

We now discuss some basic properties of triangular GWAs, which concern
central characters and the block decomposition of $\calo$.

\begin{theorem}\label{Tgwa}
Suppose $A = \cals(H, \theta, z_0, z_1)$ is a triangular GWA (over any
field $\F$). Then $A$ is a strict, based RTA of rank one (with $\Delta :=
\{ \theta \}$ and $H_1 = H_0 := H$) if and only if $A$ satisfies
Assumption \ref{Agwa}.

Suppose henceforth that the triangular GWA $A$ is a strict, based RTA of
rank one.
\begin{enumerate}
\item For all $m,n \geq 0$, the Shapovalov form of $\F[d]$ is given by
$\tangle{d^m, d^n} = \delta_{m,n} \prod_{j=1}^n \z_j$.

\item $S^3(\lambda) = [\lambda]$ for all $\lambda \in \hfree$.

\item Suppose $z_1 = 1$ and $z_0 \in \im(\id_H - \theta)$. Define a
\emph{quadratic Casimir operator} to be $\Omega := du + \zeta$ for any
$\zeta \in H$ satisfying: $(\id_H - \theta)(\zeta) = z_0$. Then,
\[ Z(A) \cap H = \ker(\id_H - \theta), \qquad Z(A) = (Z(A) \cap
H)[\Omega], \qquad S^4(\lambda) \cap (\Z \theta * \lambda) = S^3(\lambda)
= [\lambda]\ \forall \lambda \in \hfree. \]

\noindent Moreover, $\Omega$ is transcendental over $Z(A) \cap H$.
\end{enumerate}
\end{theorem}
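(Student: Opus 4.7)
First I would establish the characterization of $A$ as a strict, based RTA of rank one. Given Assumption \ref{Agwa}, Lemma \ref{Lgwa} supplies (RTA1) with $B^\pm = \F[d], \F[u]$ and $H_1 = H_0 = H$. The anti-involution $i$ with $i(u) = d$, $i(d) = u$, $i|_H = \id_H$ is well-defined because the first two defining relations are exchanged by $i$ while the third ($ud = z_0 + dz_1 u$) is preserved, giving (RTA3). Since $B^+_{\theta^n} = \F u^n$ is one-dimensional, $\{\theta\}$ is a base of simple roots, and $\theta$ having infinite order ensures $\calq^+_0 \setminus \{\id_H\}$ is a semigroup, giving (RTA2). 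Conversely, Lemma \ref{Lfirst}(3) forces $H$ commutative, while the semigroup requirement on $\calq^+_0 \setminus \{\id_H\}$ forces $\theta$ to have infinite order, bringing Lemma \ref{Lgwa} back into play.

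For part (1), my plan is to prove by induction on $n$ the commutation identity $u d^n = d^{n-1} \z_n + d^n z'_n u$, whose inductive step combines $hd = d\theta(h)$, the defining relation $ud = z_0 + dz_1 u$, and the telescoping $\z_{n+1} = \theta(\z_n) + z_0 z'_n$ (obtained by peeling off the $j = 0$ summand in the defining sum for $\z_{n+1}$). Applying the Harish-Chandra projection $\xi$ annihilates $d^n z'_n u \in A \cdot N^+$ and, being $H$-linear, pulls $\z_n$ out of $d^{n-1}(\cdot)$, so iteration gives $\xi(u^n d^n) = \z_n \cdot \xi(u^{n-1} d^{n-1}) = \prod_{j=1}^n \z_j$. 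For $m \neq n$, $u^m d^n \in A_{\theta^{m-n}} \subset N^- \cdot A + A \cdot N^+$, so $\xi(u^m d^n) = 0$ automatically.

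For part (2), since each weight space $\F d^n m_\lambda$ of $M(\lambda)$ is one-dimensional, the identity above gives $u \cdot d^n m_\lambda = \lambda(\z_n)\, d^{n-1} m_\lambda$. Thus $d^n m_\lambda$ is a maximal vector precisely when $\lambda(\z_n) = 0$, in which case the submodule it generates is readily shown to be isomorphic to $M(\theta^{-n} * \lambda)$; this yields $[\lambda] \subset S^3(\lambda)$, with the negative-index case handled symmetrically via $\z_{-n} := \theta^{-n}(\z_n)$. The reverse inclusion -- equivalently, closure of $[\lambda]$ under the basic relation defining $S^3$ -- rests on the key identity
\[ \z_{m+n} \;=\; \z_m \cdot \theta^{m-1}(z'_n) + \theta^m(\z_n), \]
proved by splitting the defining sum for $\z_{m+n}$ at index $j = m$. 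This shows that if $\lambda(\z_m) = 0$ and $(\theta^{-m} * \lambda)(\z_n) = 0$, then $\lambda(\z_{m+n}) = 0$, so iterating the Verma submodule analysis produces a Verma flag whose highest weights remain in $[\lambda]$.

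For part (3), with $z_1 = 1$ one has $z'_n \equiv 1$ and $\z_n = \sum_{j=0}^{n-1}\theta^j(z_0)$, and $Z(A) \cap H = \ker(\id_H - \theta)$ is immediate from $uh = \theta(h)u$. Centrality of $\Omega = du + \zeta$ I would check via $[\Omega, u] = (du^2 - u\,du) + (\zeta - \theta(\zeta))u = -z_0 u + z_0 u = 0$ and symmetrically $[\Omega, d] = 0$, noting that $du$ and $\zeta$ commute because both $du\,\zeta$ and $\zeta\,du$ reduce to $d\theta(\zeta) u$. For $Z(A) \subset H^\theta[\Omega]$, every central element lies in $A_{\id_H} = \bigoplus_{a \geq 0} d^a H u^a$; writing such an element as $z = \sum_{a=0}^N d^a h_a u^a$, extracting the $d^N u^{N+1}$-coefficient of $[z,u] = 0$ forces $h_N \in H^\theta$, and since $\Omega^N = d^N u^N + (\text{strictly lower bidegree})$, subtracting $h_N \Omega^N$ reduces $N$ and an induction finishes the argument; the same leading-term analysis shows $\Omega$ is transcendental over $H^\theta$. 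Finally, for $\mu = \theta^{-n}*\lambda$, the restriction $\mu|_{H^\theta} = \lambda|_{H^\theta}$ is automatic, and iterating $\theta(\zeta) = \zeta - z_0$ yields $\theta^n(\zeta) - \zeta = -\z_n$ for $n \geq 0$ (the $n < 0$ case being symmetric), so $\mu(\Omega) = \lambda(\Omega)$ is equivalent to $\lambda(\z_n) = 0$, i.e., to $\mu \in [\lambda] = S^3(\lambda)$. The main obstacle throughout is the combinatorial identity for $\z_{m+n}$: it powers both the closure of $[\lambda]$ in part (2) and the telescope $\theta^n(\zeta) - \zeta = -\z_n$ behind part (3), and pinning down sign conventions across $n \in \Z$ (from the asymmetric definition of $\z_{-n}$) requires some care.
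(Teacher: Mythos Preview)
Your proposal is correct and follows essentially the same approach as the paper. Your key identity $u\,d^n = d^{n-1}\z_n + d^n z'_n u$ is the $m=1$ case of the paper's formula \eqref{Egwa}, and your explicit $\z_{m+n}$ identity (which the paper omits, saying only that part (2) ``can be proved using the equivalences stated above'') is exactly the right tool for the closure of $[\lambda]$. The one minor presentational difference is in the center computation: you induct on top bidegree in $\bigoplus_a d^a H u^a$, whereas the paper rewrites $\sum_i (du)^i h_i$ directly as $\sum_j \Omega^j h'_j$ via the binomial expansion of $(\Omega - \zeta)^i$ and then reads off $[u,h'_j]=0$ term by term; both arguments are equivalent and rest on the same PBW structure.
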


\noindent Consequently, $A$ satisfies Condition (S3) if and only if
$|[\lambda]| < \infty$ for all $\lambda \in \hfree$. The last part also
says that the converse to Lemma \ref{Lcs} holds for triangular GWAs when
$z_0 = 1$ and a quadratic Casimir exists.

\begin{proof}
Set $B^+ := \F[u]$, $B^- := \F[d]$, $H_1 = H_0 := H$, and $\Delta := \{
\theta \}$.
Now the first assertion is not hard to show, using the anti-involution
that sends $u$ to $d$ and fixes $H$. To show the next result, we prove
some intermediate equivalences that may be useful in their own right.
First, specializing the analysis in Section \ref{Sverma} to $A$ helps
determine the structure of Verma modules:\medskip

\noindent \textit{For all weights $\mu \in \hfree$, $M(\mu)$ is a
uniserial module, with unique composition series:}
\[ M(\mu) \supset M(\theta^{-n_1} * \mu) \supset M(\theta^{-n_2} * \mu)
\supset \cdots, \]

\noindent \textit{where $0 < n_1 \leq n_2 \leq \cdots$ comprise the set
$\{ n \in \N : \mu(\z_n) = 0 \}$. Thus $\calo$ is finite length if and
only if $[\mu] \cap (-\nn \Delta * \mu)$ is finite for every $\mu \in
\hfree$. Moreover, the following are equivalent, given $n \in \nn$ and
$\mu \in \widehat{H}$:}
(a) \textit{The multiplicity $[M(\theta^n * \mu) : L(\mu)]$ is
nonzero.}
(b) $[M(\theta^n * \mu) : L(\mu)] = 1$.
(c) $(\theta^n * \mu)(\z_n) = 0$.
(d) $\mu(\z_{-n}) = 0$.\medskip

\noindent The proof is straightforward, given that $M(\lambda) \cong
\F[d]$ for all $\lambda$, and $d^n m_\lambda$ spans
$M(\lambda)_{\theta^{-n} * \lambda}$ for all $\lambda \in \hfree$ and $n
\geq 0$. The key computation, which is straightforward but longwinded,
is to show:
\begin{equation}\label{Egwa}
u^m d^n \ \in \ d^{n-m} \cdot \prod_{j=n-m}^{n-1} \z_{j+1} + A \cdot u,
\qquad \forall 0 \leq m \leq n.
\end{equation}

\noindent Setting $m=1$ and applying \eqref{Egwa} to the highest weight
vector of $M(\theta^n * \mu)$ shows that (a) $\Leftrightarrow$ (c).
The remaining equivalences are standard. We now sketch the proofs of the
three assertions. The first part follows using Equation \eqref{Egwa}.
Next, that $S^3(\lambda) = [\lambda]$ can be proved using the
equivalences stated above.

It remains to prove part (3) about the center. That $Z(A) \cap H = \ker
(\id_H - \theta)$ is easily verified. Now suppose $\omega \in Z(A)$ is
central. Then $\omega$ commutes with $H$, whence $\omega \in H[du] =
H[\Omega - \zeta]$. Consider such a central element $\omega := \sum_i
(du)^i h_i$, where $h_i \in H\ \forall i \geq 0$. We then have
\[ \omega = \sum_{i \geq 0} (\Omega - \zeta)^i h_i = \sum_{0 \leq j \leq
i} \binom{i}{j} \Omega^j \zeta^{i-j} h_i = \sum_{j \geq 0} \Omega^j
\sum_{i \geq j} \binom{i}{j} \zeta^{i-j} h_i = \sum_{j \geq 0} \Omega^j
h'_j, \]

\noindent where $h'_j \in H\ \forall j$. Now if $\omega$ is central, we
compute: $0 = [u, \omega] = \sum_j \Omega^j [u,h'_j]$,
whence by the PBW property (RTA1), one checks that $[u,h'_j] = 0\ \forall
j$, whence $h'_j \in \ker(\id_H - \theta)$ from above. Thus $\omega \in
(Z(A) \cap H)[\Omega]$ as claimed. Additionally, it is not hard to see
using (RTA1) that $\Omega$ is transcendental over $Z(A) \cap H$.

Finally, by a previous part and Lemma \ref{Lcs}, it suffices to show that
$S^4(\lambda) \cap (\Z \theta * \lambda) \subset S^3(\lambda)$ for all
$\lambda \in \hfree$. Moreover, it further suffices to show the
\textbf{claim} that $\chi_{\theta^{-n} * \lambda} \equiv \chi_\lambda$
for some $n \geq 0$ if and only if $[M(\lambda) : L(\theta^{-n} *
\lambda)] > 0$. By the proof of Theorem \ref{Tgwa}, this is equivalent to
showing that $\lambda(\z_{n}) = 0$. Now compute using any quadratic
Casimir element and Proposition
\ref{Pbasic}:
\[ \chi_\lambda(\Omega) - \chi_{\theta^{-n} * \lambda}(\Omega) =
\lambda(\zeta) - \lambda(\theta^n(\zeta)) = \lambda \circ (\id_H -
\theta^n)(\zeta) = \lambda \circ (\id_H + \theta + \cdots +
\theta^{n-1})(z_0) = \lambda(\z_n), \]

\noindent since $z_1 = 1$. Thus the above claim follows, completing the
proof.
\end{proof}

\subsection{Examples: generalized down-up algebras}\label{Sgwa-example1}

We now discuss a family of examples of triangular GWAs, which has been
extensively studied in many papers in the literature. These are the
``generalized down-up algebras" introduced by Cassidy and Shelton in
\cite{CS}, and they are strict, based RTAs of rank one, with
\begin{equation}\label{Edownup}
H = \F[h], \qquad \theta = \theta_{r,\gamma}(h) := r^{-1}(h + \gamma),
\qquad z_1 = s^{-1}, \qquad z_0 = s^{-1} f(h),
\end{equation}

\noindent where $r,s \in \F^\times$, $\gamma \in \F$, and $f(h) \in H$ is
a fixed polynomial in $h$. (Note that if $r=1$ then $\cals(\F[h],
\theta_{1,\gamma}, s^{-1} f(h), s^{-1})$ is a strict, based Hopf RTA of
rank one.) The operators $d,u$ in $\cals(\F[h], \theta_{r,\gamma}, s^{-1}
f(h), s^{-1})$ are thought of as ``lowering" and ``raising" operators
respectively (hence the name of ``down-up" algebras). Examples of such
algebras occur in many different settings in the literature:
\begin{enumerate}
\item In representation theory, Smith \cite{Smi} introduced and studied a
family of triangular GWAs (more precisely, generalized down-up algebras)
that are deformations of $U(\lie{sl}_2)$. Smith showed that these
algebras satisfy Condition (S4), as well as an analogue of Duflo's
theorem for primitive ideals and annihilators of simple modules
$L(\lambda)$.

\item In mathematical physics, Witten \cite{Wi} introduced a 7-parameter
family of deformations of $U(\lie{sl}_2)$ that include a large sub-family
of GWAs. Witten's motivations arose from vertex models and duality in
conformal field theory. Witten's family of deformations was later studied
by Kulkarni \cite{Ku1}, and a three-parameter subfamily
$U_{abc}(\lie{sl}_2)$ was studied by Le Bruyn \cite{LeB} under the name
of ``conformal $\lie{sl}_2$-algebras".

\item In the comprehensive paper \cite{Kac1} studying Lie superalgebras,
Kac studied the ``dispin Lie superalgebra" $B[0,1]$. In this case,
\[ U(B[0,1]) = \cals(\C[h], \theta = \theta_{1,-1}, h, 1), \qquad
\theta(h) = h-1. \]

\item In \cite{Wo}, Woronowicz introduced and studied the algebra
$\cals(\F[h], \theta, \nu^{-1} h, \nu^{-2})$ in the context of quantum
groups. This algebra is a generalized down-up algebra where $\nu \in \F
\setminus \{ 0, \pm 1 \}$ and $\theta(h) = \nu^{-4} h + 1 + \nu^{-2}$.

\item These algebras also occur in combinatorics, in certain cases when
``down" and ``up" operators are defined on the span of a partially
ordered set. These were the original ``down-up" algebras, studied by
Benkart and Roby in \cite{BR}, and they are a special case of generalized
down-up algebras with $z_0 = h$ and $z_1 \in \F$. They have been the
subject of continuing interest -- see \cite{CM,Jo2,KM,Ku2,LL} among
others.

\item The algebras studied by Jing and Zhang, as discussed in Example
\ref{EJZ}. In this case one can show that $\calo[\hfree]$ satisfies
Condition (S3) if $q$ is not a root of unity and $\ch \F \neq 2,3$.
\end{enumerate}

Note that in a large number of examples mentioned in the above list, the
generalized down-up algebras of interest are described by \eqref{Edownup}
with parameters $r=1, \gamma \neq 0, f \not\equiv 0$, and $\ch \F = 0$.
In such settings it is possible to describe when the algebra satisfies
Condition (S3). Thus the following result deals with block decompositions
of $\calo$, for all of the above examples at once.

\begin{theorem}\label{Tdownup}
Under the setting of \eqref{Edownup}, and identifying the weights
$\lambda_a : h \mapsto a$ of $\F[h]$ with the corresponding scalars $a
\in \F$, we have:
\[ \hfree = \begin{cases}
\F \setminus \{ \gamma r^{-1} / (1 - r^{-1}) \}, & \text{ if } r \notin
\sqrt{1};\\
\F, & \text{ if } \gamma \neq 0 = \ch(\F),\ r = 1;\\
\emptyset, & \text{ otherwise.}
\end{cases} \]

\noindent If $r=s=1$ and $\ch \F = 0$, a quadratic Casimir operator
$\Omega$ always exists, and the center of $A$ is the polynomial algebra
$\F[\Omega]$.

Now suppose $r = 1$ and $\gamma \neq 0$, $f \not\equiv 0$.
\begin{enumerate}
\item If $s=1$, then $[\lambda]$ is finite for one (equivalently, all)
weights $\lambda$ if and only if $\ch \F = 0$.

\item If $s$ is not a root of unity and $\ch \F = 0$, then $[\lambda]$ is
finite for all $\lambda$.
\end{enumerate}
\end{theorem}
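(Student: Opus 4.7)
The plan is to verify the assertions in sequence by explicit computation with iterates of $\theta_{r,\gamma}$ and the elements $\z_n$ from Definition \ref{Delements}, applying Theorem \ref{Tgwa} for the structural statements.

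For $\hfree$, I would compute $\theta^n(h)$ in closed form. If $r \neq 1$, then $\theta_{r,\gamma}$ has a unique fixed point $h_0 := \gamma/(r-1) = \gamma r^{-1}/(1-r^{-1})$, and a one-line induction gives $\theta^n(h) = h_0 + r^{-n}(h - h_0)$. Hence $(\theta^{-n}*\lambda_a)(h) = a$ iff $(a - h_0)(1 - r^{-n}) = 0$: for $r$ not a root of unity this forces $n = 0$ whenever $a \neq h_0$, yielding $\hfree = \F \setminus \{h_0\}$, while for $r$ a nontrivial root of unity some $r^n = 1$ kills freeness for every $a$, giving $\hfree = \emptyset$. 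If $r = 1$ then $\theta^n(h) = h + n\gamma$, and $\theta^{-n}*\lambda_a = \lambda_a$ iff $n\gamma = 0$; so $\hfree = \F$ when $\gamma \neq 0$ and $\ch \F = 0$, and $\hfree = \emptyset$ in all remaining cases (either $\theta = \id$ or $\theta^p = \id$). For the Casimir statement ($r=s=1$, with $\gamma \neq 0$ forced by Assumption \ref{Agwa} and the previous computation), I apply Theorem \ref{Tgwa}(3): the operator $\id_H - \theta$ sends $h^{n+1}$ to a polynomial with leading term $-(n+1)\gamma\, h^n$, which is nonzero in characteristic zero, so $\id_H - \theta$ is surjective on $\F[h]$ and $\zeta$ with $(\id_H - \theta)\zeta = f(h)$ exists; moreover $\ker(\id_H - \theta) = \F$, and Theorem \ref{Tgwa}(3) then yields $Z(A) = \F[\Omega]$.

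For the finiteness of $[\lambda_a]$ under $r = 1$, $\gamma \neq 0$, $f \not\equiv 0$, the central computation from Definition \ref{Delements} is $\z_n = s^{-n}\sum_{j=0}^{n-1} s^j f(h + j\gamma)$, so $\lambda_a(\z_n) = s^{-n}\sum_{j=0}^{n-1} s^j f(a + j\gamma)$. Writing $E$ for the shift $j \mapsto j+1$ on $\F[j]$ and solving $(sE - 1)\,q = f(a + j\gamma)$ in $\F[j]$, telescoping gives
\[ \sum_{j=0}^{n-1} s^j f(a + j\gamma) \;=\; s^n q(n) - q(0), \qquad \lambda_a(\z_n) \;=\; q(n) - s^{-n} q(0). \]
In case (a) with $s = 1$, the operator $E - 1$ raises degree by one in characteristic zero, so $q$ has degree $\deg f + 1 \geq 1$ with nonzero leading coefficient, and $\lambda_a(\z_n) = q(n) - q(0)$ is a nonzero polynomial in $n$ with finitely many integer zeros; in characteristic $p > 0$ the hypothesis $r = 1$, $\gamma \neq 0$ forces $\theta^p = \id$, so Assumption \ref{Agwa} fails and $\hfree = \emptyset$, which is how the ``if and only if $\ch \F = 0$'' should be read on the RTA side. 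In case (b), $sE - 1$ is degree-preserving with leading scalar $s - 1 \neq 0$, so $q$ is a nonzero polynomial of degree $\deg f$ with leading coefficient $\gamma^{\deg f}\operatorname{lc}(f)/(s-1)$; if $q(0) = 0$ then $\lambda_a(\z_n) = q(n)$ has finitely many integer zeros, while if $q(0) \neq 0$ one reduces to bounding the $\Z$-solutions of the polynomial-exponential identity $q(n) = s^{-n} q(0)$.

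The main obstacle will be that last polynomial-exponential identity when $s$ is not a root of unity and $q(0) \neq 0$, which I would dispatch by Skolem--Mahler--Lech: the sequence $n \mapsto q(n) - s^{-n} q(0)$ satisfies the linear recurrence with characteristic polynomial $(x-1)^{\deg f + 1}(x - s^{-1})$, whose roots $1$ and $s^{-1}$ are distinct in characteristic zero, so its zero set in $\Z$ is finite unless the sequence is identically zero; but the latter forces $q(0) = 0$, contradicting the hypothesis. (Alternatively one may embed the finitely generated subfield of $\F$ into $\C$ and compare growth rates: $|s^{-n} q(0)|$ is either bounded when $|s| = 1$ or exponential otherwise, whereas $|q(n)|$ grows only polynomially, so equality holds on a finite set.) The remaining verifications are mechanical checks that each leading coefficient of $q$ is genuinely nonzero, which use $\ch \F = 0$ together with $f \not\equiv 0$, $\gamma \neq 0$, and the respective hypothesis on $s$.
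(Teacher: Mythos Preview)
Your approach is correct and tracks the paper's proof closely: both compute $\theta^n(h)$ explicitly for $\hfree$, both invoke Theorem~\ref{Tgwa}(3) for the Casimir, and both attack parts (1)--(2) by telescoping $\lambda_a(\z_n)$ into a polynomial-exponential expression and then appealing to Skolem--Mahler--Lech (the paper packages this as Theorem~\ref{Tskolem}). The only presentational difference is that the paper telescopes via the explicit basis $t_{n,s}(h)=s^{-1}\binom{h+1}{n}-\binom{h}{n}$ of $\F[h]$ to produce an auxiliary polynomial $\tilf$ with $\z_n=s^{-n}\tilf(h+n)-\tilf(h)$, whereas you solve the difference equation $(sE-1)q=f(a+j\gamma)$ directly; these are equivalent bookkeeping devices. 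One substantive divergence: for the ``only if'' direction of (1), the paper argues that in characteristic $p>0$ one has $\z_{kp}=0$ (via $\sum_{i=0}^{p-1} i^l\equiv 0\pmod p$) so infinitely many $n$ satisfy $\lambda(\z_n)=0$, whereas you dismiss this case by noting Assumption~\ref{Agwa} fails. Your reading is the internally consistent one, since with $r=1,\gamma\neq 0,\ch\F=p$ the map $\theta$ has order $p$, so the set $[\lambda]\subset\widehat H$ is automatically finite even though the index set $\{n:\lambda(\z_n)=0\}$ is not. A small point to tidy: your formula $\lambda_a(\z_n)=q(n)-s^{-n}q(0)$ is derived for $n\ge 0$; for $n<0$ the same telescoping gives $\lambda_a(\z_n)=q(0)-s^{n}q(n)$, but both cases reduce to the single equation $s^n q(n)=q(0)$, so your application of Skolem--Mahler--Lech over all of $\Z$ goes through unchanged.
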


\noindent In particular, we conclude via Theorem \ref{Tgwa} that if $\ch
\F = 0$ and part (1) or (2) holds, then $A$ satisfies Condition (S3) and
hence $\calo[\hfree]$ has BGG Reciprocity.

Theorem \ref{Tdownup} and its proof are similar in flavor to a subsequent
result for ``quantum" down-up algebras (see Theorem \ref{Tkleinian}). The
proofs of both of these results are deferred to Section \ref{Slech}.

\subsection{Further examples: quantum triangular
GWAs}\label{Sgwa-example2}

Another well-studied and important class of algebras in the literature is
similar in structure and has many properties in common with down-up
algebras. These algebras have a ``quantum" flavor; a prominent example is
$U_q(\lie{sl}_2)$. We now introduce the general notion of a
\textit{quantum triangular GWA}. This is a strict, based Hopf RTA of rank
one, which includes as examples several algebras studied in the
literature, and also resembles generalized down-up algebras.

To define a quantum triangular GWA, suppose $\Gamma$ is an arbitrary
abelian group equipped with a fixed character (or weight) $\alpha :
\Gamma \to \F^\times$, and $H = \F \Gamma$ is its group algebra.
Now define the associated quantum triangular GWA to be
\begin{equation}\label{Ekleinian}
\cals(\G) := \cals(\F \Gamma, \theta = \rho_H(\alpha), z_0, z_1), \quad
z_0, z_1 \in H,
\end{equation}

\noindent where the weight-to-root map $\rho_H$ was studied in
Proposition \ref{Prho}. Note by Lemma \ref{Lgwa} that $\cals(\G)$
satisfies Conditions (RTA1) and (RTA3).
Moreover, quantum triangular GWAs do not fall under the framework of
generalized down-up algebras, since $H$ is now no longer a polynomial
ring but a group algebra. The present work unites these two settings via
triangular GWAs (from Definition \ref{Dgwa}). Moreover, quantum
triangular GWAs encompass many families of quantum algebras studied in
the literature:
\begin{enumerate}
\item \textit{Quantum $\lie{sl}_2$:}
A motivating and fundamental example is $U_q(\lie{sl}_2)$. This is
obtained by setting $\Gamma = \Z$ (more precisely, $\Gamma = K^\Z$ for
some variable $K$), and $\alpha(K) = q^2, z_1 = 1, z_0 = \frac{K -
K^{-1}}{q - q^{-1}}$ for some $q \neq 0, \pm 1$.
More generally, Ji et.~al.~\cite{JWZ} and Tang \cite{Ta2} studied the
quantum triangular GWAs with arbitrary $z_0 \in H = \F[K^{\pm 1}]$.

\item The Drinfeld quantum double of the positive part of
$U_q(\lie{sl}_2)$ is a special case of a family of quantum algebras
studied by Ji et.~al.~\cite{JWY} as well as Tang-Xu \cite{TX}. These
algebras are also quantum triangular GWAs, where
$H = \F[K^{\pm 1},h^{\pm 1}]$ and $\alpha(K) = q^2, \alpha(h) = q^{-2},
z_1 = 1$.

\item \textit{Double loop quantum enveloping algebras:}
This construction was discussed in Example \ref{Ezhi}.

\item \textit{Quantized Weyl algebras:}
This is a degenerate example that we mention for completeness. Namely,
when $H = \F$, $\alpha$ is the (constant) counit map on $\G$, and $z_0 =
1, z_1 \neq 0$, one obtains the quantized Weyl algebras (and in
particular, the first Weyl algebra $A_1$ if $z_1 = 1$).
\end{enumerate}

\begin{remark}
Recall that Crawley-Boevey and Holland studied noncommutative
deformations of Kleinian singularities in \cite{CBH}. These are algebras
associated with finite subgroups of $SL_2(\C)$. In Type $A$, these
algebras are triangular GWAs with $H = \F[\Z / n \Z]$ for $n \in \N$,
together with $z_1 = 1$ and $\alpha(m + n\Z) := \varepsilon^m$ (where
$\varepsilon \in \F^\times$ is a primitive $n$th root of unity). In
general, one replaces $\Z / n \Z$ by a finite subgroup of $SL_2(\F)$. In
contrast, we will work with subgroups $\Gamma$ of the torus $\F^\times
\subset SL_2(\F)$ (which we assume to be infinite in order to obtain a
strict, based Hopf RTA structure).
\end{remark}

\begin{remark}[Ambiskew polynomial rings]
All of the examples discussed above in this section have in common that
$z_1 \in \F$. Triangular GWAs where $z_1 \in \F^\times$ are known as
\textit{ambiskew polynomial rings}.
The study of ambiskew polynomial rings was initiated and carefully
developed by Jordan (see \cite{Jo1,Jo2} for more details). The subject
continues to attract much interest -- see for instance \cite{BrMa,Ha,JW}
and the references therein. We also remark that the level of generality
in defining an ambiskew polynomial ring has varied throughout the
literature. The current -- and most general -- version of an ambiskew
polynomial ring can be found in \cite[Definition 2.2]{JW}.
\end{remark}

We now state a similar result to Theorem \ref{Tdownup} for quantum
triangular GWAs algebras, which characterizes when Condition (S3) holds
for such algebras. In the following result, as in Theorem \ref{Tdownup},
we will assume that $z_1 \in H^\times$ is a unit.

\begin{theorem}\label{Tkleinian}
In the setting of \eqref{Ekleinian}, the orders of $\theta$, $\alpha$,
and $\Gamma / \ker(\alpha) \cong \alpha(\Gamma)$ are either all infinite,
or all equal. Thus $\widehat{H} = \hfree$ if and only if $\hfree$ is
nonempty, if and only if $\alpha(\Gamma) \subset \F^\times$ is infinite.

\noindent Now suppose $z_1 = s \cdot [1_\Gamma] = s \in \F^\times$.
Define $\sqrt{1}$ to be the roots of unity in $\F^\times$, and define
\[ \Gamma_1 := \{ g \in \Gamma : \alpha(g) = s^{-1} \}, \qquad
\Gamma_2 := \{ g \in \Gamma : \alpha(g) s \in \sqrt{1} \setminus \{ 1 \}
\}, \qquad \Gamma_3 := \{ g \in \Gamma : \alpha(g) s \notin \sqrt{1} \}. \]

\noindent Also write $g \in \Gamma$ to denote $[g]$, and write
\[ z_0 = \sum_{g \in \Gamma_1 \cup \Gamma_2} a_g g + \sum_{i,j} a_{ij}
g_{ij} \in \F \Gamma \]

\noindent with $a_g, a_{ij} \in \F$, and $g_{ij} \in \Gamma_3$ such that
$\alpha(g_{ij}^{-1} g_{kl})$ has finite order if and only if $j=l$.
\begin{enumerate}
\item Suppose there exists $\mu \in \hfree$ such that at least one of the
following equations holds:
\begin{align}
&\ \sum_i \frac{a_{ij} \mu(g_{ij})}{1 - (\alpha(g_{ij})s)^{-1}} = 0 =
\sum_{g \in \Gamma_1} a_g \mu(g), \qquad \forall j,\label{ES3kleinian1}\\
\mbox{or} \qquad &\ \sum_i \frac{a_{ij} \alpha(g_{ij}) \mu(g_{ij})}{1 -
(\alpha(g_{ij})s)^{-1}} = 0 = \sum_{g \in \Gamma_1} a_g \mu(g), \qquad
\forall j.\label{ES3kleinian2}
\end{align}

\noindent Then $[\mu]$ is infinite.

\item Conversely, if $\ch \F = 0$ and $[\lambda]$ is infinite for at
least one $\lambda \in \hfree$, then at least one of \eqref{ES3kleinian1}
and \eqref{ES3kleinian2} holds for some $\mu \in \Z \Delta * \lambda = \Z
\theta * \lambda \subset \hfree$.
\end{enumerate}
\end{theorem}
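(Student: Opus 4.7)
First, for Part 1, I would apply Proposition \ref{Prho}(1): since $\rho_H : \widehat{H} \to \Aut_{\F-alg}(H)$ is an injective group homomorphism, $\theta = \rho_H(\alpha)$ has the same order in $\Aut_{\F-alg}(H)$ as $\alpha$ has in $\widehat{H}$, and $\alpha$ factors as $\Gamma \twoheadrightarrow \Gamma/\ker(\alpha) \cong \alpha(\Gamma) \hookrightarrow \F^\times$, so this common order also equals $|\alpha(\Gamma)|$. For group-like $g \in \Gamma$ a direct computation via $\Delta(g) = g \otimes g$ yields $\theta(g) = \alpha(g)^{-1} g$, whence $\theta^n * \mu = \alpha^n \cdot \mu$ for every $\mu \in \widehat{H}$. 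Since $\mu$ takes only nonzero values in $\F^\times$, the identity $\theta^n * \mu = \mu$ is equivalent to $\alpha^n \equiv 1$, independently of $\mu$. Hence $\Z\theta$ acts freely on some particular $\mu$ iff it acts freely on all of $\widehat{H}$, iff $\alpha$ has infinite order, which gives all of Part 1.

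For Part 2, my starting point is an explicit formula for $\mu(\z_n)$. Since $z_1 = s \in \F^\times$, Definition \ref{Delements} gives $z'_n = s^n$ and $\z_n = s^{n-1}\sum_{j=0}^{n-1} s^{-j}\theta^j(z_0)$ for $n \geq 0$. Evaluating at $\mu$ using $\mu(\theta^j(g)) = \alpha(g)^{-j}\mu(g)$ and summing the geometric series (with $\omega_g := s\alpha(g)$ and $B_g := a_g \mu(g)/(1 - \omega_g^{-1})$ for $g \notin \Gamma_1$) yields
\begin{equation}\label{Epropklein}
s^{1-n}\mu(\z_n) \;=\; n \sum_{g\in\Gamma_1} a_g\mu(g) \;+ \sum_{g\in\Gamma_2} B_g(1 - \omega_g^{-n}) \;+ \sum_{i,j} B_{g_{ij}}(1 - \omega_{g_{ij}}^{-n}).
\end{equation}
The analogous computation using $\z_{-n} = \theta^{-n}(\z_n)$ gives a parallel expression for negative indices in which $\omega_g^{-n}$ is replaced by $\omega_g^n$; the factor of $\alpha(g_{ij})$ appearing in \eqref{ES3kleinian2} drops out from the identity $\omega_g/(1-\omega_g) = -1/(1 - \omega_g^{-1})$. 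For the forward direction, assuming \eqref{ES3kleinian1}, the $\Gamma_1$-sum vanishes; for $N$ divisible by the orders of all $\omega_g$ with $g \in \Gamma_2$ and of every ratio $\omega_{g_{ij}}/\omega_{g_{1j}}$ (roots of unity by hypothesis on the $g_{ij}$), evaluating at $n = N, 2N, \ldots$ annihilates the $\Gamma_2$-contribution entirely and reduces the $j$-th block of the $\Gamma_3$-sum to $(1 - \omega_{g_{1j}}^{-n})\sum_i B_{g_{ij}} = 0$. Thus $\mu(\z_n) = 0$ for infinitely many $n$, and by Theorem \ref{Tgwa}(2), $[\mu] = S^3(\mu)$ is infinite. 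The case \eqref{ES3kleinian2} is symmetric using negative $n$.

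The converse is the main obstacle. Assuming $\ch \F = 0$ and $[\mu]$ infinite, the sequence $c_n := s^{1-n}\mu(\z_n)$ vanishes on an infinite subset of $\Z$, hence on an infinite subset of either $\N$ or $-\N$; I would treat the positive case (the negative one yielding \eqref{ES3kleinian2} by the same reasoning). Formula \eqref{Epropklein} exhibits $c_n$ as a linear exponential polynomial $nA + B + \sum_k D_k\eta_k^n$ with distinct $\eta_k \in \F^\times$. The Skolem-Mahler-Lech theorem, valid over any characteristic-zero field, produces an infinite arithmetic progression $\{a + Nm : m \geq 0\}$ on which $c_n$ vanishes; enlarging $N$ further so that every ratio $\eta_k/\eta_{k'}$ of finite order has order dividing $N$ identifies the equivalence classes $\{k : \eta_k^N = \text{const}\}$ with the $j$-partition of the $g_{ij}$'s together with $\Gamma_2$. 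The Vandermonde linear independence of the functions $m \mapsto \xi^m$ for distinct $\xi \in \F^\times$ then forces the coefficient of each $\xi^m$ in $c_{a+Nm}$ to vanish separately, giving $A = 0$ (the $\Gamma_1$-equation in \eqref{ES3kleinian1}) and, after translating to $\mu' := \theta^{-a} * \mu$, the vanishing $\sum_i B'_{g_{ij}} = 0$ for each $j$ (the remaining equation of \eqref{ES3kleinian1} for $\mu'$). The key difficulty is matching the $j$-partition of the $g_{ij}$'s built into the hypothesis with the equivalence classes produced by Skolem-Mahler-Lech; the fact that condition \eqref{ES3kleinian1} may only hold for a translate $\mu'$ rather than for $\mu$ itself is precisely why the converse statement allows such a translate by $\Z\theta$.
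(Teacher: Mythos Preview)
Your proposal is correct and follows essentially the same route as the paper's proof: both compute $\mu(\z_n)$ explicitly as a polynomial-exponential expression, handle the forward direction by evaluating at multiples of a suitable modulus $N_0$, and for the converse pass to an arithmetic progression and separate the exponential terms. The only cosmetic difference is that the paper first pigeonholes to a residue class modulo $N_0$ and then invokes its packaged Theorem~\ref{Tskolem} (itself proved via Skolem--Mahler--Lech plus the Vandermonde-type claim), whereas you apply Skolem--Mahler--Lech directly to obtain the progression, enlarge its step, and then invoke Vandermonde --- this is just Theorem~\ref{Tskolem}'s proof unpacked inline.
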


\noindent As in the case of Theorem \ref{Tdownup}, the proof of Theorem
\ref{Tkleinian} is deferred to Section \ref{Slech}. We also observe that
quantum triangular GWAs with $z_1 = 1$ have certain similarities in
structure and center, to symplectic reflection algebras (which were
discussed in \cite{EG,Eti}). We do not elaborate further on this point in
the present paper.

\subsection{Quantization of generalized down-up
algebras}\label{Sgwa-qdef}

We now describe a concrete connection between a distinguished class of
quantum triangular GWAs and generalized down-up algebras, which to our
knowledge is not explored in the literature even though both families
have been extensively studied (as indicated by the numerous references in
this section). More precisely, recall that $U_q(\lie{sl}_2)$ is a
quantization of $U(\lie{sl}_2)$, in the sense of taking a ``classical
limit" as $q \to 1$ to obtain $U(\lie{sl}_2)$. Given the family of
deformations of $U(\lie{sl}_2)$ studied in \cite{Smi}, it is natural to
ask if these triangular GWAs also admit quantizations, which are
themselves then flat/PBW deformations of $U_q(\lie{sl}_2)$. We now
introduce a family of quantum triangular GWAs that provides a positive
answer to this question for Smith's family of algebras, and more
generally, for a large class of generalized down-up algebras.

\begin{exam}[Deformations of quantum $\lie{sl}_2 =$ quantization of
generalized down-up algebras]\label{Eqdef}

Consider a generalized down-up algebra given by \eqref{Edownup}, with
$\ch \F = 0 \neq \gamma$ and $r=1$. By Theorems \ref{Tgwa} and
\ref{Tdownup}, $\cals = \cals(\F[h], \theta_{1,\gamma}, s^{-1} f(h),
s^{-1})$ is a strict, based HRTA of rank one, with $\widehat{H} = \hfree
= \F$.

Let $q$ be an indeterminate over $\F$. We now propose a hitherto new
family of triangular GWAs $\cals_q$ over the $\F(q)$-algebra $H_q :=
\F(q)[K,K^{-1}]$, such that $\cals$ is the ``$q \to 1$" quasi-classical
limit of the algebra $\cals_q$. First define a more general family of
$\F(q)$-algebras $\cals(H_q = \F(q)[K^{\pm 1}], \theta, z_0', z_1')$ with
$z_0', z_1' \in H_q$ and $\theta : H_q \to H_q$ an $\F(q)$-algebra
automorphism of infinite order. As above, these algebras are strict,
based RTAs of rank one.
Now for the desired special case: given $l,m,n \in \Z$ with $l \neq 0$,
define the $\F(q)$-algebra $\cals_q(l,m,n)$ to be:
\begin{equation}\label{Edefq}
\cals_q(l,m,n) := \cals(\F(q)[K^{\pm 1}],\ \theta : K \mapsto q^{-l} K,\
s^{-1} q^m K^n f(\textstyle{\frac{-\gamma}{l} \cdot \frac{K-1}{q-1}}),\
s^{-1}).
\end{equation}

\noindent Observe that for various special cases of parameters,
$\cals_q(l,m,n)$ was studied earlier in the literature (but not in
general). Namely, Ji et.~al.~\cite{JWZ} and Tang \cite{Ta2} studied the
sub-family of algebras $\cals_q(2,0,0)$ with $s=1$ and $\theta(K) =
q^{-2} K$.

We now prove that the algebras $\cals_q(l,m,n)$ are indeed quantum
analogues of Smith's family of deformations of $U(\lie{sl}_2)$ -- and
more generally, the quantizations of a large class of generalized down-up
algebras \eqref{Edownup}. Note that if such a result is to hold, then
highest weight modules over $\cals_q(l,m,n)$ should also ``specialize" to
highest weight modules over the classical limit. It is natural to ask how
the corresponding highest weights are related.

To answer these questions, a natural procedure to follow is that in
\cite[Chapter 3]{HK} (see also \cite{Lu}) -- although several of the
steps therein need to be modified, as explained presently. Let $R$ be the
local subring of $\F(q)$, of rational functions that are regular at the
point $q=1$. Also define
\[ (K^n;m)_q := \frac{q^m K^n - 1}{q-1}, \qquad m,n \in \Z. \]

\noindent Now let $\cals^R_q(l,m,n)$ denote the (unital) $R$-subalgebra
of $\cals_q(l,m,n)$ generated by $U,D,K^{\pm 1}$, and $(K;0)_q =
(K-1)/(q-1)$. Then the following result holds.

\begin{theorem}[Deformation-quantization equals quantization-deformation]
Suppose $\F$ is a field of characteristic zero, $\gamma \in \F$, and
$\theta_{1,\gamma} \in \Aut_{\F-alg} \F[h]$ sends $h$ to $h + \gamma$.
Fix $f \in \F[h]$, $r=1$, and $s \in \F^\times$ not a root of unity.
Now define $\cals_q(l,m,n)$ as in \eqref{Edefq}, with $z_1 = s^{-1}$ and
$z_0 = s^{-1} q^m K^n f( -\gamma(K;0)_q / l)$ for some $l \neq 0,m,n \in
\Z$. Then,
\begin{equation}
\cals_1 := \cals^R_q(l,m,n) / (q-1) \cals^R_q(l,m,n) \cong
\cals(\F[h],\theta_{1,\gamma},s^{-1}f(h),s^{-1}).
\end{equation}

\noindent Now fix a scalar $\lambda \in \F(q)^\times$ such that
$\displaystyle \frac{\lambda - 1}{q-1} \in R$, and a highest weight
module $M_q(\lambda) \twoheadrightarrow \vla_q$ over $\cals_q(l,m,n)$,
where we identify $\lambda$ with the $\F(q)$-weight of $H_q$ sending $K$
to $\lambda$. If $v_\lambda \in (\vla_q)_\lambda$ generates $\vla_q$,
then
\begin{equation}
\vla_1 := \cals_q^R(l,m,n) v_\lambda / (q-1) \cals_q^R(l,m,n) v_\lambda
\end{equation}

\noindent is a highest weight module over $\cals_1 \cong \cals(\F[h],
\theta_{1,\gamma}, s^{-1} f(h), s^{-1})$ with highest $\F[h]$-weight
given by $\displaystyle h \mapsto \left. \frac{-\gamma}{l} \cdot
\frac{\lambda(K) - 1}{q-1}\right|_{q \to 1}$, and with the same graded
character as $\vla_q$ (up to modification of the highest weight).
\end{theorem}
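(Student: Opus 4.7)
The plan is to carry out a classical ``integral form and specialization'' argument, adapted from the $U_q(\lie{sl}_2) \rightsquigarrow U(\lie{sl}_2)$ correspondence in \cite[Chapter 3]{HK}, but with the Cartan generator $K$ being specialized via the divided difference $(K;0)_q = (K-1)/(q-1) \leadsto -l\gamma^{-1} h$ rather than via the usual $(K - K^{-1})/(q-q^{-1})$ trick.

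First I would set up the algebra isomorphism. Define a map
\[ \varphi : \cals^R_q(l,m,n) \longrightarrow \cals(\F[h], \theta_{1,\gamma}, s^{-1} f(h), s^{-1}) \]
on generators by $U \mapsto u,\ D \mapsto d,\ K \mapsto 1$, and $(K;0)_q \mapsto -l\gamma^{-1}h$, and verify that it descends modulo $(q-1)\cals^R_q(l,m,n)$. The key compatibility is that, modulo $(q-1)$,
\[ \theta\bigl(-\gamma (K;0)_q/l\bigr) = -\gamma \cdot \frac{q^{-l}K - 1}{l(q-1)} \equiv -\gamma(K;0)_q/l + \gamma, \]
using the expansion $(q^{-l}K-1)/(q-1) = q^{-l}(K;0)_q + (q^{-l}-1)/(q-1)$ and $(q^{-l}-1)/(q-1) \to -l$ as $q \to 1$; this matches $\theta_{1,\gamma}(h) = h+\gamma$. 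The remaining relations -- in particular $UD = z_0' + D z_1' U$ -- reduce correctly since $q^m K^n \to 1$ and $f(-\gamma(K;0)_q/l) \to f(h)$. Thus $\varphi$ induces an algebra map $\overline{\varphi}: \cals_1 \to \cals(\F[h],\theta_{1,\gamma},s^{-1}f(h),s^{-1})$, clearly surjective.

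To prove $\overline{\varphi}$ is an isomorphism I would use the PBW property from (RTA1) on both sides. By Theorem \ref{Tgwa}, the target has basis $\{d^i h^j u^k : i,j,k \geq 0\}$, while $\cals^R_q(l,m,n)$ has $R$-basis $\{D^i K^a (K;0)_q^j U^k\}$ with $a \in \Z,\ i,j,k \geq 0$ (essentially a filtered deformation of a localization of the target). The relation $K \equiv 1 \pmod{q-1}$ (after noting $K^{-1} \equiv 1 - (q-1)K^{-1}(K;0)_q$, so $K^{-1}$ is in the subalgebra generated by $K$ and $(K;0)_q$ modulo $(q-1)$) collapses the $K^a$-factors in the quotient, and $\overline{\varphi}$ becomes a bijection on bases. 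The mildly delicate point -- and the main bookkeeping obstacle -- is confirming that the $R$-subalgebra generated by $U,D,K^{\pm 1},(K;0)_q$ is $R$-free with the stated basis; this follows from Lemma \ref{Lgwa} (Diamond Lemma) applied over $R$, since the straightening relations all lie in $\cals^R_q(l,m,n)$.

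For the module statement, let $v_\lambda$ generate $\V_q^\lambda$ and set $M^R := \cals^R_q(l,m,n) \cdot v_\lambda$. Under the hypothesis $(\lambda(K)-1)/(q-1) \in R$, the vector $(K;0)_q \cdot v_\lambda = \frac{\lambda(K)-1}{q-1} v_\lambda$ lies in $R \cdot v_\lambda$, so $M^R = R[D]\cdot v_\lambda$ by the triangular decomposition of $\cals^R_q(l,m,n)$ applied to $v_\lambda$ (using $B^+ v_\lambda = \F(q)v_\lambda$ and $H_q^R v_\lambda \subset R v_\lambda$). Hence $M^R$ is $R$-spanned by $\{D^i v_\lambda\}_{i \geq 0}$; freeness of these elements in $\V_q^\lambda$ (whenever they are nonzero there) then transfers. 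Setting $\V_1^\lambda := M^R/(q-1)M^R$, the action of $\cals^R_q(l,m,n)$ factors through $\cals_1$; by the algebra isomorphism above, $\V_1^\lambda$ is a module over $\cals(\F[h],\theta_{1,\gamma},s^{-1}f(h),s^{-1})$, generated by the image $\bar v_\lambda$ which is annihilated by $u$. On $\bar v_\lambda$, the element $h = -l\gamma^{-1}(K;0)_q$ acts by $-\gamma l^{-1} \cdot \left.(\lambda(K)-1)/(q-1)\right|_{q \to 1}$, giving the claimed highest weight. The graded character assertion follows because the root-space decomposition under the Cartan is preserved generator-by-generator by the specialization, so $\dim_\F (\V_1^\lambda)_{\theta_{1,\gamma}^{-n}\ast \lambda_1} = \dim_{\F(q)} (\V_q^\lambda)_{\theta^{-n}\ast\lambda}$ for all $n \geq 0$.
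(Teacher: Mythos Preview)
Your overall strategy follows the paper's in spirit (integral form, specialize at $q=1$, compare triangular pieces), and your module argument is essentially correct. But the heart of the proof --- showing that $\overline{\varphi}:\cals_1\to\cals(\F[h],\theta_{1,\gamma},s^{-1}f(h),s^{-1})$ is an \emph{isomorphism} rather than just a surjection --- has a genuine gap.

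The problem is your claimed $R$-basis $\{D^i K^a (K;0)_q^j U^k:a\in\Z,\ i,j,k\ge0\}$ of $\cals^R_q(l,m,n)$. These monomials are not even $\F(q)$-linearly independent: the identity $K=1+(q-1)(K;0)_q$ already exhibits a nontrivial $R$-relation among them. So Lemma~\ref{Lgwa} (the Diamond Lemma) cannot deliver this set as a basis, and the step ``$\overline\varphi$ becomes a bijection on bases'' collapses. What you actually need is to pin down an honest $R$-basis of $\cals^R_0\subset\F(q)[K^{\pm1}]$ and then compute $\cals^R_0/(q-1)\cals^R_0$; this is the genuinely delicate part of the integral-form story, and your reference to the Diamond Lemma does not address it because $\cals^R_q(l,m,n)$ is defined as a subalgebra, not by generators and relations over $R$.

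The paper sidesteps this entirely. Instead of computing any $R$-basis, it proves injectivity of $\pi:\cals\to\cals_1$ factor-by-factor using representation theory. For $\pi|_{\cals_0}$: given $0\ne p(h)\in\F[h]$, pick $x\in\F$ with $p(x)\ne0$, construct the weight $\lambda:K\mapsto 1-xl(q-1)/\gamma$ (which satisfies the $R$-integrality hypothesis), and observe that $\pi(p(h))$ acts on the highest weight vector of $\V_1^\lambda$ by the nonzero scalar $p(x)$. For $\pi|_{\cals_-}$: extend scalars to an uncountable field $\F_u\supset\F$, use that each $\z_n$ has only finitely many roots (here $s\notin\sqrt1$ is used) to find $x\in\F_u$ avoiding all of them, so the corresponding Verma module over $\cals_1^{\F_u}$ is infinite-dimensional; hence $(\cals_1)_-$ is infinite-dimensional and $\pi|_{\cals_-}$ is injective. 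The anti-involution handles $\cals_+$, and (RTA1) assembles the pieces. This argument is what you are missing; your PBW approach could in principle be made to work, but not with the basis you wrote down, and the correct $R$-module analysis of $\cals^R_0$ is more work than the paper's route.
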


\noindent In particular when $s=1$, the family of algebras studied by
Smith \cite{Smi} are indeed ``classical limits" (as $q \to 1$) of
triangular GWAs. Note that these algebras also provide deformations of
$U_q(\lie{sl}_2)$ (for $s=1$).

\begin{proof}
We follow the approach in \cite[Chapter 3]{HK}, developing the results
for both $\cals_1$ and $\vla_1$ simultaneously. We outline the steps,
omitting the proofs when they are similar to those in \textit{loc.~cit.}
The meat of the (new) proof lies in Step 5.
\begin{enumerate}
\item Set $\cals^R_\pm$ to be $R[U], R[D]$ respectively, and $\cals^R_0$
to be the $R$-subalgebra of $H_q = \F(q)[K^{\pm 1}]$ that is generated by
$K^{\pm 1}$ and $(K;0)_q$. Then all elements of the form $(K^n;m)_q$ and
$\frac{\beta K - \beta^{-1} K^{-1}}{q - q^{-1}}$ lie in $\cals^R_0$,
where $m,n \in \Z$ and $\beta \in R^\times$ such that $1 = \beta|_{q \to
1} := \beta \mod (q-1) R$.

\item The multiplication map $: \cals^R_- \otimes_R \cals^R_0 \otimes_R
\cals^R_+ \to \cals^R_q(l,m,n)$, induced from the triangular
decomposition of $\cals_q(l,m,n)$, is an isomorphism of $R$-algebras.

\item Henceforth, fix a weight $\lambda \in \widehat{H_q}$ such that
$\frac{\lambda(K)-1}{q-1} \in R$, as well as a highest weight module
$M_q(\lambda) \twoheadrightarrow \vla_q$. The \textit{$R$-form} of
$\vla_q$ is defined to be $\vla_R := \cals^R_q(l,m,n) v_\lambda$, where
$v_\lambda$ is the image of $1$ under the map $\cals_q(l,m,n)
\twoheadrightarrow M_q(\lambda) \twoheadrightarrow \vla_q$. Via the
previous step, we claim:
\[ \vla_R = \cals^R_- v_\lambda = \bigoplus_{\mu \leq \lambda}
(\vla_R)_\mu. \]

\noindent More precisely, we assert that the $R$-form $\vla_R$ is
$\cals^R_0$-semisimple, with each weight space a free rank one $R$-module
with $R$-basis $D^n v_\lambda$ for (unique) $n \geq 0$. Moreover, $\F(q)
\otimes_R \vla_R = \vla_q$.

In this step, we only explain why $\vla_R$ is $\cals^R_0$-semisimple.
First note that the weights of $\vla_R$ are of the form $\theta^{-n} *
\lambda$ for $n \geq 0$. Thus suppose $v = \sum_{j=1}^k v_j \in \vla_R$
with $v_j$ of weight $\theta^{-n_j} * \lambda$ for $0 \leq n_1 < n_2 <
\cdots$. The first claim is that for each fixed $j$, the ``interpolating
polynomial"
\[ I_j := \prod_{k \neq j} \frac{\lambda(K)^{-1} q^{l n_k} K - 1}{q^{l
(n_k-n_j)}-1} \]

\noindent lies in $\cals^R_0$. Indeed, we show that each factor lies in
$\cals^R_0$ by computing for any $r, 0 < s \in \Z$:
\[ \frac{\lambda(K)^{-1} q^r K - 1}{q^s - 1} = \frac{\lambda(K)^{-1}
q^r}{1 + \cdots + q^{s-1}} (K;0)_q + \lambda(K)^{-1} \frac{q^r-1}{q^s-1}
- \frac{\lambda(K)^{-1}}{1 + \cdots + q^{s-1}} \cdot
\frac{\lambda(K)-1}{q-1}, \]

\noindent and this is indeed in $\cals^R_0$ by assumption. Now apply the
quantity $I_j$ (defined above) to $v_\lambda$ to obtain $v_j$. Thus $v_j
\in \vla_R\ \forall j$, proving the $\cals_0^R$-semisimplicity of
$\vla_R$.

\item Define $\m := (q-1)R \subset R$ to be the unique maximal ideal of
the local ring $R$, and $\cals_1 := \cals^R_q(l,m,n) / \m
\cals^R_q(l,m,n), \ \vla_1 := \vla_R / \m \vla_R$. These are called the
\textit{classical limits} of $\cals_q(l,m,n)$ and $\vla_q$ respectively.
Also define $(\vla_1)_\mu := (R / \m) \otimes_R (\vla_R)_\mu$. Then
$\vla_1$ is a $\cals_1$-module, and each weight space is one-dimensional
with $\F$-basis $\overline{D^n v_\lambda}$ for some integer $n \geq 0$.

\item There exists a surjection of algebras $\pi : \cals = \cals(\F[h],
\theta_{1,\gamma}, s^{-1} f(h), s^{-1}) \twoheadrightarrow \cals_1$,
which sends $u,d,h$ to the images of $U, D, -\gamma (K;0)_q / l$
respectively, under the quotient map $: \cals_q(l,m,n) \twoheadrightarrow
\cals_1$. To see why, first note that the image of $(q-1) (K;0)_q = K-1$
is zero in $\cals_1$, whence $\overline{K} = 1$ in $\cals_1$. This shows
the surjectivity of the map $\pi$ if we show that $\pi$ is an algebra
map. We verify one of the relations; the others are similar. Namely,
$\pi(u) \pi(h)$ is the image in $\cals_1$ of
\[ U \cdot \frac{-\gamma}{l} \frac{K-1}{q-1} = \frac{-\gamma}{l} \frac{K
q^{-l} - 1}{q-1} U = \frac{-\gamma}{l} \cdot K \cdot \frac{q^{-l}-1}{q-1}
U + \frac{-\gamma}{l} \frac{K-1}{q-1} U, \]

\noindent and the image of the right-hand side in $\cals_1$ is precisely
$(-\gamma/l) \cdot 1 \cdot (-l) U + \pi(h) U = (\pi(h) + \gamma) \pi(u)$,
as desired.

The meat of the proof lies in showing that the surjection $\pi$ is an
isomorphism of algebras. We now describe an argument that utilizes the
GWA structure in our setting, as opposed to the symmetries under the Weyl
group in the setting of \cite[Chapter 3]{HK}.

Note that $\pi : \cals \twoheadrightarrow \cals_1$ restricts to a
surjection of algebras on the respective factors in the two triangular
decompositions. We first claim that $\pi$ is an isomorphism of Cartan
subalgebras. Indeed, given $0 \neq p(h) \in \F[h] = \cals_0$, choose $x
\in \F$ such that $p(x) \neq 0$ (since $\F$ is infinite). Define $\lambda
\in \widehat{H_q}$ via:
\[ \lambda : K \mapsto 1 - xl(q-1)/\gamma \in 1 + (q-1) \F \subset 1 + \m
\subset R^\times, \]

\noindent since $R$ is a commutative local ring. Then the above analysis
of $\vla_q$ (in steps (3) and (4)) holds, and $\pi(p(h))$ acts on the
highest weight space of $\vla_1$ by the scalar $p(x) \neq 0$. Therefore
$\pi(p(h)) \neq 0$, whence $\pi|_{\cals_0}$ has zero kernel, and hence is
an isomorphism of Cartan subalgebras.

We now claim that $\pi|_{\cals_-}$ also has trivial kernel. To prove the
claim, first fix any field extension $\F_u$ of $\F$, with $\F_u$ an
uncountable field. Since $\cals_q(l,m,n)$ is the quotient of the tensor
algebra $T_{\F(q)}({\rm span}_{\F(q)}(K,K^{-1},U,D))$ by an ideal, it is
possible to tensor this construction with $\F_u$ to obtain the same
algebra over $\F_u(q)$. Label these algebras $\cals_q^\F$ and
$\cals_q^{\F_u}$ respectively, and similarly for the other algebras
considered in the previous steps.
Now reconsider the entirety of the above procedure over $\F_u(q)$ instead
of $\F(q)$. We then make the \textit{sub-claim} that
$\pi|_{\cals^{\F_u}_-}$ has trivial kernel. To see why, note that
$\cals^{\F_u}_- \cong \F_u[d] \twoheadrightarrow (\cals^{\F_u}_1)_-$, and
this in turn surjects onto every highest weight module. Thus it suffices
to produce an infinite-dimensional Verma module over $\cals_1^{\F_u}$.

Now recall from Definition \ref{Delements} that $\z_n = \sum_{i=0}^{n-1}
s^{-(n-i)} f(h + i \gamma)$ is a nonzero polynomial in $h$ of degree
$\deg(f)$ (since $s$ is not a root of unity). Thus it has finitely many
roots for each $n$. Since $\F_u$ is uncountable, choose $x \in \F_u$ that
is not a root of $\z_n$ for any $n \geq 0$. It follows that the Verma
module $M_1^{\F_u}(\lambda_x)$ is simple over $\cals_1^{\F_u}$. In
particular, $(\cals^{\F_u}_1)_-$ is infinite-dimensional over $\F_u$.
Finally, $(\cals^{\F_u}_1)_- = \F_u \otimes_\F (\cals^\F_1)_-$, so we
obtain that $(\cals_1)_- = (\cals^\F_1)_-$ is also infinite-dimensional
over $\F$. Thus $\pi|_{\cals_-}$ is also an algebra isomorphism as
claimed.

Having shown the claim for $\pi|_{\cals_-}$, one shows the same result
for $\pi|_{\cals_+}$, either by a similar argument using lowest weight
theory, or directly via the anti-involutions in both settings from
(RTA3). Thus $\pi : \cals \twoheadrightarrow \cals_1$ is an isomorphism
of algebras using (RTA1).

\item It follows using the previous step that $\vla_1$ is a
$\cals$-module (since it is a $\cals_1$-module), with the same weight
bases for both module structures.

One now shows that $\vla_1$ is a highest weight module over $\cals$, with
the same formal character as $\vla_q$. Moreover, the highest $h$-weight
for $\vla_1$ is precisely as claimed, since $h$ acts on the highest
weight space via $\pi(h)$, i.e.~by the scalar
$\displaystyle \frac{-\gamma}{l} \cdot \left. \frac{\lambda(K)-1}{q-1}
\right|_{q \to 1}$
as claimed. We also remark that if $\vla_1$ is simple but $\vla_q$ has a
maximal vector of weight $\theta^{-n} * \lambda < \lambda$, then since
the two graded characters are equal, the corresponding vector in $\vla_1$
would also be maximal, which is impossible. It follows that $\vla_q$ is a
simple $\cals_q(l,m,n)$-module if $\vla_1$ is a simple $\cals_1$-module.
\end{enumerate}
\end{proof}
\end{exam}

\subsection{Solutions of polynomial-exponential equations}\label{Slech}

We conclude this section by showing Theorems \ref{Tdownup} and
\ref{Tkleinian}. The proofs use a result on ``polynomial-exponential
equations" over a general field. We begin with a result by Schlickewei
\cite{Sch} that was proved for number fields. Namely, Schlickewei showed
that a special family of equations (with argument $n \in \Z$) has only
finitely many integer solutions.

\begin{theorem}[Schlickewei {\cite[Theorem 1.1]{Sch}}]\label{Tschli}
Given a field $\F$ of characteristic zero, consider the {\em
polynomial-exponential equation} (with argument $n \in \Z$):
\begin{equation}\label{Ea1}
F_n := \sum_{j=1}^m p_j(n) \alpha_j^n = 0, \qquad n \in \Z,
\end{equation}

\noindent where $m \in \N$, $0 \not\equiv p_j \in \F[X], \alpha_j \in
\F^\times\ \forall j \leq m$, and $\alpha_i / \alpha_j$ is not a root of
unity for all $i \neq j$.

If $\F$ is an algebraic number field, then \eqref{Ea1} has only finitely
many solutions in $\Z$.
\end{theorem}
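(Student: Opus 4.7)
The plan is to prove Theorem \ref{Tschli} by induction on the number $m$ of summands, with the main analytic input being the $p$-adic Subspace Theorem of Schmidt--Schlickewei applied to a suitable $S$-unit equation over the number field $\F$.

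The base case $m=1$ is trivial: the equation $p_1(n) \alpha_1^n = 0$ forces $p_1(n)=0$, and a nonzero polynomial has only finitely many integer roots. For the inductive step, I would first reduce to the case where no proper subsum $\sum_{j \in J} p_j(n) \alpha_j^n$ vanishes identically in $n$; any identical vanishing of a proper subsum yields a shorter polynomial-exponential equation of the same form (using that $\alpha_i/\alpha_j$ is not a root of unity, so the surviving ratios retain this property), and the inductive hypothesis disposes of those $n$. So assume the equation is ``non-degenerate.'' Next, expand each $p_j(X) = \sum_{k=0}^{d_j} c_{j,k} X^k$, and regard the equation \eqref{Ea1} as a vanishing $\F$-linear combination of the $M := \sum_{j=1}^m (d_j+1)$ quantities $n^k \alpha_j^n$, indexed by pairs $(j,k)$. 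After choosing a finite set $S$ of places of $\F$ containing all archimedean places and all places at which some $\alpha_j$ or $c_{j,k}$ is not an $S$-unit, each $\alpha_j^n$ is an $S$-unit in $\F$, so $(\alpha_j^n)_{j}$ lies in a finitely generated subgroup of $\F^\times$.

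The core step is to apply the $p$-adic Subspace Theorem to the ``projective'' relation $\sum_{j,k} c_{j,k} \cdot (n^k \alpha_j^n) = 0$, viewed as a linear form in the variables $n^k \alpha_j^n$ with coefficients in $\F$. Standard height estimates show that $H(n^k \alpha_j^n) \le n^k \cdot H(\alpha_j)^n$, so the polynomial factors contribute only a sub-exponential correction to the height of each term, while the $\alpha_j^n$ dominate. The Subspace Theorem then yields finitely many proper linear subspaces of $\F^M$ in which all non-degenerate solution vectors $(n^k \alpha_j^n)_{j,k}$ must lie. Each such subspace constraint is itself a polynomial-exponential equation of the shape \eqref{Ea1} (possibly with fewer terms, or with the $\alpha_j^n$ satisfying a nontrivial multiplicative dependence combined with polynomial weighting); grouping terms with the same $\alpha_j$ reduces the number of distinct exponential bases, and invoking the non-root-of-unity hypothesis on ratios $\alpha_i/\alpha_j$ lets us re-normalize to trigger the inductive hypothesis.

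The main obstacle is the second paragraph's ``degeneracy'' analysis: one must argue that the auxiliary relations produced by the Subspace Theorem still satisfy the hypothesis that the ratios of the new exponential bases are not roots of unity, so that induction applies. This requires carefully collecting terms with identical $\alpha_j^n$ (possibly after extracting a common factor $\zeta^n$ for a root of unity $\zeta$, which reduces to a fixed residue class of $n$ modulo the order of $\zeta$) and verifying that the coefficients in the collected equation are again nonzero polynomials in $n$, with strictly fewer distinct $\alpha_j$'s. Once this bookkeeping is done, the induction closes and the set of integer solutions is finite.
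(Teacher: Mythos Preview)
Your outline is in the spirit of Schlickewei's original argument, and with enough care it can be made to work over a number field. However, the paper does not prove Theorem \ref{Tschli} directly at all: it is simply quoted from \cite{Sch}. What the paper does prove is the strictly stronger Theorem \ref{Tskolem} (same conclusion over every field of characteristic zero), and explicitly remarks that its proof does not use Theorem \ref{Tschli}. So the relevant comparison is between your Subspace-Theorem approach and the paper's proof of Theorem \ref{Tskolem}.

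The two routes are genuinely different. The paper's argument is much more elementary: it observes that $(F_n)_{n\in\Z}$ is a linear recurrence sequence, invokes the Skolem--Mahler--Lech theorem to conclude that infinitely many zeros force vanishing along an entire arithmetic progression $r+d\N$, substitutes $n\mapsto r+dn$ to obtain $\sum_j q_j(n)\beta_j^n=0$ for \emph{all} $n\in\N$ with the $\beta_j=\alpha_j^d$ pairwise distinct, and then proves by induction on $\sum_j\deg q_j$ (Vandermonde for the base case, the differencing operator $G(n)\mapsto G(n+1)-\beta_i G(n)$ to lower the degree in the inductive step) that this forces every $q_j\equiv 0$, a contradiction. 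This avoids the Subspace Theorem entirely and works over an arbitrary field of characteristic zero; Theorem \ref{Tschli} then follows as the special case $\F$ a number field. What your approach buys, by contrast, is effectivity and quantitative control on the number of solutions, which is the point of Schlickewei's paper but is not needed here.

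One technical caution about your sketch: the vectors $(n^k\alpha_j^n)_{j,k}$ are not $S$-units because of the polynomial factor $n^k$, so you cannot feed them directly into the $S$-unit equation machinery. Schlickewei's actual argument handles the polynomial weights more delicately (or, alternatively, one first strips the polynomial factors by repeated differencing before invoking the Subspace Theorem). Your ``height estimate'' paragraph glosses over exactly the place where the real work lies.
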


It turns out that Theorem \ref{Tschli} is true in all fields of
characteristic zero; as we are unsure if this is mentioned in the
literature, we write down a proof for completeness. (The proof does not
use Theorem \ref{Tschli}.)

\begin{theorem}\label{Tskolem}
The conclusion of Theorem \ref{Tschli} holds over any field $\F$ of
characteristic zero.
\end{theorem}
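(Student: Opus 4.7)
My plan is to reduce Theorem~\ref{Tskolem} to the algebraic number field case (Theorem~\ref{Tschli}) via a specialization argument. Since the coefficients of the $p_j$ and the values $\alpha_j^{\pm 1}$ lie in a finitely generated subfield of $\F$, I may assume $\F$ itself is finitely generated over $\Q$. Let $R \subset \F$ be the finitely generated $\Z$-subalgebra generated by all $\alpha_j^{\pm 1}$ together with the coefficients of the $p_j$. If $\F$ has transcendence degree zero over $\Q$ then $\F$ is already a number field and Theorem~\ref{Tschli} applies directly, so henceforth I may assume the transcendence degree is at least one.

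In the general case, the idea is to construct a ring homomorphism $\sigma : R \to \overline{\Q}$ such that (a) each polynomial $\sigma(p_j)$ has the same degree as $p_j$ (so remains nonzero as an element of $\overline{\Q}[X]$), and (b) for every $i \neq j$ the element $\sigma(\alpha_i/\alpha_j) \in \overline{\Q}^\times$ is not a root of unity. Granted such a $\sigma$, applying it to the identity $F_n = 0$ produces the polynomial-exponential equation $\sum_{j=1}^m \sigma(p_j)(n)\,\sigma(\alpha_j)^n = 0$ over the number field $\Q(\sigma(R))$, which still satisfies the hypotheses of Theorem~\ref{Tschli}. Hence the set of integer solutions of the specialized equation is finite, and since every $n \in \Z$ with $F_n = 0$ automatically satisfies $\sigma(F_n) = 0$, the theorem follows.

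\textbf{The main obstacle} is producing the specialization $\sigma$ satisfying (b). Viewing $R$ as the coordinate ring of an affine $\Q$-scheme $X$, condition (a) carves out a Zariski-open subset $U_{(a)} \subset X(\overline{\Q})$, and for each fixed $n \geq 1$ and pair $i \neq j$ the condition $\sigma(\alpha_i/\alpha_j)^n \neq 1$ defines a further Zariski-open subset $U_{n,i,j}$ that is nonempty, since $\alpha_i/\alpha_j$ has infinite order in $\F^\times$ implies that the regular function $(\alpha_i/\alpha_j)^n - 1$ does not vanish identically on $X$. The desired $\sigma$ corresponds to a $\overline{\Q}$-point in the intersection $U_{(a)} \cap \bigcap_{n,i,j} U_{n,i,j}$, i.e.~one must avoid a countable union of proper closed subvarieties. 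Over the uncountable field $\C$ this intersection is immediately nonempty by the Baire category theorem; the subtlety is to descend to the countable set $X(\overline{\Q})$, where a priori the bad subvarieties could conspire to cover every algebraic point. I would handle this either by a $p$-adic specialization---choosing a prime $p$ and an embedding $\F \hookrightarrow \C_p$ so that the $p$-adic logarithms $\log_p \sigma(\alpha_i/\alpha_j)$ are all nonzero, forcing these ratios to have infinite order in $\C_p^\times$ and applying Strassmann's theorem to the resulting $p$-adic analytic continuation of $F_n$---or by combining the Baire-category argument over $\C$ with a Hilbert-irreducibility style descent to extract a genuine $\overline{\Q}$-specialization from the complex one.
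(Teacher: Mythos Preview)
Your approach is genuinely different from the paper's, and the key step you flag as the ``main obstacle'' is in fact a real gap that you have not closed. The difficulty you identify is exactly right: the bad loci $\{(\alpha_i/\alpha_j)^n = 1\}$ form a \emph{countable} family of proper subvarieties of $X = \Spec R$, and since $X(\overline{\Q})$ is itself countable, there is no Baire-category reason why a $\overline{\Q}$-point avoiding all of them should exist. Your two proposed fixes do not resolve this. The Hilbert-irreducibility suggestion is too vague to evaluate; Hilbert irreducibility controls how a single polynomial factors upon specialization, not whether a specialization avoids infinitely many hypersurfaces simultaneously. The $p$-adic suggestion is more promising, but it does not do what you say: an embedding $\F \hookrightarrow \C_p$ does not yield a homomorphism $R \to \overline{\Q}$ at all, so it cannot feed into Theorem~\ref{Tschli}. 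What that embedding \emph{would} let you do, via Strassmann, is prove directly that the zero set of $F_n$ is a finite union of arithmetic progressions --- i.e.~the Skolem--Mahler--Lech theorem over $\F$ --- bypassing Schlickewei entirely.

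That last observation is precisely the paper's route. The paper does not reduce to the number-field case and never invokes Theorem~\ref{Tschli}; instead it appeals to the Skolem--Mahler--Lech theorem (which, by Lech's original $p$-adic argument, is already valid over any field of characteristic zero). If $F_n$ had infinitely many integer zeros, Skolem--Mahler--Lech would force $F_n = 0$ on a full arithmetic progression $r + d\N$. The substitution $n \mapsto r + dn$ yields a new identity $\sum_j q_j(n)\beta_j^n = 0$ for \emph{all} $n \in \N$, with pairwise distinct $\beta_j = \alpha_j^d$ and $q_j \not\equiv 0$. One then shows by an elementary induction on $\sum_j \deg q_j$ (base case via the Vandermonde determinant, inductive step via $G(n+1) - \beta_i G(n)$) that this forces all $q_j \equiv 0$, a contradiction. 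This argument is shorter and avoids the specialization problem altogether; your $p$-adic alternative was already pointing in this direction, but framed as a means to an end rather than the end itself.
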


\begin{proof}
We prove the result in various steps. The first step is to claim that
every such polynomial-exponential function gives rise to a {\em linear
recurrence sequence} $\{ F_n : n \in \Z \}$ (with suitable initial
values); this has essentially been shown for any field in \cite[Section
2]{MvP}.

Now suppose $F_n$ vanishes infinitely often in $\Z$, say on the set $T$.
(We will prove that $p_i \equiv 0\ \forall i$.)
If $T \subset \Z$ is the set of zeros, then we restrict to $T' = T \cap
\N$ if this is an infinite set. Otherwise $T' \cap -\N$ is infinite, and
changing every $\alpha_i$ to $\alpha_i^{-1}$ and $p_i$ to a new
polynomial $q_i(X) := p_i(-X)$ if necessary, we may assume that $F_n = 0$
for all $n$ in an infinite set $T' \subset \N$. (Note that $q_i \equiv 0
\Leftrightarrow p_i \equiv 0$, so we may work with the new setup now.)

Since $\ch \F = 0$, we conclude by the Skolem-Mahler-Lech Theorem
\cite{Lech} that $F_n$ vanishes for all $n$ in an infinite arithmetic
progression, say $r + \N d$. But then
\[ \sum_j (p_j(r + dn) \alpha_j^r) (\alpha_j^d)^n = 0\ \forall n \in \N.
\]

Once again, we fix $d \neq 0, r$ and call the new polynomial $q_j(X) :=
p_j(r + dX)$; then $q_j \equiv 0$ if and only if $p_j \equiv 0$. Also set
$\beta_j := \alpha_j^d$; these are pairwise distinct, and we are left to
prove the following\medskip

\noindent {\bf Claim.} Fix pairwise distinct $\beta_i \in \F$ and
polynomials $q_i(T) \in \F[T]$, for a field $\F$ of characteristic zero.
If $G(n) := \sum_i q_i(n) \beta_i^n = 0\ \forall n \in \N$, then all the
polynomials $q_i$ are identically zero.\medskip

We prove this claim by assuming it to be false and obtaining a
contradiction. If the claim is false, then $D := \sum_{i : q_i \not\equiv
0} \deg(q_i)$ is defined (and nonnegative). Now obtain a contradiction by
induction on $D$. (The base case $D=0$ is treated using the Vandermonde
determinant from $G(1), \dots, G(n)$; for the general case, consider
$H(n) := G(n+1) - \beta_i G(n)$, where $\deg q_i > 0$.)
\end{proof}

It is now possible to show that a large number of ``classical" and
``quantum" generalized down-up algebras satisfy Condition (S3).

\begin{proof}[Proof of Theorem \ref{Tdownup}]
Throughout this proof we use $\theta$ instead of $\theta_{r,\gamma}$.
Fix an algebra map $\lambda : H \to \F$. First suppose that $r=1$; then
$\lambda \circ \theta^n(h) = \lambda(h) + n \gamma$.
Thus $\lambda \in \hfree$ if and only if $n \gamma$ is never zero for
$n>0$, i.e., $\ch \F = 0 \neq \gamma$. Next, if $r \neq 1$, then compute:
\[ \lambda \circ \theta^n(h) = r^{-n} \lambda(h) + r^{-1} \gamma \frac{1
- r^{-n}}{1-r^{-1}}. \]

\noindent It is clear that if $r$ is a root of unity, then this
expression equals $\lambda(h)$ for all $h$, for infinitely many $n$. On
the other hand, if $r \notin \sqrt{1}$, then it is clear for any $n>0$
that
\[ \lambda \equiv \lambda \circ \theta^n \quad \Longleftrightarrow \quad
\lambda(h) = \frac{r^{-1} \gamma}{1 - r^{-1}}, \]

\noindent and this completes the proof of the first part. Next when
$r=s=1$, it is not hard to show that $Z(A) \cap \F[h] = \F$. Moreover, a
quadratic Casimir operator always exists because of the identity
$\binom{X}{k} = \binom{X-1}{k-1} + \binom{X-2}{k-1} + \cdots$, which
helps show that power sums $\sum_{i=1}^n i^k$ are polynomials in $n$ of
degree $k+1$ with rational coefficients.

Finally, to study the sets $[\lambda]$ we first compute for $n>0$ and
$r=1$:
\[ \z_n = \sum_{i=0}^{n-1} s^{-1} \theta^i(f(h)) s^{-i} =
\sum_{i=0}^{n-1} s^{-1-i} f(\theta^i(h)) = \sum_{i=0}^{n-1} s^{-1-i} f(h
+ i \gamma). \]

\noindent If $f \equiv 0$ then clearly $\z_n = 0$ and $[\lambda]$ is
infinite for every weight $\lambda$. Now suppose $\gamma \neq 0$ and $f
\not\equiv 0$ is of the form $f(h) = \sum_{j=1}^k c_j h^{m_j}$ for
integers $0 \leq m_1 < \cdots < m_k$, with $c_k \in \F^\times$. We first
assume that $r=1$ and compute:
\[ \z_n = \sum_{i=0}^{n-1} s^{-1-i} \sum_{j=1}^k c_j (h + i
\gamma)^{m_j} = \sum_{i=0}^{n-1} \sum_{j=1}^k \sum_{l=0}^j s^{-1-i} c_j
\binom{j}{l} h^{j-l} \gamma^l i^l
= \sum_{j=1}^k \sum_{l=0}^j c_j \binom{j}{l} h^{j-l} \gamma^l
\sum_{i=0}^{n-1} s^{-1-i} i^l. \]

\noindent If moreover $s=1$, then it is clear that $\z_n = 0$ if $\ch(\F)
| n$ (since for every prime $p>0$ and all integers $l \geq 0$,
$\sum_{i=0}^{p-1} i^l$ is divisible by $p$, by using the primitive
generator of $\Z / p \Z$). Now if $\ch \F = 0$, then $\lambda(\z_n)$ is a
polynomial in $n$ of degree at most $1 + m_k$, so it has only finitely
many roots $n>0$. A similar argument for $n<0$ shows that $[\lambda]$ is
always finite if $r=s=1$ and $\ch \F = 0$. On the other hand, if $\ch \F
> 0$ and $r=s=1$, then $[\lambda]$ is always infinite.

Now suppose $\ch \F = 0$, $\gamma \neq 0$, $r=1$, and $s \not\in
\sqrt{1}$. First assume by a change of variables that $\gamma = 1$,
without loss of generality; since $\ch \F = 0$, one can then write the
polynomial $f(h)$ as a linear combination of the basis elements
$t_{n,s}(h) := s^{-1}\binom{h+1}{n} - \binom{h}{n}$ of $\F[h]$. Now if $f
\equiv \sum_{j \geq 0} a_j t_{j,s}$ (finite sum), then define $\tilf(h)
:= s^{-1} \sum_{j \geq 0} a_j \binom{h}{j}$. Then for $n \geq 0$,
\begin{align*}
\z_n = &\ \sum_{i=0}^{n-1} s^{-1-i} f(h+i) = \sum_{i=0}^{n-1} s^{-i}
(s^{-1} \tilf(h+i+1) - \tilf(h+i)) = s^{-n} \tilf(h+n) - \tilf(h), \\
\z_{-n} = &\ \theta^{-n}(\z_n) = s^{-n} (\tilf(h) - s^n \tilf(h-n)).
\end{align*}

\noindent Now given any weight $\lambda$, applying Theorem \ref{Tskolem}
to the nontrivial polynomial-exponential equation (in $n \in \Z$) given
by
\[ F_n := \lambda(\z_n) = \tilf(\lambda(h)) 1^n + (-\tilf(\lambda(h)+n))
(s^{-1})^n = 0 \]

\noindent shows that there are only finitely many integer solutions,
whence $[\lambda]$ is finite for every $\lambda$.
\end{proof}

Finally, we show the analogous result (to Theorem \ref{Tdownup}) for
quantum triangular GWAs.

\begin{proof}[Proof of Theorem \ref{Tkleinian}]
Clearly, the orders of $\theta$ and $\alpha$ are either both infinite or
both equal. Next, if $\Gamma / \ker(\alpha)$ has finite order, say $N$,
then for all $g \in \Gamma$, $\alpha^N(g) = \alpha(g)^N = \alpha(g^N) \in
\alpha(\ker(\alpha)) = 1$, whence $\alpha$ has finite order as well.
Moreover, $\alpha : \Gamma/\ker(\alpha) \to \F^\times$ is an injection,
whence $\Gamma / \ker(\alpha)$ embeds into a finite group of units in
$\F$, which must therefore be cyclic. Hence $\Gamma / \ker(\alpha)$ is
cyclic, and generated by some $g_0$ of order $N$. This implies that
$\alpha$ also has order $N$.
Conversely, say $\alpha$ has order $N$. Then the order of each $g$
divides $N$. But (via $\alpha$,) there are only finitely many such values
of $\alpha(g)$, namely, (powers of) $N$th roots of unity. Hence $\Gamma /
\ker(\alpha)$ must be finite, since it maps faithfully into these $N$th
roots. It is also easy to see that a primitive $N$th root is in the image
of $\alpha$, which proves the first assertion.

In order to show the next two parts, we first define $N_0$ to be the
least common multiple of the orders of the roots of unity $\{ \alpha(g) s
: g \in \Gamma_1 \cup \Gamma_2 \} \subset \sqrt{1}$, as well as of the
orders of $\alpha(g_{ij}^{-1} g_{kj})$ over all $i,j,k$. Now compute for
any $\mu \in \hfree$ that $\mu(z'_n) = z'_n = s^n$ for all $n \geq 0$.
Therefore we obtain for $n>0$:
\begin{align}\label{ES32}
\mu(\z_n) = &\ \sum_{i=0}^{n-1} s^{n-1-i} \sum_{g \in \Gamma} a_g
\alpha(g)^{-i} \mu(g) = n s^{n-1} \sum_{g \in \Gamma_1} a_g \mu(g) +
s^{n-1} \sum_{g \in \Gamma_2 \cup \Gamma_3} a_g \mu(g) \frac{1 -
(\alpha(g)s)^{-n}}{1 - (\alpha(g)s)^{-1}},\notag\\
\mu(\z_{-n}) = &\ \mu(\theta^{-n}(\z_n)) = \sum_{i=0}^{n-1} s^{n-1-i}
\sum_{g \in \Gamma} a_g \alpha(g)^{n-i} \mu(g) = \sum_{g \in \Gamma}
\sum_{i=0}^{n-1} a_g \alpha(g) \mu(g) (\alpha(g) s)^i\\
= &\ n s^{-1} \sum_{g \in \Gamma_1} a_g \mu(g) + \sum_{g \in \Gamma_2
\cup \Gamma_3} a_g \alpha(g) \mu(g) \frac{1 - (\alpha(g)s)^n}{1 -
(\alpha(g)s)}.\notag
\end{align}

\noindent We now show the two remaining parts in this result.
\begin{enumerate}
\item If \eqref{ES3kleinian1} holds, then we claim that $\mu(\z_{m N_0}) =
0$ for all $m \in \N$. Indeed, the sum in \eqref{ES32} over $g \in
\Gamma_1$ vanishes by assumption, and we are left with:
\[ \mu(\z_{m N_0}) = s^{m N_0 - 1} \sum_{g \in \Gamma_2} a_g \mu(g)
\frac{1 - (\alpha(g)s)^{-m N_0}}{1 - (\alpha(g)s)^{-1}} + 
s^{m N_0 - 1} \sum_j \sum_i a_{ij} \mu(g_{ij}) \frac{1 -
(\alpha(g_{ij})s)^{-m N_0}}{1 - (\alpha(g_{ij})s)^{-1}}. \]

\noindent By construction, each summand of the sum over $g \in \Gamma_2$
vanishes, and moreover, the element $(\alpha(g_{ij})s)^{-m N_0}$ is
independent of $i$ for each fixed $j$. Thus, we obtain:
\[ \mu(\z_{m N_0}) = s^{m N_0 - 1} \sum_j (1 - (\alpha(g_{1j}) s)^{-m
N_0}) \sum_i \frac{a_{ij} \mu(g_{ij})}{1 - (\alpha(g_{1j})s)^{-1}}, \]

\noindent which vanishes by assumption, proving the claim.

Similarly, one shows using \eqref{ES32} that if \eqref{ES3kleinian2}
holds, then $\mu(\z_{-m N_0}) = 0$ for all $m \in \N$.

\item Conversely, suppose $[\lambda]$ is infinite for $\lambda \in
\hfree$. Then at least one of $[\lambda] \cap (\pm \N \theta * \lambda)$
is infinite. Suppose first that the former case holds. Define $N_0$ as
above; then there exists $n_0 \in \N$ such that $[\lambda] \cap ((n_0 +
N_0 \N) \theta * \lambda)$ is infinite. Thus, fix $0 < n_1 < n_2 < \dots$
such that $(n_0 + N_0 n_k) \theta * \lambda \in [\lambda]$ for all $k>0$.
Then using \eqref{ES32},
\[ 0 = s^{1 - n_0 - N_0 n_k} \lambda(\z_{n_0 + N_0 n_k}) = (n_0 + N_0
n_k) \sum_{g \in \Gamma_1} a_g \lambda(g) + \sum_{g \in \Gamma_2 \cup
\Gamma_3} a_g \lambda(g) \frac{1 - (\alpha(g)s)^{-n_0 - N_0 n_k}}{1 -
(\alpha(g)s)^{-1}}, \]

\noindent for all $k \in \N$. Rearranging this expansion, we obtain that
\[ (p_{00} + N_0 \sum_{g \in \Gamma_1} a_g \lambda(g) X) 1^X + \sum_j p_j
\beta_j^X = 0, \qquad \forall X = n_1, n_2, \dots \]

\noindent where
\begin{align}
p_{00} := &\ \sum_{g \in \Gamma_2} a_g \lambda(g) \frac{1 -
(\alpha(g)s)^{-n_0}}{1 - (\alpha(g)s)^{-1}} + \sum_{g \in \Gamma_3}
\frac{a_g \lambda(g)}{1 - (\alpha(g)s)^{-1}} + n_0 \sum_{g \in \Gamma_1}
a_g \lambda(g) \in \F, \notag \\
p_j := &\ - s^{-n_0} \sum_i \frac{a_{ij} \lambda(g_{ij})}{1 -
(\alpha(g_{ij}) s)^{-1}} \alpha(g_{ij})^{-n_0}, \\
\beta_j := &\ (\alpha(g_{1j}) s)^{-N_0}. \notag
\end{align}

\noindent Now note that $1$ and the $\beta_j$ are distinct, and the ratio
of no two of these is a root of unity. Since $\ch \F = 0$, Theorem
\ref{Tskolem} now implies that
\[ p_{00} = \sum_{g \in \Gamma_1} a_g \lambda(g) = p_j = 0 \qquad
\forall j. \]

\noindent Finally, define $\mu := \theta^{n_0} * \lambda$. Then,
\begin{align*}
0 = &\ -s^{n_0} p_j = \sum_i \frac{a_{ij} \mu(g_{ij})}{1 - (\alpha(g_{ij}
s))^{-1}} \qquad \forall j,\\
0 = &\ s^{n_0} \sum_{g \in \Gamma_1} a_g \lambda(g) = \sum_{g \in
\Gamma_1} a_g \lambda(g) \alpha(g)^{-n_0} = \sum_{g \in \Gamma_1} a_g
\mu(g),
\end{align*}

\noindent and \eqref{ES3kleinian1} follows. A similar analysis shows
using \eqref{ES32} and Theorem \ref{Tskolem} that if $[\lambda] \cap (-\N
\theta * \lambda)$ is infinite, then \eqref{ES3kleinian2} holds, which
concludes the proof.
\end{enumerate}
\end{proof}

\section{Non-Hopf examples of RTAs}\label{Sgwa2}

Note that all of the previous examples of triangular GWAs in Section
\ref{Sgwa} -- with the exception of generalized down-up algebras
\eqref{Edownup} with $r \neq 1$ (such as Example \ref{EJZ}) -- were
strict Hopf RTAs.
We now provide examples of triangular GWAs that are not Hopf RTAs. The
Hopf structure in the examples gets increasingly weaker, in the following
precise sense:
\begin{itemize}
\item As a first example, consider Example \ref{EJZ}, in which $H =
\F[h]$ is a Hopf algebra, but the Hopf structure is (necessarily) not
used.

\item In the second example -- see Example \ref{Echmutova} -- $H$ is a
topological Hopf algebra but not a Hopf algebra.

\item In the final example -- see Example \ref{Emirza} -- $H$ is not even
a topological Hopf algebra.
\end{itemize}

\begin{exam}[{\em Continuous Hecke algebra of
$\lie{gl}_1$}{}]\label{Echmutova}
Let $\F$ be any field, and $H = \calo(\F^\times)^* = \F[T^{\pm 1}]^* =
\F[[t^{\pm 1}]]$, the algebra of ``Fourier series" or distributions on
the unit circle (if $\F = \C$). This is a topological Hopf algebra with
coordinatewise multiplication, and other Hopf operations given by
\[ \eta(1) = \sum_{n \in \Z} t^n, \quad \Delta(t^m) = \sum_{n \in \Z} t^n
\otimes t^{m-n} \in H \widehat{\otimes} H, \quad \vi(\sum_{n \in \Z} a_n
t^n) = a_0, \quad S(\sum_{n \in \Z} a_n t^n) = \sum_{n \in \Z} a_n
t^{-n}. \]

The corresponding triangular GWA with $z_1 = 1$ is the {\it continuous
Hecke algebra} of $GL(1)$ and $\F \oplus \F^*$, where $\theta$ is
(non-coordinatewise) multiplication by $t$, i.e.,
\[ \hk(GL(1), \F \oplus \F^*) := \cals(\calo(\F^\times)^*, \theta,
\kappa, 1), \quad \kappa \in \calo(\F^\times)^*, \quad \theta(\sum_{n \in
\Z} a_n t^n) := \sum_{n \in \Z} a_n t^{n+1} = \sum_{n \in \Z} a_{n-1}
t^n. \]

\noindent Continuous Hecke algebras were introduced in \cite{EGG} as
``continuous" generalizations of Drinfeld's family of degenerate affine
Hecke algebras. The family of algebras under discussion is in some sense
the simplest special case, of ``Lie rank zero". Higher (Lie) rank
examples of infinitesimal Hecke algebras are discussed in the following
section. In this section and the next, we differentiate between the Lie
rank of an infinitesimal Hecke algebra (which equals the rank of the
underlying reductive Lie algebra $\lie{g}$) and the ``(RTA) rank" of a
strict, based RTA given in Definition \ref{Dstrict}. In fact, the based
Hopf RTAs considered in Section \ref{Sinfhecke} are not strict, hence we
will only talk about their Lie rank, but not their RTA-rank.

\begin{remark}
Observe that $H = \F[[t^{\pm 1}]]$ is the $\F$-algebra of functions on
$\Z$. Thus if $\kappa = 0$, then the triangular GWA $\hk(GL(1), \F \oplus
\F^*)$ also equals $\cala(\nn)$, where $\cala(Q^+_0)$ was defined in
Theorem \ref{Tmonoid}, with $\theta_1 \ltimes \theta_2^{-1} :=
\theta_2^{-1}$ for $\theta_1, \theta_2 \in Q^+_0 = \nn$.
\end{remark}

We now list some of the properties of (Lie) rank zero continuous Hecke
algebras, which are triangular GWAs from above.

\begin{prop}
Suppose $\kappa = \sum_{n \in \Z} a_n t^n \in \calo(\F^\times)^*$ and
$\hk := \hk(GL(1), \F \oplus \F^*)$.
\begin{enumerate}
\item $\hk$ is a strict, based RTA of rank one, but not a Hopf RTA.

\item The set of weights is $\widehat{H} = \{ \mu_m : m \in \Z \}$ with
$\mu_m(t^n) := \delta_{m,n}$. Moreover, $\theta^n * \mu_m = \mu_{n+m}$
for $m,n \in \Z$, so $\hfree = \widehat{H}$.

\item For all $m \in \Z$, define $s_{m,n}(\kappa) := a_m + a_{m-1} +
\dots + a_{m+n+1}$ for $n<0$, $s_{m,0}(\kappa) := 0$, and
$s_{m,n}(\kappa) := a_{m+1} + \dots + a_{m+n}$ for $n>0$. Then the Verma
module $M(\mu_m)$ is uniserial, with
\[ l(M(\mu_m)) = \# \{ n \leq 0 : s_{m,n} = 0 \}, \qquad |S^3(\mu_m)| =
|[\mu_m]| = \# \{ n \in \Z : s_{m,n} = 0 \}. \]
\end{enumerate}
\end{prop}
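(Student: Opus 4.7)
The plan is to deduce all three parts from the triangular GWA machinery already developed in Section~\ref{Sgwa}. For Part~(1), I would invoke Lemma~\ref{Lgwa} and Theorem~\ref{Tgwa}: the algebra $H$ is commutative, so $z_0 = \kappa$ and $z_1 = 1$ are automatically central; one checks directly that the shift $\theta : \sum_n a_n t^n \mapsto \sum_n a_{n-1} t^n$ is an $\F$-algebra automorphism of infinite order, since it permutes the orthogonal idempotents $t^n \mapsto t^{n+1}$ with no fixed point. Hence Assumption~\ref{Agwa} holds and Theorem~\ref{Tgwa} gives that $\hk$ is a strict, based RTA of rank one with $\Delta = \{\theta\}$. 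To rule out the Hopf structure, I would invoke Theorem~\ref{Thrtarta}: the comultiplication $\Delta(t^m) = \sum_n t^n \otimes t^{m-n}$ only lands in $H \widehat{\otimes} H$ and not in $H \otimes_\F H$, so $H$ fails to be an $\F$-Hopf algebra, whence $\hk$ cannot be an HRTA.

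For Part~(2), the approach is to classify the $\F$-algebra maps $\mu : H \to \F$ from the idempotent structure. The relations $t^m t^n = \delta_{m,n} t^n$ force $\mu(t^n)^2 = \mu(t^n)$ and $\mu(t^m)\mu(t^n) = 0$ for $m \neq n$, so each $\mu(t^n) \in \{0,1\}$ with at most one index $m$ satisfying $\mu(t^m) = 1$; the identity $x \cdot t^n = x_n t^n$ inside $H$ then forces $\mu(x)\mu(t^m) = x_m \mu(t^m)$, so once a unique such $m$ is pinned down the map $\mu$ agrees with the coordinate projection $\mu_m$. The action is a direct calculation: $(\theta * \mu_m)(t^n) = \mu_m(t^{n-1}) = \delta_{m,n-1}$, giving $\theta^n * \mu_m = \mu_{m+n}$, and freeness of this $\Z$-action on $\widehat{H}$ yields $\hfree = \widehat{H}$.

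For Part~(3), I would specialize the uniserial composition-series analysis from the proof of Theorem~\ref{Tgwa}. With $z_1 = 1$, Definition~\ref{Delements} simplifies to $z'_n = 1$ and $\z_n = \sum_{j=0}^{n-1} \theta^j(\kappa)$ for $n > 0$, and $\z_{-n} = \theta^{-n}(\z_n)$. Writing $\kappa = \sum_k a_k t^k$ gives the key computation
\begin{equation*}
\mu_m(\z_n) = \sum_{j=0}^{n-1} a_{m-j} = s_{m,-n}(\kappa), \qquad n > 0,
\end{equation*}
and the identity $\mu_m \circ \theta^{-n} = \mu_{m+n}$ extends this to $\mu_m(\z_n) = s_{m,-n}(\kappa)$ for all $n \in \Z$ (both sides vanish at $n=0$). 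Theorem~\ref{Tgwa} identifies the composition factors of $M(\mu_m)$ strictly below the top with $\{L(\mu_{m-n}) : n > 0,\ \mu_m(\z_n) = 0\}$, so
\[ l(M(\mu_m)) = 1 + \#\{n > 0 : s_{m,-n}(\kappa) = 0\} = \#\{k \leq 0 : s_{m,k}(\kappa) = 0\} \]
after the substitution $k = -n$ and absorbing the top factor via $s_{m,0} = 0$. For the block, Theorem~\ref{Tgwa}(2) gives $S^3(\mu_m) = [\mu_m] = \{\theta^{-n} * \mu_m : n \in \Z,\ \mu_m(\z_n) = 0\}$; injectivity of $n \mapsto \mu_{m-n}$ together with the same substitution yields $|S^3(\mu_m)| = \#\{k \in \Z : s_{m,k}(\kappa) = 0\}$.

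The step I expect to be the main obstacle is the existence portion of Part~(2): the idempotent argument shows that at most one $\mu(t^n)$ can be nonzero, but ruling out the pathological case in which \emph{every} $\mu(t^n)$ vanishes is delicate, because $1_H$ is realized only as the ``all-ones'' infinite expression $\sum_n t^n$ in $H = \F^\Z$. The cleanest workaround is to restrict attention to algebra maps that are continuous for the product topology on $H$ (equivalently, to weights that arise as $H$-weights of modules in $\calo$, which enforces locality of support); once this is settled, the remaining steps reduce to bookkeeping with Definitions~\ref{Dgwa} and~\ref{Delements}.
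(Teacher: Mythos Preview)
Your proposal is correct and mirrors the paper's own proof: Part~(1) is deduced from Theorem~\ref{Tgwa}, Part~(2) from the fact that $\{t^n : n \in \Z\}$ is a complete set of primitive idempotents in $H$, and Part~(3) from the explicit computation of $\mu_m(\z_{\pm n})$ together with the uniserial description of Verma modules in the proof of Theorem~\ref{Tgwa}. The ``pathological'' case you flag in Part~(2) --- an algebra map $\mu$ with $\mu(t^n)=0$ for all $n$ --- is a genuine subtlety that the paper also elides (it simply asserts the conclusion from completeness of the idempotents), so your instinct to resolve it by restricting to weights that actually occur on objects of $\calo$ is both reasonable and more careful than the original.
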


\noindent We remark that $s_{m,n}(\kappa) = \sum_{j = 1 +
\min(m,m+n)}^{\max(m,m+n)} a_j$ for all $m,n \in \Z$.

\begin{proof}
The first part follows from Theorem \ref{Tgwa}, and the second holds
since $\{ t^n : n \in \Z \}$ is a complete set of primitive idempotents
in $H$. To show the third part, we compute using Definition
\ref{Delements} and that $z_1 = 1$:
\[ \z_n = \sum_{i=0}^{n-1} \theta^i(\kappa) = \sum_{m \in \Z}
\sum_{i=0}^{n-1} a_m t^{m+i} = \sum_{m \in \Z} t^m \sum_{i=0}^{n-1}
a_{m-i}, \qquad
\z_{-n} = \sum_{m \in \Z} t^m \sum_{i=0}^{n-1}
a_{m+n-i}, \qquad \forall n > 0. \]

\noindent The third part now follows from results on the uniseriality of
Verma modules, as discussed in the proof of Theorem \ref{Tgwa}.
\end{proof}
\end{exam}

Next is an example of an RTA in which the Cartan subalgebra is not a
(topological) Hopf algebra.

\begin{exam}[{\em GWA arising from geometry}]\label{Emirza}
Suppose $X$ is an object in some category $\scrc$ of topological spaces
containing the real line, and $T : X \to X$ is an automorphism in $\scrc$
such that $X^\vee := \hhh_\scrc(X,\R)$ is an $\R$-algebra containing the
constant map $: X \to 1$, which is stable under pre-composition with $T$.
We now construct a ``first approximation" to a GWA. Consider the
subalgebra $A' \subset \End_\R(X^\vee)$ generated by the operators $H_X
:= \{ M_f : f \in X^\vee \}$, and two additional operators $U,D$, where:
\begin{itemize}
\item $M_f$ corresponds to multiplication by $f$ in $X^\vee$;
\item $U(f) := f \circ T$ and $D(f) := f \circ T^{-1}$ for $f \in
X^\vee$.
\end{itemize}

\noindent Then $U,D$ ``count" the dynamics of applying $T^{\pm 1}$ to
$X$, i.e., the following equations hold in $\End_\R(X^\vee)$:
\[ U^n f(-) = f(T^n(-)) U^n, \qquad D^n f(-) = f(T^{-n}(-)) D^n. \]

\noindent Moreover, $UD = DU = 1_{H_X}$ in $\End_\R(X^\vee)$, $H_X \cong
X^\vee$, and $T^* : H_X \to H_X$ is indeed an algebra automorphism. Thus
$A' = \cals(H_X, \theta = T^*, 0, 1) / (UD - 1_{H_X}, DU - 1_{H_X})$.

We now define an associated family of triangular GWAs as follows.
Suppose $T : X \to X$ is an automorphism in $\scrc$ of infinite order
that stabilizes $X^\vee$. For each $z_0, z_1 \in H_X$, define $A :=
\cals(H_X, T^*, z_0, z_1)$. This is a strict, based RTA of rank one, but
not necessarily a (topological) Hopf RTA, since $H_X \cong X^\vee$ is not
a (topological) Hopf algebra for every topological space $X$.
\end{exam}

We conclude with a conjectural example involving twisted generalized Weyl
algebras.

\begin{exam}[{\em Twisted generalized Weyl algebras}]\label{ETGWA}

We follow the treatment in the paper \cite{FH}. Given a TGW datum $(R,
\sigma, t)$, define the twisted GWA $A := \mathcal{A}_\mu(R,\sigma,t)$,
constructed as the quotient of $\mathcal{C}_\mu(R,\sigma,t)$ by the ideal
$\mathcal{I}_\mu(R,\sigma,t)$, as in \cite[Definition 2.3]{FH}. (These
algebras were originally defined by Mazorchuk and Turowska \cite{MT}.) We
further \textbf{assume} that the algebra $A$ satisfies three additional
conditions:
\begin{itemize}
\item The parameter matrix $(\mu_{ij})$ with diagonals removed, is
symmetric.

\item  The ``middle" subalgebra $R$ is isomorphic to a polynomial algebra
$H[t_1, ..., t_n]$ over some commutative $\F$-algebra $H$.
(Then $t_i$ equals $y_i x_i$ as in the defining algebra relations.)

\item The algebra $A$ satisfies \cite[Definition 2.5 and Theorem 2.7]{FH}
of ``$\mu$-consistency".
\end{itemize}

In this case, a natural question to ask is if the algebra $A$ is an RTA.
That (RTA3) holds is not hard to show, but the other two RTA axioms are
not known to hold in this degree of generality. Specifically, are the
subalgebras $B_x, B_y$ generated by the $x_i$ and the $y_i$ respectively,
isomorphic as vector spaces to polynomial algebras in these variables?
Does the condition (RTA1) hold?

Another question of interest is to verify whether or not the type $A_1^n$
case of a multiparameter twisted GWA (defined in \cite[Theorem 4.1]{FH})
is an RTA.
\end{exam}

\section{Non-strict RTAs: higher Lie rank infinitesimal Hecke
algebras}\label{Sinfhecke}

In the final section we address yet another motivation for this paper --
to construct a framework that includes RTAs that are not strict. In this
section we consider \textit{infinitesimal Hecke algebras}
$\mathcal{H}_\beta(\lie{g},V)$, which are deformations of $H_0(\lie{g},V)
:= U(\lie{g} \ltimes V)$, with $\lie{g}$ a reductive Lie algebra and $V$
a finite-dimensional $\lie{g}$-module. Note that these algebras include
reductive Lie algebras, for which $V = 0$. In this section we work over a
ground field $\F$ of characteristic zero.

The first example of infinitesimal Hecke algebras is over $\lie{sl}_2$. A
family of these algebras was described in Example \ref{Einfhecke1} and
studied in detail in \cite{Kh,KT}, and they are strict, based Hopf RTAs
of rank one. The next two classes of examples discussed in this section,
were introduced in \cite{EGG}.

\subsection{Partial examples}

Before discussing specific families of infinitesimal Hecke algebras, we
first mention a general framework for such algebras, in which one can
show that Condition (HRTA2) is related to Ginzburg's {\em Generalized
Duflo Theorem} \cite[Theorem 2.3]{Gi}.

\begin{prop}\label{Pduflo}
Suppose an $\F$-algebra $A$ is generated by an abelian Lie algebra
$\lie{h}_1$ and a finite-dimensional $\lie{h}_1$-semisimple module $M$,
with $M_0 = 0 = \ch \F$. The following are equivalent:
\begin{enumerate}
\item ``HRTA2" holds; in other words, there exist
\begin{itemize}
\item a Lie subalgebra $\lie{h}_0 \subset \lie{h}_1$,

\item a decomposition $M = M^+ \oplus M^-$ into $\lie{h}_1$-semisimple
submodules, and

\item an $\F$-linearly independent set $\Delta' \subset \lie{h}_0^*$,
\end{itemize}

\noindent such that $M^\pm = \bigoplus_{\mu \in \pm \nn \Delta'}
M^\pm_\mu$. (In particular, the subalgebras generated by $M^\pm$ are
$\lie{h}_1$-semisimple, with finite-dimensional weight spaces, and
one-dimensional zero weight space spanned by the unit.)

\item There exists a codimension $d$ subspace $K \subset \lie{h}_1^*$
(for some $d$), such that modulo $K$, and up to a change of basis,
$\overline{\wt(M)} := \wt(M) + K \subset \Q^d \setminus \{ 0 \}$.

\item There exists $\delta \in \lie{h}_1$ such that $\wt(M)(\delta)
\subset \Z \setminus \{ 0 \}$.
\end{enumerate}
\end{prop}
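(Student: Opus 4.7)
The plan is to prove the cycle $(1) \Rightarrow (2) \Rightarrow (3) \Rightarrow (1)$, working under the (implicit) assumption that $\lie{h}_1$ is finite-dimensional, as is the case for all infinitesimal Hecke algebras of interest. The unifying theme will be that each formulation encodes the existence of suitable linear functionals on $\lie{h}_1$ that separate the weights of $M$ from zero in a coherent manner.

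For $(1) \Rightarrow (2)$, the approach is to write $\Delta' = \{\alpha_1, \dots, \alpha_d\}$ and use $\F$-linear independence to choose dual elements $\delta_1, \dots, \delta_d \in \lie{h}_0$ with $\alpha_i(\delta_j) = \delta_{ij}$. The subspace $K \subset \lie{h}_1^*$ to consider is the annihilator of the $\F$-span of $\delta_1, \dots, \delta_d$; it has codimension $d$, and the quotient map identifies $\mu + K$ with the tuple $(\mu(\delta_1), \dots, \mu(\delta_d))$. Because $M_0 = 0$ and the zero $\lie{h}_0$-weight space of each $B^\pm$ in (1) reduces to $\F \cdot 1$, every $\mu \in \wt(M^\pm)$ will restrict to $\lie{h}_0$ as a nonzero element of $\pm \nn \Delta'$, hence correspond to a point of $\pm (\nn)^d \setminus \{0\} \subset \Q^d \setminus \{0\}$, giving (2).

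For $(2) \Rightarrow (3)$, the set $\overline{\wt(M)}$ is a finite subset of $\Q^d \setminus \{0\}$ since $M$ is finite-dimensional. I would then pick a $\Q$-linear functional $\ell \in (\Q^d)^*$ avoiding the finitely many hyperplanes orthogonal to the elements of $\overline{\wt(M)}$, and rescale so that $\ell(\overline{\wt(M)}) \subset \Z \setminus \{0\}$. Finite-dimensionality of $\lie{h}_1$ will yield $(\lie{h}_1^*/K)^* \cong K^\perp \subset \lie{h}_1$, so $\ell$ lifts to a $\delta \in \lie{h}_1$ with $\mu(\delta) = \ell(\mu + K) \in \Z \setminus \{0\}$ for every $\mu \in \wt(M)$. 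For $(3) \Rightarrow (1)$, take $\lie{h}_0 := \F \delta$ (an abelian Lie subalgebra) and $\Delta' := \{\alpha\}$ where $\alpha(\delta) = 1$, and define $M^+ := \bigoplus_{\mu(\delta) > 0} M_\mu$, $M^- := \bigoplus_{\mu(\delta) < 0} M_\mu$. Since $\mu(\delta) \in \Z \setminus \{0\}$ for all $\mu \in \wt(M)$, every weight vector belongs to exactly one summand and each restricts to $\lie{h}_0$ as a nonzero element of $\pm \nn \Delta'$; the parenthetical consequences will follow because monomials in $M^\pm$ have $\delta$-eigenvalue equal to the sum of those of their factors, forcing the zero-weight space of each subalgebra to be $\F \cdot 1$ and every other weight space to be spanned by finitely many monomials.

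The main obstacle will be in $(2) \Rightarrow (3)$, where a rational functional on the quotient $\lie{h}_1^*/K$ must be realized by an actual element of $\lie{h}_1$; this relies crucially on finite-dimensionality of $\lie{h}_1$ via the canonical isomorphism $\lie{h}_1 \cong \lie{h}_1^{**}$ (otherwise the map $\lie{h}_1 \to (\lie{h}_1^*/K)^*$ could fail to be surjective). The remaining implications are essentially bookkeeping, provided one handles the hypothesis $M_0 = 0$ (together with the addendum that the zero $\lie{h}_0$-weight space of $B^\pm$ is $\F \cdot 1$) carefully, so as to avoid ambiguity in the decomposition $M = M^+ \oplus M^-$.
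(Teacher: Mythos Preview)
Your proposal is correct and follows essentially the same cyclic route $(1)\Rightarrow(2)\Rightarrow(3)\Rightarrow(1)$ as the paper, with the same key constructions in each step (annihilator subspace $K$, generic rational functional rescaled to land in $\Z$, and the one-dimensional $\lie{h}_0=\F\delta$ with $\Delta'=\{\alpha\}$). Your treatment is in fact slightly more careful than the paper's in two places: you make explicit the finite-dimensionality of $\lie{h}_1$ needed to realize the rational functional as an element of $\lie{h}_1$, and you spell out why the zero weight space of the subalgebra generated by $M^\pm$ is $\F\cdot 1$; the paper leaves both of these implicit.
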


\begin{remark}\hfill
\begin{enumerate}
\item For example, for the infinitesimal Hecke algebras associated to
$(\lie{g},V) = (\lie{gl}_n$, $\F^n \oplus (\F^n)^*)$ or $(\lie{sp}_{2n},
\F^{2n})$ (which were characterized in \cite{EGG}), the second condition
is easily verified, for $M = V \oplus \lie{n}^+ \oplus \lie{n}^-,\ K=0$,
and the basis consisting of the fundamental weights (and one additional
weight in $Z(\lie{g})^*$ for $\lie{gl}_n$).

\item The third condition indicates that for infinitesimal algebras
$H_\beta(\lie{g},V)$ with $V_0 = 0$, one can always take $\Delta'$ to be
a singleton. (In particular, this also holds for semisimple Lie algebras
$\lie{g}$.) This is why the present paper discusses the ``Lie rank" of
non-strict based RTAs, but does not define the RTA rank for such
algebras.

\item The first of the three equivalent conditions is what is needed to
show that $A$ is an HRTA; the second is what typically comes as ``given
data" for $A$; and the third is needed to apply Ginzburg's Generalized
Duflo Theorem \cite{Gi}.

\item Note that the conditions in (HRTA2) are stated in terms of $B^\pm$,
unlike the first statement above. However, in the case of infinitesimal
Hecke algebras $\mathcal{H}_\beta(\lie{g},V)$, the spaces $M^\pm$ are
typically Lie algebras if $\beta = 0$, and $B^\pm$, which are the
subalgebras generated by $M^\pm$ inside $\mathcal{H}_\beta(\lie{g},V)$,
are deformations of $U(M^\pm) \subset \mathcal{H}_0(\lie{g},V)$. In
particular, given (RTA1), a suitable version of the PBW property yields
the regularity conditions inside (HRTA2).

\end{enumerate}
\end{remark}

\begin{proof}
We prove a series of cyclic implications.\medskip

\noindent $(1) \Rightarrow (2)$:
Since $\wt M$ is finite, choose a finite subset $\Delta_0 \subset
\Delta'$ such that $M = \bigoplus_{\mu \in \pm \Z \Delta_0} M_\mu$. Now
define $d := |\Delta_0|,\ \lie{h}_{00} := {\rm span}_{\F}(\Delta_0)$, and
$K := \lie{h}_{00}^\perp \subset \lie{h}_1^*$. Then (2) follows.\medskip

\noindent $(2) \Rightarrow (3)$:
Since $\Q$ is an infinite field and $0 \notin \overline{\wt(M)}$, choose
a hyperplane $K_1 \subset \Q^d \setminus \overline{\wt(M)}$, and consider
$0 \neq h_0 \in (K_1 + K)^\perp = (\overline{K_1})^\perp$.
Since these weights all lie in a $\Q$-vector space, there exists $c \in
\F^\times$ such that
\[ \alpha(h_0) \in \Q^\times \cdot c\ \forall \alpha \in \wt(M) \subset
\lie{h}_1^*. \]

\noindent Now rescale $h_0$ using that $\ch \F = 0$, to obtain $\delta$
such that $\alpha(\delta) \in \pm \N\ \forall \alpha \in \wt(M)$.\medskip

\noindent $(3) \Rightarrow (1)$:
Set $\lie{h}_0 = \F \cdot \delta$, $M^\pm := \bigoplus_{n \in \pm \N}
M_n$ with respect to $\ad \delta$, and $\alpha \in \lie{h}_0^*$ via:
$\alpha(\delta) = 1$. Now set $\Delta' := \{ \alpha \}$.
\end{proof}

\subsection{The general linear case}\label{Sgln}

We now show that all infinitesimal Hecke algebras of the form $\infgl$
are based Hopf RTAs. First recall the definition of these algebras from
\cite[Section 4.1.1]{EGG}:
Set $\lie{g} = \lie{gl}_n(\F)$ and $V= \F^n \oplus (\F^n)^*$. Identify
$\lie{g}$ with $\lie{g}^*$ via the trace pairing $\lie{g} \times \lie{g}
\to \F :\ (A,B) \mapsto \tr(AB)$, and identify $U \lie{g}$ with $\sym
\lie{g}$ via the symmetrization map.
Then for any $x \in (\F^n)^*,\ y \in \F^n,\ A \in \lie{g}$, one writes 
\[ (x, (1 - T A)^{-1} y) \det (1 - T A)^{-1} = r_0(x,y)(A) + r_1(x,y)(A)
T + r_2(x,y)(A) T^2 + \cdots \]

\noindent where $r_i(x,y)$ is a polynomial function on $\lie{g}$, for all
$i$. Now for each polynomial $\beta = \beta_0 + \beta_1 T + \beta_2 T^2 +
\dots \in \F[T]$, the authors define in \cite{EGG} the algebra $\infgl$
as a quotient of $T(\F^n \oplus (\F^n)^*) \rtimes U \lie{g}$ by the
relations
\[ [x,x'] = 0,\qquad [y,y'] = 0,\qquad [y,x] = \beta_0 r_0(x,y) + \beta_1
r_1(x,y) + \cdots \]

\noindent for all $x,x' \in (\F^n)^*$ and $y,y' \in \F^n$. It is proved
in \cite{EGG} that these algebras are infinitesimal Hecke algebras (so
the ``PBW property" holds). Also note that if $\beta \equiv 0$, then
$\mathcal{H}_0(\lie{gl}_n, \F^n \oplus (\F^n)^*) = U(\lie{gl}_n \ltimes
(\F^n \oplus (\F^n)^*))$.

The algebras $\infgl$ provide us with the first examples of RTAs for
which one needs to use a non-strict structure to analyze them.

\begin{prop}\label{Pinfgl}
If $\ch \F = 0$, then $A = \infgl$ is a based Hopf RTA with $B^+ =
\lie{n}^+ \oplus (\F^n)^*$. Moreover, $\infgl$ is not strict for any $n
\geq 2$ and polynomial $\beta$.
\end{prop}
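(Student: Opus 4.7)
The plan is to verify the Hopf RTA axioms directly, producing the natural ``rank-one'' non-strict, based structure on $A$. For (HRTA1), I would invoke the PBW property for infinitesimal Hecke algebras from \cite{EGG}, which gives a vector-space isomorphism $A \cong U(\lie{g}) \otimes_\F \sym(V)$. Splitting $\lie{g} = \lie{n}^- \oplus \lie{h} \oplus \lie{n}^+$ and $V = \F^n \oplus (\F^n)^* =: V^- \oplus V^+$, and noting that $\lie{n}^\pm$ stabilises $V^\pm$ under the $\lie{g}$-action while $[V^\pm,V^\pm]=0$, this refines to the triangular decomposition
\[ A \cong U(\lie{n}^- \oplus V^-) \otimes_\F U(\lie{h}) \otimes_\F U(\lie{n}^+ \oplus V^+), \]
with $H_1 := U(\lie{h})$ (a commutative, cocommutative Hopf algebra) as the middle factor and $B^\pm$ the subalgebras generated by $\lie{n}^\pm \oplus V^\mp$.

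Next, to establish (HRTA2), I would apply the implication $(3) \Rightarrow (1)$ of Proposition \ref{Pduflo} with $\delta := \mathrm{diag}(-1,-2,\ldots,-n) \in \lie{h}$. Every weight of $M := V \oplus \lie{n}^+ \oplus \lie{n}^-$ takes a nonzero integer value on $\delta$: $\vi_i(\delta) = -i$, $(-\vi_i)(\delta) = i$, and $(\vi_i - \vi_j)(\delta) = j - i$ all lie in $\Z \setminus \{0\}$; moreover, the strictly positive-weight generators are precisely those spanning $V^+ \oplus \lie{n}^+$ and the strictly negative-weight ones those of $V^- \oplus \lie{n}^-$. The proposition then yields $H_0 := \F[\delta] \subset H_1$, a singleton base $\Delta' = \{\alpha\}$ with $\alpha(\delta) = 1$, and $\calq^{'+}_0 = \nn \alpha$. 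Finite-dimensionality $\dim_\F B^+_{n\alpha} < \infty$ for every $n \in \N$ is then immediate from (HRTA1), since only finitely many PBW monomials in the finitely many generators of $B^+$ have a given $\mathrm{ad}\,\delta$-weight.

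The anti-involution $i$ for (RTA3) is obtained by combining the classical transpose $E_{jk} \mapsto E_{kj}$ on $U(\lie{gl}_n)$ (which fixes $\lie{h}$ pointwise) with the basis swap $y_k \leftrightarrow x_k$ of $\F^n$ and $(\F^n)^*$; this sends $B^\pm$ into $H_1 \cdot B^\mp$. Compatibility with the linear relations $[E_{jk}, y_l] = \delta_{kl} y_j$ and $[E_{jk}, x_l] = -\delta_{jl} x_k$ is routine, and the main obstacle is compatibility with the deformation relation $[y_i, x_j] = \sum_k \beta_k r_k(x_j, y_i)$, which (since $i$ is anti-multiplicative) reduces to the identity $i(r_k(x_j, y_i)) = r_k(x_i, y_j)$ in $U(\lie{g}) \cong \sym(\lie{g})$. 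Using the generating-series definition
\[ \bigl(x,(1-TA)^{-1}y\bigr)\det(1-TA)^{-1} = \sum_{k \geq 0} r_k(x,y)(A)\,T^k \]
from \cite{EGG}, this further reduces to the matrix-transpose identity $(x_j, (1 - TA^T)^{-1} y_i) = (y_i, (1 - TA)^{-1} x_j)$ (using $(M^T u, v) = (u, Mv)$ under the canonical pairing) together with the transpose-invariance of $\det(1-TA)$.

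Finally, the constructed structure is based because $|\Delta'| = 1$ is trivially $\Z$-linearly independent, and it is not strict because $\lie{h}_0 = \F\delta$ has dimension one whereas $\lie{h}$ has dimension $n \geq 2$, so that $H_0 = \F[\delta] \subsetneq U(\lie{h}) = H_1$ for every choice of $\beta$.
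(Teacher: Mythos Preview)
Your approach to the existence of the based Hopf RTA structure mirrors the paper's: invoke PBW from \cite{EGG} for (RTA1), apply Proposition~\ref{Pduflo} for (HRTA2), and construct the transpose-based anti-involution for (RTA3). Two minor points: there is a slip where you write ``$B^\pm$ the subalgebras generated by $\lie{n}^\pm \oplus V^\mp$'' immediately after displaying the decomposition with $\lie{n}^\pm \oplus V^\pm$ --- the latter is what you want, matching $B^+ = \lie{n}^+ \oplus (\F^n)^*$. Also, your anti-involution $y_k \leftrightarrow x_k$ differs from the paper's $v_i \leftrightarrow -v_i^*$ by a sign (both work), but your reduction of $i(r_k(x_j,y_i)) = r_k(x_i,y_j)$ to a matrix-transpose identity silently uses that the anti-involution $X \mapsto X^T$ on $U(\lie{g})$ corresponds, via the symmetrization map and the trace pairing $\lie{g} \cong \lie{g}^*$, to precomposition by transpose on polynomial functions; this is exactly the content of Proposition~\ref{Psymm}, which the paper invokes explicitly.

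The substantive gap is in the non-strictness claim. You only observe that the \emph{particular} structure you built has $H_0 = \F[\delta] \subsetneq U(\lie{h}) = H_1$. This does not rule out some other choice of $(H_0,\calq^{'+}_0)$ with $H_0 = H_1$ and the same $B^\pm, H_1, i$. The paper argues instead that the structure is \emph{necessarily} not strict: assuming $H_0 = H_1$, the $\lie{h}$-weights of $\F^n$ (or $(\F^n)^*$) would all lie on one side of $\calq^{'+}_1$, and the paper derives an obstruction from the fact that the sum of those weights is Weyl-invariant for $\lie{sl}_n$ (hence vanishes on the $\lie{sl}_n$-Cartan) while the central element $\mathrm{diag}(1,\dots,1)$ acts with a fixed sign on each of $\F^n$ and $(\F^n)^*$. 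Your version does not engage with this ``necessarily'' at all; you should either reproduce the paper's Weyl-invariance obstruction or supply an independent argument that no choice $H_0 = H_1$ is compatible with the axioms.
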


\begin{proof}
We first make the necessary identifications: set $\lie{h}_1$ to be the
Cartan subalgebra of $\lie{gl}_n$, $B^+$ as above, and $B^- := \lie{n}^-
\oplus (\F^n)$. Then this algebra satisfies (RTA1) by \cite{EGG}, where
$B^\pm \cong U(\lie{n}^+ \ltimes V), U(\lie{n}^- \ltimes V^*)$
respectively. Moreover, the verification of (HRTA2) is the same as what
is done in proving Proposition \ref{Pduflo}. In particular, $H_0 =
U(\lie{h}_0)$ can be chosen with $\lie{h}_0 = \F \cdot \delta$
one-dimensional. We now claim that for $n>1$, the Hopf RTA structure is
necessarily not strict. This is because if all of $\F^n$ is ``positive"
(i.e., with $H_1$-roots in $\calq_1^+$), then so is the sum of the
$\lie{h}_1$-weights in it. But this sum is over an integrable
$\lie{sl}_n$-module, hence $W$-invariant, hence has zero projection when
restricted to the Cartan subalgebra of $\lie{sl}_n$, while the eigenvalue
with respect to the central element ${\rm diag}(1, \dots, 1)$ is constant
on all of $\F^n \oplus (\F^n)^*$. This contradicts the RTA axioms.

To conclude the proof, we now present a map from \cite{KT}, which we
\textbf{claim} is an anti-involution satisfying (RTA3) for general
$n,\beta$: $j$ takes $X \in \lie{gl}_n$ to $X^T$, and $v_i
\leftrightarrow -v_i^*\ \forall i$.
To show the claim, first observe that $j$ is an anti-involution on
$\lie{gl}_n$. 
Next, $[e_{ij}, v_k] = \delta_{jk} v_i$ and $[e_{ji}, v_k^*] =
-\delta_{jk} v_i^*$ are clearly interchanged by $j$, so these relations
are also preserved.
Third, $[v_1, v_2] \equiv [v_1^*, v_2^*] \equiv 0$ are also $j$-stable
relations (for $v_i \in \F^n,\ v_i^* \in (\F^n)^*$).

It remains to consider the relations: $[v_l, v_k^*] = \sum_{i \geq 0}
\beta_i r_i(v_k^*, v_l)$. Note that each $r_i(v^*,v)$ is in $U \lie{g}$ -
and at the same time, identified with a function $r_i(v^*,v)(-) : \lie{g}
\to \F$, via the symmetrization map.
Now first analyze the left side: $j([v_l, v_k^*]) = [v_k, v_l^*] =
\sum_{i \geq 0} \beta_i r_i(v_l^*, v_k)$. Recall how the $r_k$ were
defined. Treating $v \in \lie{h}$ and $v^* \in \lie{h}^*$ as column and
row vectors respectively, the inner product $(v^*, Av)$ is merely matrix
multiplication $v^* A v$. Thus, we compute (inside our algebra):
\begin{align*}
\sum_{i \geq 0} r_i(v_l^*, v_k)(A) T^i
= &\ v_l^T (1 - TA)^{-1} v_k \cdot \det(1 - TA)^{-1}
= v_k^T (1 - T A^T)^{-1} v_l \cdot \det(1 - TA^T)^{-1}\\
= &\ (v_k^*, (1 - TA^T)^{-1} v_l) \det(1 - TA^T)^{-1}
= \sum_{i \geq 0} r_i(v_k^*, v_l)(A^T) T^i.
\end{align*}

\noindent Finally, use Proposition \ref{Psymm} below to show that
$j(r_i(v_k^*, v_l)(A)) = r_i(v_k^*, v_l)(A^T)$ for all $i,k,l$. Then
using the above computation of power series equality, 
\[ j \left( \sum_{i \geq 0} \beta_i r_i(v_k^*, v_l)(A) \right) = \sum_{i
\geq 0} \beta_i r_i(v_k^*, v_l)(A^T) = \sum_{i \geq 0} \beta_i r_i(v_l^*,
v_k)(A) = [v_k, v_l^*] = j([v_l, v_k^*]), \]

\noindent which shows that $j$ does indeed preserve these last relations.
\end{proof}

\begin{remark}
The based HRTA structure in Proposition \ref{Pinfgl} is not unique. For
instance, one checks that taking $\delta$ to be the matrix ${\rm
diag}(2n-1, 2n-5, \dots, 3-2n)$ works for $\infgl$ for all $n$ and all
linear $\beta = \beta_0 + \beta_1 T$.
\end{remark}

Higher rank continuous and infinitesimal Hecke algebras continue to be
the focus of much recent and ongoing research -- see
e.g.~\cite{DT,tika4,tika1,tsy} for more results and references. In
particular, Category $\calo$ has been defined and studied over $A =
\infgl$ for all $\beta$. Using Proposition \ref{Pinfgl} and the theory
developed in Section \ref{Sverma}, we now claim:

\begin{theorem}
Suppose $\F$ is algebraically closed of characteristic zero. For all
$n,\beta$, the category $\calo = \calo[\hofree]$ over $\infgl$ splits
into a direct sum of highest weight categories.
\end{theorem}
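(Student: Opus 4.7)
The plan is to reduce the claim to Theorem \ref{Tfirst}(3) via verification of Condition~(S3) on $A := \infgl$. First I would package the structural inputs provided by Proposition \ref{Pinfgl}: the algebra $A$ is a based Hopf RTA with Cartan $H_1 = U(\lie{h}_1)$ the usual Cartan of $\lie{gl}_n$, and subalgebra $H_0 = U(\F \delta)$ one-dimensional (as in Proposition \ref{Pduflo}). Since $A$ is a Hopf RTA, Remark \ref{Rfree} gives $\hofree = \widehat{H_1} = \lie{h}_1^*$, so we lose no weights by restricting to $\calo[\hofree]$. Moreover, with $\Delta = \{\alpha\}$ and $\alpha(\delta) = 1$, we have $\calq^+_0 = \nn\alpha$ order-isomorphic to $\nn$, so every interval $[0, n\alpha]$ is finite and $A$ is discretely graded in the sense required by Theorem \ref{Tfirst}.

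Next I would verify Condition (S4), which by Lemma \ref{Lcs} implies (S3) and indeed $\overline{S^2}(A) = \hofree$. The algebra $Z(A) = Z(\infgl)$ has been studied in \cite{DT,tika4,tika1,tsy}: up to a polynomial-type ``$\rho$-shift'', the Harish-Chandra projection $\xi : Z(A) \to H_1$ (see Proposition \ref{Pbasic}(4)) identifies $Z(A)$ with $\sym(\lie{h}_1)^W$, where $W = S_n$ is the Weyl group of $\lie{gl}_n$ acting on $\lie{h}_1$. Since $\F$ is algebraically closed of characteristic zero, we may apply the Nagata--Mumford theorem exactly as in Step~4 of the proof of Theorem \ref{Tuqg} to conclude that for any $\lambda \in \lie{h}_1^*$ the fiber
\[
S^4(\lambda) = \{ \mu \in \widehat{H_1} : \chi_\mu = \chi_\lambda \text{ on } Z(A) \}
\]
lies in a single (shifted) $W$-orbit of $\lambda$, and is therefore finite. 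Hence Condition (S4) holds on all of $\hofree$, and in particular so does Condition (S3).

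Finally, Theorem \ref{Tfirst}(3) applied with $m=3$ yields that for every $\lambda \in \hofree$ the block $\calo[S^3(\lambda)]$ is abelian, self-dual, has enough projectives filtered by Vermas, and is equivalent to finitely generated modules over a finite-dimensional quasi-hereditary $\F$-algebra; that is, each block is a highest weight category. Combined with the block decomposition formula at the end of Theorem \ref{Tfirst} (and using $\hofree = \widehat{H_1}$), this gives
\[
\calo \;=\; \calo[\hofree] \;=\; \bigoplus_{\lambda \in \hofree / S^3} \calo[S^3(\lambda)],
\]
as claimed.

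The main obstacle is Step~2, the verification of Condition (S4) via the center. The general RTA machinery reduces the theorem to this single input, but obtaining the requisite Harish-Chandra-type description of $Z(\infgl)$ for arbitrary deformation parameter $\beta$ is nontrivial and genuinely relies on the cited literature; if the reader is uncomfortable with the full (S4) claim, the same conclusion can instead be extracted from (S3) alone by a direct Shapovalov-style analysis showing $S^3(\lambda)$ is finite for every $\lambda$, but this is a longer route whose payoff is no stronger than the category-theoretic conclusion above.
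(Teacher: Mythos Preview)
Your proposal is correct and follows essentially the same approach as the paper: establish the based Hopf RTA structure via Proposition~\ref{Pinfgl}, invoke Tikaradze's computation of the center \cite{tika4} to obtain Condition~(S4), and then apply Theorem~\ref{Tfirst}. The paper's own proof is a single sentence citing \cite{tika4} for (S4); your version simply fleshes out the intermediate verifications (discretely graded, $\hofree=\widehat{H_1}$, the Nagata--Mumford step analogous to Theorem~\ref{Tuqg}) that the paper leaves implicit.
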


\noindent This is because in \cite{tika4}, Tikaradze computed the center
of this algebra, and showed that it satisfies Condition (S4).

\subsection{The symplectic case}

These algebras are generated by $\lie{g} = \lie{sp}_{2n}(\F)$ and its
natural representation, $V = \F^{2n}$. The bases for these that we use
are $e_i, e_{i+n}$ for $\F^{2n}$ with $1 \leq i \leq n$, and
\[ u_{jk} := e_{jk} - e_{k+n,j+n}, \quad v_{jk} := e_{j,k+n} + e_{k,j+n},
\quad w_{jk} := e_{j+n,k} + e_{k+n,j}, \qquad 1 \leq j, k \leq n. \]

\noindent As discussed in \cite{KT}, given a scalar parameter $\beta_0$,
the algebras $\mathcal{H}_{\beta_0}(\lie{sp}_{2n}, \F^{2n})$ are
generated by $\lie{sp}_{2n} \oplus V$, modulo the usual Lie algebra
relations for $\lie{g} = \lie{sp}_{2n}$, the ``semidirect product"
relations $[X,v] = X(v)$ for all $X \in \lie{g}, v \in V$, and the
relations $[e_i, e_j] = \beta_0 \delta_{|i-j|, n} (i-j)/n$.

\begin{prop}\label{Psymp}
The algebras $\mathcal{H}_{\beta_0}(\lie{sp}_{2n}, \F^{2n})$ are based
Hopf RTAs (assuming $\ch \F = 0$).
\end{prop}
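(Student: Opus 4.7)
The plan is to verify the three HRTA axioms for $A := \mathcal{H}_{\beta_0}(\lie{sp}_{2n}, \F^{2n})$ with the following choice of data. Let $\lie{h}$ denote the diagonal Cartan of $\lie{sp}_{2n}$, spanned by $h_i := e_{ii} - e_{i+n,i+n}$, and set $H_1 := U(\lie{h})$. Take $V^+ := \mathrm{span}(e_1,\ldots,e_n)$ and $V^- := \mathrm{span}(e_{n+1},\ldots,e_{2n})$, and let $B^\pm$ be the subalgebras of $A$ generated by $\lie{n}^\pm \cup V^\pm$. For the based structure, set $\delta := \sum_{i=1}^n (n+1-i)h_i$, $H_0 := U(\F\delta)$, and $\Delta' := \{\alpha\}$ with $\alpha(\delta) = 1$.

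First I would verify (RTA1) by appealing to the PBW theorem for infinitesimal Hecke algebras proved in \cite{EGG}: it identifies the associated graded of $A$ (under the filtration by total $V$-degree) with $U(\lie{sp}_{2n} \ltimes V)$. The classical triangular decomposition of the latter as $U(\lie{n}^- \ltimes V^-) \otimes U(\lie{h}) \otimes U(\lie{n}^+ \ltimes V^+)$ then lifts to show the multiplication map $B^- \otimes H_1 \otimes B^+ \to A$ is a vector space isomorphism. For (HRTA2), I would apply Proposition \ref{Pduflo} to $M := V \oplus \lie{n}^+ \oplus \lie{n}^-$: since $\epsilon_i(\delta) = n+1-i > 0$, every weight of $V^+$ and every positive root $\epsilon_i \pm \epsilon_j$, $2\epsilon_i$ of $\lie{sp}_{2n}$ pairs to a positive integer with $\delta$ (and their negatives to negative integers), which verifies the third condition of that proposition and simultaneously exhibits the based structure.

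The main obstacle is the construction of the anti-involution $i$ required by (RTA3); this is where the $\lie{sp}_{2n}$ case departs essentially from the $\lie{gl}_n$ argument in Proposition \ref{Pinfgl}. I intend to define $i$ on $\lie{sp}_{2n}$ by the twisted transpose $i(X) := g X^T g$ with $g := \mathrm{diag}(I_n, -I_n)$, and on $V$ by the swap $i(e_k) := e_{k+n}$, $i(e_{k+n}) := e_k$ for $1 \le k \le n$. The sign twist by $g$ is essential: combining naive transpose on $\lie{sp}_{2n}$ with a scaled swap $e_k \mapsto a\, e_{k+n}$ would force $a^2 = -1$ (an obstruction reminiscent of the Heisenberg/Weyl algebra), so the twist absorbs precisely this sign and allows $a = 1$ to work over any $\F$. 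Checking that $g X^T g$ preserves $\lie{sp}_{2n}$ reduces to the identity $J X^T + X J = 0$, which follows from $X^T = J X J$ and $J^2 = -I$.

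Once the maps are in place, I would verify that $i$ respects the three families of defining relations: (a) the internal Lie relations of $\lie{sp}_{2n}$, which is automatic since $X \mapsto g X^T g$ is an order-two anti-automorphism of $\lie{gl}_{2n}$ fixing $\lie{h}$ pointwise and preserving $\lie{sp}_{2n}$; (b) the Heisenberg-type relations $[e_i, e_j] = \beta_0 \delta_{|i-j|, n}(i-j)/n$, which are preserved because the swap on $V$ is symmetric while the scalar $(i-j)/n$ is antisymmetric, matching the sign flip in $i([e_i, e_j]) = [i(e_j), i(e_i)]$; and (c) the semidirect product relations $[X, v] = X(v)$, for which one checks $i(X(v)) = -i(X)\bigl(i(v)\bigr)$ for each of the standard root-vector generators $u_{jk}, v_{jk}, w_{jk}, h_i$ paired with each of $V^\pm$. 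These last checks are elementary matrix computations in which the sign coming from $i(v_{jk}) = -w_{jk}$ and $i(w_{jk}) = -v_{jk}$ (a consequence of the twist by $g$) cancels the sign from reversing the commutator, leaving the identity consistent with $a = 1$.
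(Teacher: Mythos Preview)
Your proposal is correct and follows essentially the same approach as the paper: the same $\delta$ (the paper calls it $h_0$), the same $V^\pm$ and $B^\pm$, the same appeal to \cite{EGG} for (RTA1) and to Proposition~\ref{Pduflo} for (HRTA2), and the same anti-involution---your formula $i(X)=gX^Tg$ with $g=\mathrm{diag}(I_n,-I_n)$ is exactly the map $j$ the paper quotes from \cite{KT} (and later rewrites as $\tau C^T\tau$ in Proposition~\ref{Psp}). The only difference is presentational: the paper simply cites \cite{KT} for the verification that $j$ respects the defining relations, whereas you sketch the check directly.
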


There are other based Hopf RTAs of ``symplectic" type -- e.g., (Lie) rank
one infinitesimal Hecke algebras $\mathcal{H}_\beta(\lie{sl}_2, \F^2)$
for any $\beta$, which were discussed in Example \ref{Einfhecke1} above.
Moreover, for all $n$ and ``all possible" $\beta$, we show below that
$H_\beta(\lie{sp}_{2n}, \F^{2n})$ always has an anti-involution as in
(RTA3).

\begin{proof}
Define $h_0 := {\rm diag}(n, n-1, \dots, 1, -n, -(n-1), \dots, -1)$, and
consider the standard triangular decomposition $\lie{g} = \lie{n}^-
\oplus \lie{h}_1 \oplus \lie{n}^+$. Then $\lie{g} \oplus V$ has a basis
of eigenvectors for $\lie{h}_1$, and in particular, for $h_0$ (with
eigenvalues in $\Z$). Write $\lie{g} \oplus V = \lie{n}^{'-} \oplus
\lie{h}' \oplus \lie{n}^{'+}$, a decomposition into spans of eigenvectors
with negative, zero, and positive eigenvalues respectively.
Then $\lie{h}'$ is indeed the Cartan subalgebra $\lie{h}_1 \subset
\lie{g}$, and $\lie{n}^{'\pm} = \lie{n}^\pm \oplus V^\pm$ are Lie
subalgebras in $H_\beta$, where $V^\pm$ are the spans of $\{ e_1, \dots,
e_n \}$ and $\{ e_{n+1}, \dots, e_{2n} \}$ respectively.

Next, define $\lie{h}_0 := \F h_0,\ H_r := \sym \lie{h}_r$ for $r=0,1$,
and $B^\pm := U (\lie{n}^{'\pm})$.
Then $\mathcal{H}_{\beta_0}(\lie{sp}_{2n}, \F^{2n})$ has the required
triangular decomposition by \cite{EGG}, and $H_1$ is a commutative Hopf
algebra with sub-Hopf algebra $H_0$.
Moreover, $\widehat{H_1} = \hofree = \lie{h}_1^*$ surjects onto
$\widehat{H_0} = \lie{h}_0^* \cong \F$. Define $\calq'{}^+_0 := \nn
\Delta' := \nn \{ \alpha \}$, where $\alpha(h_0) = 1$. Then $\Z \Delta'$
is generated by the $\ad h_0$-weights of $\lie{g} \oplus V$. The
remaining part of (HRTA2) is shown as for RTLAs. Finally, that there
exists an anti-involution was shown in \cite{KT}:
\begin{equation}\label{Einv}
j : u_{kl} \leftrightarrow u_{lk},\ v_{kl} \leftrightarrow -w_{lk},\ e_i
\leftrightarrow e_{i+n}.
\end{equation}
\end{proof}

\subsection{The symmetrization map and anti-involutions}

We end this section by studying anti-involutions in infinitesimal Hecke
algebras. The first result is that all algebras $\infsp$ possess an
anti-involution as in (RTA3), which generalizes a part of Proposition
\ref{Psymp}. To see why, we first define these algebras for general
$n,\beta$ as in \cite{EGG}.
Denote by $\omega$ the symplectic form on $V = \F^{2n}$; one then
identifies $\lie{g} = \lie{sp}_{2n}(\F)$ with $\lie{g}^*$ via the pairing
$\lie{g} \times \lie{g} \to \F,\ (A,B) \mapsto \tr(AB)$, and $\sym
\lie{g}$ with $U \lie{g}$ via the symmetrization map. Write
\[ \omega(x, (1 - T^2 A^2)^{-1} y) \det(1 - T A)^{-1} = l_0(x,y)(A) +
l_2(x,y)(A) T^2 + \cdots \]

\noindent where $x,y \in V, A \in \lie{g}$, and $l_i(x,y) \in \sym
\lie{g} \cong U \lie{g}$ is a polynomial in $\lie{g}$ for all $i$.
For each polynomial $\beta = \beta_0 + \beta_2 T^2 + \dots \in \F[T]$,
the algebra $\infsp$ is the quotient of $TV \rtimes U \lie{g}$ by the
relations
\[ [x,y]=\beta _0 l_0(x,y) + \beta _2 l_2(x,y) + \cdots \]

\noindent for all $x,y \in V$. We now show:

\begin{prop}\label{Psp}
For all $n,\beta$, the map $j : \infsp \to \infsp$ defined in Equation
\eqref{Einv} is an anti-involution that fixes $H_1 = \sym \lie{h}_1$ (the
Cartan subalgebra of $U(\lie{g})$). Moreover, the conditions of
Proposition \ref{Pduflo} are satisfied.
\end{prop}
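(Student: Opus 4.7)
The plan is to verify the three assertions in turn: that $j$ is an anti-involution of $\infsp$, that $j|_{H_1} = \id$, and that the conditions of Proposition \ref{Pduflo} hold. The second and third are almost immediate. For $j|_{H_1} = \id$, the Cartan subalgebra $H_1 = \sym \lie{h}_1$ admits an $\F$-basis of monomials in $u_{11}, \dots, u_{nn}$, and each $u_{ii} = e_{ii} - e_{i+n,i+n}$ is fixed by $j$ from \eqref{Einv}. For the Duflo conditions, the element $\delta := h_0 = \text{diag}(n, n-1, \ldots, 1, -n, -(n-1), \ldots, -1)$ used in the proof of Proposition \ref{Psymp} verifies condition (3) of Proposition \ref{Pduflo}: every weight of $\ad \delta$ on $\lie{sp}_{2n} \oplus V$ is a nonzero integer, independently of $\beta$.

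The substance of the proof lies in verifying that $j$ is an anti-involution. I would first extend $j$ from the generators to an $\F$-linear anti-automorphism of the tensor algebra on $\lie{sp}_{2n} \oplus V$, and then verify that it descends to $\infsp$ by checking each family of defining relations: the Lie bracket relations inside $\lie{sp}_{2n}$; the semidirect-product relations $[X,v] = X(v)$ for $X \in \lie{sp}_{2n}$, $v \in V$; and the deformation relations $[x,y] = \sum_{k \geq 0} \beta_{2k} l_{2k}(x,y)$ for $x,y \in V$. The key unifying observation, which handles the first two families, is that $j$ acts on $\lie{sp}_{2n}$ by $X \mapsto -PXP$, where $P$ is the swap matrix on $V = \F^{2n}$ exchanging $e_i$ with $e_{i+n}$; this is checked on the three bases $\{u_{kl}\}, \{v_{kl}\}, \{w_{kl}\}$. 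Since $P^2 = I$, this formula defines a Lie-algebra anti-involution of $\lie{sp}_{2n}$, and the identity $j(Xv) = P(Xv) = -j(X) \cdot j(v)$ verifies the semidirect relations.

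The hard part will be checking the deformation relations. Since $j$ is to be an anti-involution with $j(e_i) = e_{i+n}$, applying $j$ to $[e_i, e_j] = \sum_k \beta_{2k} l_{2k}(e_i, e_j)$ produces $[e_{j+n}, e_{i+n}] = \sum_k \beta_{2k} j(l_{2k}(e_i, e_j))$, whose left side unfolds via the defining relations of $\infsp$ to $\sum_k \beta_{2k} l_{2k}(e_{j+n}, e_{i+n})$. So the task reduces to the identity $l_{2k}(e_{j+n}, e_{i+n})(A) = j(l_{2k}(e_i, e_j)(A))$ in $U(\lie{sp}_{2n})$ for every $A \in \lie{sp}_{2n}$ and every $k \geq 0$. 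Invoking Proposition \ref{Psymm} (the symplectic analogue of the symmetrization-versus-involution compatibility used in the proof of Proposition \ref{Pinfgl}), the right-hand side rewrites as $l_{2k}(e_i, e_j)(j_\lie{g}(A))$ with $j_\lie{g}(A) = -PAP$, and the whole identity collapses to the generating-function equality
\[ \omega\bigl(e_{j+n},\, (1 - T^2 A^2)^{-1} e_{i+n}\bigr) \det(1 - TA)^{-1} = \omega\bigl(e_i,\, (1 - T^2 j_\lie{g}(A)^2)^{-1} e_j\bigr) \det(1 - T j_\lie{g}(A))^{-1}. \]

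This power-series identity is where the special structure of $\lie{sp}_{2n}$ enters, and is the main obstacle. For the determinants, since the eigenvalues of $A \in \lie{sp}_{2n}$ come in pairs $\pm \lambda$ (consequence of $A^T = -JAJ^{-1}$), one has $\det(1 - TA) = \det(1 + TA)$; conjugating by $P$ then converts the right side to $\det(1 - T j_\lie{g}(A))$. For the $\omega$-factor, I would combine three elementary identities: $\omega(Pu, v) = -\omega(u, Pv)$ (from $PJP = -J$, where $J$ is the Gram matrix of $\omega$), $P A^m P = (PAP)^m$ (from $P^2 = I$), and $\omega(x, A^{2m} y) = \omega(A^{2m} x, y)$ for $A \in \lie{sp}_{2n}$ (even powers ``commute past'' $\omega$). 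Applying these in sequence transforms $\omega(e_{j+n}, (1 - T^2 A^2)^{-1} e_{i+n})$ into $-\omega(e_j, (1 - T^2 j_\lie{g}(A)^2)^{-1} e_i) = \omega(e_i, (1 - T^2 j_\lie{g}(A)^2)^{-1} e_j)$, completing the deformation-relation check and hence the proof that $j$ is an anti-involution.
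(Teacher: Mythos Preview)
Your proof is correct and follows the same overall architecture as the paper's: verify $j$ on $\lie{sp}_{2n}$, on the semidirect relations, and then on the deformation relations by reducing (via Proposition \ref{Psymm}) to a generating-function identity that splits into a determinant piece and an $\omega$-piece. The one genuine difference is your choice of formula for $j$ on $\lie{sp}_{2n}$. You use $j(X) = -PXP$ with $P$ the swap matrix; the paper instead extends $j$ to all of $\lie{gl}_{2n}$ via $j(C) = \tau C^T \tau$ with $\tau = \mathrm{diag}(I_n,-I_n)$. These agree on $\lie{sp}_{2n}$ (since $C^T = -JCJ^{-1}$ there and $\tau J = P$), but the paper's version buys two simplifications: the determinant step becomes the trivial $\det(1-TA) = \det(1-TA^T)$ rather than your appeal to the $\pm\lambda$ eigenvalue pairing, and the three identities you chain together for the $\omega$-factor collapse into the single packaged statement $\omega(x,Cy) = \omega(j(y), j(C)j(x))$ valid for arbitrary $C \in \lie{gl}_{2n}$. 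Your route is slightly more hands-on and leans more on symplectic-specific facts, but it is equally valid.
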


\begin{proof}
The first step is to show the following facts via straightforward
computations:
\begin{enumerate}
\item The map $j$ on $\lie{sp}_{2n}$ can be extended to all of
$\lie{gl}_{2n}$, via: $j(C) = \tau C^T \tau$ -- where $\displaystyle \tau
= \tau^{-1} = \begin{pmatrix} {\rm Id}_n & 0\\0 & -{\rm Id}_n
\end{pmatrix} \in GL(2n)$.

\item One has $\omega(x,Cy) = \omega(j(y), j(C) j(x))$, for all $x,y \in
\F^{2n},\ C \in \lie{gl}_{2n}$ (using $j$ as in the previous part).

\item $j \left( (1 - T^2 A^2)^{-1} \right) = (1 - T^2 j(A)^2)^{-1}$.
\end{enumerate}

Now note that the conditions of Proposition \ref{Pduflo} hold here, if
one defines $\delta := h_0$, the special element from the proof of
Proposition \ref{Psymp}. As for the proposed anti-involution, it is not
hard to check that $j$ is an anti-involution on $\lie{sp}_{2n}$, which
preserves the relations $[X,v] = X(v)$ for $X \in \lie{sp}_{2n}$ and $v
\in \F^{2n}$. We are left to consider the relations $[x,y]$. Now compute
using the above facts:
\begin{align*}
&\ \sum_{i \geq 0} l_{2i}(x,y)(A) T^{2i} = \omega(x, (1 - T^2 A^2)^{-1}y)
\det (1 - TA)^{-1}\\
= &\ \omega(j(y), (1 - T^2 j(A)^2)^{-1} j(x)) \det (1 - T j(A))^{-1}
= \sum_{i \geq 0} l_{2i}(j(y),j(x))(j(A)) T^{2i},
\end{align*}

\noindent where the second equality is not hard to show. In particular,
replacing $A$ by $j(A)$ and equating coefficients of $T$, it follows that
\begin{equation}\label{Esp}
l_{2i}(x,y)(j(A)) = l_{2i}(j(y),j(x))(A) \quad \forall x,y \in \F^{2n},\
i \geq 0.
\end{equation}

\noindent Now compute:
\[ j([x,y]) = j \left( \sum \beta_i l_{2i}(x,y)(A) \right)
= \sum \beta_i l_{2i}(x,y)(j(A)) = \sum_i \beta_i l_{2i}(j(y),j(x))(A) =
[j(y),j(x)], \]

\noindent where the first and last equalities are by definition, the
second uses Proposition \ref{Psymm} below (via the trace form), and the
third follows from Equation \eqref{Esp}.
\end{proof}

We finally mention a result that was used in proving that every
infinitesimal Hecke algebra over $\lie{gl}_n$ has an anti-involution
that is required to make it a (based) Hopf RTA.

\begin{prop}\label{Psymm}
Suppose $\lie{g}$ is any Lie algebra, and we identify $\sym \lie{g}$ with
$U \lie{g}$ via the {\em symmetrization map}
\[ \symm : X_1 \dots X_n \mapsto \frac{1}{n!} \sum_{\sigma \in S_n}
X_{\sigma(1)} \dots X_{\sigma(n)}. \]

\noindent Suppose $j$ is a Lie algebra anti-involution of $\lie{g}$. Then
the automorphism $j$ of $\sym \lie{g}$ is transferred to $U \lie{g}$ via
$\symm$.
\end{prop}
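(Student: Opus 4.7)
The plan is to verify the identity $\symm \circ j = j \circ \symm$ directly on monomials, after first making precise the two meanings of $j$ on the left- and right-hand sides. On the commutative algebra $\sym\lie{g}$, the anti-involution $j|_{\lie{g}}$ extends (uniquely) to an algebra automorphism since reversal of order is invisible in a commutative product; call this $j^{\sym}$. On $U\lie{g}$, the map $j|_{\lie{g}} \colon \lie{g} \to \lie{g}^{\mathrm{op}}$ is a Lie algebra homomorphism (since $j([X,Y]) = [j(Y),j(X)] = -[j(X),j(Y)]$), so by the universal property it extends to an algebra map $U\lie{g} \to U(\lie{g}^{\mathrm{op}}) = U(\lie{g})^{\mathrm{op}}$, i.e.~to an anti-involution $j^{U}$. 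The claim is then $j^U \circ \symm = \symm \circ j^{\sym}$.

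Next I would compute both sides on an arbitrary monomial $X_1 \cdots X_n \in \sym\lie{g}$ with $X_i \in \lie{g}$. On one side,
\[
\symm(j^{\sym}(X_1 \cdots X_n)) = \symm(j(X_1) \cdots j(X_n)) = \frac{1}{n!}\sum_{\sigma \in S_n} j(X_{\sigma(1)}) \cdots j(X_{\sigma(n)}).
\]
On the other side, since $j^U$ is an anti-involution,
\[
j^U(\symm(X_1 \cdots X_n)) = \frac{1}{n!}\sum_{\sigma \in S_n} j(X_{\sigma(n)}) \cdots j(X_{\sigma(1)}).
\]
The two expressions agree after the change of variable $\sigma \mapsto \sigma \circ w_0$, where $w_0 \in S_n$ is the longest element $i \mapsto n+1-i$: this reindexing is a bijection of $S_n$ with itself and converts the reversed product into the forward product. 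Hence the two sides coincide on every monomial, and by linearity on all of $\sym\lie{g}$.

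There is essentially no obstacle beyond bookkeeping: the only subtle point is making sure the two extensions of $j$ are the correct ones (automorphism on the symmetric algebra, anti-involution on the enveloping algebra), after which the rest is the elementary observation that $S_n$ is stable under right multiplication by $w_0$. Since $\symm$ is a linear (not algebra) isomorphism, one does not need to worry about compatibility with products on $\sym\lie{g}$ versus $U\lie{g}$ separately; the computation above on PBW-type monomials suffices, and extends by linearity to all of $\sym\lie{g}$ since such monomials span.
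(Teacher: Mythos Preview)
Your proof is correct and is essentially a fully spelled-out version of the paper's one-line argument, which simply asserts that the symmetrization map commutes with $j$ (and that both composites are vector-space isomorphisms). Your explicit computation via the substitution $\sigma \mapsto \sigma \circ w_0$ is exactly what underlies that assertion.
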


\begin{proof}
Observe that the symmetrization map commutes with $j$, and both composite
maps are $\F$-vector space isomorphisms.
\end{proof}

Applying this result to infinitesimal Hecke algebras over $(\lie{gl}_n,
\F^n \oplus (\F^n)^*)$ in the proof of Proposition \ref{Pinfgl}, we get
(via a further identification of $\lie{g} \leftrightarrow \lie{g}^*$ by
the trace form):
\[ r_i(v, v^*)(A^T) = r_i(v, v^*)(j(A)) = j(r_i(v, v^*)(A)), \]

\noindent as desired. A similar application yields the anti-involution
mentioned above for infinitesimal Hecke algebras over $(\lie{sp}_{2n},
\F^{2n})$.

\subsection*{Concluding example}

Recall that the construction in Section \ref{Sexample} provided a setting
that could not be studied using previous theories of $\calo$, because the
``root lattice" $\calq^+_0$ is not abelian.
Using the above results on $\calo$ for general RTAs, as well as
the examples studied above, we now present a second example of a regular
triangular algebra, whose study requires the full generality of our
axiomatic framework and not a more specialized setting. The following
example, combined with the Existence Theorems in Section
\ref{Snonabelian}, reinforces the viewpoint that our theory is not merely
abstract, but is required in its totality in applications to specific
regular triangular algebras.

\begin{exam}[A non-strict, non-Hopf, RTA]\label{Efull}
Suppose $q \in \C^\times$ is not a root of unity, $\beta \in \C[T]$, and
$z$ is a nonzero polynomial in the quantum Casimir in $U_q(\lie{sl}_2)$.
Also suppose $X$ is a topological space with the algebra of continuous
functions $C(X,\R)$ not a Hopf algebra, and $T$ is a homeomorphism of $X$
of infinite order. Now define
\begin{equation}\label{Efullrta}
A := U'_q(\lie{sl}_2) \otimes \mathcal{H}_{z,q} \otimes \left( \C
\otimes_\R \cals(C(X,\R), T^*, 1, 1) \right) \otimes
\mathcal{H}_\beta(\lie{gl}_n, \C^n \oplus (\C^n)^*),
\end{equation}

\noindent where the individual tensor factors were studied in Examples
\ref{EJZ}, \ref{Einfhecke1}, \ref{Emirza}, and Section \ref{Sgln}
respectively. We now claim that $A$ is an RTA satisfying BGG Reciprocity,
and that the study of Category $\calo$ over $A$ requires the full scope
of our general framework and no less.

\begin{theorem}\label{Tfullrta}
The algebra $A$ defined in \eqref{Efullrta} has the following properties:
\begin{enumerate}
\item $A$ is a based Regular Triangular Algebra but not a strict one.

\item Neither of the algebras $H_1 \supsetneq H_0$ is a Hopf algebra, so
$A$ is not an HRTA.

\item The simple roots $\Delta$ are not weights for $H_0$.

\item $\calo[\hofree] \subsetneq \calo$, because $\hofree \subsetneq
\widehat{H_1}$.

\item Condition (S4) is not satisfied because the center is not ``large
enough". Thus, central characters cannot be used to obtain a block
decomposition of $\calo[\hofree]$ into blocks with finitely many simple
objects.
\end{enumerate}

\noindent Nevertheless, the algebra $A$ satisfies Condition (S3). Hence
Theorem \ref{Tfirst} holds and $\calo[\hofree]$ decomposes into a direct
sum of finite length, self-dual blocks. Each block has finitely many
simples and enough projectives/injectives, and is a highest weight
category satisfying BGG Reciprocity.
\end{theorem}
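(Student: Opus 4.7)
The plan is to use the functoriality of the framework (Theorem \ref{Tfunct}) to reduce every claim about $A$ to claims about its four tensor factors, and then invoke Theorem \ref{Tfirst} to obtain the homological conclusions from Condition (S3). First I would verify that each of the four factors is individually a based RTA, as established in Example \ref{EJZ} (for $U'_q(\lie{sl}_2)$, strict, based, of rank one, not Hopf), Example \ref{Einfhecke1} (for $\mathcal{H}_{z,q}$, strict, based, Hopf, of rank one), Example \ref{Emirza} (for the triangular GWA from geometry, strict, based, but not Hopf), and Proposition \ref{Pinfgl} (for $\mathcal{H}_\beta(\lie{gl}_n,\C^n\oplus(\C^n)^*)$, based and Hopf but not strict). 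Theorem \ref{Tfunct}(1) then immediately gives that $A$ itself is a based RTA which moreover inherits discrete grading from its factors.

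Next I would read off the ``defect'' properties (1)--(5) factor by factor: non-strictness of $A$ is inherited from the $\mathcal{H}_\beta(\lie{gl}_n,\C^n\oplus(\C^n)^*)$ factor (Proposition \ref{Pinfgl}); failure of the Hopf property for $H_1\supsetneq H_0$ follows because the Cartan subalgebras of the $\cals(C(X,\R),T^*,1,1)$ factor and the $U'_q(\lie{sl}_2)$ factor are not Hopf algebras, and a tensor product containing a non-Hopf factor cannot itself be a Hopf algebra; the simple roots not being weights for $H_0$ then follows from Theorem \ref{Thrtarta} since the weight-to-root identification requires a Hopf structure; the strict inclusion $\calo[\hofree]\subsetneq\calo$ is transferred from the $U'_q(\lie{sl}_2)$ factor (where the weight $-2/(1-q)$ gives a Verma module outside $\calo$); and the failure of Condition (S4) follows from $Z(\mathcal{H}_{z,q})=\F$ by \cite[Theorem 11.1]{GGK} together with the tensor decomposition $Z(A)=\bigotimes_j Z(A_j)$ derived in the proof of Theorem \ref{Tfunct}(3).

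The core step is to verify Condition (S3) for $A$, which by Theorem \ref{Tfunct}(3) reduces to verifying it on each factor. For $U'_q(\lie{sl}_2)$, with $q$ not a root of unity and $\ch\F\neq 2,3$, Condition (S3) is the content of Example \ref{EJZ} together with the subsequent remark in Section \ref{Sgwa-example1}. For $\mathcal{H}_{z,q}$, Condition (S3) is \cite[Propositions 8.2 and 8.13]{GGK}, as recorded in Example \ref{Einfhecke1}. For the geometric triangular GWA $\cals(C(X,\R),T^*,1,1)$, I would invoke Theorem \ref{Tgwa}(2) together with the direct computation $\z_n=n\cdot 1_{C(X,\R)}$ (since $z_0=z_1=1$), which in characteristic zero gives $\lambda(\z_n)\neq 0$ for all $n\neq 0$ and all $\lambda$, so that $S^3(\lambda)=[\lambda]=\{\lambda\}$ is a singleton. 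Finally, for $\mathcal{H}_\beta(\lie{gl}_n,\C^n\oplus(\C^n)^*)$, Condition (S4) was established by Tikaradze \cite{tika4}, whence (S3) follows by Lemma \ref{Lcs}. Combining the four verifications via Theorem \ref{Tfunct}(3) yields (S3) for $A$, and Theorem \ref{Tfirst} then supplies the block decomposition of $\calo[\hofree]$ into finite-length, self-dual highest weight categories with enough projectives and injectives, satisfying BGG Reciprocity.

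The main obstacle is bookkeeping rather than conceptual: one has to carefully verify that the various hypotheses attached to each tensor factor (non-Hopf Cartan, non-strictness, trivial center, failure of $\hofree=\widehat{H_1}$, and validity of (S3)) are all compatible with the assembly into a single RTA, and that no factor inadvertently ``fixes'' a defect contributed by another. The verification of (S3) for the geometric factor $\cals(C(X,\R),T^*,1,1)$ relies on the specific choice $z_0=z_1=1$; for more general $z_0,z_1$ the analysis would become genuinely delicate, which is precisely why the example is tailored as in \eqref{Efullrta}.
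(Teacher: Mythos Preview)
Your proposal is correct and follows essentially the same approach as the paper: reduce each claim to the four tensor factors via Theorem \ref{Tfunct}, identify which factor contributes each ``defect'' property, verify (S3) factor by factor using the same references (Section \ref{Sgwa-example1}, \cite{GGK}, Theorem \ref{Tgwa}(2), \cite{tika4}), and conclude via Theorem \ref{Tfirst}.

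One small correction: the Cartan subalgebra of $U'_q(\lie{sl}_2)$ is $\F[h]$, which \emph{is} a Hopf algebra (as the paper notes explicitly at the start of Section \ref{Sgwa2}). What fails for $U'_q(\lie{sl}_2)$ is the Hopf RTA property---the root $\theta : h \mapsto qh-2$ lies outside $\im(\rho_{H_0})$---not the Hopf property of its Cartan. So the argument for (2) rests solely on the $C(X,\R)$ factor, exactly as in the paper's proof; your invocation of $U'_q(\lie{sl}_2)$ there should be dropped. Everything else matches.
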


\begin{proof}
That the algebra $A$ is a based RTA follows from the analysis in the
aforementioned examples by using Theorem \ref{Tfunct}, since each
individual tensor factor is a based RTA. Next, the RTA is not strict
because $\mathcal{H}_\beta(\lie{gl}_n, \C^n \oplus (\C^n)^*)$ is
necessarily not a strict RTA by Proposition \ref{Pinfgl}. The algebras
$H_0, H_1$ are not Hopf algebras because $\C \otimes_\R C(X, \R)$ is not
a Hopf algebra by assumption. Properties (3),(4) hold in $A$ because they
hold respectively in $\C \otimes_\R \cals(C(X,\R), T^*, 1, 1)$ and in
$U'_q(\lie{sl}_2)$.
Finally, Condition (S4) is not satisfied in $\mathcal{H}_{z,q}$ by
\cite{GGK}, since $Z(\mathcal{H}_{z,q}) = \C$. Hence $A$ also does not
satisfy (S4), by Theorem \ref{Tfunct}.

Next, to show that $A$ satisfies Condition (S3) it suffices to verify the
same property for each of the tensor factors. As stated in Section
\ref{Sgwa-example1}, if $q$ is not a root of unity then
$U'_q(\lie{sl}_2)$ satisfies Condition (S3) over $\C$. That
$\mathcal{H}_{z,q}$ satisfies Condition (S3) was shown in \cite{GGK}
(also see Example \ref{Einfhecke1}); and that
$\mathcal{H}_\beta(\lie{gl}_n, \C^n \oplus (\C^n)^*)$ satisfies Condition
(S4) (and hence (S3)) was shown in \cite{tika4}. Finally, $\C \otimes_\R
\cals(C(X,\R), T^*, 1, 1)$ satisfies Condition (S3) by Theorem
\ref{Tgwa}(2), since $z_0 = z_1 = 1$.
\end{proof}
\end{exam}

\subsection*{Acknowledgements}

I thank the following people for useful conversations concerning various
examples considered in this paper:
Hiroyuki Yamane for Example \ref{Eyamane};
Tatyana Chmutova for Example \ref{Echmutova};
Jonas T.~Hartwig and Maryam Mirzakhani for Example \ref{Emirza}; and
Jonas T.~Hartwig and Vyacheslav Futorny for Example \ref{ETGWA}.
I also thank Shahn Majid for pointing me to the reference \cite{GM}
concerning matched pairs of monoids, and Daniel Bump and Jacob Greenstein
for stimulating conversations.
Finally, I am indebted to Akaki Tikaradze for many interesting and useful
discussions, including about the proof of Theorem \ref{Tskolem} (together
with Justin Sinz).
A part of this work was carried out at a conference organized by Punita
Batra at the Harish-Chandra Research Institute in December 2014, and I
thank her for providing a productive and stimulating atmosphere.





\end{document}